\documentclass[10pt, article]{amsart}
\usepackage{tikz}
\usetikzlibrary{calc}
\usepackage{ae} 
\usepackage[T1]{fontenc}
\usepackage[cp1250]{inputenc}
\usepackage{amsmath}
\usepackage{amssymb, amsfonts,amscd,verbatim}

\usepackage[normalem]{ulem}
\usepackage{hyperref}
\usepackage{indentfirst}
\usepackage{latexsym}
\input xy
\xyoption{all}

\usepackage{xcolor}

\usepackage{amsmath}    

\theoremstyle{plain}
\newtheorem{Pocz}{Poczatek}[section]
\newtheorem{Proposition}[Pocz]{Proposition}

\newtheorem{Theorem}[Pocz]{Theorem}
\newtheorem{Corollary}[Pocz]{Corollary}

\newtheorem{Lemma}[Pocz]{Lemma}
\newtheorem{Observation}[Pocz]{Observation}
\newtheorem{Notation}[Pocz]{Notation}

\newtheorem{Example}[Pocz]{Example}

\theoremstyle{definition}
\newtheorem{Definition}[Pocz]{Definition}

\theoremstyle{remark}

\errorcontextlines=0
\numberwithin{equation}{section}
%

\title[Ends of large scale groups]
{Ends of large scale groups}

\author{Yuankui Ma}
\address{Xi'an Technological University, No.2 Xuefu zhong lu, Weiyang district, Xi'an, China 710021}
\email{mayuankui@xatu.edu.cn}

\author{Hussain Rashed}
\address{University of Tennessee, Knoxville, TN 37996, USA}
\email{hrashed1@vols.utk.edu}

\author{Jerzy Dydak}
\address{University of Tennessee, Knoxville, TN 37996, USA}
\email{jdydak@utk.edu}
\address{Xi'an Technological University, No.2 Xuefu zhong lu, Weiyang district, Xi'an, China 710021}
\email{jdydak@gmail.com}

\date{ \today
}
\keywords{dimension, coarse geometry, ends of groups, Freundenthal compactification, Higson corona}

\subjclass[2000]{Primary 54D35; Secondary 20F69}


\begin{document}
\maketitle
\begin{center}
\today
\end{center}

\tableofcontents

\begin{abstract}
The aim of this paper is to unify the theory of ends of finitely generated groups with that of ends of locally compact, metrizable and connected topological groups. In both theories one proves that, if the number of ends is finite, then it must be at most $2$. In both theories groups of two ends are characterized as having an infinite cyclic subgroup of either finite index or such that its coset space is compact. Our generalization amounts to defining the space of ends of any coarse space and then applying it to large scale groups, a class of groups generalizing both finitely generated groups and locally compact, metrizable and connected topological groups.

Additionally, we prove a version of Svarc-Milnor Lemma for large scale groups and we prove that coarsely hyperbolic large scale groups have finite asymptotic dimension provided they have bounded geometry.
\end{abstract}

\section{Introduction}

Historically, as noted in \cite{DK} on p.287, 
ends are the oldest coarse topological notion. Here is their internal description:

\begin{Definition}
A \textbf{Freudenthal end} is a decreasing sequence $(U_i)_{i\ge 1}$ of components of $X\setminus K_i$, where $(K_i)_{i\ge 1}$ is an \textbf{exhausting sequence}, i.e. $K_i$ is compact, $K_i\subset int(K_{i+1})$ for each $i\ge 1$, and $\bigcup\limits_{i=1}^\infty K_i=X.$
The set of ends of $X$ is denoted by $Ends(X)$.
\end{Definition}

 Ends were used by Freudenthal in 1930 in his famous compactification (see \cite{Peschke}  for information about theorems in this section and see \cite{Engel} for results related to the theory of dimension):

\begin{Theorem}
Suppose $X$ is a $\sigma$-compact, locally compact, connected and locally connected Hausdorff space. It has a compactification $\bar X$ such that
$\bar X\setminus X$ is of dimension $0$ and $\bar X$ dominates any compactification
$\hat X$ of $X$ whose corona is of dimension $0$.
\end{Theorem}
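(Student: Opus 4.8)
The plan is to realize the required compactification as the \emph{end compactification} $\bar X:=X\sqcup Ends(X)$, with $X$ a dense open subspace and the ends furnishing the points at infinity. Fix an exhausting sequence $(K_i)$ as in the Definition. For a component $U$ of $X\setminus K_i$, put
\[
\hat U:=U\cup\{(U_j)_{j\ge1}\in Ends(X): U_j\subseteq U\text{ for all large }j\},
\]
and declare a basis for $\bar X$ to consist of the open subsets of $X$ together with all such $\hat U$. First I would dispatch the routine axioms. The set $X$ is open by fiat and dense because each basic neighborhood $\hat U$ of an end contains the nonempty open set $U\subseteq X$. For the Hausdorff property, two distinct ends are eventually supported on disjoint components and so are separated by the corresponding $\hat U$'s, while a point $x\in X$ and an end are separated using a relatively compact neighborhood of $x$, which exists by local compactness and is swallowed by some $K_i$ thanks to the exhaustion.

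The heart of the matter — and the step I expect to be the main obstacle — is compactness of $\bar X$, which comes down to compactness and zero-dimensionality of the corona. The organizing identity is
\[
Ends(X)=\varprojlim_i\,\pi_0^{\infty}(X\setminus K_i),
\]
where $\pi_0^{\infty}(X\setminus K_i)$ denotes the set of components of $X\setminus K_i$ with noncompact closure and the bonding maps send a component into the unique larger component containing it. The crucial lemma is that each $\pi_0^{\infty}(X\setminus K_i)$ is \emph{finite}. To see this I would use local compactness to choose a relatively compact open $V\supseteq K_i$; its frontier $F=\overline V\setminus V$ is then a compact subset of $X\setminus K_i$, since components of the open set $X\setminus K_i$ are open with frontier inside $K_i\subseteq V$. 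Any component with noncompact closure meets $V$ (its nonempty frontier lies in $K_i\subseteq V$) yet is not contained in the compact $\overline V$, so by connectedness it must meet $F$; as $F$ is compact and the components are pairwise disjoint open sets, only finitely many of them meet $F$. Granting finiteness, $Ends(X)$ is an inverse limit of finite discrete spaces, hence a compact, Hausdorff, zero-dimensional (Stone) space. Compactness of all of $\bar X$ then follows by covering the compact corona with finitely many basic sets $\hat U$ and noting that the complementary part of $X$ is contained in some $K_i$, hence compact; zero-dimensionality of the corona $\bar X\setminus X=Ends(X)$ is immediate from its description as an inverse limit of discrete spaces, the sets $\hat U\cap Ends(X)$ forming a clopen basis.

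Finally, for the domination property, let $\hat X$ be any compactification with $\dim(\hat X\setminus X)=0$ and set $C:=\hat X\setminus X$. I would define $q\colon\bar X\to\hat X$ to be the identity on $X$ and to send an end $e=(U_i)$ to the point cut out by $\bigcap_i\overline{U_i}^{\,\hat X}$. The linchpin is that this intersection is a single point: each $U_i$ is connected, so its closure in $\hat X$ is connected, and a nested intersection of compact connected sets is connected; moreover the $U_i$ eventually leave every compact subset of $X$, which forces the intersection to lie entirely in $C$. Since $C$ is zero-dimensional its only connected subsets are singletons, so $\bigcap_i\overline{U_i}^{\,\hat X}$ is a single point and $q(e)$ is well defined. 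It remains to check that $q$ is continuous and surjective — continuity on the corona from the fact that the $\hat U$ map into shrinking neighborhoods of $q(e)$, and surjectivity because every point of $C$ is approached by points escaping to infinity in $X$ — and this is exactly the statement that $\bar X$ dominates $\hat X$.
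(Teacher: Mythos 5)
The paper itself offers no proof of this theorem: it is quoted as classical background (Freudenthal's compactification, with \cite{Peschke} as the reference), and the text merely records the construction of $\bar X$ via the basis $\mathcal{T}\cup \{\widetilde{U}_{end}\mid U\in \mathcal{T}\}$ without verifying anything. So your argument has to stand on its own; it is the standard classical proof, and most of it is sound. Your basis (sets $\hat U$ indexed by components $U$ of the $X\setminus K_i$) generates the same topology as the paper's; the finiteness of the set of components of $X\setminus K_i$ with noncompact closure is correctly proved by the frontier argument (local connectedness makes components open with frontier in $K_i$, connectedness of $X$ makes the frontier nonempty); the identification of $Ends(X)$ with the inverse limit is legitimate, since in any decreasing sequence of components each $U_i$ automatically has noncompact closure (otherwise $\overline{U_i}$ is absorbed by some $K_j$, forcing $U_j=\emptyset$); and the domination map $q$ is handled exactly as in the classical argument: the sets $\overline{U_i}^{\hat X}$ are nested compact connected sets whose intersection misses $X$, hence lies in the zero-dimensional (so totally disconnected) corona and must be a point, after which continuity follows from shrinking neighborhoods and surjectivity from the image being compact and dense.

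The one assertion that genuinely needs proof is the compactness step: ``the complementary part of $X$ is contained in some $K_i$, hence compact.'' If $\hat U_1,\dots,\hat U_n$ cover $Ends(X)$, with $U_j$ a component of $X\setminus K_{i_j}$ and $i=\max_j i_j$, what is clear is only that $X\setminus \bigcup_j U_j$ is contained in $W_i$, the union of $K_i$ with all components of $X\setminus K_i$ having \emph{compact} closure; since there may be infinitely many such components, neither the compactness of $W_i$ nor the fact that every noncompact-closure component of $X\setminus K_i$ lies inside some $U_j$ comes for free. Both follow from a refinement of the frontier argument you already used: a component of $X\setminus K_i$ that misses $F=\overline V\setminus V$ cannot lie in $X\setminus \overline V$ (its frontier would then be contained in $V\cap (X\setminus V)=\emptyset$, producing a proper nonempty clopen subset of the connected space $X$), so it lies in $V$; consequently $W_i$ is contained in $\overline V$ together with the closures of the finitely many components that meet $F$, and since $W_i$ is closed (its complement is the open union of the noncompact-closure components) it is compact. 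This lemma also yields the surjectivity of your bonding maps, which you use implicitly: a noncompact-closure component of $X\setminus K_i$ cannot be contained in the compact set $W_j$ for $j>i$, so it contains a noncompact-closure component of $X\setminus K_j$, and by induction (using finiteness at each level) it contains an end, hence is contained in some $U_j$. With these two supplements the leftover set is compact, is swallowed by the exhaustion, and your compactness and surjectivity claims close up; without them, that sentence of your proof is an unsupported leap rather than an observation.
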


\begin{Notation}
Given a $\sigma$-compact, locally compact, connected and locally connected Hausdorff space $(X,\mathcal{T})$. If $U\in \mathcal{T}$, we define the subset $U_{end}:=\{(U_i)\in Ends(X)|U_{i}\subset U$ for some $i\geq 1\}$ and $\widetilde{U}_{end}:=U_{end}\cup U$.
\end{Notation}
The family $\mathcal{T}\cup \{\widetilde{U}_{end}| U\in \mathcal{T}\}$ is a basis for a topology $\mathcal{T}_{end}$ on $X\cup Ends(X)$. The topological space $\bar X:=X\cup Ends(X)$ is a compactification of $X$ called the \textbf{Freudenthal compactification}. The space of ends $Ends(X)=\bar X\setminus X$ is of dimension 0, and $\bar X$ dominates any compactification
$\hat X$ of $X$ whose corona is of dimension $0$. Moreover, the number of ends of $X$ is the supremum of $n_i\geq 0$ where $n_i$ is the number of all mutually disjoint unbounded components of $X\setminus K_i$, for all $i\geq 1$.

Initially, ends were useful as properties of topological groups:
\begin{Theorem}
(Freudenthal) A path connected topological group has at most two ends.
\end{Theorem}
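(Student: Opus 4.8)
The plan is to exploit homogeneity of the group together with the $0$-dimensionality of the space of ends. Working under the standing hypotheses of the excerpt (so that $Ends(G)$ and the Freudenthal compactification $\bar G$ are defined, and assuming $G$ also locally connected so the construction applies), I would first fix a left-invariant proper metric on $G$; this is available since a connected, locally compact, $\sigma$-compact Hausdorff group is metrizable. Then every left translation $L_g(x)=gx$ is an isometry, hence a proper self-homeomorphism of $G$, so it extends to a self-homeomorphism of $\bar G$ and induces a homeomorphism of $Ends(G)$. The resulting action $G\times Ends(G)\to Ends(G)$ is continuous, so for a fixed end $\omega$ the orbit map $g\mapsto g\cdot\omega$ is a continuous map from the (path-)connected, hence connected, space $G$ into $Ends(G)$, which is of dimension $0$ and therefore totally disconnected. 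Such a map must be constant, so $g\cdot\omega=e\cdot\omega=\omega$ for all $g$: every left translation fixes every end.

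Next I would argue by contradiction. Suppose $G$ has at least three ends and choose three distinct ones $\omega_1,\omega_2,\omega_3$. Using that distinct ends are separated in $\bar G$, I would pick a connected compact set $K$ with $e\in K$ such that $\omega_1,\omega_2,\omega_3$ lie in three distinct unbounded components $U_1,U_2,U_3$ of $G\setminus K$; enlarging $K$ to a connected compact set is possible because $G$ is connected and locally connected. Now pick $x_1\in U_1$ with $\dist(x_1,K)>\diam(K)$. Since $L_{x_1}$ is an isometry, the translate $x_1K$ is a set of the same diameter sitting at $x_1$, hence a small blob contained entirely in $U_1$.

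The contradiction then comes from examining $G\setminus x_1K$ in two ways. On one hand, $B:=G\setminus U_1=K\cup\bigcup_{i\neq 1}U_i$ is connected, since each component of $G\setminus K$ other than $U_1$ has its boundary in the connected set $K$, and $B$ is disjoint from $x_1K\subseteq U_1$; hence $B$ lies in a single unbounded component $W$ of $G\setminus x_1K$, and since $U_2,U_3\subseteq B\subseteq W$ both ends $\omega_2$ and $\omega_3$ belong to $W$. On the other hand, $L_{x_1}$ is a homeomorphism carrying the component $U_i$ (which contains $\omega_i$) to the component $x_1U_i$ of $G\setminus x_1K$ containing $L_{x_1}(\omega_i)=\omega_i$; thus $\omega_2\in x_1U_2$ and $\omega_3\in x_1U_3$ with $x_1U_2\neq x_1U_3$. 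Because each end lies in exactly one component of the complement of a fixed compact set, $W=x_1U_2$ and $W=x_1U_3$, forcing $U_2=U_3$, a contradiction. Therefore $G$ has at most two ends.

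I expect the main obstacle to be the first step: verifying rigorously that the left-translation action on $Ends(G)$ is well defined and continuous (that each $L_g$ extends to $\bar G$ and that $G\times\bar G\to\bar G$ is continuous), together with the point-set technicalities of separating three distinct ends by the complement of a single connected compact set and of reducing to the locally connected case. Everything after the homogeneity input is elementary separation-of-components bookkeeping; the genuinely group-theoretic content is the triviality of the action, which is exactly where connectedness of $G$ and $0$-dimensionality of $Ends(G)$ enter.
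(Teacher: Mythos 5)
The paper does not actually prove this statement: it is quoted in the introduction as classical background (attributed to Freudenthal, with \cite{Peschke} as the reference), and the paper's own contribution is the coarse generalization proved much later (Lemma \ref{ActingOnThreeEnds} and Theorem \ref{NumberOfEnds}), whose strategy --- reduce to a group acting trivially on its ends, then rule out three disjoint end-representing sets by translating a small ``blob'' deep into one of them --- is in fact closely parallel to yours. So your outline is the right classical argument, and your steps after the homogeneity input (placing $x_1K$ inside $U_1$, the connectedness of $G\setminus U_1$, and the component bookkeeping forcing $U_2=U_3$) are essentially sound.

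There are, however, two genuine gaps. First, your metrization step is false as stated: a connected, even path-connected, locally compact, $\sigma$-compact Hausdorff group need not be metrizable. For example $(S^1)^{\mathbb{R}}$ (or $(S^1)^{\mathbb{R}}\times\mathbb{R}$ for a non-compact, two-ended example) is path connected, locally compact and $\sigma$-compact, but not first countable, hence not metrizable by Birkhoff--Kakutani. This gap is repairable because you never really need a metric: to place $x_1K$ inside $U_1$ it suffices to pick $x_1\in U_1\setminus (K\cdot K^{-1})$, which exists since $K\cdot K^{-1}$ is compact and $U_1$ is unbounded; then $x_1K\cap K=\emptyset$, and $x_1K$ is connected and meets $U_1$, so $x_1K\subset U_1$. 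Second, and more seriously, the heart of the theorem --- that every translation fixes every end --- is exactly the step you defer as ``the main obstacle,'' so the proposal as written omits the one genuinely group-theoretic fact; everything you do prove is downstream of it. Note also that routing this through continuity of the action $G\times Ends(G)\to Ends(G)$ is harder than necessary: path connectedness gives triviality directly. Given $g\in G$, a path $\gamma$ from $e$ to $g$, a compact set $K$ and an end $\omega$, put $C:=\gamma([0,1])$ and $L:=K\cup C^{-1}\cdot K$, let $W$ be the component of $G\setminus L$ in which $\omega$ lives, and pick $w\in W$. Then $W\cup (C\cdot w)\cup (g\cdot W)$ is connected and disjoint from $K$, so $\omega$ and $g\cdot\omega$ determine the same component of $G\setminus K$; since $K$ was arbitrary and distinct ends are separated by the complement of some compact set, $g\cdot\omega=\omega$. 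With that substituted for your first step, and with the metric removed as above, your argument becomes complete.
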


\begin{Theorem} (Leo Zippin \cite{Zippin})
If a locally compact, metrizable, connected topological group $G$ is two-ended, then $G$ contains a closed subgroup $T$ isomorphic to the group of reals such that the coset-space $G/T$ is compact; moreover, the space $G$ is the topological product of the axis of reals by a compact connected set homeomorphic to the space $G/T$.
\end{Theorem}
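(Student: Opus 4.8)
The plan is to reduce the statement to the case of connected Lie groups via the structure theory of locally compact groups, to classify the two-ended connected Lie groups using their maximal compact subgroup, and then to transport the splitting back to $G$ through a compact kernel. Throughout, the key soft input is that the number of ends is a coarse invariant, so it is unchanged under quotients with compact fibers.

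First I would invoke the Gleason--Yamabe theorem: since $G$ is locally compact and connected, it contains a compact normal subgroup $K$ such that $L := G/K$ is a connected Lie group. Because $K$ is compact, the quotient homomorphism $q\colon G\to L$ is proper (preimages of compacta are compact) and cobounded, hence a coarse equivalence inducing a bijection of Freudenthal ends; thus $L$ is again two-ended. So it suffices to prove the theorem for $L$ and then lift. Next I would classify: by the Cartan--Iwasawa--Malcev theorem $L$ is homeomorphic to $C\times\mathbb{R}^{d}$ for a maximal compact subgroup $C$ and some $d\ge 0$, and since $C$ is compact the ends of $L$ are the ends of $\mathbb{R}^{d}$, which number $0,2,1$ according as $d=0$, $d=1$, $d\ge 2$. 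Hence two-endedness is equivalent to $d=1$. Using the Levi decomposition $L=R\rtimes S$ with $R$ the solvable radical and $S$ semisimple, a noncompact semisimple factor would force $d\ge 2$ (already $\mathrm{SL}_2(\mathbb{R})/\mathrm{SO}(2)$ is a surface, $d=2$), so $S$ is compact and the single noncompact dimension sits in $R$. A connected solvable Lie group whose maximal compact torus has codimension one carries a closed one-parameter subgroup $T_L\cong\mathbb{R}$ transverse to that torus, and then $L/T_L$ is compact (it fibers over the compact $L/R=S$ with compact fiber $R/T_L$).

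To obtain the topological product in the Lie case, I would note that the coset projection $L\to L/T_L$ is a locally trivial fiber bundle with fiber $T_L\cong\mathbb{R}$ (for Lie groups $L\to L/H$ is a bundle for every closed $H$). As $\mathbb{R}$ is contractible, the bundle admits a global section and is therefore trivial, giving $L\cong\mathbb{R}\times(L/T_L)$ with base $L/T_L$ compact and, as a continuous image of the connected $L$, connected. This already realizes the conclusion for $L$.

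Finally I would lift to $G$. Set $\tilde T:=q^{-1}(T_L)$, a closed subgroup fitting in an extension $1\to K\to\tilde T\to\mathbb{R}\to 1$ with $K$ compact. The crux, and the step I expect to be the main obstacle, is to split this extension by a continuous homomorphism $s\colon\mathbb{R}\to\tilde T$ with closed image; passing to identity components one uses the vanishing of the relevant continuous cohomology $H^{2}_{\mathrm{cont}}(\mathbb{R},K)$ (equivalently, one constructs the requisite one-parameter subgroup through a pro-Lie approximation of $\tilde T$), so that $T:=s(\mathbb{R})\cong\mathbb{R}$ is closed in $G$. Then $\tilde T/T\cong K$ and $G/\tilde T\cong L/T_L$ are both compact, whence $G/T$ is compact and connected. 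The product decomposition $G\cong\mathbb{R}\times(G/T)$ then follows as before from the contractibility of the fiber $\mathbb{R}$, using Mostow's local cross-section theorem to guarantee that $G\to G/T$ is a bundle in this merely locally compact setting. I anticipate that the delicate points are precisely the existence of the closed, cocompact one-parameter subgroup inside the solvable radical and the splitting of the compact-kernel extension over $\mathbb{R}$; both the classification and the final bundle triviality are comparatively routine once those are in hand.
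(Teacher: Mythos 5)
The paper never proves this statement: it is quoted in the introduction as classical background, attributed to Zippin's 1950 article \cite{Zippin}, and the paper's own two-ended theorems (\ref{SpecialStallingsTwoEnds}, \ref{GeneralStallingsTwoEnds}) are coarse analogues producing an infinite cyclic subgroup of bounded index rather than a closed copy of $\mathbb{R}$ with compact coset space. So your outline must stand on its own. Its overall strategy --- Gleason--Yamabe reduction to a connected Lie group $L$, coarse invariance of ends under quotient by a compact normal subgroup, classification of two-ended Lie groups, then lifting back through the compact kernel --- is the standard modern route and is sound in shape; the end-counting via Cartan--Iwasawa--Malcev and the final bundle argument ($G\to G/T$ is a principal $\mathbb{R}$-bundle by Mostow's cross-section theorem, hence trivial over a paracompact base) are fine. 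The problem is that the two steps you yourself call ``the delicate points'' are exactly the mathematical content of the theorem, and you defer rather than prove them, so as written this is a plan, not a proof.

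Concretely: (a) the claim that a connected solvable Lie group whose maximal torus has codimension one carries a closed cocompact one-parameter subgroup is asserted with no argument, and the Levi-decomposition detour is not needed. The efficient fix works directly in $L$: if $C$ is a maximal compact subgroup of the two-ended $L$, then $\dim L-\dim C=1$, and the adjoint action of the compact connected group $C$ on the one-dimensional space $\mathfrak{l}/\mathfrak{c}$ is trivial (a compact connected subgroup of $GL_1(\mathbb{R})=\mathbb{R}^{\times}$ is trivial), which gives $[\mathfrak{c},\mathfrak{l}]\subseteq\mathfrak{c}$; hence $\mathfrak{c}$ is an ideal, $C$ is normal, $L/C$ is a one-dimensional connected Lie group which cannot be the circle (else $L$ is compact), so $L/C\cong\mathbb{R}$, and any $X\in\mathfrak{l}$ with $dq(X)=1$ gives a closed one-parameter section $T_L$. (b) More seriously, splitting $1\to K\to\tilde T\to\mathbb{R}\to 1$ by invoking ``$H^2_{\mathrm{cont}}(\mathbb{R},K)=0$'' is not legitimate: $K$ is an arbitrary compact group, generally nonabelian and non-Lie, and nonabelian $H^2$ does not classify splittings; even for abelian $K$ the vanishing in the continuous category is something to prove, not quote. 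The ``pro-Lie approximation'' you mention parenthetically is the actual proof and needs to be carried out: reduce to $\tilde T$ connected (using $\tilde T=\tilde T^0K$, since $\tilde T/\tilde T^0K$ is both connected and totally disconnected), write $\tilde T^0=\varprojlim \tilde T^0/N_\alpha$ with $N_\alpha\subseteq K$ compact normal and Lie quotients, split each Lie quotient by the argument in (a), and note that the sets of infinitesimal splittings form an inverse system of nonempty finite-dimensional affine spaces with affine bonding maps, which satisfies the Mittag--Leffler condition, so the limit is nonempty and yields a closed one-parameter subgroup of $G$. Alternatively, all of this can be replaced by citing Iwasawa's 1949 structure theorem, which supplies a maximal compact $C$ and closed one-parameter subgroups $H_1,\dots,H_r$ with $C\times H_1\times\cdots\times H_r\to G$ a homeomorphism; two-endedness forces $r=1$ and Zippin's conclusion follows immediately. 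A minor additional caveat: since $G$ need not be locally connected (e.g.\ a solenoid times $\mathbb{R}$), identifying Freudenthal ends with coarse ends requires the chain-connected comparison of the type in \ref{MainComparisonTheorem}, not the locally connected version.
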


\begin{Theorem}
(H. Hopf) Let $G$ be a finitely generated discrete group acting on a space $X$ by covering transformations. Suppose the orbit space $B:=X/G$ is compact. Then (i) and (ii), below, hold.\\
(i) The end space of $X$ has 0, 1 or 2 (discrete) elements or is a Cantor space.\\ 
(ii) If $G$ also acts on $Y$ satisfying the hypotheses above, then $X$ and $Y$ have
homeomorphic end spaces.
\end{Theorem}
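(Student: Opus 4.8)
The plan is to reduce the entire statement to the coarse geometry of the group $G$ itself. Since $G$ acts by covering transformations with compact orbit space $B = X/G$, the action is free, properly discontinuous and cocompact, so a Svarc--Milnor--type argument produces a coarse equivalence between $X$ (equipped with a $G$-invariant metric) and $G$ endowed with a word metric coming from a finite generating set. Because the Freudenthal end construction is a coarse invariant --- the corona $Ends(X) = \bar X \setminus X$ depends only on the large-scale structure of $X$ --- this coarse equivalence yields a homeomorphism $Ends(X) \cong Ends(G)$. This observation already settles part (ii): if $G$ also acts on $Y$ under the same hypotheses, then $Ends(Y) \cong Ends(G) \cong Ends(X)$, so the two end spaces are homeomorphic.

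For part (i) I would first record the topological quality of $E := Ends(G)$. By the Freudenthal compactification theorem quoted above, $\bar G = G \cup E$ is a compactification whose corona $E$ is compact and $0$-dimensional; since $G$ is countable, $\bar G$ is second countable, so $E$ is metrizable and totally disconnected. Moreover $G$ acts on $\bar G$ by homeomorphisms preserving $G$ setwise, hence acts on $E$ by homeomorphisms, and in particular permutes the isolated points of $E$ among themselves. Recall from the description above that the number of ends equals the supremum of the numbers $n_i$ of pairwise disjoint unbounded components of $G \setminus K_i$.

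The heart of the argument is the following dichotomy: \emph{if $E$ has an isolated point, then $|E| \le 2$.} To prove it I would fix an isolated end $\xi$, choose a compact $K$ so that a single unbounded component $U$ of $G \setminus K$ carries $\xi$ as its only end, and then exploit cocompactness: there is $g \in G$ with $gK \subset U$, and the far side of $gK$ inside $U$ is again an unbounded set with unique end $\xi$. Iterating the translation in this one direction exhibits $U$ as coarsely a half-line, and the symmetric construction shows $G$ is coarsely a bi-infinite line, forcing $E$ to have exactly two points (or one, in the degenerate case). Granting the dichotomy, the theorem follows: a finite nonempty $0$-dimensional space is discrete, hence consists entirely of isolated points, so a finite end space has at most two elements; if $E$ is infinite it has no isolated point, and a nonempty, compact, metrizable, totally disconnected, perfect space is a Cantor set by Brouwer's characterization. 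Thus $E$ has $0$, $1$, or $2$ points, or is a Cantor space.

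I expect the main obstacle to be the dichotomy, specifically turning ``$\xi$ is isolated'' into the rigid statement that $G$ is coarsely a line. The subtlety is to use cocompactness to guarantee that translates $gK$ can be pushed arbitrarily deep into the thin end $U$ while keeping control of both the number and the unboundedness of the complementary components, so that the one-ended neighborhood of $\xi$ is genuinely a coarse ray rather than merely one-ended. A secondary and more routine obstacle is verifying in full that the Freudenthal end construction above is a coarse invariant in the large-scale setting, which is precisely what legitimizes replacing $X$ by $G$ at the outset.
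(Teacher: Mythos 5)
A preliminary remark: the paper states this theorem of Hopf as classical background (with references to Peschke and Geoghegan) and gives no proof of it, so your attempt can only be measured against the standard argument and the paper's general machinery. Your treatment of (ii) — reduce to the group via a \v Svarc--Milnor coarse equivalence and invoke coarse invariance of ends — is sound in outline and is exactly the toolkit the paper builds later (Theorem \ref{Svarc-MilnorForLSGroups}, Corollary \ref{CoarselyEquivalentSpacesHaveHomeomorphicEndsSpaces}, and the comparison Theorem \ref{MainComparisonTheorem} identifying Freudenthal ends with coarse ends). Likewise, your skeleton for (i) — a dichotomy ``isolated point $\Rightarrow$ $|E|\le 2$'' combined with Brouwer's characterization of the Cantor set — is the standard one and is logically adequate.

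The genuine gap is in your proof of the dichotomy, which is the heart of the matter. Your mechanism is: an isolated end $\xi$ with one-ended neighborhood $U$, plus cocompactness producing $gK\subset U$, exhibits $U$ as a coarse half-line after iteration. This intermediate claim is false, and the counterexample sits inside the very cases your dichotomy must cover: take $G=\mathbb{Z}^2$. Its single end is isolated (a one-point space is discrete), and $U$, the complement of a large ball, is one-ended; yet $U$ is nothing like a coarse half-line, because the translates $g^nK$ do \emph{not} separate $U$ into uniformly bounded pieces --- $U\setminus gK$ is connected, since one can walk around $gK$. Cocompactness pushes $gK$ deep into $U$ but by itself yields no separation and hence no ray structure, so the argument proves nothing even in the case it is supposed to handle trivially. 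The defect is that your proof never uses the existence of \emph{other} ends, which is what the correct argument hinges on. One must argue by contradiction: suppose $\xi$ is isolated with one-ended neighborhood $U_1$ and that $G\setminus K$ has at least two further unbounded components $U_2,U_3$ (this is where $|E|\ge 3$ enters). Translate $gK$ deep into $U_1$. Then $K\cup U_2\cup U_3$ is connected, disjoint from $gK$, and so lies in a single component of $G\setminus gK$; the remaining unbounded components of $G\setminus gK$ (at least two of $g\cdot U_1$, $g\cdot U_2$, $g\cdot U_3$) each have frontier on $gK\subset U_1$ and avoid $K$, hence are contained in $U_1$. Thus $U_1$ contains two disjoint unbounded sets separated by the compact set $gK$, i.e.\ $U_1$ has at least two ends, contradicting the isolation of $\xi$. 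It is the translated copies of the \emph{other} components landing inside $U_1$ --- not the ``far side'' of $gK$ --- that produce the contradiction; as written, your argument never confronts the isolation hypothesis with anything and cannot be repaired along the half-line route.
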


Conclusion (ii) suggests to regard the end space of $X$ as an invariant of the group $G$ itself:
\begin{Definition}
Let $p:X \to B$ be a covering map with compact base $B$ and the group of covering transformations $G$. The \textbf{end space} of $G$ is
$$Ends(G):= Ends(X).$$
\end{Definition}

When applied to a Cayley graph of $G$, it gives the standard definition of ends of finitely generated groups (see \cite{DK}, p.295). See \cite{Geog}  for basic results in this theory and see \cite{Grom} for more general facts in coarse geometry related to groups. \cite{MM} contains interesting results for ends of finitely generated groups.

In this paper we will define ends of arbitrary countable groups by generalizing the construction of the Higson corona. In the case of coarse spaces we generalize Freudenthal's method to define their space of ends.

E. Specker \cite{Sp} defined ends of arbitrary groups using Stone's duality theorem. See a very nice paper \cite{Corn} of Yves Cornulier describing properties of the space of ends of infinitely generated groups. We consider Specker's approach highly non-geometric. Additionally, our way of defining ends of spaces leads directly to view them as coronas of certain compactifications (large scale compactifications in case of coarse spaces). A future paper will demonstrate the equivalence of Specker's definition of ends of groups and our definition of them.

W. Dicks and M. J. Dunwoody \cite{DD} also consider ends of non-finitely generated groups. In particular, they prove the following result that is a generalization of the famous theorem of Stallings \cite{St}:

\begin{Theorem}
A group $G$ has infinitely many ends if and only if 
one of the following conditions holds:\\
(i) $G$ is countably infinite and locally finite,\\
(i) $G$ can be expressed as an amalgamated free product $A\ast_CB$ or an HNN extension $A\ast_C$, where $C$ is a finite subgroup of $A$ and $B$ such that $[A : C]\ge 3$ and $[B : C]\ge 2$.
\end{Theorem}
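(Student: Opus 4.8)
The plan is to handle the two implications separately, since the reverse direction carries essentially all of the difficulty.

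For the direction asserting that either condition forces infinitely many ends, I would argue geometrically. In case (i), write the countably infinite locally finite group as an increasing union $G=\bigcup_{n} F_n$ of finite subgroups; each $F_n$ is a bounded set, and using the Hopf-type description quoted above (ends as the supremum of the numbers of mutually disjoint unbounded complementary components) I would produce, for every $N$, a bounded set whose complement has at least $N$ unbounded coarse components --- the growing indices $[F_{n+1}:F_n]$ supply the branching --- so that the supremum is infinite. In case (ii) I would invoke the version of the Svarc--Milnor lemma established earlier in this paper: the Bass--Serre tree $T$ of the splitting $A\ast_C B$ (resp. of the HNN extension $A\ast_C$) carries a cobounded $G$-action whose vertex stabilizers are conjugates of the finite group $C$, so the action is coarsely proper and $G$ is coarsely equivalent to $T$. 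The inequalities $[A:C]\ge 3$ and $[B:C]\ge 2$ force $T$ to be infinitely branching, so $Ends(T)$ is a Cantor space; as ends are a coarse invariant, $Ends(G)$ is infinite.

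For the converse I would follow the almost-invariant-set and structure-tree strategy underlying the theorems of Stallings and Dunwoody cited here. Assume $Ends(G)$ is infinite; if $G$ is countably infinite and locally finite we are in case (i), so suppose it is not, and aim to build a splitting over a finite subgroup. Having more than one end yields a nontrivial \emph{almost invariant} subset $A\subseteq G$, that is, a set with finite coboundary for which both $A$ and $G\setminus A$ are infinite; geometrically this is a cut separating two ends. The core of the argument is Dunwoody's construction: out of the Boolean algebra of such cuts one extracts a $G$-invariant, $G$-finite family of pairwise \emph{nested} cuts, and such a family is exactly the combinatorial datum of a $G$-tree $T$. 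One then checks that $G$ acts on $T$ without inversions and coboundedly, and that each edge stabilizer fixes setwise a finite coboundary; since the setwise stabilizer of a finite subset of $G$ under left translation is itself finite, all edge groups are finite, and Bass--Serre theory converts the action into a graph-of-groups decomposition over finite edge subgroups. The hypotheses that $G$ is neither two-ended nor locally finite then force this decomposition to be a genuine amalgam or HNN extension meeting the index bounds in (ii).

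The main obstacle is the \textbf{extraction of a nested, $G$-finite family of cuts from the end data}, i.e.\ the construction of the $G$-tree itself. Producing nestedness from an arbitrary almost invariant set is the delicate combinatorial heart of the matter --- Dunwoody's tracks and the accessibility machinery --- and it is precisely the step that does not follow formally from the coarse definition of ends. By contrast, both the existence of an initial cut and the subsequent Bass--Serre bookkeeping (finiteness of edge groups, coboundedness, and the index inequalities) are comparatively routine. I would therefore concentrate the technical work on the nesting argument and, following Dicks--Dunwoody, on setting up the Boolean algebra of almost invariant subsets directly so that the construction survives in the absence of a finite generating set.
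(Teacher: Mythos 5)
A preliminary but important point: the paper does not prove this statement at all. It is quoted in the introduction as a theorem of Dicks and Dunwoody \cite{DD}, generalizing Stallings' theorem, so there is no internal proof to compare yours against; what follows judges your proposal on its own merits, using the paper's machinery where relevant.

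Your ``easy'' direction contains two genuine errors. For case (ii), you assert that the Bass--Serre tree $T$ of $A\ast_C B$ has vertex stabilizers conjugate to the finite group $C$. In fact the vertex stabilizers are the conjugates of $A$ and $B$ (it is the \emph{edge} stabilizers that are conjugates of $C$), and the hypotheses allow $A$ and $B$ to be infinite. Consequently the action is not proper, the Svarc--Milnor theorem of this paper (Theorem \ref{Svarc-MilnorForLSGroups}) does not apply, and $G$ need not be coarsely equivalent to $T$: for $G=\ZZ^2\ast\ZZ^2$ (so $C=1$ and the index hypotheses hold) the group contains $\ZZ^2$ and hence has asymptotic dimension at least $2$, while any tree has asymptotic dimension at most $1$; indeed the orbit map collapses the unbounded coset $A$ to a single vertex, so it is not even a coarse map in the paper's sense. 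The repair is to exploit the finiteness of the \emph{edge} groups: normal forms in the amalgam (or the two half-trees at an edge of $T$) produce at least three disjoint, unbounded, coarsely clopen subsets of $G$, and the trichotomy proved later in the paper (a group whose bornology consists of all finite subsets and which has finitely many ends has at most two) then forces infinitely many ends. For case (i), your plan to exhibit, for each $N$, a finite set whose complement has at least $N$ unbounded components cannot work: in a locally finite group any finite symmetric set $K$ generates a finite subgroup, so every $K$-component of the complement of a finite set lies in a coset of a finite subgroup and is bounded. At every scale there are \emph{no} unbounded components, and the supremum you propose equals $0$, not $\infty$. This is precisely why Proposition \ref{LocallyBoundedGroupsCase} argues differently, building infinitely many disjoint unbounded coarsely clopen sets of the form $\bigcup_{i\in J} g_i\cdot G_i$ from an increasing chain of bounded subgroups; that proposition yields case (i) immediately.

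For the converse you correctly identify Dunwoody's nested-cuts/structure-tree construction as the crux, but you then defer exactly that step to Dicks--Dunwoody. Since that construction \emph{is} the hard direction, your proposal at this point is an outline of the known proof rather than a proof --- a defensible stance given that the paper itself only cites \cite{DD}, but it should be acknowledged as a citation. Note also one case your sketch passes over: an uncountable locally finite group cannot split as in (ii) (such splittings contain elements of infinite order), so the converse implicitly requires showing that such groups do not have infinitely many ends; this, too, lives inside the machinery you are invoking rather than in anything you have written down.
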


The authors are grateful to Ross Geoghegan and Mike Mihalik for their help in understanding classical theory of ends of finitely generated groups.

\section{Ends of coarse spaces}

In this section we generalize the concept of Freudenthal ends to arbitrary coarse spaces. See \cite{JDEnds} for other ways to introduce ends in coarse spaces.

We follow a description of coarse spaces (quite often our terminology is that of \textbf{large scale spaces}) as in \cite{DH}. It is equivalent to Roe's definition of those spaces in \cite{Roe lectures}.

Recall that a \textbf{star} $st(x,U)$ of $x\in X$ with respect to a family $\mathcal{U}$ of subsets of $X$ is defined as the union of $U\in \mathcal{U}$ containing $x$. If $A\subset X$, then $st(A,\mathcal{U}):=\bigcup\limits_{x\in A}st(x,\mathcal{U})$. Given two families $\mathcal{U},\mathcal{V}$ of subsets of $X$,
$st(\mathcal{U},\mathcal{V})$ is defined as the family $st(A,\mathcal{V})$, $A\in \mathcal{U}$.

\begin{Definition}
A \textbf{large scale space} is a set $X$ equipped with a family $\mathbb{LSS}$ of covers (called \textbf{uniformly bounded} covers) satisfying the following two conditions:\\
1. 
$st(\mathcal{U},\mathcal{V})\in \mathbb{LSS}$ if $\mathcal{U},\mathcal{V}\in \mathbb{LSS}$.\\
2. If $\mathcal{U}\in \mathbb{LSS}$ and every element of $\mathcal{V}$ is contained in some element of $\mathcal{U}$, then $\mathcal{V}\in \mathbb{LSS}$.

Sets which are contained in an element of $\mathcal{U}\in \mathbb{LSS}$ are called \textbf{bounded}.

In this paper we consider only large scale spaces that have the \textbf{lowest form of coarse connectivity}. Namely, the union of two bounded subsets of $X$ is always bounded.
\end{Definition}

\begin{Definition}
The subsets $A$ and $C$ of a large scale space $X$ are \textbf{coarsely disjoint} if for every uniformly bounded cover $\mathcal{U}$ of $X$ the set $st(A,\mathcal{U})\cap st(C,\mathcal{U})$ is bounded. $A$ is \textbf{coarsely clopen} if $A$ and $A^c$ are coarsely disjoint.

A \textbf{non-trivial coarsely clopen} subset $A$ of a large scale space $X$ (an NCC-set for short) is one that is not bounded and $A^c$ is not bounded.
\end{Definition}

\begin{Lemma}\label{StLemma}
$st(A_1\cap A_2,\mathcal{U})\cap st((A_1\cap A_2)^c,\mathcal{U})\subset
st(A_1,\mathcal{U})\cap st((A_1)^c,\mathcal{U})\cup st(A_2,\mathcal{U})\cap st((A_2)^c,\mathcal{U})$.
\end{Lemma}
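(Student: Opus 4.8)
The plan is to argue pointwise, exploiting the elementary reformulation of the star operation: for any subset $A\subset X$ and any cover $\mathcal{U}$, the set $st(A,\mathcal{U})$ is precisely the union of those members $U\in\mathcal{U}$ that meet $A$. Indeed $st(A,\mathcal{U})=\bigcup_{x\in A}st(x,\mathcal{U})$, and $st(x,\mathcal{U})$ is the union of the $U\in\mathcal{U}$ containing $x$; running over all $x\in A$ one obtains exactly the union of the $U\in\mathcal{U}$ with $U\cap A\neq\emptyset$. With this description in hand, the containment becomes a routine verification, so I expect no serious obstacle — the only care needed is in correctly distributing the complement over the intersection.

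First I would fix a point $y$ belonging to the left-hand side, so that $y\in st(A_1\cap A_2,\mathcal{U})$ and simultaneously $y\in st((A_1\cap A_2)^c,\mathcal{U})$. By the reformulation above, the first membership produces some $U\in\mathcal{U}$ with $y\in U$ and $U\cap(A_1\cap A_2)\neq\emptyset$; pick a witness $a\in U\cap A_1\cap A_2$. The key small observation is that a single such $a$ lies in $A_1$ and in $A_2$ at once, so the set $U$ meets both $A_1$ and $A_2$, giving $y\in st(A_1,\mathcal{U})$ and $y\in st(A_2,\mathcal{U})$.

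Next I would handle the second membership. It produces some $V\in\mathcal{U}$ with $y\in V$ and $V\cap(A_1\cap A_2)^c\neq\emptyset$. Here I would invoke the De Morgan identity $(A_1\cap A_2)^c=A_1^c\cup A_2^c$, so any witness $b\in V\cap(A_1\cap A_2)^c$ satisfies $b\in A_1^c$ or $b\in A_2^c$. This forces a two-case split: if $b\in A_1^c$ then $V$ meets $A_1^c$, whence $y\in st(A_1^c,\mathcal{U})$ and, combined with the previous step, $y\in st(A_1,\mathcal{U})\cap st((A_1)^c,\mathcal{U})$; symmetrically, if $b\in A_2^c$ then $y\in st(A_2,\mathcal{U})\cap st((A_2)^c,\mathcal{U})$.

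In either case $y$ lies in the right-hand union, so the desired inclusion follows since $y$ was arbitrary. The whole argument is a pointwise chase, and the single conceptual point worth isolating is that membership in the ``good'' star $st(A_1\cap A_2,\mathcal{U})$ is witnessed by one point of $A_1\cap A_2$, which simultaneously certifies membership in both $st(A_1,\mathcal{U})$ and $st(A_2,\mathcal{U})$, while the ``complementary'' star splits along $A_1^c\cup A_2^c$ and thereby selects which of the two terms on the right absorbs $y$.
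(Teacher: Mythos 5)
Your proof is correct and follows essentially the same argument as the paper's: a pointwise chase in which a single witness in $A_1\cap A_2$ certifies membership in both $st(A_1,\mathcal{U})$ and $st(A_2,\mathcal{U})$, while De Morgan applied to $(A_1\cap A_2)^c=A_1^c\cup A_2^c$ produces the case split deciding which term of the right-hand union absorbs the point. Your write-up is merely more explicit than the paper's terse version, which leaves the first half of this reasoning implicit.
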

\begin{proof}
Suppose $x\in st(A_1\cap A_2,\mathcal{U})\cap st((A_1\cap A_2)^c,\mathcal{U})$. There is $y\in A_1\cap A_2$ satisfying $x\in st(y,\mathcal{U})$
and there is $z\in A_1^c\cup A_2^c$ satisfying $x\in st(z,\mathcal{U})$.
Thus either $x\in st((A_1)^c,\mathcal{U})$ or $x\in st((A_2)^c,\mathcal{U})$
and we are done.
\end{proof}

\begin{Corollary}
\label{IntersectionOFCCs}
The intersection of two coarsely clopen subsets of $X$ is coarsely clopen.
\end{Corollary}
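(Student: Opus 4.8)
The plan is to deduce the Corollary directly from Lemma \ref{StLemma} together with the definition of coarse disjointness. Let $A_1$ and $A_2$ be coarsely clopen subsets of $X$; I must show that $A_1\cap A_2$ is coarsely clopen, i.e. that $A_1\cap A_2$ and $(A_1\cap A_2)^c$ are coarsely disjoint. Unwinding the definition, this means that for every uniformly bounded cover $\mathcal{U}$ of $X$, the set
$$
st(A_1\cap A_2,\mathcal{U})\cap st((A_1\cap A_2)^c,\mathcal{U})
$$
is bounded. This is precisely the left-hand side appearing in Lemma \ref{StLemma}, so the Lemma is tailor-made for the argument.

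First I would fix an arbitrary uniformly bounded cover $\mathcal{U}$. By the hypothesis that $A_1$ is coarsely clopen, the set $st(A_1,\mathcal{U})\cap st(A_1^c,\mathcal{U})$ is bounded, and similarly $st(A_2,\mathcal{U})\cap st(A_2^c,\mathcal{U})$ is bounded by the hypothesis on $A_2$. Next I would invoke the standing assumption that $X$ has the lowest form of coarse connectivity, which guarantees that the union of two bounded sets is bounded; hence the right-hand side of the containment in Lemma \ref{StLemma} is a bounded set. Finally, applying Lemma \ref{StLemma} shows that $st(A_1\cap A_2,\mathcal{U})\cap st((A_1\cap A_2)^c,\mathcal{U})$ is a subset of this bounded set, and since any subset of a bounded set is bounded (by condition 2 in the definition of a large scale space), it is itself bounded. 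As $\mathcal{U}$ was arbitrary, $A_1\cap A_2$ is coarsely clopen.

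I do not anticipate a genuine obstacle here; the Corollary is essentially a repackaging of Lemma \ref{StLemma}. The only points requiring care are bookkeeping ones: making sure the set-theoretic containment in the Lemma is read with the correct precedence (the right-hand side is a union of two intersections), and explicitly citing the two structural facts about boundedness — that finite unions of bounded sets are bounded (coarse connectivity) and that subsets of bounded sets are bounded (axiom 2). Each of the two coarse-disjointness hypotheses is used exactly once, one for each summand on the right-hand side of Lemma \ref{StLemma}.
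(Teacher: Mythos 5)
Your proof is correct and follows exactly the paper's own route: the paper's proof of Corollary \ref{IntersectionOFCCs} is simply ``Apply \ref{StLemma}'' together with the remark that this is where the union of two bounded sets being bounded is needed, which is precisely what you spelled out in detail.
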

\begin{proof}
Apply \ref{StLemma}. Notice that it is exactly here we need the union of two bounded subsets of $X$ be always bounded.
\end{proof}

\begin{Definition}
A topology on $X$ is \textbf{compatible} with the large scale structure on $X$ if there is a uniformly bounded cover of $X$ consisting of open subsets of $X$. 
\end{Definition}

\begin{Observation}
The simplest non-trivial topology compatible with a large scale structure is the discrete topology.
\end{Observation}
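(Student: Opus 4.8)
The plan is to prove this by exhibiting, for an arbitrary large scale structure, one uniformly bounded open cover in the discrete topology; the adjective ``simplest'' then just records that this is the finest topology, for which compatibility is the easiest to verify. Recall from the Definition of compatibility that I only need to produce a single uniformly bounded cover of $X$ consisting of open sets.

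First I would pass to the cover by singletons, $\mathcal{U}_0 := \{\{x\} \mid x \in X\}$. In the discrete topology every subset of $X$ is open, so in particular each member of $\mathcal{U}_0$ is open, and $\mathcal{U}_0$ is manifestly a cover of $X$. The entire content of the Observation therefore reduces to checking that $\mathcal{U}_0 \in \mathbb{LSS}$, i.e. that the singleton cover is uniformly bounded.

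For that step I would invoke axiom~2 of the definition of a large scale space. Since $X$ is assumed to carry a large scale structure, $\mathbb{LSS}$ is non-empty; I fix any $\mathcal{U} \in \mathbb{LSS}$. As $\mathcal{U}$ is a cover of $X$, each singleton $\{x\}$ is contained in some element of $\mathcal{U}$, and hence by axiom~2 we get $\mathcal{U}_0 \in \mathbb{LSS}$. Thus $\mathcal{U}_0$ is a uniformly bounded cover by open sets, and the discrete topology is compatible with the large scale structure.

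To justify the qualifier ``non-trivial'' I would contrast the discrete topology with the indiscrete one $\{\emptyset, X\}$: its only open cover is $\{X\}$, which lies in $\mathbb{LSS}$ precisely when $X$ is bounded, so for an unbounded $X$ the indiscrete topology is \emph{not} compatible. The only point requiring any care—and it is hardly an obstacle—is the implicit assumption that $\mathbb{LSS}$ is non-empty; this holds for every large scale space and is exactly the hypothesis that lets axiom~2 manufacture the singleton cover from an arbitrary uniformly bounded cover.
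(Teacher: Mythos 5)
Your proposal is correct: the paper states this Observation without any proof, and the singleton-cover argument (each $\{x\}$ is contained in an element of any fixed $\mathcal{U}\in\mathbb{LSS}$, so axiom 2 gives $\{\{x\}\mid x\in X\}\in\mathbb{LSS}$, and singletons are open in the discrete topology) is exactly the implicit justification intended. Your side remarks — that the discrete topology, being finest, is compatible whenever any topology is, and that the indiscrete topology fails for unbounded $X$ — correctly account for the words ``simplest'' and ``non-trivial.''
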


\begin{Definition}
A \textbf{topological large scale space} is a set equipped with large scale structure and with a compatible topology. Additionally, we assume that the coarse structure is \textbf{coarsely connected}, i.e. the union of two bounded subsets of $X$ is bounded.
\end{Definition}

\begin{Example}
Every metric space $(X,d)$ has a natural topological large scale structure  $ \mathbb{LSS}_d=\{\mathcal{U}_r | r>0 \}$, where $\mathcal{U}_r$ is the family of all subsets of $X$ whose diameter is at most $r$. Notice that for any subset $A$ of $X$ and any uniformly bounded cover $\mathcal{U}_r$, one has  $st(A,\mathcal{U}_r)=B(A,r)$; in particular, a subset $A$ of $X$ is coarsely clopen if one of the following conditions holds:\\
1. For all $r>0$, the subset $B(A,r)\cap A^c$ is bounded,\\
2. For all $r>0$, the subset $A\cap B(A^c,r)$ is bounded,\\
3. For all $r>0$, there is a bounded subset $K_{r}$ of $X$ such that $B(A\setminus K_{r},r)\cap B(A^c,r)$ is empty,\\
4. For all $r>0$, there is a bounded subset $K_{r}$ of $X$ such that $B(A,r)\cap B(A^c\setminus K_{r},r)$ is empty.
\end{Example}

\begin{Lemma}\label{ExtendingNCCLemma}
If $A$ is a (non-trivial) coarsely clopen subset of $X$, then $st(A,\mathcal{U})$ is a (non-trivial) coarsely clopen subset of $X$ for each uniformly bounded cover $\mathcal{U}$ of $X$. 
\end{Lemma}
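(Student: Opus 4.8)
The plan is to prove that if $A$ is coarsely clopen then so is $st(A,\mathcal{U})$, and that the non-triviality is preserved. Recall that $A$ being coarsely clopen means that for every uniformly bounded cover $\mathcal{V}$, the set $st(A,\mathcal{V})\cap st(A^c,\mathcal{V})$ is bounded. So I must show that for every uniformly bounded $\mathcal{V}$, the set $st(st(A,\mathcal{U}),\mathcal{V})\cap st((st(A,\mathcal{U}))^c,\mathcal{V})$ is bounded. The natural move is to absorb the extra layer of starring into a single larger cover using axiom~1 of large scale spaces, which guarantees $st(\mathcal{U},\mathcal{V})\in\mathbb{LSS}$.

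\medskip

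First I would rewrite $st(st(A,\mathcal{U}),\mathcal{V})$ as $st(A,\mathcal{W})$ where $\mathcal{W}:=st(\mathcal{U},\mathcal{V})$ is itself uniformly bounded; this identity is the routine ``stars compose'' fact, namely that starring $A$ first by $\mathcal{U}$ and then by $\mathcal{V}$ lands inside starring $A$ by the single cover whose elements are $st(U,\mathcal{V})$, $U\in\mathcal{U}$. For the complementary side, the key observation is the elementary containment $(st(A,\mathcal{U}))^c\subset A^c$, since $A\subset st(A,\mathcal{U})$; hence $st((st(A,\mathcal{U}))^c,\mathcal{V})\subset st(A^c,\mathcal{V})\subset st(A^c,\mathcal{W})$ by monotonicity of the star operation in both the set argument and the cover. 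Combining these two inclusions gives
\[
st(st(A,\mathcal{U}),\mathcal{V})\cap st((st(A,\mathcal{U}))^c,\mathcal{V})\subset st(A,\mathcal{W})\cap st(A^c,\mathcal{W}),
\]
and the right-hand side is bounded precisely because $A$ is coarsely clopen and $\mathcal{W}\in\mathbb{LSS}$. A bounded set is one contained in a single member of a uniformly bounded cover, so any subset of a bounded set is bounded; this closes the coarse-clopenness claim.

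\medskip

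For the non-triviality part I would argue that $st(A,\mathcal{U})$ is unbounded because it contains the unbounded set $A$, and that its complement is unbounded as follows. Since $A$ is coarsely clopen, $st(A,\mathcal{U})\cap st(A^c,\mathcal{U})$ is bounded; if $(st(A,\mathcal{U}))^c$ were bounded, then $st(A^c,\mathcal{U})=\bigl(st(A^c,\mathcal{U})\cap st(A,\mathcal{U})\bigr)\cup\bigl(st(A^c,\mathcal{U})\cap (st(A,\mathcal{U}))^c\bigr)$ would be a union of two bounded sets, hence bounded by coarse connectivity; but $A^c\subset st(A^c,\mathcal{U})$, forcing $A^c$ bounded, contradicting that $A$ is an NCC-set. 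Thus $(st(A,\mathcal{U}))^c$ is unbounded, completing non-triviality.

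\medskip

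The step I expect to be the main obstacle is verifying cleanly the star-composition identity $st(st(A,\mathcal{U}),\mathcal{V})=st(A,st(\mathcal{U},\mathcal{V}))$ (or at least the containment in the direction I need) together with the monotonicity statements, since these are the places where the bookkeeping of ``which cover stars which set'' is easy to get subtly wrong. Everything else reduces to the defining closure axioms of $\mathbb{LSS}$ and to coarse connectivity (the union of two bounded sets is bounded), which is exactly the hypothesis flagged in the proof of Corollary~\ref{IntersectionOFCCs} as the essential ingredient.
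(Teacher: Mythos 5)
Your proof is correct. For the main assertion (preservation of coarse clopenness) you follow essentially the same route as the paper: both arguments rest on the star-composition containment $st(st(A,\mathcal{U}),\mathcal{V})\subset st(A,st(\mathcal{U},\mathcal{V}))$ and absorb both stars into the single cover $\mathcal{W}=st(\mathcal{U},\mathcal{V})$, which is uniformly bounded by axiom 1 of large scale spaces. The only cosmetic difference is on the complement side: the paper uses the chain $st(A,\mathcal{U})^c\subset A^c\subset st(A^c,\mathcal{U})$ and then stars twice, while you use $(st(A,\mathcal{U}))^c\subset A^c$ together with monotonicity of the star in the cover argument (every $V\in\mathcal{V}$ lies in some element of $\mathcal{W}$); both are valid. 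Where you genuinely diverge is the non-triviality of the complement. The paper argues in one line that if $C:=(st(A,\mathcal{U}))^c$ were bounded then $A^c\subset st(st(C,\mathcal{U}),\mathcal{U})$ would be bounded; that containment is not literally valid in general (for instance, if $st(A,\mathcal{U})=X$ then $C=\emptyset$ while $A^c$ need not be empty), although the intended conclusion is true. Your decomposition $st(A^c,\mathcal{U})=\bigl(st(A^c,\mathcal{U})\cap st(A,\mathcal{U})\bigr)\cup\bigl(st(A^c,\mathcal{U})\cap C\bigr)$, with the first piece bounded by coarse clopenness of $A$, the second contained in the assumed-bounded $C$, and the union bounded by coarse connectivity, then $A^c\subset st(A^c,\mathcal{U})$, is airtight and in fact repairs this soft spot in the paper's argument at the cost of one extra line.
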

\begin{proof}
Observe that if $A$ is a non-trivial coarsely clopen subset of $X$, then $C:=(st(A,\mathcal{U}))^c$ cannot be bounded. Indeed, in that case $A^c\subset st(st(C,\mathcal{U}),\mathcal{U})$ would be bounded.

Notice $st(st(A,\mathcal{U}),\mathcal{V})\subset st(A,st(\mathcal{U},\mathcal{V}))$ for any two covers $\mathcal{U},\mathcal{V}$.
Therefore $st(st(A,\mathcal{U}),\mathcal{V})\cap st(st(A^c,\mathcal{U}),\mathcal{V})\subset st(A,st(\mathcal{U},\mathcal{V}))\cap st(A^c,st(\mathcal{U},\mathcal{V}))$. Since
$st(A,\mathcal{U})^c\subset A^c\subset st(A^c,\mathcal{U})$ the proof is completed.
\end{proof}

\begin{Lemma}\label{ShrinkingNCCLemma}
If $A$ is a coarsely clopen subset of $X$, then a subset $C$ of $A$ is coarsely clopen provided
 $A\subset st(C,\mathcal{V})$ for some uniformly bounded cover $\mathcal{V}$ of $X$.
\end{Lemma}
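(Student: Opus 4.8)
The goal is to verify the defining property of coarse clopenness for $C$: that for every uniformly bounded cover $\mathcal U$ of $X$ the set $st(C,\mathcal U)\cap st(C^c,\mathcal U)$ is bounded. So I would fix such a $\mathcal U$ and try to squeeze this intersection between sets already known to be bounded. There are two hypotheses to exploit: the coarse clopenness of $A$, which controls overlaps of the form $st(A,\cdot)\cap st(A^c,\cdot)$, and the coarse density $A\subset st(C,\mathcal V)$, which records that $C$ fills up $A$ up to the uniformly bounded cover $\mathcal V$.

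The first step is to reduce everything to $A$ and its complement. Since $C\subset A$ we have $st(C,\mathcal U)\subset st(A,\mathcal U)$, and since $C^c=A^c\cup(A\setminus C)$ we have $st(C^c,\mathcal U)=st(A^c,\mathcal U)\cup st(A\setminus C,\mathcal U)$. Distributing the intersection gives
\[
st(C,\mathcal U)\cap st(C^c,\mathcal U)\subset \big(st(A,\mathcal U)\cap st(A^c,\mathcal U)\big)\cup\big(st(C,\mathcal U)\cap st(A\setminus C,\mathcal U)\big).
\]
The first term on the right is bounded because $A$ is coarsely clopen, and by the lowest form of coarse connectivity the union of the two terms is bounded as soon as the second term is. So the whole problem collapses to controlling the inner collar term $st(C,\mathcal U)\cap st(A\setminus C,\mathcal U)$.

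For that term I would bring in the density hypothesis through the star-absorption inequality established in the proof of Lemma \ref{ExtendingNCCLemma}, namely $st(st(B,\mathcal V),\mathcal U)\subset st(B,st(\mathcal V,\mathcal U))$. Applying it to $B=C$ and using $A\setminus C\subset A\subset st(C,\mathcal V)$ yields $st(A\setminus C,\mathcal U)\subset st(C,\mathcal W)$, where $\mathcal W:=st(\mathcal V,\mathcal U)$ is uniformly bounded by the first condition in the definition of a large scale space. The plan is then to feed this back, together with the coarse clopenness of $A$ applied at the coarser scale $\mathcal W$ rather than $\mathcal U$, so as to pin the collar term inside a set of the form $st(A,\mathcal W)\cap st(A^c,\mathcal W)$, up to bounded error.

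The main obstacle is exactly this collar term. The difficulty is that $A\setminus C$ lies inside $A$ and is coarsely packed against $C$, so the crude containment $st(A\setminus C,\mathcal U)\subset st(C,\mathcal W)$ is by itself far too weak: it only places the collar back inside an unbounded enlargement of $C$, not inside a bounded set. Converting the density hypothesis into genuine boundedness of $st(C,\mathcal U)\cap st(A\setminus C,\mathcal U)$ is the heart of the matter, since one must rule out the collar accumulating along $C$. I expect the step to require extracting from $A\subset st(C,\mathcal V)$ the stronger conclusion that $A\setminus C$ is coarsely negligible relative to $C$ (indeed, that it is bounded), after which the collar term is immediately bounded and, combining with the first term via coarse connectivity, the proof closes.
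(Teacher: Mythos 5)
Your reduction is sound as far as it goes: the decomposition
$st(C,\mathcal U)\cap st(C^c,\mathcal U)\subset \bigl(st(A,\mathcal U)\cap st(A^c,\mathcal U)\bigr)\cup\bigl(st(C,\mathcal U)\cap st(A\setminus C,\mathcal U)\bigr)$
is valid, and you have correctly isolated the collar term as the crux. But the closing step you ``expect'' is not available: the hypothesis $A\subset st(C,\mathcal V)$ does not imply that $A\setminus C$ is bounded, and the collar term genuinely need not be bounded. Indeed, the lemma as literally stated is false, so no argument can fill your gap. Take $X=\mathbb Z$ with its metric large scale structure, $A=\mathbb N$ (coarsely clopen), $C$ the set of even non-negative integers, and $\mathcal V=\mathcal U_1$. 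Then $C\subset A\subset st(C,\mathcal V)=B(C,1)$, yet $st(C,\mathcal U_1)\cap st(C^c,\mathcal U_1)=B(C,1)\cap B(C^c,1)$ contains all of $\mathbb N$, so $C$ is not coarsely clopen; here $A\setminus C$ (the odd naturals) is unbounded and the collar accumulates along all of $C$, which is exactly the scenario you feared.

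For comparison, the paper's own proof takes a different route: it sets $C':=(st(st(A^c,\mathcal V),\mathcal V))^c$, asserts $C'\subset C$, observes that $C'$ is coarsely clopen by Lemma \ref{ExtendingNCCLemma}, that $B:=C\setminus C'\subset A\cap st(st(A^c,\mathcal V),\mathcal V)\subset st\bigl(st(A,\mathcal V)\cap st(A^c,\mathcal V),\mathcal V\bigr)$ is bounded, and that adding a bounded set to a coarsely clopen set preserves coarse clopenness, so $C=C'\cup B$ is coarsely clopen. Every step there is correct except the asserted inclusion $C'\subset C$, which is equivalent to $A\setminus C\subset st(st(A^c,\mathcal V),\mathcal V)$ and fails in the same example; in other words, the paper's argument silently uses a stronger hypothesis than the one stated. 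Both the statement and the paper's proof become correct if the hypothesis is replaced by the shrinking condition $A\setminus C\subset st(A^c,\mathcal V)$ (equivalently, $A\setminus st(A^c,\mathcal V)\subset C$), i.e.\ one removes from $A$ only points near its coarse boundary. Under that corrected hypothesis your approach also closes immediately: the collar satisfies $st(C,\mathcal U)\cap st(A\setminus C,\mathcal U)\subset st(A,\mathcal W)\cap st(A^c,\mathcal W)$ with $\mathcal W:=st(\mathcal V,\mathcal U)$, which is bounded since $A$ is coarsely clopen. So the correct diagnosis is not that your route lacks a final idea, but that the stated hypothesis is the wrong one.
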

\begin{proof}
Observe $C':=(st(st(A^c,\mathcal{V}),\mathcal{V}))^c\subset C$ is coarsely clopen by \ref{ExtendingNCCLemma} and $B:=C\setminus C'\subset A\cap st(st(A^c,\mathcal{V}),\mathcal{V})
\subset st(st(A,\mathcal{V})\cap st(A^c,\mathcal{V}),\mathcal{V})$ is bounded as $st(A,\mathcal{V})\cap st(A^c,\mathcal{V})$
is bounded. Adding a bounded set $B$ to a coarsely clopen subset
preserves being coarsely clopen as can be easily seen.
\end{proof}

\begin{Definition}
An \textbf{end} of a large scale space $X$ is a family $E$ of unbounded and coarsely clopen subsets of $X$ that is maximal with respect to the property of all finite intersections being unbounded. The set of all ends of $X$ is denoted by $Ends(X)$.
\end{Definition}

\begin{Proposition}\label{OpenEndsProp}
If $X$ is a topological large scale space and $A$ belongs to an end $E$ of $X$, then there is an open $V\in E$ contained in $A$.
Consequently,
two ends $E$ and $E'$ are equal if and only if $\{U\in E | U \mbox{ is open}\}=\{V\in E' | V \mbox{ is open}\}$.
\end{Proposition}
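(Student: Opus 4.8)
The plan is to realize the required open set as the \emph{open core} of $A$ relative to a fixed uniformly bounded open cover. Since $X$ is a topological large scale space, compatibility of the topology furnishes a uniformly bounded cover $\mathcal{U}_0$ consisting of open sets. I would set $V:=\bigcup\{U\in\mathcal{U}_0\mid U\subset A\}$. As a union of open sets $V$ is open, and as a union of subsets of $A$ it satisfies $V\subset A$; the remaining work is to check that $V$ is unbounded, coarsely clopen, and a member of $E$.

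The key step is to show that $A\setminus V$ is bounded. If $x\in A\setminus V$, then no element of $\mathcal{U}_0$ containing $x$ is contained in $A$, so every such element meets $A^c$ (and $\mathcal{U}_0$ being a cover, at least one contains $x$); hence $x\in st(A^c,\mathcal{U}_0)$. This gives $A\setminus V\subset A\cap st(A^c,\mathcal{U}_0)\subset st(A,\mathcal{U}_0)\cap st(A^c,\mathcal{U}_0)$, and the right-hand side is bounded precisely because $A$ is coarsely clopen. This is the heart of the argument and the one place where the coarse clopenness of $A$, rather than mere unboundedness, is used.

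Once $A\setminus V$ is known to be bounded, the rest is bookkeeping. Writing $V=A\cap (A\setminus V)^c$ exhibits $V$ as an intersection of two coarsely clopen sets: $A$ by hypothesis, and $(A\setminus V)^c$ because bounded sets are coarsely clopen and complements of coarsely clopen sets are coarsely clopen. Thus $V$ is coarsely clopen by Corollary \ref{IntersectionOFCCs}. Since $A$ is unbounded while $A\setminus V$ is bounded, coarse connectivity forces $V$ to be unbounded (otherwise $A$ would be a union of two bounded sets). To place $V$ in $E$, I would invoke maximality: for any finite family $A_1,\dots,A_n\in E$ the set $V\cap A_1\cap\cdots\cap A_n$ differs from the unbounded set $A\cap A_1\cap\cdots\cap A_n$ by a subset of the bounded set $A\setminus V$, hence is unbounded; so $E\cup\{V\}$ still has all finite intersections unbounded and maximality forces $V\in E$.

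For the final equivalence the forward implication is immediate. For the converse, assume $E$ and $E'$ have the same open members and let $A\in E$. The first part produces an open $V\in E$ with $V\subset A$; by hypothesis $V$ is then also an open member of $E'$, i.e. $V\in E'$. Since $A\supset V$ and $A$ is unbounded and coarsely clopen, every finite intersection of $A$ with members of $E'$ contains the corresponding intersection of $V$ with those members, which is unbounded; maximality of $E'$ then yields $A\in E'$. Thus $E\subset E'$, and the symmetric argument gives $E=E'$. The only genuine obstacle is the boundedness of $A\setminus V$; the remaining steps are routine applications of maximality and of the closure properties of coarsely clopen sets established earlier.
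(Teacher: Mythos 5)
Your proof is correct, but your construction of $V$ differs from the paper's. The paper sets $B:=st(A,\mathcal{U})\cap st(A^c,\mathcal{U})$ and takes $V:=A\setminus cl(B)$; there the openness of $V$ is what requires an argument (if $x\in V$ then $st(x,\mathcal{U})\setminus cl(B)\subset V$, since otherwise $x\in st(A^c,\mathcal{U})\cap A\subset B$), while the fact that $V$ differs from $A$ by a bounded set is immediate from $cl(B)\subset st(B,\mathcal{U})$. You instead take the open core $V:=\bigcup\{U\in\mathcal{U}_0\mid U\subset A\}$, for which openness is free and the work shifts to showing $A\setminus V\subset st(A,\mathcal{U}_0)\cap st(A^c,\mathcal{U}_0)$ is bounded. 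The same bounded ``boundary region'' drives both arguments, so the two constructions are dual trade-offs of roughly equal cost. A genuine merit of your write-up is that you make explicit the bookkeeping the paper leaves tacit: that $V$ is coarsely clopen (via $V=A\cap(A\setminus V)^c$, the coarse clopenness of bounded sets and of complements, and Corollary \ref{IntersectionOFCCs}), that $V$ is unbounded (via coarse connectivity), and that $V\in E$ (via maximality, since finite intersections with members of $E$ change only by the bounded set $A\setminus V$). Your handling of the final equivalence, a direct inclusion $E\subset E'$ by maximality, is a minor variant of the paper's argument by contradiction; both are sound, and yours has the small advantage of not needing to first observe that $E$ is closed under finite intersections in order to produce a single $A\in E$ with $C\cap A$ bounded.
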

\begin{proof}
Let $\mathcal{U}$ be a uniformly bounded cover of $X$ consisting of open subsets. 
Notice $cl(B)\subset st(B,\mathcal{U})$ is bounded for every bounded subset of $X$. 

Consider $B:= st(A^c,\mathcal{U})\cap st(A,\mathcal{U})$
and observe $V:=A\setminus cl(B)$ is open. Indeed, if $x\in V$,
then $st(x,\mathcal{U})\setminus cl(B)\subset V$ as otherwise
there is $y\in (st(x,\mathcal{U})\setminus cl(B))\setminus A$ resulting in $x\in st(A^c,\mathcal{U})\cap A\subset B$, a contradiction.

Suppose $\{U\in E | U \mbox{ is open}\}=\{V\in E' | V \mbox{ is open}\}$
and $C\in E'\setminus E$. There is $A\in E$ such that $C\cap A$ is bounded. 
Choose $V\subset C$, $V\in E'$ and open. 
Now, $V\in E$
contradicting $A\cap V$ being bounded.
\end{proof}

\begin{Notation}
Let $X$ be a large scale space. If $U$ is a coarsely clopen subset of $X$, we define $U_{end}:=\{E\in Ends(X)|U\in E\}$ and $\widetilde{U}_{end}:=U_{end}\cup U$.

\end{Notation}
\begin{Proposition}
Let $\mathcal{T}$ be the topology of a topological large scale space $X$. The collection $\widetilde{\mathcal{B}}=\mathcal{T}\cup \{\widetilde{U}_{end}|$$U$ open coarsely clopen subset of $X$$\}$ is a basis for a topology $\mathcal{T}_{end}$ on $X\cup Ends(X)$ that extends the topology $\mathcal{T}$. 
\end{Proposition}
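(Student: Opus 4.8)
The plan is to verify the two axioms that make a collection a basis for a topology: that $\widetilde{\mathcal{B}}$ covers $X\cup Ends(X)$, and that the intersection of any two of its members can be written as a union of members. In fact I expect each such pairwise intersection to be a single member, so that $\widetilde{\mathcal{B}}$ is closed under finite intersection, which is more than enough. The covering axiom splits into covering $X$, which is immediate since $X\in\mathcal{T}\subset\widetilde{\mathcal{B}}$, and covering $Ends(X)$. For the latter I would invoke Proposition \ref{OpenEndsProp}: every end $E$ is nonempty, containing the unbounded coarsely clopen set $X$ by maximality, so picking any $A\in E$ yields an open member $V\in E$ with $V\subset A$; since every member of an end is coarsely clopen by definition, $V$ is an open coarsely clopen set and $E\in U_{end}\subset\widetilde{V}_{end}$.

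For the intersection axiom I would treat three cases. When both members lie in $\mathcal{T}$, their intersection lies in $\mathcal{T}$, so we are done. When one member is $T\in\mathcal{T}$ and the other is $\widetilde{U}_{end}=U_{end}\cup U$, I note that $U_{end}\subset Ends(X)$ is disjoint from $X\supset T$, whence $T\cap\widetilde{U}_{end}=T\cap U$, which is open and hence lies in $\mathcal{T}$.

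The substantive case is $\widetilde{U}_{end}\cap\widetilde{V}_{end}$ for open coarsely clopen $U,V$. Expanding the product and discarding the two cross terms $U_{end}\cap V$ and $U\cap V_{end}$, which are empty because $Ends(X)$ and $X$ are disjoint, reduces this to $(U_{end}\cap V_{end})\cup(U\cap V)$. Since $U\cap V$ is open coarsely clopen by Corollary \ref{IntersectionOFCCs}, the whole case will follow once I prove the identity $U_{end}\cap V_{end}=(U\cap V)_{end}$, for then the intersection equals $\widetilde{(U\cap V)}_{end}\in\widetilde{\mathcal{B}}$. I expect this identity to be the main obstacle, and I would establish both inclusions from the maximality defining an end. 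For $\supseteq$, if $U\cap V\in E$ then $U\cap V\subset U$ forces $U$ to be unbounded, and adjoining $U$ to $E$ preserves the finite-intersection property, because any intersection $U\cap A_1\cap\cdots\cap A_n$ with $A_i\in E$ contains the unbounded set $(U\cap V)\cap A_1\cap\cdots\cap A_n$; maximality then gives $U\in E$, and symmetrically $V\in E$. For $\subseteq$, if $U,V\in E$ then $U\cap V$ is a finite intersection of members of $E$, hence unbounded, and coarsely clopen by Corollary \ref{IntersectionOFCCs}; adjoining it again preserves the finite-intersection property, so maximality gives $U\cap V\in E$.

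Finally, to see that $\mathcal{T}_{end}$ extends $\mathcal{T}$, I would trace $\widetilde{\mathcal{B}}$ on $X$: for $T\in\mathcal{T}$ one has $T\cap X=T$, while $\widetilde{U}_{end}\cap X=U\in\mathcal{T}$ since $U_{end}$ is disjoint from $X$. Hence the subspace topology on $X$ has as a basis the family $\mathcal{T}$ together with the open coarsely clopen sets, and as the latter already lie in $\mathcal{T}$ this subspace topology is exactly $\mathcal{T}$.
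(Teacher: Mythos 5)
Your proposal is correct and follows essentially the same route as the paper: verify the covering axiom, reduce closure under intersection to the identity $\widetilde{U}_{end}\cap\widetilde{V}_{end}=\widetilde{(U\cap V)}_{end}$ (with $U\cap V$ coarsely clopen by Corollary \ref{IntersectionOFCCs}), and check that the induced topology on $X$ is $\mathcal{T}$. The only difference is one of detail: the paper merely asserts the key identity, whereas you prove $U_{end}\cap V_{end}=(U\cap V)_{end}$ via the maximality property defining an end, which is exactly the argument the paper leaves implicit.
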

\begin{proof}
Clearly $\widetilde{\mathcal{B}}$ covers $X\cup Ends(X)$ as either $X$ is unbounded open coarsely clopen and $\widetilde{X}_{end}=X\cup Ends(X)$ or $X$ is bounded and $Ends(X)=\emptyset$. Now, let $\widetilde{U}_{end},\widetilde{V}_{end}\in \widetilde{\mathcal{B}}$. Notice $\widetilde{U}_{end}\cap \widetilde{V}_{end}=(\widetilde{U\cap V})_{end}\in \widetilde{\mathcal{B}}$. Finally, every bounded element of $\mathcal{T}$ is contained in $\widetilde{\mathcal{B}}$ which implies that $\mathcal{T}\subset \mathcal{T}_{end}$.
\end{proof}

\begin{Corollary}
Let $\mathcal{T}$ be the topology of a topological large scale space $X$. The collection $\mathcal{B}_{\mathcal{T}}=\{{U}_{end}|$$U$ open coarsely clopen subset of $X$$\}$ is a basis for the topology of $Ends(X)$ as a subspace of $X\cup Ends(X)$.
\end{Corollary}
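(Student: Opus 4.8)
The plan is to reduce the statement to the standard fact that a basis restricts to a basis on any subspace: if $\widetilde{\mathcal{B}}$ is a basis for a topology on a set $Y$ and $A\subseteq Y$, then $\{\widetilde{B}\cap A \mid \widetilde{B}\in\widetilde{\mathcal{B}}\}$ is a basis for the subspace topology on $A$. I would apply this with $Y=X\cup Ends(X)$, with $\widetilde{\mathcal{B}}=\mathcal{T}\cup\{\widetilde{U}_{end}\mid U\text{ open coarsely clopen}\}$ the basis supplied by the preceding Proposition, and with $A=Ends(X)$. Thus everything comes down to computing the traces $\widetilde{B}\cap Ends(X)$ as $\widetilde{B}$ runs over $\widetilde{\mathcal{B}}$ and checking that, after discarding the empty set, precisely $\mathcal{B}_{\mathcal{T}}$ survives.

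First I would record that $X$ and $Ends(X)$ are disjoint by the very construction of $X\cup Ends(X)$. Consequently, for every $T\in\mathcal{T}$, which is a subset of $X$, we have $T\cap Ends(X)=\emptyset$, so these basis elements contribute nothing beyond $\emptyset$. Next, for an open coarsely clopen $U$, recall $\widetilde{U}_{end}=U_{end}\cup U$ with $U_{end}\subseteq Ends(X)$ and $U\subseteq X$; hence $\widetilde{U}_{end}\cap Ends(X)=U_{end}$. Therefore the induced subspace basis is exactly $\{\emptyset\}\cup\mathcal{B}_{\mathcal{T}}$.

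Finally I would argue that deleting the empty set still leaves a basis. The empty set, being the union of the empty subfamily, is open in any topology, so removing it does not change the generated topology as long as the remaining family still covers the space. Covering holds because, when $X$ is unbounded, $X$ is an open coarsely clopen set (its complement $\emptyset$ is bounded), and by maximality every end must contain $X$, since adjoining $X$ to an end leaves all finite intersections unchanged and hence unbounded; thus $X_{end}=Ends(X)$. When $X$ is bounded we have $Ends(X)=\emptyset$ and there is nothing to prove. This yields that $\mathcal{B}_{\mathcal{T}}$ is a basis for the subspace topology on $Ends(X)$. I do not expect a genuine obstacle here: the only thing requiring care is the bookkeeping of which traces vanish, and the verification that dropping $\emptyset$ is harmless, both of which are routine.
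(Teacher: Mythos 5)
Your proof is correct and is exactly the argument the paper leaves implicit: the Corollary is stated without proof because it follows by restricting the basis $\widetilde{\mathcal{B}}$ from the preceding Proposition to the subspace $Ends(X)$, computing the traces $T\cap Ends(X)=\emptyset$ and $\widetilde{U}_{end}\cap Ends(X)=U_{end}$, precisely as you do. Your extra verification that $X_{end}=Ends(X)$ when $X$ is unbounded is harmless but not needed, since removing $\emptyset$ from a basis never destroys the covering property (the empty set covers no points).
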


\begin{Corollary}
Given two compatible topologies on a large scale space $X$, the induced topologies on $Ends(X)$ coincide.
\end{Corollary}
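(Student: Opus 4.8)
The plan is to reduce everything to a single observation: the set $Ends(X)$, the notion of a coarsely clopen subset, and the notion of a bounded subset are all determined by the large scale structure alone and are completely independent of the topology. Thus the only way two compatible topologies $\mathcal{T}_1,\mathcal{T}_2$ can influence the corresponding bases $\mathcal{B}_{\mathcal{T}_i}=\{U_{end}\mid U \text{ open coarsely clopen}\}$ is through the word ``open''. The goal is therefore to show that this word is immaterial at the level of the sets $U_{end}$.

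The key lemma I would establish first is that $U_{end}$ is invariant under bounded modifications of $U$: if $U$ and $U'$ are coarsely clopen with $U\setminus U'$ and $U'\setminus U$ both bounded, then $U_{end}=U'_{end}$. To prove this I would fix an end $E$ with $U\in E$ and show $U'\in E$ via maximality. For any $W_1,\dots,W_n\in E$, the intersection $U\cap W_1\cap\dots\cap W_n$ is coarsely clopen by Corollary \ref{IntersectionOFCCs} and unbounded by the defining property of an end; since $U\setminus U'$ is bounded and the union of two bounded sets is bounded (coarse connectivity), the set $U'\cap W_1\cap\dots\cap W_n$ contains this unbounded set minus a bounded set and so is again unbounded. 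The same reasoning shows $U'$ itself is unbounded, so adjoining $U'$ to $E$ preserves the property that all finite intersections are unbounded; maximality then forces $U'\in E$. By symmetry $U\in E\iff U'\in E$, i.e. $U_{end}=U'_{end}$.

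Next I would record that every compatible topology ``sees enough'' open coarsely clopen sets. Precisely, for a compatible topology $\mathcal{T}$ with open uniformly bounded cover $\mathcal{U}$ and any coarsely clopen $U$, the set $V:=U\setminus cl(B)$ with $B:=st(U,\mathcal{U})\cap st(U^c,\mathcal{U})$ produced in the proof of Proposition \ref{OpenEndsProp} is open in $\mathcal{T}$, is coarsely clopen, and satisfies $U\setminus V\subseteq cl(B)$, which is bounded. Combined with the invariance lemma this gives $U_{end}=V_{end}$ with $V$ open in $\mathcal{T}$, so every set of the form $U_{end}$ (with $U$ merely coarsely clopen) already belongs to $\mathcal{B}_{\mathcal{T}}$.

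Finally, to conclude I would compare the two bases directly. Take any $U_{end}\in\mathcal{B}_{\mathcal{T}_1}$, so that $U$ is open in $\mathcal{T}_1$ and coarsely clopen; in particular $U$ is coarsely clopen, so applying the previous step to $\mathcal{T}_2$ yields an open-in-$\mathcal{T}_2$ coarsely clopen $V$ with $U_{end}=V_{end}\in\mathcal{B}_{\mathcal{T}_2}$. Hence $\mathcal{B}_{\mathcal{T}_1}\subseteq\mathcal{B}_{\mathcal{T}_2}$, and by symmetry the two collections of subsets of $Ends(X)$ are literally equal, so they define the same topology. I expect the main obstacle to be the invariance lemma of the second paragraph: one must handle the maximality clause with care, and it is precisely there that coarse connectivity (the lowest form of coarse connectivity assumed throughout) is needed, mirroring its role in Corollary \ref{IntersectionOFCCs}.
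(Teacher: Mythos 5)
Your proof is correct, and in substance it runs on the same engine as the paper's, though the packaging is genuinely different and worth comparing. The paper's entire proof is ``Use \ref{OpenEndsProp}'': given $U$ open in $\mathcal{T}_1$ and coarsely clopen, and an end $E\in U_{end}$, Proposition \ref{OpenEndsProp} applied with respect to $\mathcal{T}_2$ yields a $\mathcal{T}_2$-open $V\in E$ contained in $U$; maximality of ends then gives $V_{end}\subset U_{end}$, so each $\mathcal{T}_1$-basic set is a union of $\mathcal{T}_2$-basic sets, and one finishes by symmetry. You instead isolate a bounded-modification lemma ($U_{end}=U'_{end}$ whenever the symmetric difference $U\Delta U'$ is bounded, i.e. $U$ and $U'$ are what the paper, just after this corollary, calls coarsely identical) and feed it the explicit set $V=U\setminus cl(B)$ from the \emph{proof} of \ref{OpenEndsProp}, for which $U\setminus V$ is bounded; this yields the stronger conclusion that $\mathcal{B}_{\mathcal{T}_1}$ and $\mathcal{B}_{\mathcal{T}_2}$ are literally the same family, namely $\{U_{end}\mid U \mbox{ coarsely clopen}\}$, rather than merely generating the same topology. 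Both routes use the same two ingredients, the shrinking construction and maximality of ends together with coarse connectivity; what yours buys is a reusable invariance lemma that the paper in fact needs but leaves implicit: in the proof of \ref{OpenEndsProp} itself, membership $V\in E$ is asserted while only openness of $V$ is verified, and Lemma \ref{CoarselyIdentical} tacitly uses the same principle. The small costs on your side are that you quote the construction inside the proof of \ref{OpenEndsProp} rather than only its statement, and that you assert (correctly, but without proof) that $V=U\setminus cl(B)$ is coarsely clopen; that step deserves the one-line justification that removing a bounded set from a coarsely clopen set preserves coarse clopenness, again by coarse connectivity.
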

\begin{proof}
Use \ref{OpenEndsProp}.
\end{proof}

\begin{Definition}
Let $X$ be a large scale space. Two subsets $A,C\subseteq X$ are said to be \textbf{coarsely identical} if their symmetric difference $A\Delta C:=(A\setminus C)\cup (C\setminus A)$ is a bounded subset of $X$.
\end{Definition}

\begin{Example} 
Let $A$ be a coarsely clopen subset of a large scale space $X$, and $U$ be a uniformly bounded cover of $X$. Then $A$ and $st(A,\mathcal{U})$ are coarsely identical.
\end{Example}

\begin{Lemma}\label{CoarselyIdentical}
Let $E\in Ends(X)$ and $U\in E$, then $int(U), cl(U)\in E$.
\end{Lemma}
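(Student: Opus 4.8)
The plan is to show that the three sets $int(U)$, $U$, and $cl(U)$ are pairwise coarsely identical, and then to deduce membership of $int(U)$ and $cl(U)$ in $E$ from the maximality defining an end. The heart of the matter is that the topological boundary $\partial U := cl(U)\cap cl(U^c)$ is bounded; granting this, $cl(U)\setminus U\subseteq \partial U$ and $U\setminus int(U)=U\cap cl(U^c)\subseteq\partial U$ are both bounded, so all three sets differ from one another only by bounded sets.

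To see that $\partial U$ is bounded, I would fix a uniformly bounded cover $\mathcal{U}$ of $X$ consisting of open sets, which exists because the topology is compatible with the large scale structure. For such an open cover one has $cl(A)\subseteq st(A,\mathcal{U})$ for every $A\subseteq X$: indeed any $x\in cl(A)$ lies in some open $W\in\mathcal{U}$, and $W$ meets $A$, whence $W\subseteq st(A,\mathcal{U})$ and so $x\in st(A,\mathcal{U})$. Applying this to $A=U$ and to $A=U^c$ gives $\partial U\subseteq st(U,\mathcal{U})\cap st(U^c,\mathcal{U})$. Since $U$ is coarsely clopen, $U$ and $U^c$ are coarsely disjoint, so this intersection is bounded; hence $\partial U$ is bounded. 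This is the step I expect to carry the real content, as it is the only place where the compatible topology interacts with coarse disjointness.

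With $\partial U$ bounded, the remaining steps are routine. Both $int(U)$ and $cl(U)$ are coarsely clopen: they are obtained from the coarsely clopen set $U$ by adding or removing the bounded set $\partial U$, and such modifications preserve being coarsely clopen (for removal, pass to complements and use that the complement of a coarsely clopen set is coarsely clopen, together with the remark in the proof of \ref{ShrinkingNCCLemma} that adding a bounded set is harmless). Both are unbounded: since $U\in E$ is unbounded and $U\setminus int(U)$ is bounded, coarse connectivity forces $int(U)$ to be unbounded, while $cl(U)\supseteq U$ is unbounded a fortiori. Finally, for any finite family $A_1,\dots,A_n\in E$ the intersection $U\cap A_1\cap\dots\cap A_n$ is unbounded, and it differs from $int(U)\cap A_1\cap\dots\cap A_n$ by a subset of the bounded set $\partial U$; hence the latter is unbounded as well. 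Thus $E\cup\{int(U)\}$ is again a family of unbounded coarsely clopen sets all of whose finite intersections are unbounded, and maximality of $E$ yields $int(U)\in E$. The case of $cl(U)$ is identical, and even easier since $cl(U)\supseteq U$, which completes the argument.
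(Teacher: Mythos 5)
Your proof is correct and follows essentially the same route as the paper's: the key step in both is the containment $cl(U)\setminus int(U)=\partial U\subseteq st(U,\mathcal{U})\cap st(U^c,\mathcal{U})$ for an open uniformly bounded cover $\mathcal{U}$, which is bounded because $U$ is coarsely clopen. The only difference is cosmetic: the paper cites Proposition \ref{OpenEndsProp} to get $int(U)\in E$ and leaves the maximality step for $cl(U)$ implicit, whereas you carry out the maximality argument (coarse clopenness, unboundedness, and the finite-intersection property) explicitly for both sets, which makes your write-up more self-contained.
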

\begin{proof}
Use \ref{OpenEndsProp} to see that $int(U)\in E$.
It suffices to show that $int(U)$ and $cl(U)$ are coarsely identical. Let $\mathcal{U}$ be a uniformly bounded open cover of $X$. Notice that $cl(U)\setminus int(U)\subseteq st(U,\mathcal{U})\cap st(U^c,\mathcal{U})$, and that $st(U,\mathcal{U})\cap st(U^c,\mathcal{U})$ is bounded as $U$ is coarsely clopen.
\end{proof}

\begin{Proposition}
Let $X$ be a topological large scale space. If $X$ is Hausdorff, then the topological space $X\cup Ends(X)$ is Hausdorff. 
\end{Proposition}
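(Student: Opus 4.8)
The plan is to separate two distinct points $p,q$ of $X\cup Ends(X)$ by disjoint members of the basis $\widetilde{\mathcal{B}}$, splitting into three cases according to whether $p,q$ lie in $X$ or in $Ends(X)$. If both lie in $X$, then since $X$ is Hausdorff and $\mathcal{T}\subseteq\mathcal{T}_{end}$, any disjoint $\mathcal{T}$-open separation works verbatim, so only the two remaining cases require work. Throughout I would use three auxiliary facts, all recorded or immediate from the excerpt: (a) the closure of a bounded set is bounded, since $cl(B)\subseteq st(B,\mathcal{U})$ for a uniformly bounded open cover $\mathcal{U}$ and the latter is bounded (this is exactly the observation opening the proof of \ref{OpenEndsProp}); (b) removing a bounded set from a member of an end $E$ yields again a member of $E$, because it stays coarsely clopen (coarse identity preserves being coarsely clopen) and all of its finite intersections with members of $E$ stay unbounded, so maximality forces it back into $E$; and (c) an end is closed under finite intersections, which follows from \ref{IntersectionOFCCs} together with maximality.

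For the point-versus-end case, let $x\in X$ and $E\in Ends(X)$. Using \ref{OpenEndsProp} choose an open coarsely clopen member $A$ of $E$. Fix a uniformly bounded open cover $\mathcal{U}$ and set $W:=st(x,\mathcal{U})$, an open bounded neighbourhood of $x$. The idea is to delete a closed bounded neighbourhood of $x$ from $A$: put $A':=A\setminus cl(W)$. Then $A'$ is open (as $cl(W)$ is closed), it is coarsely identical to $A$ because $cl(W)$ is bounded by (a), and hence $A'\in E$ is open coarsely clopen by (b). Since $A'\subseteq X\setminus cl(W)\subseteq X\setminus W$, the sets $W$ and $\widetilde{(A')}_{end}=(A')_{end}\cup A'$ are disjoint (they meet neither in $X$, where $W\cap A'=\emptyset$, nor in $Ends(X)$, which $W$ avoids), while $x\in W$ and $E\in (A')_{end}$. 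This separates $x$ from $E$.

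The main work is the end-versus-end case, and this is the step I expect to be the real obstacle. Given $E\neq E'$ in $Ends(X)$, I would first produce members with bounded overlap: since a maximal family of this kind cannot be a proper subfamily of another such family, there is $A\in E\setminus E'$, and maximality of $E'$ gives $C_1,\dots,C_n\in E'$ with $A\cap C_1\cap\dots\cap C_n$ bounded; by (c) the set $C:=C_1\cap\dots\cap C_n$ lies in $E'$, so $A\in E$ and $C\in E'$ satisfy that $A\cap C$ is bounded. Passing to open shrinkings via \ref{OpenEndsProp} I may assume $A,C$ open coarsely clopen. The delicate point is upgrading ``$A\cap C$ bounded'' to genuinely disjoint basic neighbourhoods: here I would use (a) to note $cl(A\cap C)$ is bounded and set $A_0:=A\setminus cl(A\cap C)$, $C_0:=C\setminus cl(A\cap C)$, which are open, disjoint (a common point would lie in $(A\cap C)\setminus cl(A\cap C)=\emptyset$), and by (b) still belong to $E$ and $E'$ respectively. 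Then, by the identity $\widetilde{U}_{end}\cap\widetilde{V}_{end}=\widetilde{(U\cap V)}_{end}$ recorded above and $A_0\cap C_0=\emptyset$ (whence $\widetilde{\emptyset}_{end}=\emptyset$, as the empty set belongs to no end), the neighbourhoods $\widetilde{(A_0)}_{end}\ni E$ and $\widetilde{(C_0)}_{end}\ni E'$ are disjoint, completing the proof. The one thing to watch is that every perturbation stays inside the relevant end, which is precisely what facts (b) and (c) guarantee.
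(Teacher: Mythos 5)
Your proof is correct and takes essentially the same route as the paper's: a case analysis in which a point is separated from an end by deleting the closure of a bounded open neighborhood, and two ends are separated by choosing members with bounded intersection, removing that overlap, and passing to disjoint open coarsely clopen members, exactly the content of the paper's unexplained ``without loss of generality $A\cap B=\emptyset$'' step, which your fact (b) makes precise. The only cosmetic difference is that in the point-versus-end case the paper uses $V:=X\setminus cl(U)$ (which lies in every end by maximality) instead of shrinking a chosen member $A\in E$, and in the end-versus-end case it invokes Lemma \ref{CoarselyIdentical} where you invoke Proposition \ref{OpenEndsProp}.
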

\begin{proof}
We consider two cases:\\
\textbf{Case 1:} Let $x\in X$ and $E\in Ends(X)$ and let $\mathcal{U}$ be a uniformly bounded open cover of $X$. Choose $U\in \mathcal{U}$ such that $x\in U$. Notice that $cl(U)$ is a bounded subset and hence $V:=X\setminus cl(U)$ is open and coarsely clopen contained in $E$. Hence, $U$ and $\widetilde{V}_{end}$ are disjoint open neighborhood of $x$ and $E$, respectively. \\
\textbf{Case 2:} If $E_{1}, E_{2}\in Ends(X)$ are two distinct ends of $X$, then we can find $A\in E_{1}$ and $B\in E_{2}$ such that $A\cap B$ is a bounded subset of $X$. Without loss of generality, we may assume that $A\cap B=\emptyset$. Therefore, $U=int(A)\in E_{1}$ and  $V=int(B)\in E_{2}$ are disjoint and hence $\widetilde{U}_{end}$ and $\widetilde{V}_{end}$ are disjoint neighborhoods of $E_{1}$ and $E_{2}$, respectively. 
\end{proof}

$X$ is an open dense subspace of $X\cup Ends(X)$. Moreover, $(X\cup Ends(X),\mathcal{T}_{end})$ is Hausdorff whence $(X,\mathcal{T})$ is Hausdorff. The question then arises: is $(X\cup Ends(X),\mathcal{T}_{end})$ a compactification of $(X,\mathcal{T})$? The answer is positive in a coarse sense!

\begin{Definition}
Let $Y$ be a topological space and $X\subseteq Y$ is a topological subspace equipped with a large scale structure compatible with its topology. $Y$ is \textbf{large scale compact} if for any open cover $\{U_s\}_{s\in S}$ of $Y$, there is a finite subset  $F$ of $S$ such that $Y\setminus \bigcup\limits_{s\in F}U_s$ is a bounded subset of $X$. $Y$ is a \textbf{large scale compactification} of $X$ if in addition to being large scale compact, $Y$ is Hausdorff and $X$ is an open dense subspace of $Y$(see \cite{JDUnifying}).
\end{Definition}

\begin{Lemma}\label{LargeScleCompact}
For any family $\{U^{s}\}_{s\in I}$ of coarsely clopen subsets of $X$ such that $Ends(X)\subset \bigcup\limits_{s\in I}U^{s}_{end}$, there is a finite subset  $F$ of $I$ such that $X\setminus \bigcup\limits_{s\in F}U^{s}$ is a bounded subset of $X$.
\end{Lemma}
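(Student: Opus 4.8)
The plan is to argue by contradiction, viewing this lemma as the finite-intersection-property characterization of compactness transported to the large scale setting, with ends playing the role of points of the corona and coarsely clopen sets their basic neighborhoods. Suppose the conclusion fails. Taking $F=\emptyset$ gives $X\setminus\bigcup_{s\in\emptyset}U^s=X$, so we may assume $X$ is unbounded (otherwise the conclusion already holds with $F=\emptyset$), and the failure means that for every finite $F\subseteq I$ the set $X\setminus\bigcup_{s\in F}U^s=\bigcap_{s\in F}(U^s)^c$ is unbounded.

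First I would set $C^s:=(U^s)^c$ and record two facts: each $C^s$ is coarsely clopen, since the defining condition ``$A$ and $A^c$ are coarsely disjoint'' is symmetric in a set and its complement, and each $C^s$ is unbounded (the case $F=\{s\}$). Then the family $\mathcal{F}_0:=\{X\}\cup\{C^s:s\in I\}$ consists of unbounded coarsely clopen subsets of $X$, and by the displayed consequence of the contradiction hypothesis every finite intersection of its members is unbounded (adjoining $X$ affects no intersection). Here $X$ itself qualifies because $X^c=\emptyset$ is bounded, so $X$ is coarsely clopen.

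Next I would produce an end containing $\mathcal{F}_0$ via Zorn's lemma. Consider the poset $P$ of all families $\mathcal{G}\supseteq\mathcal{F}_0$ of unbounded coarsely clopen subsets of $X$ all of whose finite intersections are unbounded, ordered by inclusion. Given a chain in $P$, its union again lies in $P$: any finite subcollection of the union involves finitely many members of the chain and hence, by totality, sits inside a single member, so its intersection is unbounded. Thus $P$ has a maximal element $E$. Any family properly containing $E$ and having the defining property would also contain $\mathcal{F}_0$, hence would lie in $P$ and contradict maximality; therefore $E$ is maximal among all families of unbounded coarsely clopen sets whose finite intersections are all unbounded, i.e. $E\in Ends(X)$.

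Finally I would extract the contradiction from the covering hypothesis. Since $Ends(X)\subseteq\bigcup_{s\in I}U^s_{end}$, there is some $s_0$ with $E\in U^{s_0}_{end}$, that is $U^{s_0}\in E$. But $C^{s_0}=(U^{s_0})^c\in\mathcal{F}_0\subseteq E$, so $E$ contains both $U^{s_0}$ and its complement, whose intersection is the bounded set $\emptyset$, contradicting that all finite intersections of members of $E$ are unbounded. The only genuinely delicate points are checking that $P$ is closed under unions of chains (routine, using totality) and handling the empty-$F$ edge case; the conceptual heart is recognizing that the failure of the conclusion says precisely that $\{(U^s)^c\}_{s\in I}$ generates an end lying in no $U^s_{end}$.
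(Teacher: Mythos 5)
Your proof is correct and follows essentially the same route as the paper: assume no finite subfamily works, observe that the complements $(U^s)^c$ (the paper uses the sets $A_F=X\setminus\bigcup_{s\in F}U^s$) form a family of unbounded coarsely clopen sets with all finite intersections unbounded, extend it to an end $E$ by maximality, and then the covering hypothesis forces some $U^{s_0}$ and its complement to both lie in $E$, a contradiction. The only difference is cosmetic: you make explicit the Zorn's lemma step that the paper compresses into ``is contained in some end,'' and you spell out the bounded-$X$ edge case, which the paper leaves implicit.
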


\begin{proof}
Let $\mathcal{F}$ be the collection of all finite subsets of $I$. Seeking contradiction assume that for any $F\in \mathcal{F}$, $A_{F}=X\setminus \bigcup\limits_{s\in F}U^{s}$ is unbounded. The collection $\{A_{F}|F\in \mathcal{F}\}$ is contained in some end $E\in Ends(X)$. Hence, $\{A_{F}|F\in \mathcal{F}\}\subset E\in U^{s}_{end}$ for some $s\in I$ which implies that $U^{s}\in E$ and $X\setminus U^{s}\in E$, a contradiction. 
\end{proof}

\begin{Theorem}
Let $X$ be a Hausdorff topological large scale space, the topological space $X\cup Ends(X)$ is a large scale compactification of $X$. Furthermore, $Ends(X)$ is a compact totally disconnected subspace of $X\cup Ends(X)$.
\end{Theorem}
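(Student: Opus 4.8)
The plan is to verify the three requirements of a large scale compactification separately, and then to establish the topological properties of the corona $Ends(X)$. Hausdorffness of $X\cup Ends(X)$ and the fact that $X$ is an open dense subspace are already recorded in the preceding Proposition and the paragraph following it, so the only part of the definition of large scale compactification that remains is large scale compactness. For this I would start with an arbitrary open cover $\{W_t\}_{t\in T}$ of $X\cup Ends(X)$. Every end $E$ lies in some $W_t$, and since the sets $\widetilde{U}_{end}$ with $U$ open coarsely clopen form a neighborhood basis at $E$, I can choose for each $E$ an open coarsely clopen $V_E\in E$ with $E\in\widetilde{V_E}_{end}\subseteq W_{t(E)}$. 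The family $\{V_E\}$ then satisfies $Ends(X)\subseteq\bigcup_E (V_E)_{end}$, so Lemma \ref{LargeScleCompact} yields a finite subfamily with $X\setminus\bigcup_{\mathrm{finite}}V_E$ bounded. Because $\widetilde{V_E}_{end}\supseteq V_E$ and each $\widetilde{V_E}_{end}$ sits inside the corresponding $W_{t(E)}$, the finitely many sets $W_{t(E)}$ cover all ends and all of $X$ outside a bounded set; hence $X\cup Ends(X)$ is large scale compact.

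For compactness of $Ends(X)$ I would exploit large scale compactness of the ambient space rather than trying to argue that a closed subset of a large scale compact space is compact (which is false in general). Given a cover of $Ends(X)$ by relatively open sets, extend each to an open set of $X\cup Ends(X)$ and adjoin the open set $X$; this is an open cover of the whole space. Large scale compactness produces a finite subfamily whose complement is a bounded subset of $X$. But that complement is contained in $Ends(X)$, which is disjoint from $X$, so the only bounded subset it can contain is $\emptyset$. Thus the finitely many extended sets already cover $Ends(X)$, and intersecting back down gives a finite subcover.

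Total disconnectedness I would deduce from zero-dimensionality by showing that each basic open set $U_{end}$ (with $U$ open coarsely clopen) is in fact clopen in $Ends(X)$. The heart of the matter is a small collection of ultrafilter-type facts about ends that follow from maximality: an end is closed under finite intersections and under coarsely clopen supersets, it contains $X$ whenever $X$ is unbounded, and it is prime in the sense that $A\cup B\in E$ forces $A\in E$ or $B\in E$ (using Corollary \ref{IntersectionOFCCs} to keep the relevant intersections coarsely clopen). From these one obtains the dichotomy that for a coarsely clopen $U$ exactly one of $U,U^c$ lies in any given end, so that $Ends(X)\setminus U_{end}=(U^c)_{end}$. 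Since $U^c$ need not be open, I would replace it by $int(U^c)$, which is coarsely identical to $U^c$ and hence determines the same subset of $Ends(X)$ by Lemma \ref{CoarselyIdentical} and Proposition \ref{OpenEndsProp}; as $int(U^c)$ is open coarsely clopen, $(U^c)_{end}$ is a basic open set and $U_{end}$ is clopen.

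The main obstacle I anticipate is not any single topological step but the careful verification of these ultrafilter-type properties of ends, especially the prime property and the resulting $U$-versus-$U^c$ dichotomy, all of which must be extracted from the bare maximality condition in the definition of an end. Once they are in hand, both the finite subcover for compactness and the clopenness of the basic sets follow cleanly, giving total disconnectedness.
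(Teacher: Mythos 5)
Your proposal follows essentially the paper's route: Lemma \ref{LargeScleCompact} supplies the finite subfamily, and the ultrafilter-type dichotomy for coarsely clopen sets does the rest. However, there is one step whose stated justification fails, and it is exactly the heart of the paper's proof. In your first paragraph you conclude that ``the finitely many sets $W_{t(E)}$ cover all ends'' because $V_E\subseteq\widetilde{V_E}_{end}\subseteq W_{t(E)}$. Those containments only show that the $W_{t(E)}$ cover $X$ outside a bounded set; they say nothing about ends other than the finitely many $E$'s used to index the subfamily, and Lemma \ref{LargeScleCompact} by itself does not assert that the corresponding sets $(V_E)_{end}$ cover $Ends(X)$. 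Since the definition of large scale compactness requires the leftover set to be a \emph{subset of $X$}, covering every end is precisely what has to be proved here, not a formality. The repair is the paper's central argument, and it uses exactly the tools you develop in your third paragraph: if some end $E'$ lay outside every $(V_E)_{end}$ of the finite subfamily, then by the dichotomy each complement $X\setminus V_E$ would belong to $E'$, hence so would the finite intersection $\bigcap (X\setminus V_E)=X\setminus\bigcup V_E$ (ends are closed under finite intersections, via Corollary \ref{IntersectionOFCCs} and maximality); but that set is bounded, contradicting the fact that every member of an end is unbounded. You clearly possess this tool and even acknowledge in your last paragraph that the compactness step depends on it, but as assembled the large scale compactness argument never invokes it, so the proof as written has a non sequitur at its crucial point. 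Note also that your second paragraph (compactness of $Ends(X)$ from large scale compactness) is fine but silently presupposes that the first paragraph is complete, since the complement being a subset of $X$ is what forces the finite subfamily to cover all ends.

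Once that wiring is fixed, the rest is correct and differs from the paper only cosmetically. The paper proves compactness of $Ends(X)$ and large scale compactness in one stroke, whereas you derive the former from the latter; and for total disconnectedness the paper separates two given ends directly by $U=int(A)\in E_1$ and $V=X\setminus cl(A)\in E_2$ with $Ends(X)=U_{end}\cup V_{end}$ disjointly, while you show every basic set $U_{end}$ is clopen because its complement is $(int(U^c))_{end}$, using Lemma \ref{CoarselyIdentical} and Proposition \ref{OpenEndsProp}. Both rest on the same dichotomy and both are valid; your version gives zero-dimensionality explicitly, which together with Hausdorffness yields total disconnectedness.
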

\begin{proof}
Let $\{O^{s}\}_{s\in I}$ be an open cover $X\cup Ends(X)$, we can find a subset $J\subset I$ such that $\{\widetilde{U^{s}}_{end}\}_{s\in J}$ of $\{O^{s}\}_{s\in I}$ that covers $Ends(X)$. In particular, $Ends(X)\subset \bigcup\limits_{s\in J}U^{s}_{end}$. By the above lemma, there is a finite subset $F$ of $J$ such that $X\setminus \bigcup\limits_{s\in F}U^{s}$ is a bounded subset of $X$. We claim that $Ends(X)\subset \bigcup\limits_{s\in F}U^{s}_{end}$. To this end, notice that if $U\subset X$ is an arbitrary coarsely clopen subset of $X$ and $V=X\setminus U$, then $Ends(X)=U_{end}\cup V_{end}$. Indeed, if $E\in Ends(X)\setminus (U_{end}\cup V_{end})$, then we can find $A,B\in E$ such that both $A\cap U$ and $B\cap V$ are bounded. Hence, there exist $K,L$ bounded subsets of $X$ such that $A\subset V\cup K$ and $B\subset U\cup L$. In particular, $A\cap B$ is bounded, a contradiction. Now, seeking contradiction assume that $E\in Ends(X)\setminus \bigcup\limits_{s\in F}U^{s}_{end}$, then $E\notin U^{s}_{end}$ for all $s\in F$. Therefore, by above observation, $X\setminus U^{s}\in E$ for all $s\in F$ which implies that $\bigcap\limits_{s\in F}(X\setminus U^{s})=X\setminus \bigcup\limits_{s\in F}U^{s}\in E$, a contradiction. This shows that $Ends(X)$ is compact and $X\cup Ends(X)$ is large scale compact as $(X\cup Ends(X))\setminus \bigcup\limits_{s\in F}\widetilde{U^{s}}_{end}$ is bounded.\\ Finally, we show that $Ends(X)$ is totally disconnected. Let $E_{1}, E_{2}\in Ends(X)$ be two distinct ends of $X$, then we can find $A\in E_{1}$ and $B\in E_{2}$ such that $cl(A)\cap B$ is a bounded subset of $X$. Without loss of generality, we may assume that $cl(A)\cap B=\emptyset$. Clearly, $U=int(A)\in E_{1}$ and $V=X\setminus cl(A)\in E_{2}$. Moreover, $Ends(X)=U_{end}\cup V_{end}$ and $U_{end}\cap V_{end}=\emptyset$.
\end{proof}

Let us show that $Ends(X)$ is a coarse invariant of a space. 

\begin{Definition}
Let $\varphi,{\varphi}':X\to Y$ be maps between large scale spaces.\\
$\bullet$ $\varphi$ and ${\varphi}'$ are \textbf{close} if there is a uniformly bounded cover $\mathcal{U}$ of $Y$ such that
$\varphi(x)\in st({\varphi}'(x),\mathcal{U})$ for each $x\in X$.\\
$\bullet$ $\varphi $ is \textbf{coarse} if $\varphi^{-1}(K)$ is bounded for each bounded subset $K$ of $Y$.\\
$\bullet$ $\varphi$ is \textbf{large scale continuous} if $\varphi(\mathcal{U})$ is a uniformly bounded cover of $\varphi(X)$ for each uniformly bounded cover $\mathcal{U}$ of $X$.\\
$\bullet$ $\varphi$ is a \textbf{coarse equivalence} if there is a coarse, large scale continuous map $\psi:Y\to X$ such that $\psi\circ \varphi$ is close to $id_X$ and $\varphi\circ \psi$ is close to $id_Y$. In a such case, $\varphi$ and $\psi$ are \textbf{coarse inverses} of each others; $X$ and $Y$ are \textbf{coarsely equivalent}. 
\end{Definition}

\begin{Lemma}\label{ImageOfAnEnd}
Suppose $f:X\to Y$ is a coarse large scale continuous function of topological large scale spaces and $E\in Ends(X)$. Then, using the closure operation $cl$ in $Y\cup Ends(Y)$, the following hold:\\
a.
If $G\in \bigcap\limits_{A\in E} cl(f(A))$ is an end of $Y$ and $V\in G$ is an open and coarsely clopen subset of $Y$, then $f^{-1}(V)\in E$.\\
b. $\bigcap\limits_{A\in E} cl(f(A))$
is a singleton belonging to $Ends(Y)$.
\end{Lemma}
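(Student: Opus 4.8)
The plan is to reduce both parts to one technical ingredient—that preimages of coarsely clopen sets under $f$ are again coarsely clopen—together with two soft closure properties of an end. I would first record the following. (i) If $V\subseteq Y$ is coarsely clopen, then $f^{-1}(V)$ is coarsely clopen in $X$: fixing a uniformly bounded cover $\mathcal W$ of $X$, large scale continuity lets $f(\mathcal W)$ extend (by adjoining singletons for uncovered points) to a uniformly bounded cover $\mathcal U$ of $Y$, and a direct chase using $f^{-1}(V)^c=f^{-1}(V^c)$ gives $f\big(st(f^{-1}(V),\mathcal W)\cap st(f^{-1}(V^c),\mathcal W)\big)\subseteq st(V,\mathcal U)\cap st(V^c,\mathcal U)$; the right side is bounded since $V$ is coarsely clopen, and as $f$ is coarse its preimage is bounded. (ii) An end $E$ is closed under finite intersections and under coarse identity: a finite intersection of members of $E$ is coarsely clopen by Corollary \ref{IntersectionOFCCs}, is unbounded by the maximality defining $E$, and adjoining it preserves the ``all finite intersections unbounded'' property, so it already lies in $E$; the same maximality argument shows that replacing a member by a coarsely identical unbounded coarsely clopen set keeps it in $E$.

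For part a, note that $G\in cl(f(A))$ in $Y\cup Ends(Y)$ means exactly that $U\cap f(A)\neq\emptyset$ for every open coarsely clopen $U\in G$, since the $\widetilde{U}_{end}$ form a neighborhood basis of $G$ and $\widetilde{U}_{end}\cap f(A)=U\cap f(A)$. I would prove $f^{-1}(V)\in E$ by maximality: it is coarsely clopen by (i), so, using (ii) to reduce finite intersections to single members, it suffices to show $f^{-1}(V)\cap A$ is unbounded for every $A\in E$. Suppose some $f^{-1}(V)\cap A$ were bounded. Then $A':=A\cap f^{-1}(V^c)$ is coarsely identical to $A$, hence $A'\in E$ by (ii), so $G\in cl(f(A'))$ and therefore $V\cap f(A')\neq\emptyset$; but $f(A')\subseteq V^c$, a contradiction. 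Since $f^{-1}(V)\cap A$ is then unbounded for a fixed $A\in E$, $f^{-1}(V)$ is itself unbounded, and maximality yields $f^{-1}(V)\in E$.

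For part b I would treat existence, location, and uniqueness separately. Existence: each $f(A)$, $A\in E$, is unbounded (else $A\subseteq f^{-1}(f(A))$ would be bounded), and the closure of an unbounded set must meet $Ends(Y)$—otherwise the complement $(Y\cup Ends(Y))\setminus cl(f(A))$ together with a uniformly bounded open cover of $Y$ would be an open cover of the large scale compactification, and large scale compactness would force $cl(f(A))$ to be bounded. So each $cl(f(A))\cap Ends(Y)$ is a nonempty closed subset of the compact space $Ends(Y)$; by (ii) these sets have the finite intersection property, so compactness produces a point $G\in\bigcap_{A\in E}cl(f(A))\cap Ends(Y)$. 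Location: no $y_0\in Y$ lies in the intersection, for a bounded open neighborhood $U$ of $y_0$ has $f^{-1}(U)$ bounded, whence $A\setminus f^{-1}(U)\in E$ has image missing $U$ and $y_0\notin cl(f(A))$. Uniqueness: if $G\neq G'$ were two ends in the intersection, then, exactly as in the total-disconnectedness argument for $Ends(Y)$, there are disjoint open coarsely clopen $U\in G$ and $V\in G'$; part a gives $f^{-1}(U),f^{-1}(V)\in E$ with $f^{-1}(U)\cap f^{-1}(V)=f^{-1}(U\cap V)=\emptyset$, contradicting the defining property of $E$.

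I expect the main obstacle to be the existence-and-location step of part b: marrying the honest compactness of $Ends(Y)$ with the merely large scale compactness of $Y\cup Ends(Y)$ so that the nested closures accumulate on the corona and nowhere in $Y$. Once that is in place, uniqueness is a formal consequence of part a, and part a itself rests only on the preimage computation (i) and the closure properties (ii).
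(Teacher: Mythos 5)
Your proposal is correct and follows essentially the same route as the paper: the same key observation that $f^{-1}$ of a coarsely clopen set is coarsely clopen (your displayed inclusion is in fact the corrected form of the paper's), the same bounded-modification contradiction for part a, and the same location/uniqueness arguments for part b. The only cosmetic difference is in nonemptiness: you route it through compactness of $Ends(Y)$ plus the finite intersection property, while the paper applies Lemma \ref{LargeScleCompact} directly to finitely many $A_i\in E$ — but since the compactness of $Ends(Y)$ is itself proved from that lemma, the underlying machinery is identical.
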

\begin{proof}
Observe $f^{-1}(D)$ is coarsely clopen in $X$ if $D$ is coarsely clopen in $Y$.
Indeed, if $B:=st(f^{-1}(D),\mathcal{U})\cap st(f^{-1}(D)^c,\mathcal{U})$ is bounded for some
uniformly bounded cover $\mathcal{U}$ of $X$, then 
$st(D,\mathcal{U}_f)\cap st(D^c,\mathcal{U}_f)\subset f(B)$ is bounded, a contradiction. Here, $ \mathcal{U}_f$ is $f(\mathcal{U})$ union singletons outside of $f(X)$.

$a$. If $f^{-1}(V)\notin E$, then there is
$A\in E$ such that $A\cap f^{-1}(V)$ is bounded, hence its image $B:=f(A)\cap V$ is bounded. Put $A_1=A\setminus f^{-1}(B)$ to get $f(A_1)\subset Y\setminus V$ contradicting $E\in cl(f(A_1))$. Thus $f^{-1}(V)\in E$.

$b$.
$\bigcap\limits_{A\in E} cl(f(A))$ cannot be empty and it is contained in $Ends(Y)$. First of all, if $y\in Y$, then there is an open bounded neighborhood
$V$ of $y$ in $Y$, so $f^{-1}(V)$ is bounded in $X$. Hence $A:=X\setminus 
f^{-1}(V)\in E$, $f(A)\subset Y\setminus V$, and $cl(f(A))$ misses $V$.
Secondly, assume that $\bigcap\limits_{A\in E} cl(f(A))=\emptyset$. By \ref{LargeScleCompact}, there exist finitely many $A_i\in E$, $i\leq n$, such that $\bigcap\limits_{i=1}^n cl(f(A_i))$ is bounded. However, $A:=\bigcap\limits_{i=1}^n A_i\in E$,
and $f(A)$ is bounded, a contradiction.
Finally, if $E_1,E_2\in \bigcap\limits_{A\in E} cl(f(A))$ and $E_1\ne E_2$,
then there exists two disjoint open and coarsely clopen subsets $V_1\in E_1$
and $V_2\in E_2$. By a) $f^{-1}(V_1)\in E$ and $f^{-1}(V_2)\in E$, a contradiction.
\end{proof}

\begin{Corollary}
If $f:X\to Y$ is a coarse large scale continuous function of topological large scale spaces and $E\in Ends(X)$, then it induces a continuous function $f_{end}:Ends(X)\to Ends(Y)$ defined by $f_{end}(E)\in \bigcap\limits_{A\in E} cl(f(A))$. \\
a. If $f,g:X\to Y$ are close, then $f_{end}=g_{end}$.\\
b. If $f$ is continuous, then $f\cup f_{end}:X\cup Ends(X)\to Y\cup Ends(Y)$
is continuous.
\end{Corollary}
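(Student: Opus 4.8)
The plan is to build everything on one preimage identity for $f_{end}$, after first recording that $f_{end}$ is well-defined. Well-definedness is immediate from Lemma \ref{ImageOfAnEnd}(b): the set $\bigcap_{A\in E}cl(f(A))$ is a singleton in $Ends(Y)$, so $f_{end}(E)$ is unambiguously its unique element. The key step I would isolate is that, for every open coarsely clopen $V\subseteq Y$,
$$f_{end}^{-1}(V_{end})=(f^{-1}(V))_{end}.$$
Unwinding the notation, this reads $V\in f_{end}(E)\iff f^{-1}(V)\in E$. The forward implication is exactly Lemma \ref{ImageOfAnEnd}(a), since $f_{end}(E)\in\bigcap_{A\in E}cl(f(A))$ and $V$ is open coarsely clopen. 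For the converse I would argue by contradiction: if $f^{-1}(V)\in E$ but $V\notin G:=f_{end}(E)$, then the decomposition $Ends(Y)=V_{end}\cup(Y\setminus V)_{end}$ (established inside the proof that $X\cup Ends(X)$ is a large scale compactification) forces $Y\setminus V\in G$; by Proposition \ref{OpenEndsProp} choose an open coarsely clopen $W\subseteq Y\setminus V$ with $W\in G$, so Lemma \ref{ImageOfAnEnd}(a) gives $f^{-1}(W)\in E$, while $f^{-1}(W)\cap f^{-1}(V)=\emptyset$ is bounded, contradicting that finite intersections of members of $E$ are unbounded.

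Granting the identity, continuity of $f_{end}$ becomes formal. The sets $V_{end}$ with $V$ open coarsely clopen form a basis for $Ends(Y)$, and $f^{-1}(V)$ is coarsely clopen (shown inside the proof of Lemma \ref{ImageOfAnEnd}). Since $f$ is only coarse, $f^{-1}(V)$ need not be open, so I would first observe $U_{end}=(int(U))_{end}$ for any coarsely clopen $U$: by Lemma \ref{CoarselyIdentical}, $U\in E\Rightarrow int(U)\in E$, and conversely $int(U)\subseteq U$ gives $int(U)\in E\Rightarrow U\in E$ by maximality, while $int(U)$ is coarsely clopen because it differs from $U$ by a bounded set. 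Hence each $(f^{-1}(V))_{end}$ is open, and preimages of basic open sets are open.

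For part (a), suppose $f,g$ are close via a uniformly bounded cover $\mathcal{U}$ of $Y$ and, seeking a contradiction, $f_{end}(E)\ne g_{end}(E)$. Using that $Ends(Y)$ is Hausdorff with clopen basis $\{V_{end}\}$, separate the two images by disjoint open coarsely clopen sets $V_f\ni f_{end}(E)$ and $V_g\ni g_{end}(E)$ in $Y$ (as produced in the total-disconnectedness argument). By the identity, $f^{-1}(V_f),g^{-1}(V_g)\in E$, so $S:=f^{-1}(V_f)\cap g^{-1}(V_g)$ is unbounded, and since $f$ is coarse, $f(S)$ is unbounded. But for $x\in S$, closeness gives $f(x)\in st(g(x),\mathcal{U})\subseteq st(V_g,\mathcal{U})$, while $f(x)\in V_f\subseteq Y\setminus V_g$, so $f(S)\subseteq st(V_g,\mathcal{U})\cap(Y\setminus V_g)$, which is bounded because $V_g$ is coarsely clopen. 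This contradiction yields $f_{end}=g_{end}$.

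Finally, for part (b), assume $f$ is continuous. The basis of $Y\cup Ends(Y)$ consists of open $O\subseteq Y$ and sets $\widetilde{V}_{end}=V\cup V_{end}$ with $V$ open coarsely clopen. For the first type $(f\cup f_{end})^{-1}(O)=f^{-1}(O)$ is open by continuity of $f$; for the second, the identity gives $(f\cup f_{end})^{-1}(\widetilde{V}_{end})=f^{-1}(V)\cup(f^{-1}(V))_{end}$, and now $f^{-1}(V)$ is genuinely open as well as coarsely clopen, so this set is exactly $\widetilde{(f^{-1}(V))}_{end}$, a basic open set. I expect the main obstacle to be precisely the converse direction of the displayed identity, as it is the single place where maximality of ends, the $V_{end}\cup(Y\setminus V)_{end}$ decomposition, and Proposition \ref{OpenEndsProp} must be combined; once it is in hand, continuity and both parts (a) and (b) reduce to routine bookkeeping.
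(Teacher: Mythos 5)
Your proposal is correct, and its skeleton matches the paper's proof: Lemma \ref{ImageOfAnEnd}(a) is the engine behind continuity, and closeness is handled by separating $f_{end}(E)$ from $g_{end}(E)$ with disjoint open coarsely clopen sets and pulling both back into $E$. The genuine difference is in how much of the continuity argument is actually carried out. The paper proves only the forward inclusion $f_{end}^{-1}(V_{end})\subset U_{end}$, where $U=f^{-1}(V)$, and immediately declares continuity of $f_{end}$ and of $f\cup f_{end}$; strictly speaking that inclusion alone does not make $f_{end}^{-1}(V_{end})$ open. What is needed is exactly your converse inclusion, so that the preimage equals the basic open set $(int(U))_{end}$, and your derivation of it -- via the decomposition $Ends(Y)=V_{end}\cup(Y\setminus V)_{end}$ from the large scale compactification theorem, Proposition \ref{OpenEndsProp}, and the unboundedness of finite intersections inside an end -- is a correct argument that supplies a step the paper leaves tacit. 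The same goes for your observation that $U_{end}=(int(U))_{end}$, which is needed because $f^{-1}(V)$ need not be open when $f$ is merely coarse, and which the paper glosses over. For part (a) the two proofs are essentially identical; yours reaches the contradiction through unboundedness of $f(S)$ against boundedness of $st(V_g,\mathcal{U})\cap V_g^c$, while the paper arranges $st(U,\mathcal{U})\cap V=\emptyset$ up front and contradicts at a single point -- a cosmetic difference, with yours needing slightly weaker separation. In part (b) you compute the preimages of both types of basic sets explicitly, where the paper only asserts the conclusion. One cosmetic slip: in (a) you write $V_f\ni f_{end}(E)$ where you mean $V_f\in f_{end}(E)$, i.e.\ membership of the set $V_f$ in the family $f_{end}(E)$.
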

\begin{proof}
Given an open, coarsely clopen subset $V$ of $Y$, notice that
$f_{end}^{-1}(V_{end})\subset U_{end}$, where $U=f^{-1}(V)$. 
Indeed, if $f_{end}(E)\in V_{end}$ then $E\in U_{end}$ by a) of \ref{ImageOfAnEnd}.
That proves continuity of $f_{end}$ and of $f\cup f_{end}$ if $f$ is continuous.

Suppose there is an open uniformly bounded cover $\mathcal{U}$ of $Y$ such that
$f(x)\in st(g(x),\mathcal{U})$ for each $x\in X$.
If there is $E\in Ends(X)$ such that $f_{end}(E)\ne g_{end}(E)$, then
we can choose open subsets $U\in f_{end}(E), V\in g_{end}(E)$ so that
$st(U,\mathcal{U})\cap V=\emptyset$.
Notice $f^{-1}(U)\in E$ and $g^{-1}(V)\in E$ by a) of \ref{ImageOfAnEnd}.
Now, there is $x\in f^{-1}(U)\cap g^{-1}(V)$, so $f(x)\in U$, $g(x)\in V$
contradicting $st(U,\mathcal{U})\cap V=\emptyset$.
\end{proof}

\begin{Corollary}\label{CoarselyEquivalentSpacesHaveHomeomorphicEndsSpaces}
If two topological large scale spaces $X$ and $Y$ are coarsely equivalent, then 
$Ends(X)$ is homeomorphic to $Ends(Y)$.
\end{Corollary}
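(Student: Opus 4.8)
The plan is to deduce the statement entirely from the functoriality package developed just above, rather than to construct a homeomorphism from scratch. By hypothesis there are coarse, large scale continuous maps $\varphi \colon X \to Y$ and $\psi \colon Y \to X$ that are coarse inverses, meaning $\psi \circ \varphi$ is close to $\mathrm{id}_X$ and $\varphi \circ \psi$ is close to $\mathrm{id}_Y$. The previous corollary gives us continuous induced maps $\varphi_{end} \colon Ends(X) \to Ends(Y)$ and $\psi_{end} \colon Ends(Y) \to Ends(X)$, so the only thing to check is that these two maps are mutually inverse, whence each is a continuous bijection with continuous inverse, i.e.\ a homeomorphism.

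To show $\psi_{end} \circ \varphi_{end} = \mathrm{id}_{Ends(X)}$ I would invoke two facts from the corollary: part (a), which says that close maps induce equal maps on ends, and the evident fact that the identity map $\mathrm{id}_X$ induces the identity on $Ends(X)$. The key point is functoriality of the $end$ construction under composition, namely that $(\psi \circ \varphi)_{end} = \psi_{end} \circ \varphi_{end}$. Granting this, since $\psi \circ \varphi$ is close to $\mathrm{id}_X$, part (a) yields $(\psi \circ \varphi)_{end} = (\mathrm{id}_X)_{end} = \mathrm{id}_{Ends(X)}$, and therefore $\psi_{end} \circ \varphi_{end} = \mathrm{id}_{Ends(X)}$. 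The symmetric argument with the roles of $X$ and $Y$ exchanged gives $\varphi_{end} \circ \psi_{end} = \mathrm{id}_{Ends(Y)}$.

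The main gap to fill is the functoriality identity $(\psi \circ \varphi)_{end} = \psi_{end} \circ \varphi_{end}$, which is not stated explicitly in the excerpt. I would verify it directly from the defining property $f_{end}(E) \in \bigcap_{A \in E} cl(f(A))$. Fix $E \in Ends(X)$ and set $G := \varphi_{end}(E) \in Ends(Y)$. For any open coarsely clopen $W$ belonging to the end $\psi_{end}(G)$, part (a) of Lemma \ref{ImageOfAnEnd} applied to $\psi$ gives $\psi^{-1}(W) \in G$; applying part (a) again, this time to $\varphi$ and the end $E$ with the open coarsely clopen set $\psi^{-1}(W) \in G = \varphi_{end}(E)$, yields $\varphi^{-1}(\psi^{-1}(W)) = (\psi \circ \varphi)^{-1}(W) \in E$. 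Thus every open coarsely clopen member of the end $\psi_{end}(\varphi_{end}(E))$ pulls back into $E$ under $\psi \circ \varphi$, which by the uniqueness clause (part (b) of Lemma \ref{ImageOfAnEnd}, identifying the singleton $\bigcap_{A \in E} cl((\psi\circ\varphi)(A))$) forces $(\psi \circ \varphi)_{end}(E) = \psi_{end}(\varphi_{end}(E))$.

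I expect the composition identity to be the only real obstacle, and even it is routine once one commits to characterizing an induced end by its open coarsely clopen members; Proposition \ref{OpenEndsProp} guarantees that an end is determined by those members, so matching them suffices. A minor technical point to handle carefully is that Lemma \ref{ImageOfAnEnd} is phrased for a single coarse large scale continuous map, so I must confirm that the composite $\psi \circ \varphi$ is again coarse and large scale continuous before invoking the lemma for it; both properties are clearly preserved under composition. With functoriality and the behaviour under closeness in hand, the conclusion that $\varphi_{end}$ and $\psi_{end}$ are inverse homeomorphisms is immediate.
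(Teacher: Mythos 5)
Your proposal is correct and takes essentially the same route as the paper: the paper also reduces the corollary to the functoriality identity $(g\circ f)_{end}=g_{end}\circ f_{end}$ and establishes it by pulling back disjoint open coarsely clopen representatives of the two candidate ends through part (a) of Lemma \ref{ImageOfAnEnd}, contradicting the finite-intersection property of an end. One subtlety, present in your argument and glossed over in the paper's as well: in the second application of part (a) the intermediate pullback ($\psi^{-1}(W)$ in your notation) is coarsely clopen but need not be \emph{open}, since the maps are not assumed topologically continuous, so one should first shrink it to an open member of the same end via Proposition \ref{OpenEndsProp} before pulling back again --- a routine repair that affects both proofs equally.
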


\begin{proof}
It suffices to show that if $f:X\to Y$ and $g:Y\to Z$ are coarse large scale continuous functions of topological large scale spaces, then $(g\circ f)_{end}=f_{end}\circ g_{end}$. Indeed, let $E\in Ends(X)$ such that $(g\circ f)_{end}(E)\neq f_{end}\circ g_{end}(E)$. There exist open disjoint subsets $U\in (g\circ f)_{end}(E)$ and $V\in f_{end}\circ g_{end}(E)$. Now, $(g\circ f)^{-1}(U)\in E$ and $f^{-1}(V)\in g_{end}(E)$ by a) of \ref{ImageOfAnEnd}. In particular, $(g\circ f)^{-1}(U), (g\circ f)^{-1}(V)\in E$, a contradiction. 
\end{proof}

\section{Comparison of coarse ends and Freundenthal ends}

In this section we are concentrating on a relationship between Freudenthal ends and coarse ends. More specifically, we are interested in cases where there is a one-to-one correspondence between those ends in the following sense: each Freudenthal end is contained in a unique coarse end and each coarse end contains a Freudenthal end. To accomplish it, we need a large scale analog of local connectedness. Also, we need to generalize Freundenthal ends to non-locally compact spaces. See \cite{DickmanMcCoy} for a theory of Freundenthal compactifications for general topological spaces.

\begin{Definition}
Suppose $X$ is a locally connected topological space that is a union of an increasing sequence of closed subspaces $\{K_i\}_{i\ge 1}$
such that $K_i\subset int(K_{i+1})$ for each $i\ge 1$. Let $\mathcal{K}$ be the bornology generated by $\{K_i\}_{i\ge 1}$, i.e. all subsets $B$ of $X$ such that $B\subset K_i$ for some $i\ge 1$.
A \textbf{Freundenthal end} of $X$ with respect to $\mathcal{K}$ is a decreasing sequence $\{C_i\}_{i\ge 1}$, where each $C_i$ is a non-empty component of $X\setminus K_i$.

Notice each $C_i$ is unbounded as otherwise it is contained in some $K_k$, $k > i$, and $C_k\subset C_i\cap (X\setminus K_k)=\emptyset$, a contradiction. 
\end{Definition}

\begin{Definition}
A large scale space $X$ is \textbf{large scale chain-connected} if there is a uniformly bounded cover $\mathcal{U}$ of $X$ such that every uniformly bounded cover $\mathcal{V}$ of $X$ is a refinement of a uniformly bounded cover $\mathcal{W}$ that consists of $\mathcal{U}$-connected sets (that means any two points $x,y\in W\in \mathcal{W}$
can be connected by a chain of points $x_1=x,\ldots, x_n=y$ with the property that for any $i < n$ there is $U\in \mathcal{U}$ containing both $x_i$ and $x_{i+1}$).
\end{Definition}

\begin{Definition}
Suppose $X$ is a large scale space that is large scale chain-connected via a uniformly bounded cover $\mathcal{U}$ and the bornology $\mathcal{B}$ of $X$ has an increasing sequence $\{B_i\}_{i\ge 1}$ of bounded subsets of $X$ that serves as a basis of it. A \textbf{Freundenthal end} of $X$ with respect to $\mathcal{B}$ is a decreasing sequence $\{C_i\}_{i\ge 1}$, where each $C_i$ is a non-empty $\mathcal{U}$-component of $X\setminus B_i$. 
\end{Definition}

\begin{Theorem}\label{MainComparisonTheorem}
Suppose $X$ is a large scale space that is large scale chain-connected via a uniformly bounded cover $\mathcal{U}$ and has the property that for any bounded subset $B$ of $X$ the union of all bounded $\mathcal{U}$-components of $X\setminus B$ is bounded and there are only finitely many unbounded $\mathcal{U}$-components of $X\setminus B$. 
If there is an increasing sequence $\{B_i\}_{i\ge 1}$ of bounded subsets of $X$ that is a basis for all bounded subsets of $X$, then the coarse ends of $X$ are in one-to-one correspondence with Freundenthal ends of $X$ with respect to its bornology.
\end{Theorem}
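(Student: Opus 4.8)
The plan is to construct explicit mutually inverse maps between $Ends(X)$ and the set of Freundenthal ends, the central tool being a structural description of coarsely clopen sets in terms of the $\mathcal{U}$-components of the $X\setminus B_i$. Before anything else I would record two preliminary facts. First, every term $C_i$ of a Freundenthal end is automatically unbounded: if some $C_i$ were bounded it would lie in $B_k$ for some $k\ge i$ (as $\{B_i\}$ is a basis of the bornology), forcing the non-empty $C_k\subseteq C_i\cap(X\setminus B_k)=\emptyset$. Second, each unbounded $\mathcal{U}$-component $C$ of $X\setminus B$ ($B$ bounded) is coarsely clopen. To see this, given a uniformly bounded cover $\mathcal{V}$ I use chain-connectedness to pass to a uniformly bounded $\mathcal{U}$-connected cover $\mathcal{W}$ refined by $\mathcal{V}$; a member $W$ of $\mathcal{W}$ meeting $C$ but missing $B$ is $\mathcal{U}$-connected inside $X\setminus B$ and hence lies entirely in the single component $C$, while a member meeting $B$ lies in $st(B,\mathcal{W})$. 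This gives $st(C,\mathcal{W})\subseteq C\cup st(B,\mathcal{W})$ and likewise $st(C^c,\mathcal{W})\subseteq C^c\cup st(B,\mathcal{W})$, so their intersection sits inside the bounded set $st(B,\mathcal{W})$.

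The heart of the argument is a structural lemma I would isolate: \emph{every coarsely clopen $A\subseteq X$ is coarsely identical to a finite union of unbounded $\mathcal{U}$-components of $X\setminus B_j$ for some $j$.} Since $A$ is coarsely clopen the set $D:=st(A,\mathcal{U})\cap st(A^c,\mathcal{U})$ is bounded, so $D\subseteq B_j$ for some $j$. Now fix an unbounded $\mathcal{U}$-component $C$ of $X\setminus B_j$ and suppose it met both $A$ and $A^c$; a $\mathcal{U}$-chain inside $C$ joining a point of $C\cap A$ to a point of $C\cap A^c$ would contain a consecutive pair straddling $A$ and $A^c$, producing a point of $C\cap D\subseteq C\cap B_j=\emptyset$, a contradiction. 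Hence each unbounded component of $X\setminus B_j$ lies wholly in $A$ or in $A^c$. Because $B_j$ together with the union of the bounded components is bounded, $A$ differs from the union of those unbounded components contained in it by a bounded set, and finiteness of the number of unbounded components makes this union finite.

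With these in hand the correspondence is routine. Given an end $E$, the finitely many unbounded components of $X\setminus B_i$ are coarsely clopen, pairwise disjoint, and cover $X$ up to a bounded set; a short maximality argument (using that $E$ is closed under finite intersections and that a finite union of bounded sets is bounded) shows exactly one of them, call it $C_i^E$, belongs to $E$, and the finite-intersection property forces $C_{i+1}^E\subseteq C_i^E$. Thus $\Phi(E):=\{C_i^E\}$ is a Freundenthal end. Conversely, to a Freundenthal end $\{C_i\}$ I assign $\Psi(\{C_i\}):=\{A\subseteq X\ \text{unbounded coarsely clopen}: C_i\setminus A\ \text{is bounded for some}\ i\}$. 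That $\Psi(\{C_i\})$ has the finite-intersection property and consists of unbounded sets is immediate from the nesting of the $C_i$; maximality is exactly where the structural lemma enters, since for a coarsely clopen $A\notin\Psi(\{C_i\})$ its decomposition at some level $j$ must omit $C_j$, whence $C_j$ is disjoint from $A$ up to a bounded set and serves as the required element of $\Psi(\{C_i\})$ meeting $A$ in a bounded set.

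Finally I would verify that $\Phi$ and $\Psi$ are mutually inverse. The identity $\Phi\circ\Psi=\mathrm{id}$ is clear because $C_i\in\Psi(\{C_i\})$ and $C_i$ is the unique unbounded component of $X\setminus B_i$ lying in that end. For $\Psi\circ\Phi=\mathrm{id}$, one inclusion uses that if $C_i^E\setminus A$ is bounded with $C_i^E\in E$ then $A$ meets every member of $E$ in an unbounded set, so $A\in E$ by maximality; the reverse inclusion applies the structural lemma to $A\in E$ and uses the finite-intersection property to identify $C_j^E$ as one of the components in the decomposition of $A$, giving $C_j^E\setminus A$ bounded. The main obstacle is the structural lemma together with the uniqueness-of-component step: both rest on reading ``$\mathcal{U}$-connected'' so that the connecting chains stay within the relevant set, and on the hypothesis that at each level only finitely many unbounded components occur while the bounded ones aggregate to a bounded set; without finiteness the intersection producing $C_i^E$ need not lie in $E$.
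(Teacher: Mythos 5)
Your proposal is correct, and its two technical pillars are exactly the paper's: your preliminary fact that unbounded $\mathcal{U}$-components of $X\setminus B$ are coarsely clopen is the paper's Claim 1, proved the same way (pass to a $\mathcal{U}$-connected uniformly bounded cover and trap the two stars inside $st(B,\cdot)$), and your structural lemma --- every coarsely clopen set is coarsely identical to a finite union of unbounded components of some $X\setminus B_j$ --- packages the paper's Claim 2 together with the decomposition step ($A\setminus B_j$ is a union of $\mathcal{U}$-components of $X\setminus B_j$) that the paper only deploys inside its surjectivity argument. Where you genuinely differ is the global organization of the bijection. The paper argues existence--uniqueness--surjectivity: a Freudenthal end, being a family of unbounded coarsely clopen sets, extends to \emph{some} coarse end (an appeal to maximal extension, i.e.\ Zorn); uniqueness of that coarse end is a separate contradiction argument choosing $A_1\in E_1$, $A_2\in E_2$ with disjoint stars and splitting $C_k$ into two unbounded unions of components; and each coarse end is shown to contain a Freudenthal end, with exactly one component per level only ``from some $k$ on.'' You instead exhibit explicit mutually inverse maps: $\Phi(E)$ picks, at \emph{every} level $i$, the unique unbounded component of $X\setminus B_i$ lying in $E$ (your ultrafilter-style maximality argument, which uses precisely finiteness of the unbounded components and boundedness of the union of the bounded ones, is sound and slightly cleaner than the paper's tail statement), and $\Psi(\{C_i\})$ is an explicit formula for the coarse end --- all unbounded coarsely clopen $A$ with $C_i\setminus A$ bounded for some $i$ --- whose maximality you check directly from the structural lemma. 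This removes both the Zorn extension and the disjoint-stars uniqueness argument, at the cost of the extra (routine) verification that $\Psi(\{C_i\})$ is maximal and that $\Phi$, $\Psi$ are inverse to each other; the paper's route is shorter, yours is more constructive and gives a usable description of the end determined by a Freudenthal end.
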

\begin{proof}
\textbf{Claim 1:} If $C$ is a union of $\mathcal{U}$-components of $X\setminus B$ for some bounded subset $B$ of $X$, then $C$ is coarsely clopen.\\
\textbf{Proof of Claim 1}: Given a uniformly bounded cover $\mathcal{V}$ of $X$ consisting of $\mathcal{U}$-connected subsets of $X$, the set $st(C,\mathcal{V})\cap st(C^c,\mathcal{V})$ is contained in $st(B,\mathcal{V})$.
Indeed, if $x\notin B$ belongs to $st(C,\mathcal{V})\cap st(C^c,\mathcal{V})$,
then there is $V\in \mathcal{V}$ containing $x$ and intersecting both $C$ and $C^c$.
Therefore $V\setminus B$ must be contained in either $C$ or $C^c$,
so $V\cap B\ne\emptyset$ and $x\in st(B,\mathcal{V})$.

\textbf{Claim 2:} If $C$ is coarsely clopen, then every union of $\mathcal{U}$-components of $C$ is coarsely clopen.\\
\textbf{Proof of Claim 2}: $B:=st(C,\mathcal{U})\cap st(C^c,\mathcal{U})$ is bounded
and $C\setminus B$ is a union of $\mathcal{U}$-components of $X\setminus B$.
By Claim 1, $C\setminus B$ is coarsely clopen, so so is $C$.

\textbf{Proof of \ref{MainComparisonTheorem}}:
Consider a Freundenthal end $\{C_i\}_{i\ge 1}$ of $X$ with respect to its bornology. $\{C_i\}_{i\ge 1}$ is contained in some coarse end of $X$ by Claim 1. $\{C_i\}_{i\ge 1}$ cannot be contained in two different coarse ends $E_1$, $E_2$. Indeed, choose $A_1\in E_1$, $A_2\in E_2$ whose stars with respect to $\mathcal{U}$ are disjoint. 
There is $k > 1$ such that $st(A_1,\mathcal{U})\cap st(A_1^c,\mathcal{U})\subset B_k$. Notice that the union $D_1$ of $\mathcal{U}$-components of $X\setminus B_k$ containing points from $C_k\cap A_1$ does not intersect 
$C_k\cap A_1^c$. That means $C_k$ can be expressed as the union of $D_1$ and $C_k\setminus D_1$, both unbounded unions of $\mathcal{U}$-components of $X\setminus B_k$. There is $j > k$ such that either $C_j\subset D_1$ or
$C_j\subset C_k\setminus D_1$.
In the first case $C_j\notin E_2$ and in the second case $C_j\notin E_1$, a contradiction.

Given a coarse end $E$ of $X$ and given $A\in E$ there is $i\ge 1$ such that
$st(A,\mathcal{U})\cap st(A^c,\mathcal{U})\subset B_i$. Consider $j > i$ satisfying
$st(B_i,\mathcal{U})\subset B_j$. Notice $A\setminus B_j$ is a union of $\mathcal{U}$-components of $X\setminus B_j$. Therefore there is an umbounded $U$-component of $X\setminus B_j$ belonging to $E$. That means, starting from some $k > i$, $E$ contains exactly one unbounded component $C_n$ of $X\setminus B_n$ for all $n\ge k$. Those components can be easily extended to a Freundenthal end of $X$ contained in $E$.
\end{proof}

\begin{Theorem}
Suppose $X$ is a connected, locally compact space that is locally connected and $X$ is the union of an increasing sequence of compact subspaces $\{K_i\}_{i\ge 1}$
such that $K_i\subset int(K_{i+1})$ for each $i\ge 1$.
There is a large scale structure on $X$ such that coarse ends of $X$ are in are in one-to-one correspondence with Freundenthal ends of $X$.
\end{Theorem}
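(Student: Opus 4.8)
The plan is to build a large scale structure on $X$ whose bounded sets are exactly the relatively compact ones and then to invoke Theorem \ref{MainComparisonTheorem}. Since $X$ is exhausted by the compacta $K_i$, ``bounded'' will mean ``contained in some $K_i$'', so the bornology will have $\{K_i\}$ as an increasing basis. To produce uniformly bounded covers I first note that $X$, being $\sigma$-compact, locally compact and Hausdorff, is paracompact, and that local connectedness lets me choose a locally finite open cover $\mathcal{U}$ of $X$ all of whose members are connected and relatively compact. I then let $\mathbb{LSS}$ be the large scale structure generated by $\mathcal{U}$, declaring a cover to be uniformly bounded when each of its members lies in a member of some iterated star $\mathcal{U}^{(n)}$, where $\mathcal{U}^{(1)}=\mathcal{U}$ and $\mathcal{U}^{(n+1)}=st(\mathcal{U}^{(n)},\mathcal{U})$.

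The single observation that drives the verification is that local finiteness makes $st(L,\mathcal{U})$ relatively compact whenever $L$ is, because a compact set meets only finitely many members of $\mathcal{U}$; iterating, every member of every $\mathcal{U}^{(n)}$ is relatively compact. From this I would read off that $\mathbb{LSS}$ is a large scale structure, that it is coarsely connected, that its bounded sets are precisely the relatively compact sets, and (using that $X$ is connected with connected charts, so the finitely many members of $\mathcal{U}$ covering a given $K_i$ lie in one member of a suitable $\mathcal{U}^{(n)}$) that each $K_i$ is bounded; hence $\{K_i\}$ is the desired basis. Finally, since the members of each $\mathcal{U}^{(n)}$ are $\mathcal{U}$-connected, $X$ is large scale chain-connected via $\mathcal{U}$.

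Next I would check the two hypotheses of Theorem \ref{MainComparisonTheorem} for a bounded set $B\subset K_i$. The crucial topological input is the classical finiteness statement: for compact $K$ with $K\subset int(K')$ and $K'$ compact, every component of $X\setminus K$ meeting $X\setminus K'$ must meet the compact frontier $\partial K'$ (it meets $int(K')$ because its closure meets $\partial K\subset int(K')$, and it meets $X\setminus K'$, so by connectedness it crosses $\partial K'$); as a partition of the compact $\partial K'$ into relatively open pieces is finite, only finitely many components of $X\setminus K$ are not contained in $K'$. This yields both that $X\setminus K$ has finitely many non-relatively-compact components and that the union of its relatively compact components is relatively compact. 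To pass from topological components to $\mathcal{U}$-components I would thicken: replacing $B$ by $st(B,\mathcal{U})$, any member of $\mathcal{U}$ meeting $X\setminus st(B,\mathcal{U})$ is disjoint from $B$ and therefore, being connected, lies in one topological component of $X\setminus B$; so a $\mathcal{U}$-chain in $X\setminus st(B,\mathcal{U})$ never crosses $B$, and each $\mathcal{U}$-component of $X\setminus st(B,\mathcal{U})$ sits inside a single topological component of $X\setminus B$. The finiteness and boundedness facts above then transfer to the $\mathcal{U}$-components, giving both hypotheses.

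With the hypotheses in hand, Theorem \ref{MainComparisonTheorem} identifies the coarse ends of $X$ with its large scale Freundenthal ends, the latter built from $\mathcal{U}$-components of the $X\setminus B_i$. It remains to match these with the topological Freundenthal ends built from the components of $X\setminus K_i$. Both end spaces are inverse limits, over a cofinal sequence of bounded sets, of the finite sets of unbounded components, so it suffices to interleave the two systems: the thickening argument shows each $\mathcal{U}$-component of $X\setminus st(B,\mathcal{U})$ lies in a topological component of $X\setminus B$, and conversely each topological component of $X\setminus B$ lies in a single $\mathcal{U}$-component of $X\setminus B$ (its $\mathcal{U}$-chain-components are open, and it is connected). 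Alternating these inclusions along $\{K_i\}$ and $\{st(K_i,\mathcal{U})\}$ produces an isomorphism of the two inverse systems and hence the required bijection. I expect the main obstacle to be exactly this comparison of $\mathcal{U}$-components with topological components, together with the verification that the union of the bounded components stays bounded; once the thickening device is set up correctly, the classical frontier argument does the rest.
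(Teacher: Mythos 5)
Your proposal is correct in substance and shares the paper's overall skeleton---construct a large scale structure compatible with the topology, verify the hypotheses of Theorem \ref{MainComparisonTheorem}, and apply it---but the two technical steps are carried out differently, and in one respect you go further than the paper. For the structure itself, the paper takes \emph{all} open covers by connected sets whose stars of compacta are pre-compact, with basic cover the components of the annuli $int(K_{i+2})\setminus K_i$; you instead generate the structure by iterated stars of a single locally finite cover by connected relatively compact open sets. For the component-finiteness hypothesis, the paper argues purely by chains: cover $cl(B)$ by finitely many elements $U_1,\dots,U_k$ of its basic cover $\mathcal{U}$, observe that any $\mathcal{U}$-chain from a point of $X\setminus B$ to $B$ must pass next to some $U_i$, and conclude that $X\setminus B$ has at most $k$ $st(\mathcal{U},\mathcal{U})$-components altogether; you instead import the classical topological argument (a component of $X\setminus K$ not contained in $K'$ must cross the compact frontier $\partial K'$, and an open partition of a compactum is finite) and transfer it to $\mathcal{U}$-components by thickening. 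Both work; the paper's count is shorter and needs no point-set input beyond pre-compactness of stars, while yours reuses classical end theory and, importantly, sets up exactly the dictionary (topological components versus $\mathcal{U}$-components) needed at the end. Indeed, Theorem \ref{MainComparisonTheorem} only matches coarse ends with the bornological Freudenthal ends defined via $\mathcal{U}$-components, and the paper's proof stops there; your interleaving of the two inverse systems along $\{K_i\}$ and $\{st(K_i,\mathcal{U})\}$ makes the identification with the genuinely topological Freudenthal ends explicit, a step the paper glosses over.

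Two caveats. First, your appeal to paracompactness does not by itself produce the cover you want: a locally finite open refinement of the cover by connected relatively compact open sets need not have connected members, and replacing members by their components can destroy local finiteness (an open set may have infinitely many components accumulating at a point outside it). The cover does exist, but build it from the exhaustion: cover each compact annulus $K_{i+1}\setminus int(K_i)$ by finitely many connected relatively compact open sets contained in $int(K_{i+2})\setminus K_{i-1}$; the resulting family is star-finite, hence locally finite (this is essentially the paper's own device, and in fact your argument never needs local finiteness per se, only that stars of relatively compact sets are relatively compact, which this cover has). Second, in the transfer step, components of $X\setminus B$ are problematic when $B$ is not closed, since they need not be open; run the frontier argument for $X\setminus cl(B)$ instead (an open set disjoint from $B$ is disjoint from $cl(B)$, so your thickening is unaffected), and when passing from unbounded $\mathcal{U}$-components of $X\setminus B$ back to topological components, use the fact you already record---that a topological component of $X\setminus cl(B)$, being open and connected, lies in a single $\mathcal{U}$-component of $X\setminus B$---to obtain the injection that yields both finiteness and boundedness of the union of bounded components. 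With these repairs the proof is complete.
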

\begin{proof}
Consider all open covers $\mathcal{V}$ of $X$ consisting of open connected subsets of $X$ with the property that $st(K,\mathcal{V})$ is pre-compact for each compact subset $K$ of $X$. The large scale structure on $X$ consists of all refinements of such covers.

The basic cover $\mathcal{U}$ consists of components of sets $int(K_{i+2})\setminus K_i$ for $i\ge 0$, where we put $K_0=\emptyset$. That implies the cover of $X$ consisting of singletons is indeed uniformly bounded and the large scale is legit.

Given two covers $\mathcal{V}, \mathcal{W}$ of $X$ consisting of open connected subsets of $X$ with the property that $st(K,\mathcal{V})$ is pre-compact for each compact subset $K$ of $X$, elements of the cover $st(\mathcal{V},\mathcal{W})$ are $\mathcal{W}$-connected, so the large scale is large scale chain-connected.

Given a bounded subset $B$ of $X$, cover $cl(B)$ by finitely many elements $U_1,\ldots, U_k$ of $\mathcal{U}$. Any $\mathcal{U}$-chain joining $x\in X\setminus B$ to $b\in B$ must intersect one of $U_i$. Therefore $X\setminus B$ has only finitely many $st(\mathcal{U},\mathcal{U})$-components.
Apply \ref{MainComparisonTheorem}.

\end{proof}

\section{Large scale groups}

In this section we introduce the concept of a large scale group that generalizes the following classes of groups:\\
1. Finitely generated groups with word metrics,\\
2. Countable groups with proper left-invariant metrics,\\
3. Locally compact topological groups.

Ideally, the name of those groups should be coarse groups but it has been already used in literature for similar but different objects (see \cite{LV}).

See \cite{BDM} for a discussion of large scale structures on a group induced by right-invariant metrics versus left-invariant metrics.

Recall that a \textbf{bornology} on a set $X$ is a cover of $X$ that is stable under inclusion and is stable under finite unions.

\begin{Definition}
A \textbf{large scale group} is a group $G$ equipped with a large scale structure $\mathcal{LSS}$ induced by a bornology $\mathcal{B}$. That means every uniformly bounded cover in $\mathcal{LSS}$ is a refinement of $\{g\cdot B\}_{g\in G}$ for some $B\in \mathcal{B}$.
\end{Definition}

\begin{Lemma}
A bornology $\mathcal{B}$ on a group $G$ induces a large scale structure on $G$ if and only if $\mathcal{B}$ is stable under inverses and products.
\end{Lemma}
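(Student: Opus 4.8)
The plan is to reduce the whole statement to one computation of stars of the basic covers $\{gB\}_{g\in G}$, $B\in\mathcal{B}$, and then read off both directions from it. First I would dispose of the refinement condition (condition 2 in the definition of a large scale space): it holds automatically for $\mathcal{LSS}$, since a refinement of a refinement of some $\{gB\}_{g\in G}$ is again a refinement of $\{gB\}_{g\in G}$. So the entire content of ``induces a large scale structure'' is the star condition (condition 1). Using the standard monotonicity of stars (if $\mathcal{U}$ refines $\mathcal{U}_0$ and $\mathcal{V}$ refines $\mathcal{V}_0$, then $st(\mathcal{U},\mathcal{V})$ refines $st(\mathcal{U}_0,\mathcal{V}_0)$), it suffices to verify the star condition on the basic covers, which reduces the problem to understanding $st(\{gB_1\}_g,\{hB_2\}_h)$.

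The crux is the identity
$$st\big(\{gB_1\}_{g\in G},\{hB_2\}_{h\in G}\big)=\{\,g\,B_1B_2^{-1}B_2\,\}_{g\in G}.$$
To obtain it I would observe that $hB_2\cap gB_1\neq\emptyset$ if and only if $h\in gB_1B_2^{-1}$, so that $st(gB_1,\{hB_2\}_h)=(gB_1B_2^{-1})B_2=g\,B_1B_2^{-1}B_2$. Throughout I identify the bounded subsets of the induced structure with the members of $\mathcal{B}$; the one delicate normalization point is that this identification presupposes $\mathcal{B}$ is left-invariant, which on the ``if'' side comes for free (product-stability together with $\{g\}\in\mathcal{B}$ forces $gB=\{g\}B\in\mathcal{B}$) and which I treat as built into the meaning of the bornology of bounded sets on the ``only if'' side.

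For the ``if'' direction, assuming $\mathcal{B}$ is stable under inverses and products, the set $B_1B_2^{-1}B_2$ again lies in $\mathcal{B}$, so the star cover displayed above is exactly the basic uniformly bounded cover generated by $B_1B_2^{-1}B_2$; hence the star condition holds on basic covers, and by the monotonicity reduction it holds for all uniformly bounded covers. For the ``only if'' direction I would exploit the identity with convenient choices, after the harmless enlargement replacing $B$ by $B\cup\{e\}$ (legitimate because singletons lie in $\mathcal{B}$ and, e.g., $B^{-1}\subseteq(B\cup\{e\})^{-1}$), so that I may assume $e$ belongs to every set in play. Taking $B_1=\{e\}$ shows $B_2^{-1}B_2\in\mathcal{B}$, and since $e\in B_2$ we get $B_2^{-1}=B_2^{-1}\{e\}\subseteq B_2^{-1}B_2$, whence $B_2^{-1}\in\mathcal{B}$ by stability under inclusion; that is inverse-stability. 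Feeding $B_2^{-1}$, now known to be in $\mathcal{B}$, back into the identity in place of the second set yields $B_1B_2B_2^{-1}\in\mathcal{B}$, and since $e\in B_2^{-1}$ we get $B_1B_2\subseteq B_1B_2B_2^{-1}$, whence $B_1B_2\in\mathcal{B}$; that is product-stability.

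The step I expect to be the main obstacle is not the algebra but the bookkeeping that converts ``the star cover $\{gC\}_g$ is uniformly bounded'' into ``the generating set $C$ belongs to $\mathcal{B}$'': a priori uniform boundedness only delivers $C\subseteq k_0B_3$ for some translate, so one must be careful that genuine membership in $\mathcal{B}$, not merely boundedness, is what is extracted at each application. The combination of the $e\in B$ normalization with left-invariance of $\mathcal{B}$ is precisely what closes this gap, and getting that interface stated cleanly is the part that needs care.
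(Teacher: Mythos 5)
Your proposal is correct and takes essentially the same route as the paper: both arguments rest on the star identity $st\big(\{gB_1\}_{g\in G},\{hB_2\}_{h\in G}\big)=\{g\,B_1B_2^{-1}B_2\}_{g\in G}$ together with the observation that the star of basic covers refines a basic cover exactly when $B_1B_2^{-1}B_2$ stays in $\mathcal{B}$. The only differences are in your favor: you actually prove, via the $e$-normalization trick, the equivalence ``$\mathcal{B}$ stable under inverses and products $\iff$ $B_1B_2^{-1}B_2\in\mathcal{B}$ for all $B_1,B_2$,'' which the paper merely asserts in its opening sentence, and you explicitly flag the identification of $\mathcal{B}$ with the bounded sets of the induced structure (left-invariance), a point the paper uses silently.
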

\begin{proof}
Notice $\mathcal{B}$ is stable under inverses and products if and only if for all non-empty $B_1,B_2\in \mathcal{B}$ the set $(B_1\cdot B_2^{-1})\cdot B_2$ belongs to $\mathcal{B}$.

Let $\mathcal{U}:=\{g\cdot B_2\}_{g\in G}$. Notice $(B_1\cdot B_2^{-1})\cdot B_2=\bigcup\limits_{g\in G} B_1\cap (g\cdot B_2)=st(B_1,\mathcal{U})$.
Conversely, if $B_3:=\bigcup\limits_{g\in G} B_1\cap (g\cdot B_2)\in \mathcal{B}$,
then $st(h\cdot B_1,\mathcal{U})\subset h\cdot B_3$ for all $h\in G$.
\end{proof}

\begin{Corollary}
Large scale groups include the following classes of groups:\\
1. Finitely generated groups with word metrics,\\
2. Countable groups with proper left-invariant metrics,\\
3. Locally compact topological groups.
\end{Corollary}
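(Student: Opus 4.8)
The plan is to apply the preceding Lemma, which reduces the problem to exhibiting, for each of the three classes, a bornology $\mathcal{B}$ on $G$ whose induced large scale structure is the intended one, and then checking that $\mathcal{B}$ is stable under inverses and products. In every case the natural candidate is the bornology of ``bounded'' sets: the metrically bounded subsets in cases 1 and 2, and the relatively compact subsets in case 3. First I would record that each of these is genuinely a bornology (a cover closed under subsets and finite unions) and that it satisfies the lowest form of coarse connectivity, since a finite union of bounded (resp.\ relatively compact) sets is again bounded (resp.\ relatively compact). Granting stability under inverses and products, the preceding Lemma then produces a large scale structure, and a short comparison shows that the uniformly bounded covers obtained as refinements of $\{g\cdot B\}_{g\in G}$ coincide with the metric covers $\mathcal{U}_r$ (resp.\ with the covers by translates of compacta), i.e.\ with the standard coarse structure.

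For cases 1 and 2 I would work with the length function $|g|:=d(e,g)$ of a left-invariant metric $d$, so that $d(x,y)=|x^{-1}y|$ and, by symmetry of $d$, $|g^{-1}|=|g|$. The point to observe is that properness forces the closed balls $\bar B(e,r)$ to be finite: for a finitely generated group this is immediate, as $\bar B(e,r)$ consists of words of length at most $r$ in a finite generating set, while for a countable group with a proper left-invariant metric it is exactly what properness means in the discrete setting. Hence every bounded set is finite, and stability becomes immediate, since the inverse of a finite set is finite and the product of two finite sets is finite. Thus $\mathcal{B}$ is closed under inverses and products and the Lemma applies. Finally I would verify that $\{g\cdot \bar B(e,r)\}_{g\in G}$ refines $\mathcal{U}_{2r}$ while $\mathcal{U}_r$ refines $\{g\cdot \bar B(e,r)\}_{g\in G}$, which identifies the induced structure with the metric one.

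For case 3 I would take $\mathcal{B}$ to be the relatively compact subsets of the locally compact topological group $G$. Here stability is clean and conceptual rather than metric: inversion $g\mapsto g^{-1}$ is a homeomorphism, so $cl(B^{-1})=cl(B)^{-1}$ is compact whenever $cl(B)$ is; and multiplication $G\times G\to G$ is continuous, so $cl(B_1)\cdot cl(B_2)$ is compact, being the image of a product of compacta, and it contains $B_1\cdot B_2$. Thus $\mathcal{B}$ is stable under inverses and products and the Lemma yields the large scale structure, whose uniformly bounded covers are the refinements of $\{g\cdot K\}_{g\in G}$ for $K$ compact; this is the usual coarse structure of a locally compact group.

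The main obstacle I anticipate is the inverse-stability in the metric cases. For a general left-invariant metric, right multiplication is not an isometry, so $B^{-1}$ need not be bounded merely because $B$ is; this is precisely the place where properness is indispensable, as it collapses the bornology to the finite sets and makes the stability conditions automatic. In case 3 the analogous difficulty evaporates, because compactness, unlike boundedness, is preserved by the continuous maps of inversion and multiplication. Beyond this, the only remaining care is the routine bookkeeping of checking that the bornology-induced large scale structure agrees with the metric, respectively topological, coarse structure one started with.
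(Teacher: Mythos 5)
Your proposal is correct and takes essentially the same approach as the paper: both apply the preceding Lemma, using the bornology of finite subsets in cases 1 and 2 (which, as you note via properness and finiteness of word-metric balls, coincides with the metrically bounded sets) and the bornology of relatively compact subsets in case 3, whose stability follows from continuity of inversion and multiplication. The paper's proof is simply a terser version of yours, omitting the routine verification that the induced large scale structure agrees with the standard metric (resp.\ topological) one.
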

\begin{proof}
In cases 1) and 2) the bornologies consist of all finite subsets of $G$.

In case 3) the bornology $\mathcal{B}$ consist of all subsets of compact sets in $G$. Indeed, given $B_1,B_2\in \mathcal{B}$ the set
 $(B_1\cdot B_2^{-1})\cdot B_2$ is pre-compact (its closure is compact).
\end{proof}

\begin{Lemma}\label{CoversOfSubgroupsLemma}
Given a subgroup $H$ of a group $G$ the restriction of a cover
$\{g\cdot B\}_{g\in G}$ to $H$ is a refinement of the cover $\{h\cdot ((B^{-1}\cdot B)\cap H)\}_{h\in H}$ of $H$.
\end{Lemma}
\begin{proof}
Since $B$ is non-empty, so is $ B^{-1}\cdot B$. For each $g\in G$ such that $H\cap (g\cdot B)\ne\emptyset$ pick $h_g\in H\cap (g\cdot B)$.
If $h\in (g\cdot B)\cap H$,
then $b:=g^{-1}\cdot h\in B$ and $b_g:=g^{-1}\cdot h_g\in B$, so $h=g\cdot b=h_g\cdot b_g^{-1}\cdot b\in h_g\cdot ((B^{-1}\cdot B)\cap H)$.
\end{proof}

\begin{Corollary}
Given a subgroup $H$ of a large scale group $G$ the induced large scale on $H$ equals the large scale generated by the restriction of the bornology of $G$ to $H$.
\end{Corollary}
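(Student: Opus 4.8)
The plan is to show that the two large scale structures on $H$ contain one another, exploiting the fact that each is determined by an explicit family of generating covers together with axiom 2 of a large scale structure (a cover belongs to the structure exactly when it refines a generating cover). Write $\mathcal{B}|_H := \{B\cap H : B\in \mathcal{B}\}$ for the restricted bornology; since $\mathcal{B}$ is stable under inverses and products and $H$ is a subgroup, $\mathcal{B}|_H$ is stable under inverses and products, so by the earlier lemma it genuinely induces a large scale structure on $H$ whose uniformly bounded covers are precisely the refinements of $\{h\cdot (B\cap H)\}_{h\in H}$, $B\in \mathcal{B}$. The induced (subspace) structure on $H$, on the other hand, has as its uniformly bounded covers the refinements of the restrictions $\{(g\cdot B)\cap H\}_{g\in G}$ of the generating covers $\{g\cdot B\}_{g\in G}$ of $G$. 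Thus it suffices to compare these two explicit families of generating covers.

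For the inclusion ``induced $\subseteq$ generated'' I would invoke Lemma \ref{CoversOfSubgroupsLemma} directly: it asserts that the restriction $\{(g\cdot B)\cap H\}_{g\in G}$ refines $\{h\cdot ((B^{-1}\cdot B)\cap H)\}_{h\in H}$. Because $B^{-1}\cdot B\in \mathcal{B}$, the set $(B^{-1}\cdot B)\cap H$ lies in $\mathcal{B}|_H$, so the right-hand cover is a generating cover of the structure generated by $\mathcal{B}|_H$. Consequently any cover of $H$ that refines a restricted generating cover of $G$ also refines a generating cover of $\mathcal{B}|_H$, and therefore belongs to the generated structure.

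For the reverse inclusion ``generated $\subseteq$ induced'' the needed observation is elementary and requires no lemma: for $h\in H$ one has $h\cdot (B\cap H)\subseteq (h\cdot B)\cap H$, since the moved factor lies in $B$ and, as $h\in H$, the whole product lies in $H$. Hence $\{h\cdot (B\cap H)\}_{h\in H}$ refines the restriction $\{(g\cdot B)\cap H\}_{g\in G}$ (take $g=h$), so every generating cover of the $\mathcal{B}|_H$-structure already sits inside the induced structure, which gives the containment.

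The only point demanding care — and it is minor — is keeping the direction of refinement straight while chaining the three refinements in the first inclusion (the original cover refines a restricted generating cover of $G$, which by the lemma refines a generating cover of $\mathcal{B}|_H$) and then appealing to transitivity of ``is a refinement of'' to conclude membership. I anticipate no genuine obstacle: the entire content is carried by Lemma \ref{CoversOfSubgroupsLemma} and the trivial inclusion $h\cdot (B\cap H)\subseteq (h\cdot B)\cap H$.
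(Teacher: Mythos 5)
Your proof is correct and takes essentially the same route the paper intends: the Corollary appears immediately after Lemma \ref{CoversOfSubgroupsLemma} with no written proof precisely because its content is the two refinements you chain together, namely the Lemma for the containment of the induced structure in the generated one, and the trivial inclusion $h\cdot (B\cap H)\subseteq (h\cdot B)\cap H$ for the reverse. Nothing is missing.
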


\begin{Theorem}
A large scale group $G$ is of asymptotic dimension $0$ if and only if for every bounded subset $B$ of $G$ the subgroup $<B>$ of $G$ generated by $B$ is bounded.
\end{Theorem}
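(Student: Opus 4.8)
The plan is to work directly from the definition that $\asdim G=0$ means: for every uniformly bounded cover $\mathcal{U}$ of $G$ there is a uniformly bounded cover $\mathcal{V}$ by pairwise disjoint sets such that $\mathcal{U}$ refines $\mathcal{V}$, i.e. each member of $\mathcal{U}$ sits inside a member of $\mathcal{V}$. Since every uniformly bounded cover refines $\{g\cdot B\}_{g\in G}$ for some bounded $B$, it is enough to treat covers of this form, and using the Lemma characterizing bornologies of large scale groups (stability under inverses and products) I may enlarge $B$ to be symmetric and to contain the identity without leaving the bornology; then $B\subseteq B^2\subseteq\cdots$ and $\langle B\rangle=\bigcup_{n\ge1}B^n$.

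For the direction assuming that each $\langle B\rangle$ is bounded, I would take an arbitrary uniformly bounded cover, pass to a refining cover $\{g\cdot B\}_{g\in G}$, and set $H:=\langle B\rangle$. By hypothesis $H$ is bounded, so the partition of $G$ into its distinct left cosets $\{gH\}$ is a uniformly bounded cover (it is literally of the form $\{g\cdot H\}_{g\in G}$ with $H$ bounded) consisting of pairwise disjoint sets. Since $B\subseteq H$, each $g\cdot B$ lies in $gH$, so $\{g\cdot B\}$ and hence the original cover refines the disjoint cover $\{gH\}$. This exhibits $\asdim G=0$.

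For the converse, assume $\asdim G=0$ and fix a bounded symmetric $B$ with $e\in B$ (as arranged above). Apply the definition to $\mathcal{U}=\{g\cdot B\}_{g\in G}$ to obtain a uniformly bounded cover $\mathcal{V}$ by pairwise disjoint sets that $\mathcal{U}$ refines; in particular every member of $\mathcal{V}$ is bounded. Let $V_0\in\mathcal{V}$ be the member containing $B=e\cdot B$ (unique by disjointness). I claim $\langle B\rangle=\bigcup_n B^n\subseteq V_0$, which finishes the proof since $V_0$ is bounded. I would prove $B^n\subseteq V_0$ by induction on $n$: given $x=yb$ with $y\in B^n\subseteq V_0$ and $b\in B$, the translate $y\cdot B$ lies in some $V_y\in\mathcal{V}$, and since $e\in B$ we have $y\in y\cdot B\subseteq V_y$; but also $y\in V_0$, so disjointness of $\mathcal{V}$ forces $V_y=V_0$, whence $x=yb\in y\cdot B\subseteq V_0$.

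The routine points are the reduction to a symmetric generating $B$ and the transitivity of ``refines''. The step I expect to carry the real content is the inductive argument in the converse: it is precisely the pairwise disjointness of $\mathcal{V}$ (the defining feature of asymptotic dimension $0$) that prevents the chain $e,\,b_1,\,b_1b_2,\dots$ from escaping the single piece $V_0$, so that the entire generated subgroup is trapped in one bounded set. The main thing to pin down is that the ``uniformly bounded cover by pairwise disjoint sets'' used here is the correct coarse analogue of an $R$-disjoint cover, so that this notion really is equivalent to $\asdim G=0$ in the large scale formalism of this paper.
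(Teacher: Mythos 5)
Your proof is correct and follows essentially the same route as the paper's: one direction uses the partition of $G$ into left cosets of the bounded subgroup $\langle B\rangle$ as the disjoint uniformly bounded cover, and the converse traps $\langle B\rangle$ in the single piece of the disjoint cover containing $1_G$ via exactly the disjointness-forces-equality induction you describe (the paper phrases it as: if $h\in U_0$ then $h\cdot B\subset U_0$, else $h\in U_0\cap U_i$). The definition of asymptotic dimension $0$ you rely on (every uniformly bounded cover refines a disjoint uniformly bounded cover) is the one the paper uses, citing Brodskiy--Dydak--Higes--Mitra for its discussion.
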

\begin{proof}
See \cite{BDHM} for a discussion of asymptotic dimension $0$.
Given a bounded subset $B$ of $G$ consider a set of elements $\{g_i\}_{i\in J}$ representing all cosets $g\cdot <B>$. Notice $\{g_i\cdot <B>\}_{i\in J}$
is a uniformly bounded of $G$ consisting of mutually disjoint sets for which
$\{g\cdot B\}_{g\in G}$ is a refinement.

Suppose $G$ is of asymptotic dimension $0$ and $B\subset G$ is bounded and symmetric.
Choose a uniformly bounded cover $\{U_i\}_{i\in J}$ consisting of mutually disjoint sets for which
$\{g\cdot B\}_{g\in G}$ is a refinement. Let $1_G\in U_0$. If $h\in U_0$,
then $h\cdot B\subset U_0$ as otherwise $h\cdot B\subset U_i$ for some $i\ne 0$ and $h\in U_0\cap U_i$, a contradiction. Consequently, any finite product of elements of $B$ belongs to $U_0$. Hence $<B>\subset U_0$ is bounded.
\end{proof}

\begin{Definition}
A subgroup $H$ of a large scale group $G$ is of \textbf{bounded index} in $G$ if there is a bounded subset $B$ of $G$ such that $B\cdot H=G$.
\end{Definition}

\begin{Proposition}\label{BoundedIndexSubgroups}
A subgroup $H$ of a large scale group $G$ is of bounded index in $G$ if and only if the inclusion $H\to G$ is a coarse equivalence.
\end{Proposition}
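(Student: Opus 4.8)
The plan is to prove the biconditional in Proposition~\ref{BoundedIndexSubgroups} by establishing the two implications separately, with the inclusion map $\iota:H\to G$ as the candidate coarse equivalence in both directions. Throughout I would use the description of the large scale structure on a large scale group: uniformly bounded covers are refinements of $\{g\cdot B\}_{g\in G}$ for bounded $B$, and by Corollary following Lemma~\ref{CoversOfSubgroupsLemma}, the induced large scale structure on $H$ is generated by the restriction of the bornology of $G$ to $H$.

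\textbf{The forward direction.} Suppose $H$ is of bounded index, so there is a bounded $B$ with $B\cdot H=G$; without loss of generality $1_G\in B$ and $B$ is symmetric. The inclusion $\iota$ is always coarse (preimages of $H$-bounded sets are $G$-bounded) and large scale continuous (it sends a uniformly bounded cover to a uniformly bounded cover). To produce a coarse inverse $\psi:G\to H$, first I would define $\psi$ by choosing, for each $g\in G$, an element $\psi(g)\in H$ with $g\in B\cdot\psi(g)$, i.e. $\psi(g)^{-1}\in \psi(g)^{-1}g^{-1}\cdot\! g\,$—more cleanly, since $B\cdot H=G$ I can write $g=b_g\cdot h_g$ with $b_g\in B$, $h_g\in H$, and set $\psi(g)=h_g$. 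Then $g\cdot\psi(g)^{-1}=b_g\in B$ for all $g$, which immediately gives that $\iota\circ\psi$ is close to $\mathrm{id}_G$ (via the uniformly bounded cover $\{g\cdot B\}$). For $\psi\circ\iota$ close to $\mathrm{id}_H$: if $h\in H$ then $h=b_h\cdot\psi(h)$ with $b_h\in B$, so $\psi(h)^{-1}h=\psi(h)^{-1}b_h\psi(h)\cdot\ldots$—I would instead argue directly that $h\cdot\psi(h)^{-1}=b_h\in B\cap(H\cdot H^{-1})\subset H$, hence lands in the bounded set $B\cap H$, giving closeness on $H$. Finally I must check $\psi$ is itself coarse and large scale continuous, which follows from the fact that if $g_1,g_2$ lie in a common $g\cdot B'$ then $\psi(g_1),\psi(g_2)$ lie in a bounded subset of $H$, using stability of the bornology under products and inverses.

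\textbf{The reverse direction.} Suppose $\iota:H\to G$ is a coarse equivalence with coarse inverse $\psi:G\to H$. Then $\iota\circ\psi$ is close to $\mathrm{id}_G$, so there is a bounded $B\subset G$ with $g\in st(\psi(g),\{g'\cdot B\})$, i.e. $g\in \psi(g)\cdot B'$ for a bounded symmetric $B'$, after replacing $B$ by a suitable product. Since $\psi(g)\in H$, this exhibits $g\in H\cdot B'$ for every $g\in G$, so $G=H\cdot B'=B'^{-1}\cdot H$ after passing to inverses, and taking $B:=B'^{-1}$ (bounded) gives $B\cdot H=G$, i.e. bounded index.

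\textbf{The main obstacle} I anticipate is the bookkeeping in the forward direction: verifying that the chosen transversal map $\psi$ is genuinely coarse and large scale continuous, rather than just a set-theoretic section. The subtlety is that $\psi$ is defined by arbitrary choices, so controlling images of uniformly bounded covers requires carefully invoking that $\{g\cdot B\}$ generates the structure and that the bornology is closed under the operations $B_1\cdot B_2^{-1}$; concretely one shows $\psi(g_1\cdot B)\subset \psi(g_1)\cdot((B\cdot B^{-1})\cap\text{relevant set})$ in the spirit of Lemma~\ref{CoversOfSubgroupsLemma}. Once this estimate is in place the closeness statements are routine, so I would front-load the proof with this uniform-boundedness estimate for $\psi$ and then read off both closeness conditions as corollaries.
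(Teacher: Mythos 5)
Your overall strategy (exhibit $H$ as coarsely dense in $G$ and build a projection onto $H$ as the coarse inverse) is the same as the paper's, but your forward direction as written contains a genuine error: you control the wrong side of the group. From $B\cdot H=G$ you write $g=b_g\cdot h_g$ and set $\psi(g)=h_g$, then claim that $g\cdot\psi(g)^{-1}=b_g\in B$ immediately makes $\iota\circ\psi$ close to $\mathrm{id}_G$. However, the large scale structure of a large scale group is generated by \emph{left} translates $\{g'\cdot B'\}_{g'\in G}$, and two points $x,y$ lie in a common element of such a cover if and only if $x^{-1}\cdot y\in B'^{-1}\cdot B'$; hence closeness of $\iota\circ\psi$ to $\mathrm{id}_G$ means precisely that $\psi(g)^{-1}\cdot g$ stays in one fixed bounded set for all $g$. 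With your $\psi$ one has $\psi(g)^{-1}\cdot g=h_g^{-1}\cdot b_g\cdot h_g$, a conjugate of $b_g$ by an arbitrary element of $H$, and such conjugates need not be uniformly bounded. Concretely: let $G$ be the free group on $x,y$ with the word metric (bornology the finite sets), let $H=\ker(G\to\mathbb{Z}/2)$ where $x\mapsto 1$, $y\mapsto 0$, and let $B=\{1_G,x\}$, so that $B\cdot H=G$. Then the decomposition is forced, $\psi(x\cdot y^n)=y^n$, and $d(x\cdot y^n,\,y^n)=|y^{-n}\cdot x^{-1}\cdot y^n|=2n+1\to\infty$, so $\iota\circ\psi$ is not close to $\mathrm{id}_G$. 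The same conjugation problem infects, in general, your closeness claim for $\psi\circ\iota$ on $H$, since $h\cdot\psi(h)^{-1}\in B\cap H$ is again a right-sided condition.

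The fix is exactly the step the paper makes at the outset, and which you do perform in the reverse direction but omit in the forward one: take inverses of the equation. Since $H$ is a subgroup, $B\cdot H=G$ implies $H\cdot B^{-1}=G$, so every $g$ decomposes as $g=h_g\cdot c_g$ with $h_g\in H$, $c_g\in B^{-1}$; defining $\psi(g)=h_g$ from \emph{this} decomposition gives $\psi(g)^{-1}\cdot g=c_g\in B^{-1}$, bounded on the correct (left-invariant) side, after which your remaining verifications (closeness on $H$ via $B^{-1}\cap H$, large scale continuity of $\psi$ via estimates in the spirit of Lemma~\ref{CoversOfSubgroupsLemma}) go through. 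Equivalently, as the paper phrases it, $H\cdot B^{-1}=G$ says the star of $H$ with respect to the uniformly bounded cover $\{g\cdot B^{-1}\}_{g\in G}$ is all of $G$, and the inclusion of a subset with full star is a coarse equivalence. This left/right distinction is precisely the subtlety the paper flags by citing \cite{BDM}, and it — not the bookkeeping about $\psi$ being large scale continuous that you anticipated — is the real obstacle in this proof.
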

\begin{proof}
If $B\cdot H=G$, then $H\cdot B^{-1}=G$, so the star of $H$ with respect to 
$\{g\cdot B^{-1}\}_{g\in G}$ equals $G$ and the inclusion $H\to G$ is a coarse equivalence.

Conversely, if the star of $H$ with respect to 
$\{g\cdot B\}_{g\in G}$ equals $G$, then for each $g\in G$ there is $f\in G$ such that $g\in f\cdot B$ and there is $h\in H\cap (f\cdot B)$.
Since $f^{-1}\cdot g\in B$ and $f^{-1}\cdot h\in B$, so 
$h^{-1}\cdot g\in B\ast B^{-1}$ and $g\in H\cdot (B\ast B^{-1})$.
Thus $G=(B\ast B^{-1})\cdot H$ and $H$ is of bounded index in $G$.

\end{proof}

\section{Connectivity in large scale groups}

In this section we introduce concepts needed to generalize being finitely generated to being boundedly generated.

\begin{Definition}\label{KchainDef}
Suppose $K$ is a symmetric subset of a group $G$  (that means $K^{-1}=K$). A \textbf{$K$-chain} is a finite sequence $g_1,\ldots, g_k$ of elements of $G$ such that $g_i^{-1}\cdot g_{i+1}\in K$ for each $i < k$. A subset $C$ of $G$ is \textbf{$K$-connected} if every two elements of $C$ can be connected by a $K$-chain. $C$ is a \textbf{$K$-component} of $A\subset G$ if $C$ is an equivalence class of the equivalence relation $\sim$ on $A$ defined as follows: $g\sim h$ if $g$ and $h$ can be connected by a $K$-chain in $A$. We always assume $1_G\in K$ as that does not change connectivity.

If $G$ is $K$-connected, then the \textbf{$K$-norm} on $G$ is the length of the shortest $K$-chain joining $1_G$ and $g\in G$.
\end{Definition}

\begin{Definition}
A large scale group $G$ is \textbf{boundedly generated} if there is a symmetric bounded set $K$ such that every element $g$ of $G$ is a finite product of elements of $K$. Equivalently, $G$ is $K$-connected. In this case we say $G$ is \textbf{$K$-generated}.
\end{Definition}

\begin{Proposition}
Suppose $K$ is a symmetric bounded subset of a group large scale $G$. If, for some bounded subset $B$ of $G$, $G\setminus B$ has has finitely many unbounded $K$-components and the union of all bounded $K$-components is bounded, then $G$ is boundedly generated.
\end{Proposition}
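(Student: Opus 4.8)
The plan is to manufacture a single symmetric bounded set $K'$ that $K'$-connects all of $G$, and then invoke the definition of boundedly generated. The guiding idea is that $G$ decomposes as $G=B\cup D\cup C_1\cup\cdots\cup C_n$, where $D$ is the (bounded) union of the bounded $K$-components of $G\setminus B$ and $C_1,\dots,C_n$ are the finitely many unbounded $K$-components. I will collapse all the bounded debris, together with one representative chosen from each unbounded component, into a single bounded ``blob'', enlarge $K$ just enough to make that blob connected, and then let the internal $K$-connectivity of each $C_i$ carry the rest.

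Concretely, first I would choose a point $g_i\in C_i$ for each $i\le n$ and set $B_0:=B\cup D\cup\{g_1,\dots,g_n\}$. This is bounded: $B$ is bounded by assumption, $D$ is bounded by hypothesis, and $\{g_1,\dots,g_n\}$ is finite --- here the finiteness of the collection of unbounded components is precisely what keeps $B_0$ bounded. Then I would put $K':=K\cup(B_0^{-1}\cdot B_0)$. This set is symmetric, since $K$ is symmetric and $(B_0^{-1}B_0)^{-1}=B_0^{-1}B_0$; it contains $1_G$ because $B_0\ne\emptyset$; and it is bounded because the bornology of a large scale group is stable under inverses and products, so $B_0^{-1}B_0$ is bounded.

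Next I would verify that $G$ is $K'$-connected. The blob $B_0$ is $K'$-connected, since for any $b_1,b_2\in B_0$ the two-term sequence $b_1,b_2$ is a $K'$-chain because $b_1^{-1}b_2\in B_0^{-1}B_0\subset K'$. Each $C_i$ is $K$-connected by the definition of a $K$-component, and a $K$-chain is in particular a $K'$-chain, so every point of $C_i$ is $K'$-connected to $g_i\in B_0$. Since also $D\subset B_0$ and $B\subset B_0$, every element of $G=B\cup D\cup C_1\cup\cdots\cup C_n$ is $K'$-connected to $B_0$, and hence all elements of $G$ are $K'$-connected to one another through $B_0$. Thus $G$ is $K'$-connected, i.e. $K'$-generated, which is exactly the definition of boundedly generated.

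I do not anticipate a genuine obstacle; the single point that needs care is keeping $K'$ bounded, and this is where both hypotheses are consumed: finiteness of the unbounded components bounds the set of representatives $\{g_1,\dots,g_n\}$, while boundedness of the union of bounded components lets $D$ be absorbed into $B_0$. Were either hypothesis dropped, the representative set or $D$ could fail to be bounded and the construction would collapse, so the proof should also make explicit that it is exactly these two finiteness/boundedness conditions that are being used.
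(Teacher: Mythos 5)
Your proof is correct and takes essentially the same route as the paper: the paper likewise absorbs $K$, $B$, the bounded $K$-components, and one representative from each unbounded $K$-component into a single set $L$, symmetrizes it to $M=L\cup L^{-1}$, and lets the internal $K$-connectivity of the unbounded components finish the argument. The only cosmetic difference is that the paper connects everything directly to $1_G$ by placing the elements of the ``blob'' themselves into the generating set, whereas you form $B_0^{-1}\cdot B_0$ to make the blob a clique; both devices serve the identical purpose.
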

\begin{proof}
Let $L$ be the union of $K\cup B$ and of the following:\\
1. The union of all bounded $K$-components of $G\setminus B$,\\
2. One point from each non-empty unbounded $K$-component of $G\setminus B$.\\
Put $M=L\cup L^{-1}$ and notice $G$ is $M$-connected.
Indeed, the $M$-component of $1_G$ contains $B$ and all $K$-components of $G\setminus B$.
\end{proof}

\begin{Proposition}
Suppose $K$ is a symmetric bounded subset of a large scale group $G$
that is $K$-connected. If every bounded subset $B$ of $G$ can be covered by finitely many sets of the form $g\cdot K$, then for every bounded subset $L$ of $G$ its complement $G\setminus L$ has finitely many $K^4$-components.
\end{Proposition}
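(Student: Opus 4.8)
The plan is to show that each $K^4$-component of $G\setminus L$ is forced to reach a bounded \emph{boundary shell} surrounding $L$, and then to use the covering hypothesis to bound how many components that shell can meet. The role of $K$-connectivity of $G$ is to guarantee the boundary-reaching property, while the covering hypothesis is precisely what upgrades ``the shell is bounded'' (bounded need not mean finite here) into ``finitely many components''.

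First I would introduce the shell $N:=\{z\in G\setminus L : z\cdot K\cap L\neq\emptyset\}$. Since $K$ is symmetric, $N\subset L\cdot K$, and $L\cdot K$ is bounded because the bornology of a large scale group is stable under products; hence $N$ is bounded. The geometric heart of the argument is then the claim that every $K^4$-component $C$ of $G\setminus L$ contains a point of $N$ (the cases $L=\emptyset$ and $G=L$ being trivial). To see this, pick $c\in C$ and $\ell\in L$. Because $G$ is $K$-connected there is a $K$-chain $c=z_0,z_1,\dots,z_m=\ell$ in $G$; letting $i$ be the least index with $z_i\in L$, we get $z_{i-1}\in G\setminus L$ with $z_i\in z_{i-1}\cdot K$, so $z_{i-1}\in N$. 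Moreover $z_0,\dots,z_{i-1}$ is a $K$-chain, hence a $K^4$-chain, lying entirely in $G\setminus L$, so $z_{i-1}$ shares a $K^4$-component with $c$; that is, $z_{i-1}\in C\cap N$.

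Finally I would invoke the hypothesis to cover the bounded set $N$ by finitely many translates $g_1\cdot K,\dots,g_n\cdot K$, and observe that each translate can meet at most one $K^4$-component: if $a,b\in N\cap(g_j\cdot K)$, then $a^{-1}b\in K^{-1}K=K^2\subset K^4$ (using $1_G\in K$), and since $a,b\in G\setminus L$ the two-term sequence $a,b$ is a $K^4$-chain in $G\setminus L$, whence $a$ and $b$ lie in the same $K^4$-component. As $N$ is covered by the $n$ translates and every $K^4$-component meets $N$, there are at most $n$ such components.

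The step I expect to carry all the content is the boundary-crossing argument of the second paragraph, together with the recognition that mere boundedness of $N$ is insufficient (bounded sets in a general large scale group can be infinite) and that the covering hypothesis is exactly the missing ingredient. I note that the argument in fact produces finitely many $K^2$-components already, so the exponent $4$ in the statement is comfortably more than needed.
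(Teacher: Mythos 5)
Your proof is correct and is essentially the paper's argument in a slightly reorganized form: both rest on a bounded shell around $L$ (yours is $N$, the paper's is $\st(L,\{g\cdot K\}_{g\in G})$), a finite cover of that shell by translates $g_i\cdot K$, truncation of a $K$-chain at its first approach to $L$, and the pigeonhole observation that points attached to the same translate lie in one component. Your closing remark that the argument already bounds the number of $K^2$-components (so $K^4$ is more than needed) is a genuine, if minor, sharpening that the paper does not record.
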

\begin{proof}
Let $\mathcal{U}:=\{g\cdot K\}_{g\in G}$ and $M:=st(L,\mathcal{U})$.
Choose $g_i\in G$, $i\leq m$, such that $M\subset \bigcup\limits_{i=1}^m g_i\cdot K$.
Pick $a\in L$ (if $L=\emptyset$, then $G\setminus L$ has exactly one $K$-component) and for each $x\in G\setminus L$ choose a $K$-chain $c_x$ joining
$x$ to $a$. Let $i(x)\leq m$ be the first index of $ \bigcup\limits_{i=1}^m g_i\cdot K$ encountered by $c_x$ and let $l(x)$ be link of $c_x$ preceding 
meeting of $ \bigcup\limits_{i=1}^m g_i\cdot K$ or $l(x)=x$ if $x\in \bigcup\limits_{i=1}^m g_i\cdot K$.
If $x,y\in G\setminus L$ have the same index $i(x)=i(y)$, then $l(x)\cdot l(y)^{-1}\in K^4$ using the chain $l(x)\to g_{i(x)}\cdot K\to l(y)$. That means $G\setminus L$ has at most $m$ $K^4$-components.
\end{proof}

\begin{Corollary}
Suppose $K$ is a symmetric neighborhood of $1_G$ in a locally compact topological group $G$ such that $G=<K>$. For every bounded subset $L$ of $G$ its complement $G\setminus L$ has finitely many $K^4$-components.
\end{Corollary}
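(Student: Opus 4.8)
The plan is to derive this as a direct corollary of the preceding Proposition by verifying its two hypotheses in the topological setting. The Proposition requires that $K$ be a symmetric bounded subset of a large scale group $G$ with $G = \langle K\rangle$ (i.e. $G$ is $K$-connected), and that every bounded subset $B$ of $G$ can be covered by finitely many translates $g\cdot K$. Under these hypotheses the conclusion about finitely many $K^4$-components is exactly what we want. So the entire task reduces to checking that a symmetric neighborhood $K$ of $1_G$ generating a locally compact topological group $G$ satisfies these assumptions, where $G$ carries the large scale structure from Case 3, whose bornology consists of all subsets of compact sets.

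First I would confirm $K$ is bounded. Since $K$ is a neighborhood of $1_G$, its closure need not be compact for a general locally compact group, so here I would either invoke local compactness to shrink $K$ to a symmetric neighborhood with compact closure, or more safely note that boundedness in this large scale structure means being contained in a compact set; a symmetric open neighborhood of $1_G$ in a locally compact group contains a symmetric neighborhood with compact closure, and replacing $K$ by such a set does not affect $\langle K\rangle = G$ (since the smaller neighborhood still generates $G$). Thus without loss of generality $K$ is bounded and symmetric with $G = \langle K\rangle$, giving $K$-connectedness.

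Next I would verify the covering hypothesis: every bounded subset $B$ of $G$ is covered by finitely many translates of $K$. A bounded $B$ is contained in a compact set $C$; since $K$ contains an open neighborhood of $1_G$, the family $\{g\cdot K\}_{g\in G}$ is an open cover of $G$ (each $g\cdot K$ containing an open set around $g$), hence an open cover of the compact $C$, so finitely many translates $g_1\cdot K,\ldots,g_m\cdot K$ suffice to cover $C\supseteq B$. This is precisely the covering condition demanded by the Proposition.

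With both hypotheses in hand, the Proposition applies directly and yields that $G\setminus L$ has finitely many $K^4$-components for every bounded $L$, completing the proof. The main obstacle I anticipate is the boundedness/compactness bookkeeping in the first step: one must be careful that ``neighborhood of $1_G$'' in an arbitrary locally compact group need not have compact closure, so the argument genuinely relies on local compactness to produce a bounded generating neighborhood, and one should confirm that passing to a smaller such neighborhood preserves the generation $G = \langle K\rangle$. Once that is settled, the rest is an immediate unwinding of the definitions and an appeal to compactness of $\mathrm{cl}(B)$.
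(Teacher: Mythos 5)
Your core reduction --- apply the preceding Proposition, with the key verification being that a bounded set has compact closure and is therefore covered by finitely many translates $g\cdot K$ (the open-cover argument using the fact that $K$ contains an open set around $1_G$) --- is exactly the paper's proof, and that part is fine. The genuine gap is your first step. The parenthetical claim that replacing $K$ by a smaller symmetric neighborhood with compact closure ``does not affect $\langle K\rangle=G$ (since the smaller neighborhood still generates $G$)'' is false: in a topological group the subgroup generated by any neighborhood of $1_G$ is an open subgroup, but open subgroups can be proper, so generation by $K$ does not pass down to sub-neighborhoods of $K$. Worse, the reduction is irreparable in general, because the Corollary does not assume $G$ is compactly generated. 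Take $G=\QQ$ with the discrete topology (so bounded $=$ finite) and $K=G$, or $G=\QQ_p$ and $K=G$: these satisfy all hypotheses of the Corollary, yet \emph{no} pre-compact subset generates $G$ at all ($\QQ$ is not finitely generated; every compact subset of $\QQ_p$ lies in some proper open subgroup $p^{-n}\ZZ_p$). So ``without loss of generality $K$ is bounded'' cannot be arranged, and your proof covers only the compactly generated case --- and even there the justification given is wrong (in discrete $\ZZ$ with $K=\ZZ$, shrinking to the compact symmetric neighborhood $\{0,\pm 2\}$ destroys generation; one must add finitely many auxiliary elements of $K$, which you never do).

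The correct resolution is the opposite of shrinking $K$: leave $K$ alone and observe that boundedness of $K$ is not what the Proposition's argument actually uses. In that proof, boundedness of $K$ serves only to make $M=st(L,\{g\cdot K\}_{g\in G})$ bounded and hence finitely coverable; but the chain argument runs verbatim if one merely covers $L$ (equivalently $cl(L)$) by finitely many sets $g_1\cdot K,\ldots,g_m\cdot K$. Indeed, any $K$-chain from $x\in G\setminus L$ to a point $a\in L$ must meet $\bigcup_{i=1}^m g_i\cdot K$ (at the latest at $a$); the link $l(x)$ preceding the first meeting lies in $g_{i(x)}\cdot K^2$ and the truncated chain from $x$ to $l(x)$ avoids $L$; so two points with the same index are joined by a single $K^4$-step outside $L$, giving at most $m$ components. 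This is precisely what the paper's one-line proof records: it covers the compact set $cl(L)$, not $K$, and never modifies $K$. (A smaller omission in your write-up: even where a pre-compact generating $K'\subset K$ exists, you would still need to pass from finitely many $K'^4$-components to finitely many $K^4$-components; this is easy since $K'\subset K$ makes every $K^4$-component a union of $K'^4$-components, but it is not addressed.)
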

\begin{proof}
The closure $cl(L)$ is compact, so it can be covered by finitely many sets of the form $g\cdot K$, $g\in G$.
\end{proof}

\begin{Proposition}\label{CyclicSubgroupProp}
Suppose $K$ is a symmetric bounded subset of a large scale group $G$ containing $1_G$, $B$ is a bounded subset of $G$ so that $G\setminus B$ has $2$ $K$-components $L$ and $R$ on which $G$ acts trivially, and both $L$ and $R$ are unbounded. $G$ has a cyclic subgroup of bounded index provided one of the following conditions is satisfied:\\
1. $B$ is $K$-connected.\\
2. $B\subset h\cdot K^n$ for some $h\in G$ and $n\ge 1$.
\end{Proposition}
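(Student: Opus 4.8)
The plan is to realize $G$ coarsely as a line and to exhibit a single ``translation'' element $t$ whose cyclic subgroup $\langle t\rangle$ is coarsely dense, so that Proposition \ref{BoundedIndexSubgroups} delivers the conclusion. Write $\overline{L}:=G\setminus R=L\cup B$, a coarsely clopen set with $G=\overline{L}\cup R$ and both pieces unbounded; these represent the two coarse ends. The first step is to unwind the phrase ``$G$ acts trivially on $L$ and $R$'': since left translation by any $g$ is a coarse self-equivalence of $G$ (it permutes the cover $\{g\cdot B\}$ and preserves $K$-chains), triviality of the induced action on the two-point end space means that each $g$ fixes both ends, and because the space is two-ended its coarsely clopen sets modulo bounded sets form a finite algebra; hence $g\,\overline{L}\,\Delta\,\overline{L}$ and $gR\,\Delta\,R$ are bounded for every $g\in G$. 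In particular every ``strip'' $R\setminus gR\subseteq gR\,\Delta\,R$ is bounded — this is the feature that will make a fundamental domain bounded. Recall also that under the standing hypotheses $G$ is boundedly generated, so coarse density of $\langle t\rangle$ is equivalent to bounded index.

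Next I would construct the translation. The idea is to choose $t\in R$ lying \emph{deep} in $R$: far enough into $R$ that $tB\subseteq R$ and $B\subseteq t\overline{L}$, so that $\overline{L}\subseteq t\overline{L}$ with $S:=t\overline{L}\setminus\overline{L}=R\setminus tR$ bounded and nonempty. Then $\overline{L}\subsetneq t\overline{L}\subsetneq t^{2}\overline{L}\subsetneq\cdots$ is a strictly increasing chain; strictness forces $t$ to have infinite order, so $\langle t\rangle\cong\mathbb{Z}$. It remains to verify the two ``escape'' statements $\bigcup_{n\ge 0}t^{n}\overline{L}=G$ and $\bigcap_{n\ge 0}t^{-n}\overline{L}=\emptyset$, i.e. that the walls $t^{n}B$ leave every bounded set as $n\to\pm\infty$. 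This is exactly where conditions 1 and 2 enter: condition 2 ($B\subseteq h\cdot K^{n}$) bounds the $K$-norm of the elements of $B$, so a $t$ chosen at $K$-distance from the wall exceeding that norm genuinely carries the whole of $B$ across itself into $R$ and forces its powers to march off; condition 1 ($B$ being $K$-connected) instead lets the wall be transported as a single connected block, producing the same effect.

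Granting strict nesting and escape, a telescoping argument finishes the proof: one gets $t^{n}\overline{L}=\overline{L}\cup\bigsqcup_{j=0}^{n-1}t^{j}S$ for $n\ge 1$ and, dually, $\overline{L}=\bigsqcup_{j\ge 1}t^{-j}S$, whence $G=\bigsqcup_{n\in\mathbb{Z}}t^{n}S=\langle t\rangle\cdot S$ with $S$ bounded. Inverting this equality gives $G=S^{-1}\cdot\langle t\rangle$ with $S^{-1}$ bounded (the bornology is stable under inverses), so the inclusion $\langle t\rangle\hookrightarrow G$ has bounded index by Proposition \ref{BoundedIndexSubgroups}. Since $t$ has infinite order, $\langle t\rangle$ is the desired cyclic subgroup of bounded index.

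The main obstacle I anticipate is precisely the construction in the second paragraph: producing a single group element whose associated half-space is \emph{strictly} nested and whose powers escape both ends, while coping with the non-commutativity of inverses (so that $t^{-n}$ really sweeps the $L$-end even though $t$ was chosen in $R$). The coarse hypothesis already makes each individual strip bounded essentially for free, but turning the merely \emph{coarse} invariance $tR\,\Delta\,R$ bounded into an honest totally ordered chain of half-spaces — which is what makes the telescoping clean — is the delicate point, and it is there that conditions 1 and 2 must do their work by controlling the wall $B$.
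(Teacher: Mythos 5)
Your outline reproduces the skeleton of the paper's own proof (powers of a single element $t$ produce nested half-spaces, the strips are bounded, telescoping plus Proposition \ref{BoundedIndexSubgroups} finishes), and the endgame is sound: strict nesting forces $t$ to have infinite order, and $G=\langle t\rangle\cdot S$ with $S$ bounded gives $G=S^{-1}\cdot\langle t\rangle$, hence bounded index. The problem is that the two steps carrying all the difficulty are left open. The first is the existence of $t$ with \emph{both} $tB\subset R$ and $B\subset t\overline{L}$. After normalizing $1_G\in B$, choosing $t\in R$ disjointly translated and far from $B$ (outside $B\cdot B^{-1}\cup B(B,n+1)$) does give $tB\subset R$: conditions 1 and 2 are exactly what force $tB$ to lie in a single $K$-component of $G\setminus B$, and $t\in tB\cap R$. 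But the second requirement is equivalent to $t^{-1}B\subset\overline{L}$, and no choice of $t$ ``deep in $R$'' controls where $t^{-1}$ sits; in a nonabelian group $t^{-1}$ may a priori also lie deep in $R$. The paper kills this bad case ($B\subset tR$) by a dedicated argument: it forces $tL=L$ (since $tL\supset L$ is $K$-connected and misses $B$), and then a boundary adjacency $c_L=c_B\cdot k$ with $c_L\in L$, $c_B\in B$, $k\in K$ (which exists because $G$ is $K$-connected, so $L$ is $K$-adjacent to $B$) yields $t\cdot c_B\in R$ $K$-adjacent to $t\cdot c_L\in L$, impossible since $L$ and $R$ are distinct $K$-components of $G\setminus B$. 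You explicitly flag this non-commutativity issue as ``the main obstacle I anticipate'' and then do not resolve it, so the construction of $t$ is incomplete.

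The second gap is the pair of escape statements $\bigcup_{k}t^{k}\overline{L}=G$ and $\bigcap_{k}t^{-k}\overline{L}=\emptyset$. These do not follow from strict nesting plus boundedness of the strips, and they are not where conditions 1 and 2 act: those conditions are already used up in placing translated copies of $B$ inside a single component. The paper's argument here is separate. First it proves $G$ is $K$-connected (this itself uses the trivial-action hypothesis, via $g:=a_L\cdot a_R^{-1}$ and a translated chain; you merely assert bounded generation), so that the $K$-distance from any $x\in G$ to any wall $t^kB$ is finite. Then, given $x$, it chooses $m$ minimizing $\dist(x,t^kB)$ over $k\in\mathbb{Z}$ and uses the separation property of consecutive walls (any $K$-chain from the far side of $t^{m\pm1}B$ to $t^mB$ must cross $t^{m\pm1}B$, contradicting minimality) to conclude $x\in t^m\cdot\bigl(B\cup(tL\setminus L)\bigr)$. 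Without some version of this minimal-distance argument, your telescoping identity $G=\bigsqcup_{k}t^{k}S$ is unjustified.
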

\begin{proof}
$G$ \textbf{acts trivially} on a subset $A$ if the symmetric difference $A\Delta (g\cdot A)$ is bounded for each $g\in G$.

Choose $a_L\in L$ and $a_R\in R$. Put $g:=a_L\cdot a_R^{-1}$.
Since $g^{-1}\cdot R\Delta R$ is bounded, there is $x\in R$ such that $g^{-1}\cdot x\in R$. Choose a $K$-chain $c$ in $R$ joining $a_R$ and $g^{-1}\cdot x$. Notice $g\cdot c$ is a $K$-chain joining $a_L$ and $x$. Therefore $G$ is $K$-connected and $B\ne\emptyset$. By switching to $c^{-1}\cdot B$,
$c^{-1}\cdot L$, and $c^{-1}\cdot R$ for some $c\in B$, we may assume $1_G\in B$.

In the case of Condition 2 we have a prescribed $n$. If it is not satisfied (i.e. Condition 1 holds), we put $n=1$.
If $g\in G\setminus (B\cdot B^{-1}\cup B(B,n+1))$, then $g\cdot B$ is disjoint from 
$B$, hence it must be contained either in $R$ or in $L$. Indeed, it is so if $B$ is $K$-connected. If it is not $K$-connected, and, say $g\in R$, then $g\cdot B$ cannot intersect $L$ as in such a case there is a $K$-chain joining $L$ to $g$ of length at most $n$, so it must intersect $B$ resulting in $g\in B(B,n+1)$, a contradiction.
 Thus $g\cdot B\subset R$ if $g\in R\setminus B\cdot B^{-1}$
and $g\cdot B\subset L$ if $g\in L\setminus B\cdot B^{-1}$

Choose $g\in R\setminus (B\cdot B^{-1}\cup B(B,n+1))$. 
Since $g\cdot B\subset R$, $L$ must be contained in $g\cdot L$. Otherwise $L\subset g\cdot R$ and $(g^{-1}\cdot L)\Delta L$ is unbounded (as it contains $L$), a contradiction.
Now, we need $g\cdot R\subset R$. It is so if $g^{-1}\cdot B\subset L$
as that implies $B\subset g\cdot L$, so $g\cdot R\subset G\setminus g\cdot L\subset G\setminus (L\cup B)=R$.
So assume $g^{-1}\cdot B\subset R$. Now, $B\subset g\cdot R$,
so $g\cdot L=L$ as $g\cdot L$ misses $B$ and is $K$-connected.
There exist elements $c_L\in L$ and $c_B\in B$ such that $c_L=c_B\cdot k$ for some $k\in K$. Now, $g\cdot c_B\cdot k=g\cdot c_L\in L$
and $ g\cdot c_B\cdot k\in (g\cdot B)\cdot k$ which means we can get from $L$ to $g\cdot B$ via a $K$-chain bypassing $B$, a contradiction.

By induction we get $g^k\cdot L\subset g^{k+1}\cdot L$ and $ g^{k+1}\cdot R\subset g^{k}\cdot R$ for all integers $k$. To complete the proof it suffices to show that the union of all sets $g^k\cdot (B\cup (g\cdot L\setminus L))$ equals $G$ as $B\cup (g\cdot L\setminus L)$ is bounded.
Given $x\in G$ find $m$ that minimizes all distances $dist(x,g^k\cdot B)$, $k\in \mathbb{Z}$, as measured via the $K$-norm. Of interest is the case of that minimum being positive.
In that case either $x\in g^m\cdot R$ or $x\in g^m\cdot L$.
In the first case $x$ cannot be in $g^{m+1}\cdot R$ as then any $K$-chain joining $x$ to $ g^m\cdot B$ passes through $ g^{m+1}\cdot B$, a contradiction. Thus $x\in g^m\cdot R\setminus g^{m+1}\cdot R\subset
(g^{m+1}\cdot L\setminus g^{m}\cdot L)\cup g^{m+1}\cdot B$.
In the second case $x$ cannot be in $g^{m-1}\cdot L$ as then any $K$-chain joining $x$ to $ g^m\cdot B$ passes through $ g^{m-1}\cdot B$, a contradiction. Thus $x\in g^m\cdot L\setminus g^{m-1}\cdot L$.
\end{proof}

\section{Svarc-Milnor Lemma for large scale groups}

Geometric group theorists traditionally restrict their attention to finitely
generated groups equipped with a word metric. A typical proof of \v Svarc-Milnor Lemma
(see \cite{Roe lectures} or \cite{BH}, p.140) involves such metrics.

 \begin{Theorem}
A group
$G$ acting properly and cocompactly via isometries on a length space $X$
is finitely generated and
induces a quasi-isometry equivalence $g\to g\cdot x_0$ for any $x_0\in X$.
\end{Theorem}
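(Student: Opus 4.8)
The plan is to run the classical argument while carefully tracking the two constants that will witness the quasi-isometry. First I would use cocompactness to fix a radius $R>0$ so large that the orbit of the ball $B(x_0,R)$ covers $X$, i.e. $X=\bigcup_{g\in G} g\cdot B(x_0,R)$; this is possible because the quotient $G\backslash X$ is compact and hence covered by the image of a single ball. Then I would introduce the candidate generating set
\[
S:=\{\,g\in G\mid d(x_0,g\cdot x_0)\le 3R\,\},
\]
and deduce that $S$ is finite from properness of the action: taking $K=\overline{B(x_0,3R)}$, every $g\in S$ satisfies $g\cdot x_0\in gK\cap K$, so $S\subseteq\{g:gK\cap K\neq\emptyset\}$, which is finite. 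Note $S$ is symmetric and contains $1_G$.

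Second, the heart of the generation step is a path-subdivision argument that uses the length-space hypothesis essentially. Given $g\in G$, since $X$ is a length space I would choose a path from $x_0$ to $g\cdot x_0$ of length close to $d(x_0,g\cdot x_0)$ and subdivide it by points $x_0=p_0,p_1,\dots,p_n=g\cdot x_0$ with $d(p_i,p_{i+1})\le R$ and with $n$ of the order $d(x_0,g\cdot x_0)/R+1$. Picking $g_i\in G$ with $p_i\in g_i\cdot B(x_0,R)$, and arranging $g_0=1_G$, $g_n=g$, the triangle inequality gives $d(g_i\cdot x_0,g_{i+1}\cdot x_0)<3R$, hence $d(x_0,g_i^{-1}g_{i+1}\cdot x_0)<3R$, so each $g_i^{-1}g_{i+1}\in S$. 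Telescoping yields $g=\prod_{i=0}^{n-1}g_i^{-1}g_{i+1}$, so $G=\langle S\rangle$ is finitely generated, and at the same time $d_S(1_G,g)\le n$ bounds the word length linearly in $d(x_0,g\cdot x_0)$.

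Third, I would assemble the two quasi-isometry inequalities for $\varphi\colon G\to X$, $\varphi(g)=g\cdot x_0$, where $G$ carries the word metric $d_S$. The subdivision count just obtained already gives one direction, $d_S(1_G,g)\le n\le C\,d(x_0,g\cdot x_0)+C$ for a suitable $C$. For the other direction, set $\lambda:=\max_{s\in S}d(x_0,s\cdot x_0)<\infty$; writing $g$ as a geodesic word $s_1\cdots s_k$ with $k=d_S(1_G,g)$ and using that each $s_i$ acts by isometry, the triangle inequality gives $d(x_0,g\cdot x_0)\le \lambda k=\lambda\,d_S(1_G,g)$. Left-invariance of $d_S$ together with $d(g\cdot x_0,h\cdot x_0)=d(x_0,g^{-1}h\cdot x_0)$ promotes both estimates to comparisons between $d_S(g,h)$ and $d(g\cdot x_0,h\cdot x_0)$. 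Finally, coarse surjectivity is exactly the covering property from the first step: every point of $X$ lies within $R$ of some $g\cdot x_0$, so the orbit is quasi-dense. These facts together show $\varphi$ is a quasi-isometry.

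I expect the main obstacle to be the generation/lower-bound step, namely verifying $g_i^{-1}g_{i+1}\in S$ while simultaneously controlling $n$ linearly in $d(x_0,g\cdot x_0)$. This is the only place where the length-space hypothesis is genuinely used and where properness must be combined with the metric geometry of $X$; the remaining inequalities are routine triangle-inequality bookkeeping once $S$ is known to be finite and generating.
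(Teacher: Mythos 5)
The paper itself gives no proof of this statement: it is quoted as the classical \v{S}varc--Milnor Lemma, with the argument deferred to \cite{Roe lectures} and \cite{BH}, p.~140. Your proposal is exactly that standard argument --- cocompactness yields $X=\bigcup_{g\in G}g\cdot B(x_0,R)$, the set $S=\{g\in G \mid d(x_0,g\cdot x_0)\le 3R\}$ is finite by properness, subdivision of almost-geodesic paths (the one place where the length-space hypothesis enters) gives both generation and the linear bound $d_S(1_G,g)\le C\,d(x_0,g\cdot x_0)+C$, and the reverse inequality together with quasi-density of the orbit are routine --- and all of your estimates check out.

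The only step requiring care is the finiteness of $S$. You apply properness to $K=\overline{B(x_0,3R)}$, but in a general length space closed balls need not be compact, so if ``acting properly'' is read as ``$\{g\in G \mid gK\cap K\ne\emptyset\}$ is finite for every \emph{compact} $K$'', the step as written does not literally apply; one needs either that $X$ is a proper metric space, or properness in the metric sense (finite return sets for every \emph{bounded} set). Since the statement leaves ``properly'' undefined, this is a definitional caveat rather than an error, and it is worth noting that the paper's own large scale generalization, Theorem \ref{Svarc-MilnorForLSGroups}, adopts precisely the bounded-set notion of properness, under which your finiteness argument is airtight. Comparing the two also clarifies what your route buys and what it costs: by passing through the word metric you obtain finite generation, which is part of the classical statement, whereas the paper's proof of Theorem \ref{Svarc-MilnorForLSGroups} works directly with uniformly bounded covers and never needs generators, geodesics, or the subdivision step at all.
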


\begin{Theorem}\cite{BDM1} 
If a group $G$ acts cocompactly and properly via isometries on a proper metric space $X$, then $g\to g\cdot x_0$ induces a coarse equivalence between $G$ and $X$
for all $x_0\in X$.
\end{Theorem}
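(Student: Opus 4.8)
The plan is to equip $G$ with the large scale group structure induced by the bornology
\[
\mathcal{B} := \{B \subseteq G \mid B\cdot x_0 \text{ is a bounded subset of } X\},
\]
and then to exhibit $\Phi\colon G \to X$, $\Phi(g) = g\cdot x_0$, as a coarse equivalence whose coarse inverse comes from cocompactness. First I would check that $\mathcal{B}$ really induces a large scale group structure. By the lemma characterizing such bornologies it suffices to show $\mathcal{B}$ is stable under inverses and products, and both reduce to the isometry hypothesis together with the triangle inequality. If $d(x_0, g\cdot x_0)\le r$ for all $g\in B$, then applying the isometry $g$ gives $d(x_0, g^{-1}\cdot x_0) = d(g\cdot x_0, x_0)\le r$, so $B^{-1}\in\mathcal{B}$; and if $d(x_0, g_i\cdot x_0)\le r_i$ for $g_i\in B_i$, then $d(x_0,(g_1 g_2)\cdot x_0)\le d(x_0,g_1\cdot x_0)+d(g_1\cdot x_0, g_1 g_2\cdot x_0) = d(x_0,g_1\cdot x_0)+d(x_0,g_2\cdot x_0)\le r_1+r_2$, so $B_1\cdot B_2\in\mathcal{B}$. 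Properness of the action is what guarantees that $\mathcal{B}$ coincides with the intrinsic bornology of $G$ referenced in the statement (finite subsets when $G$ is discrete, precompact subsets in general): if $B\cdot x_0$ lies in a compact set $C$ chosen to contain $x_0$, then every $g\in B$ satisfies $g\cdot x_0\in (g\cdot C)\cap C$, and properness bounds the set of such $g$.

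Next I would verify that $\Phi$ is coarse and large scale continuous. Large scale continuity is immediate: a uniformly bounded cover of $G$ is refined by some $\{g\cdot B\}_{g\in G}$ with $B\in\mathcal{B}$, and since each $g$ acts isometrically, $\diam(g\cdot(B\cdot x_0)) = \diam(B\cdot x_0)$ is a bound independent of $g$, so $\Phi(\{g\cdot B\})$ is uniformly bounded in $X$. Coarseness is the observation that $\Phi^{-1}(K)\cdot x_0\subseteq K$ forces $\Phi^{-1}(K)\in\mathcal{B}$ for bounded $K$, which holds by the definition of $\mathcal{B}$ and, by the previous paragraph, is the correct intrinsic notion.

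For the coarse inverse, cocompactness supplies a compact $K\subseteq X$, which we may assume contains $x_0$, with $\bigcup_{g\in G} g\cdot K = X$. Define $\psi\colon X \to G$ by choosing, for each $x$, an element $\psi(x)$ with $\psi(x)^{-1}\cdot x\in K$; no continuity of $\psi$ is needed. Writing $\rho := \diam(K)$, we get $d(x,\psi(x)\cdot x_0) = d(\psi(x)^{-1}\cdot x, x_0)\le\rho$ for all $x$, which yields everything at once. Since $d(\Phi(\psi(x)), x)\le\rho$, the composite $\Phi\circ\psi$ is close to $\mathrm{id}_X$. Applying the isometry $g^{-1}$ to $d(g\cdot x_0,\psi(g\cdot x_0)\cdot x_0)\le\rho$ gives $\psi(g\cdot x_0)\in g\cdot B_\rho$, where $B_\rho := \{h\in G\mid d(x_0,h\cdot x_0)\le\rho\}\in\mathcal{B}$ contains $1_G$, so $\psi\circ\Phi(g)\in st(g,\{h\cdot B_\rho\}_h)$ and $\psi\circ\Phi$ is close to $\mathrm{id}_G$. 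The same displacement estimate shows $\psi$ is coarse, since $S\in\mathcal{B}$ forces $\psi^{-1}(S)$ to lie within $\rho$ of $S\cdot x_0$, hence bounded; and large scale continuous, since $d(x,y)\le R$ gives $d(x_0,\psi(x)^{-1}\psi(y)\cdot x_0)\le 2\rho+R$, so $\psi(y)\in\psi(x)\cdot B_{2\rho+R}$, sending each diameter-$R$ set into one member of the uniformly bounded cover $\{h\cdot B_{2\rho+R}\}_h$.

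I expect the main obstacle to be conceptual bookkeeping rather than a single hard estimate: the genuinely essential points are confirming that $\mathcal{B}$ is a legitimate large scale group bornology and that properness identifies it with the intended structure on $G$, and the repeated use of the isometric action to transport diameter bounds across left translations, which is exactly what makes the four estimates uniform over $G$. Once the close-to-identity conclusions $\Phi\circ\psi\sim\mathrm{id}_X$ and $\psi\circ\Phi\sim\mathrm{id}_G$ are in place, the coarse equivalence follows directly from the definition.
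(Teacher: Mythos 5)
Your proof is correct, but it follows a genuinely different route from the one the paper relies on. The paper quotes this statement from [BDM1] and never proves it directly; its own argument is the generalization in Theorem \ref{Svarc-MilnorForLSGroups}, whose logic is: the orbit map is large scale continuous; properness shows that the preimage of any uniformly bounded cover of $X$ refines a cover $\{g\cdot K\}_{g\in G}$ with $K$ bounded (so the orbit map is a coarse embedding); and coboundedness shows the image $G\cdot x_0$ is coarsely dense, whence the embedding is an equivalence --- no inverse map is ever written down. You instead construct the coarse inverse $\psi$ explicitly from the compact fundamental set supplied by cocompactness and verify, with explicit constants $\rho$ and $2\rho+R$, that $\psi$ is coarse and large scale continuous and that both composites are close to identities; moreover, you pull the bornology back along the orbit ($B$ bounded iff $B\cdot x_0$ bounded), which makes coarseness of $g\mapsto g\cdot x_0$ tautological and relocates all use of properness into the identification of this pulled-back bornology with the intrinsic one on $G$, whereas the paper spends properness on the embedding estimate itself. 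Each approach has its advantages: the paper's argument survives verbatim when $X$ is a general large scale space and the action is by uniform coarse equivalences (no triangle inequality or isometry available), while yours is self-contained in the metric setting and produces concrete displacement bounds. One small point to tidy in your write-up: your properness argument gives only the inclusion of your bornology $\mathcal{B}$ into the intrinsic one; the reverse inclusion (finite, respectively precompact, subsets have bounded orbit of $x_0$) is immediate in the discrete case but in the topological-group case needs continuity of the action, which should be said explicitly --- note that in the paper's framework this reverse inclusion is built into the very definition of a proper action, so nothing extra is needed there.
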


\begin{Definition}
A group $G$ acts on a large scale space $X$ by \textbf{uniform coarse equivalences} if for every uniformly bounded cover $\{U_s\}_{s\in S}$ of $X$ the cover $\{g\cdot U_s\}_{s\in S, g\in G}$ is uniformly bounded.

The action is \textbf{cobounded} if there is a bounded subset $B$ of $X$ so that $G\cdot B=X$. 

If $G$ is a large scale group, then the action is \textbf{proper} if for every bounded subset $B$ of $X$ the set $\{g\in G | (g\cdot B)\cap B\ne\emptyset\}$ is bounded in $G$ and for every bounded subset $K$ of $G$ the set $K\cdot x$ is bounded in $X$ for each $x\in X$.
\end{Definition}

\begin{Theorem} \label{Svarc-MilnorForLSGroups}
Suppose a large scale group $G$ acts by uniform coarse equivalences on a large scale space $X$. If the action is proper and cobounded, then for each $x_0\in X$ the map $g\to g\cdot x_0$ is a coarse equivalence.
\end{Theorem}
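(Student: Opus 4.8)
The goal is a large‑scale analog of the Švarc–Milnor Lemma: under the stated hypotheses the orbit map $\f\colon G\to X$, $\f(g)=g\cdot x_0$, is a coarse equivalence. By definition I must produce a coarse, large scale continuous map $\psi\colon X\to G$ which is a coarse inverse to $\f$. The natural candidate is a ``section'' of the orbit map built from coboundedness: fix a bounded $B\subset X$ with $G\cdot B=X$, and for each $y\in X$ choose some $\psi(y)\in G$ with $y\in \psi(y)\cdot B$ (i.e. $\psi(y)^{-1}\cdot y\in B$). The whole proof then consists of checking that this $\psi$ and the given $\f$ satisfy the three requirements in the definition of coarse equivalence.

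\textbf{Step 1: $\f$ is large scale continuous and coarse.} Large scale continuity of $\f$ is immediate from the action being by uniform coarse equivalences: a uniformly bounded cover of $G$ is a refinement of $\{g\cdot K\}_{g\in G}$ for a bounded $K$, and its image under $\f$ refines $\{g\cdot(K\cdot x_0)\}_{g\in G}$, which is uniformly bounded in $X$ precisely because $K\cdot x_0$ is bounded (this is the second clause of properness) and the action sends uniformly bounded covers to uniformly bounded covers. For coarseness I must show $\f^{-1}(\text{bounded})$ is bounded: if $\f(g)\in L$ for a bounded $L\subset X$, enlarge $L$ to a bounded $B'$ with $x_0\in B'$; then $g\cdot x_0\in L\subset B'$ and $x_0\in B'$ force $(g\cdot B')\cap B'\ne\emptyset$, so $g$ lies in the set $\{g\mid (g\cdot B')\cap B'\ne\emptyset\}$, which is bounded in $G$ by the first clause of properness.

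\textbf{Step 2: $\psi$ is well defined, large scale continuous, and coarse.} Well‑definedness uses coboundedness. For large scale continuity of $\psi$ I would take $y,y'$ close in $X$, say $y'\in st(y,\mathcal V)$ for a uniformly bounded $\mathcal V$, and show $\psi(y')\in \psi(y)\cdot K'$ for a bounded $K'$ independent of $y$; writing $\psi(y)^{-1}y,\ \psi(y')^{-1}y'\in B$ and pushing the relation $y'\in st(y,\mathcal V)$ back by the element $\psi(y)^{-1}$ (legitimate since the action is by uniform coarse equivalences, so $\psi(y)^{-1}\cdot\mathcal V$ is uniformly bounded uniformly in the group element) reduces the problem to a bounded computation inside $X$ and then transfers it to $G$ via properness. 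Coarseness of $\psi$ follows symmetrically to Step 1: if $\psi(y)$ ranges over a bounded $K\subset G$, then $y\in \psi(y)\cdot B\subset K\cdot B$, and $K\cdot B$ is bounded in $X$ (again the second clause of properness, applied to the bounded set $K$).

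\textbf{Step 3: the compositions are close to the identities.} From $\psi(y)^{-1}\cdot y\in B$ one gets $y\in \f(\psi(y))\cdot(\text{something bounded})$ wait---more precisely $y$ and $\f(\psi(y))=\psi(y)\cdot x_0$ both lie in the bounded set $\psi(y)\cdot(B\cup\{x_0\})$, so $\f\circ\psi$ is close to $\mathrm{id}_X$ via the uniformly bounded cover $\{g\cdot(B\cup\{x_0\})\}_{g\in G}$. For the other composition, given $g\in G$ the element $\psi(\f(g))$ satisfies $\psi(g\cdot x_0)^{-1}\cdot(g\cdot x_0)\in B$, hence $(\psi(g\cdot x_0)^{-1}g)\cdot x_0\in B$; since $x_0\in B\cup B$ this forces $\psi(g\cdot x_0)^{-1}g$ into the bounded set $\{h\mid (h\cdot B'')\cap B''\ne\emptyset\}$ (properness, first clause), so $g$ and $\psi(\f(g))$ differ by a bounded factor, i.e. $\psi\circ\f$ is close to $\mathrm{id}_G$.

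\textbf{Main obstacle.} The genuinely delicate point is Step 2, the large scale continuity of $\psi$: the difficulty is that $\psi$ is a non‑canonical choice, so I cannot control $\psi(y')$ directly from $\psi(y)$ without invoking the action on a uniformly bounded cover and the first (``$(g\cdot B)\cap B$ bounded'') clause of properness to convert a bounded overlap in $X$ into a bounded set of group elements. Getting the bound on $\psi(y)^{-1}\psi(y')$ to be uniform in $y$---rather than merely finite for each $y$---is where the hypothesis that $G$ acts by \emph{uniform} coarse equivalences is essential, and I expect this uniformity argument to carry the weight of the proof.
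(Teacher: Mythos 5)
Your proposal is correct, but it is organized differently from the paper's proof. You build an explicit coarse inverse $\psi\colon X\to G$ from coboundedness (choosing $\psi(y)$ with $\psi(y)^{-1}\cdot y\in B$) and then verify every clause of the definition of coarse equivalence directly: both maps are coarse and large scale continuous, and both compositions are close to the identities. The paper never constructs $\psi$: it checks that the orbit map is large scale continuous, then shows that the preimage of any uniformly bounded cover $\{U_s\}_{s\in S}$ of $X$ refines $\{g\cdot K\}_{g\in G}$, where $K=\{g\in G\mid (g\cdot B)\cap B\ne\emptyset\}$ and $B$ is the star of $x_0$ with respect to the cover $\{g\cdot U_s\}$ (so the orbit map is a coarse embedding), and finally observes that the image $G\cdot x_0$ is coarsely dense by coboundedness; the fact that a large scale continuous coarse embedding with coarsely dense image is a coarse equivalence is left implicit. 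The computational engine is the same in both arguments: the first clause of properness converts a bounded overlap in $X$ into a bounded set of group elements, and the uniform action allows translating covers by arbitrary group elements; your Step 2 estimate of $\psi(y)^{-1}\cdot\psi(y')$ is essentially the paper's estimate of $g_s^{-1}\cdot g$. What your route buys is a literal verification of the paper's own definition of coarse equivalence (the paper's final step silently appeals to a standard fact); what the paper's route buys is brevity, since only one map needs to be analyzed.

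One small repair in your Step 2: the boundedness of $K\cdot B$ does not follow from the second clause of properness alone, which only bounds $K\cdot x$ for a single point $x\in X$. You also need the uniformity of the action: pick $x\in B$ and a uniformly bounded cover $\mathcal{U}$ of $X$ with $B\subset U\in\mathcal{U}$; then $K\cdot B\subset st(K\cdot x,\{g\cdot U'\}_{g\in G,\,U'\in\mathcal{U}})$, which is bounded because $K\cdot x$ is bounded and $\{g\cdot U'\}_{g\in G,\,U'\in\mathcal{U}}$ is uniformly bounded. The same combination (uniform action rather than properness) is also what makes $\{g\cdot(B\cup\{x_0\})\}_{g\in G}$ uniformly bounded in your Step 3.
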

\begin{proof}
The map $g\to g\cdot x_0$ is large scale continuous as for each bounded subset $K$ of $G$ the family $\{g\cdot K\}_{g\in G}$
is sent to the family $\{g\cdot (K\cdot x_0)\}_{g\in G}$ which is uniformly bounded as $K\cdot x_0$ is bounded in $X$.

Given a uniformly bounded cover $\mathcal{U}=\{U_s\}_{s\in S}$ of $X$ let $B:=st(x_0,g\cdot \mathcal{U})$. Notice $B$ is bounded.
The set $K:=\{g\in G | (g\cdot B)\cap B\ne\emptyset\}$ is bounded in $G$.
Now, given $s\in S$ such that $g_s\cdot x_0\in U_s$ for some $g_s\in G$, then for any $g\in G$ so that
$g\cdot x_0\in U_s$ one has $x_0\in g^{-1}\cdot U_s$. Therefore
$ g^{-1}\cdot U_s\subset B$ resulting in $x_0\in ((g_s^{-1}\cdot g)\cdot B)\cap B$. Hence $g\in g_s\cdot K$ and the inverse of the cover $U$ under the map
$g\to g\cdot x_0$ is a refinement of the cover $\{g\cdot K\}_{g\in G}$.
Thus the map is a coarse embedding.

If $G\cdot B=X$ for some bounded $B$ in $X$,
then $st(G\cdot x_0, \{g\cdot B\}_{g\in G})=X$, so the inclusion $G\cdot x_0\to X$ is a coarse equivalence.
\end{proof}

\section{Metrizable large scale groups}

In this section we discuss large scale groups whose large scale structure is metrizable or coarsely equivalent to a geodesic space.

\begin{Proposition}\label{MetrizabilityOfLSG}
Suppose $G$ is a large scale group. The following conditions are equivalent:\\
1. $G$ is metrizable (i.e. its large scale structure is generated by a metric).\\
2. The bornology of $G$ has a countable basis.\\
3. There is a left-invariant metric $d$ on $G$ inducing the large scale structure on $G$.
\end{Proposition}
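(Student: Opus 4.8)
The plan is to prove the three conditions equivalent by showing $1 \Rightarrow 2 \Rightarrow 3 \Rightarrow 1$, where the implication $3 \Rightarrow 1$ is immediate from the definitions (a left-invariant metric is in particular a metric inducing the given large scale structure). The genuinely content-bearing implications are $1 \Rightarrow 2$ and $2 \Rightarrow 3$, and the latter is where the real work lies.

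For $1 \Rightarrow 2$, suppose the large scale structure is generated by a metric $d$. Then the uniformly bounded covers $\mathcal{U}_n$ of all sets of diameter at most $n$, for $n \in \NN$, form a cofinal family, and correspondingly the balls $B(1_G, n)$ form a countable basis for the bornology. I would verify that every bounded set, being of finite diameter, sits inside some $B(1_G, n)$, which gives the countable basis directly.

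The main step is $2 \Rightarrow 3$. Here I would start from a countable basis $\{B_i\}_{i \ge 1}$ of the bornology, which by stability under finite unions and the group operations I may arrange to be increasing and to consist of symmetric sets containing $1_G$, with each $B_i \cdot B_i \subseteq B_{i+1}$. The key idea is to build a left-invariant metric by prescribing a norm $\|\cdot\|$ on $G$ and setting $d(g,h) := \|g^{-1} h\|$; left-invariance is then automatic. The natural candidate for the norm is to assign to each $g$ a weight reflecting the smallest index $i$ with $g \in B_i$, then define $\|g\|$ as the infimum over all factorizations $g = s_1 \cdots s_k$ of the sum of the weights of the factors $s_j$. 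This infimum-over-chains construction is the standard device (the Birkhoff--Kakutani type argument) guaranteeing the triangle inequality, and symmetry of the $B_i$ guarantees $\|g\| = \|g^{-1}\|$. I would then check that the metric balls are cofinal among the bounded sets and conversely that each $B_i$ has finite norm-diameter, so that the metric topology on the large scale structure agrees with the one generated by the $\{B_i\}$.

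The hard part will be the two-sided comparison in $2 \Rightarrow 3$: one must show that a set is bounded for the metric $d$ if and only if it is bounded in the original bornology, and that $d$ induces precisely the original large scale structure rather than a coarser or finer one. The upper bound (metric-bounded $\Rightarrow$ bornologically bounded) follows from the fact that a chain realizing small norm cannot leave a fixed $B_i$ by too much, using $B_i \cdot B_i \subseteq B_{i+1}$; the lower bound (each $B_i$ has bounded norm) follows because elements of $B_i$ admit a length-one factorization of bounded weight. Balancing these two estimates so the infimum construction does not collapse norms to zero nor blow them up is the delicate point, and it is exactly where the arithmetic arrangement $B_i \cdot B_i \subseteq B_{i+1}$ together with symmetry is used. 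Once the comparison is established, every uniformly bounded cover is a refinement of $\{g \cdot B_i\}_{g \in G}$ for some $i$, which corresponds to the metric cover of mesh roughly the norm-diameter of $B_i$, completing the identification of the structures.
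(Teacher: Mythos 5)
Your skeleton matches the paper's --- (1)$\Rightarrow$(2) and (3)$\Rightarrow$(1) immediate, all the work in (2)$\Rightarrow$(3), starting from an increasing symmetric basis $\{B_n\}_{n\ge 1}$ with $1_G\in B_n$ and $B_n\cdot B_n\subseteq B_{n+1}$, then $d(g,h)=\|g^{-1}\cdot h\|$ --- but your norm is genuinely different from the paper's, and the difference matters. The paper does \emph{not} take an infimum over factorizations: it sets $|1_G|=0$ and, for $g\ne 1_G$, defines $|g|$ to be the smallest $n$ with $g\in B_n$. Subadditivity of this raw index is already immediate from the arranged nesting: for $x,y\ne 1_G$ one has $x\cdot y\in B_{|x|}\cdot B_{|y|}\subseteq B_{\max(|x|,|y|)}\cdot B_{\max(|x|,|y|)}\subseteq B_{\max(|x|,|y|)+1}$, so $|x\cdot y|\le\max(|x|,|y|)+1\le |x|+|y|$; moreover the ball $B(g,n+1)$ is then \emph{literally} $g\cdot B_n$, so the cover by $(n+1)$-balls equals $\{g\cdot B_n\}_{g\in G}$ and the two large scale structures coincide on the nose --- the ``delicate point'' you worry about never arises. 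Your Birkhoff--Kakutani-style infimum $\|g\|=\inf\sum_j w(s_j)$ over factorizations $g=s_1\cdots s_k$ also yields a left-invariant metric, but it creates exactly the extra lemma you flag and leave unproved: that the infimum cannot collapse, i.e.\ that a factorization of total weight $W$ (after discarding trivial factors there are at most $W$ of them, each lying in $B_W$) keeps the product inside $B_W^W\subseteq B_{W+\lceil\log_2 W\rceil}\subseteq B_{2W}$, by iterating $B_n\cdot B_n\subseteq B_{n+1}$ in pairs. With that estimate your balls are sandwiched, $g\cdot B_n\subseteq B(g,n+1)$ and $B(g,r)\subseteq g\cdot B_{2\lceil r\rceil +1}$, and your argument closes correctly; so your route works, but it pays for the unnecessary infimum with a quantitative step that the paper's choice of norm eliminates entirely.
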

\begin{proof}
1)$\implies$2) and 3)$\implies$1) are obvious.\\
2)$\implies$3) Choose an increasing sequence $\{B_n\}_{n\ge 1}$ of symmetric bounded subsets of $G$ containing $1_G$ that serves as a basis of the bornology of $G$. We may assume $B_n\ast B_n\subset B_{n+1}$ for each $n\ge 1$. Define the norm $|g|$ on $G$ as follows:\\
1. $|1_G|=0$.\\
2. If $g\ne 1_G$, then $|g|$ is the smallest $n$ such that $g\in B_n$.

Define the the metric $d$ on $G$ via $d(g,h)=|g^{-1}\cdot h|$. 
Notice the cover $\{B(g,n+1)\}_{g\in G}$ equals $\{g\cdot B_n\}_{g\in G}$,
so the large scale structure of $G$ equals the large scale structure induced from $(G,d)$.
\end{proof}

\begin{Proposition}\label{CayleyGraph}
Suppose $G$ is a large scale group. The following conditions are equivalent:\\
1. $G$ is generated by a symmetric bounded set $K$ and its bornology has $\{K^n\}_{n\ge 1}$ as a countable basis.\\
2. $G$ is coarsely equivalent to a connected graph $\Gamma$ whose set of vertices is equal to $G$ and the graph metric is left-invariant.\\
3. $G$ is coarsely dominated by a geodesic space, i.e. there is a geodesic space $X$ and large scale continuous functions $f:G\to X$, $g:X\to G$ such that $g\circ f$ is close to $id_G$.
\end{Proposition}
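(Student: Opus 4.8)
The plan is to prove the cycle $(1)\Rightarrow(2)\Rightarrow(3)\Rightarrow(1)$, with the last implication carrying essentially all the weight.

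For $(1)\Rightarrow(2)$, I would take $\Gamma$ to be the Cayley graph of $G$ with respect to $K$: vertex set $G$, with an edge joining $g$ and $g\cdot k$ whenever $k\in K$. Since $G=\langle K\rangle$, the graph $\Gamma$ is connected, its graph metric satisfies $d_\Gamma(g,h)=$ (the $K$-norm of $g^{-1}\cdot h$), and left translations are graph automorphisms, so $d_\Gamma$ is left-invariant. The closed $n$-ball about $1_G$, restricted to vertices, is exactly $K^n$; hence on the vertex set the metrically uniformly bounded covers are precisely the refinements of $\{g\cdot K^n\}_{g\in G}$. Because $(1)$ assumes $\{K^n\}_{n\ge1}$ is a basis of the bornology, this coincides with the large scale structure of $G$, so the identity $G\to \mathrm{Vert}(\Gamma)$ is a coarse equivalence. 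Composing with the inclusion $\mathrm{Vert}(\Gamma)\hookrightarrow\Gamma$ (a coarse equivalence, being coarsely dense with a nearest-vertex retraction as coarse inverse) yields $(2)$.

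The implication $(2)\Rightarrow(3)$ is immediate: a connected graph with its path metric is a geodesic space, and a coarse equivalence provides large scale continuous maps $f,g$ with $g\circ f$ close to $\mathrm{id}_G$, which is all that $(3)$ requires.

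The real content is $(3)\Rightarrow(1)$, where the geodesic structure of $X$ is used in an essential way. Fix $x_0:=f(1_G)$ and let $\mathcal{U}$ be the uniformly bounded cover of $X$ by all subsets of diameter at most $1$. Since $g$ is large scale continuous, $g(\mathcal{U})$ is refined by $\{h\cdot K_0\}_{h\in G}$ for some bounded symmetric $K_0\ni 1_G$; since $g\circ f$ is close to $\mathrm{id}_G$, there is a bounded symmetric $W\ni 1_G$ with $a\in g(f(a))\cdot W$ for all $a\in G$. Put $K:=K_0^2\cup W$, a symmetric bounded set containing $1_G$. Given $a\in G$, join $x_0$ to $f(a)$ by a geodesic and sample it at unit arc length, obtaining $x_0=y_0,\dots,y_m=f(a)$ with $d(y_i,y_{i+1})\le1$ and $m=\lceil d(x_0,f(a))\rceil$. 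Each pair $\{y_i,y_{i+1}\}$ has diameter at most $1$, so $g(y_i),g(y_{i+1})$ lie in a common $h\cdot K_0$, whence $g(y_i)^{-1}\cdot g(y_{i+1})\in K_0\cdot K_0=K_0^2\subseteq K$. Thus $g(y_0),\dots,g(y_m)$ is a $K$-chain; since $g(x_0)\in W$ and $g(f(a))^{-1}\cdot a\in W$, prepending $1_G$ and appending $a$ produces a $K$-chain from $1_G$ to $a$. Hence $G=\langle K\rangle$, i.e. $G$ is $K$-generated. It remains to see that $\{K^n\}_{n\ge1}$ is a basis: each $K^n$ is bounded since the bornology is closed under products, and conversely, if $C\subseteq G$ is bounded then $f(C)$ is bounded in $X$, say $f(C)\subseteq B_X(x_0,R)$, and the sampling argument bounds the $K$-norm of every $c\in C$ by $\lceil R\rceil+2$ uniformly; therefore $C\subseteq K^{\lceil R\rceil+2}$, so $\{K^n\}$ is cofinal in the bornology and hence a basis. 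This establishes $(1)$.

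The main obstacle is precisely this last direction. A countable basis of the bornology (hence metrizability via \ref{MetrizabilityOfLSG}) is essentially automatic once $\{K^n\}$ is shown to be a basis, but manufacturing the single generating set $K$ and controlling the $K$-norm by the metric requires the geodesic hypothesis: a merely metrizable large scale group need not be boundedly generated, so it is the length/geodesic structure of $X$ that upgrades coarse domination to bounded generation.
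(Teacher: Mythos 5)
Your proposal is correct and follows essentially the same route as the paper: the Cayley graph construction for $(1)\Rightarrow(2)$, the observation that a connected graph is geodesic for $(2)\Rightarrow(3)$, and for $(3)\Rightarrow(1)$ the same key argument of sampling geodesics in $X$ at unit steps and pushing the resulting chains through the coarse inverse to obtain $K$-chains in $G$, with uniform length over any bounded set. The only differences are bookkeeping (you assemble $K=K_0^2\cup W$ from two separately chosen sets, while the paper chooses a single $K$ satisfying both conditions at once), so there is nothing of substance to add.
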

\begin{proof}
1)$\implies$2). Extend $G$ to a connected graph $\Gamma$ by requiring that $g$ and $h$ form an edge if and only if $g^{-1}\cdot h\in K$.
Let $d$ be the graph metric on $\Gamma$.
Notice the cover $\{B(g,n+1)\}_{g\in G}$ equals $\{g\cdot K^n\}_{g\in G}$,
so the large scale structure of $G$ equals the large scale structure induced from $(G,d)$. \\
2)$\implies$3) is obvious.\\
3)$\implies$1). Let $\alpha:G\to X$ and $\beta:X\to G$ be two large scale continuous functions such that $\beta\circ\alpha$ is close to $id_G$.
Choose a symmetric bounded subset $K$ of $G$ such that
$ \beta\circ\alpha(g)\in g\cdot K$ for each $g\in G$ and the cover
$\{\beta(B(x,2)\}_{x\in X}$ is a refinement of the cover $\{g\cdot K\}_{g\in G}$.
Given a bounded subset $L$ of $G$ containing $1_G$ there is $m > 0$
such that $\alpha(L)\subset B(x_0,m)$, $x_0=\alpha(1_G)$.
If $g\in L$, then one can connect $x_0$ to $\alpha(g)$ via a chain
$x_0,\ldots, x_n$ so that $x_{i+1}\in B(x_i,1)$ for each $i < n$.
That means $\beta(x_{i+1})$ and $\beta(x_{i})$ belong to a set of the form $h\cdot K$ resulting in $\beta(x_{i+1})\cdot \beta(x_{i})^{-1}$ belonging to
$K\cdot K$. That implies $\beta(\alpha(g))\in (K\cdot K)^n$.
Hence $ \beta(\alpha(g))\in (K\cdot K)^n\cap (g\cdot K)$ and $g\in K^{2n+1}$.
As $n$ can be chosen uniformly for all $g\in L$, $L\subset K^{2n+1}$.
\end{proof}

\begin{Definition}\label{CayleyGraphDef}
Let $G$ be a coarsely geodesic large scale group. A \textbf{Cayley graph of $G$} is a connected graph $\Gamma$ whose set of vertices equals $G$ and such that the inclusion $G\to \Gamma$ is a coarse equivalence, where $\Gamma$ is equipped with its graph metric which is left-invariant. The metric on $G$ induced from a Cayley graph will be called a \textbf{Cayley metric}.
\end{Definition}

\begin{Observation}
The set $K$ of all vertices of $G$ at distance $1$ from $1_G$ is symmetric and generates $G$.
\end{Observation}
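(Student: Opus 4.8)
The final Observation asserts: given a Cayley graph $\Gamma$ of a coarsely geodesic large scale group $G$, the set $K$ of all vertices at graph-distance $1$ from $1_G$ is symmetric and generates $G$.

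The plan is to unpack the definition of a Cayley graph (Definition \ref{CayleyGraphDef}) and exploit the left-invariance of the graph metric. First I would establish symmetry of $K$. By definition $k\in K$ means $d(1_G,k)=1$. Since the graph metric $d$ is left-invariant, $d(1_G,k)=d(k^{-1}\cdot 1_G, k^{-1}\cdot k)=d(k^{-1},1_G)=d(1_G,k^{-1})$, the last equality being symmetry of the metric. Hence $d(1_G,k^{-1})=1$, so $k^{-1}\in K$, proving $K^{-1}=K$.

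Next I would show $K$ generates $G$. Let $g\in G$ be arbitrary. Because $\Gamma$ is a \emph{connected} graph with vertex set $G$, there is a path in $\Gamma$ from $1_G$ to $g$, i.e. a finite sequence of vertices $1_G=g_0,g_1,\ldots,g_n=g$ with consecutive vertices joined by an edge, meaning $d(g_i,g_{i+1})=1$ for each $i<n$. By left-invariance of $d$, the condition $d(g_i,g_{i+1})=1$ is equivalent to $d(1_G,\,g_i^{-1}\cdot g_{i+1})=1$, which says precisely $k_i:=g_i^{-1}\cdot g_{i+1}\in K$. Telescoping, $g=g_n=(g_0^{-1}g_1)(g_1^{-1}g_2)\cdots(g_{n-1}^{-1}g_n)=k_0 k_1\cdots k_{n-1}$ is a finite product of elements of $K$. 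Since $g$ was arbitrary, $\langle K\rangle = G$.

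The only genuinely delicate point is the translation between the combinatorial edge-structure of $\Gamma$ and the algebraic statement about $K$, and this is handled entirely by left-invariance: the edge relation on vertices, $d(x,y)=1$, is invariant under left multiplication, so an edge $\{g_i,g_{i+1}\}$ corresponds to the single group element $g_i^{-1}g_{i+1}$ lying in the fixed set $K=\{k: d(1_G,k)=1\}$. I expect no substantive obstacle here; the work is just verifying that ``distance $1$'' is precisely the edge relation and that connectivity of $\Gamma$ yields paths realizing arbitrary group elements as products of edge-labels.
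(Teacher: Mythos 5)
Your proof is correct. The paper states this as an unproved Observation immediately after Definition \ref{CayleyGraphDef}, and your argument---symmetry of $K$ from left-invariance plus symmetry of the graph metric, and generation by telescoping edge-labels $g_i^{-1}\cdot g_{i+1}\in K$ along a path supplied by connectedness of $\Gamma$---is precisely the routine verification the authors left implicit.
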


\begin{Lemma}\label{CharOfCClopenInLSGroups}
Suppose $G$ is a large scale group, $B$ containing $1_G$ is a bounded subset of $G$, and $\mathcal{U}=\{g\cdot B\}_{g\in G}$. If $A\subset G$, then 
$A\cdot B\subset st(A,\mathcal{U})\subset A\cdot B^{-1}\cdot B$.
\end{Lemma}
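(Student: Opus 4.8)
The plan is to establish the two set inclusions separately, each by a direct computation that unwinds the definition of the star operation applied to the canonical cover $\mathcal{U}=\{g\cdot B\}_{g\in G}$, using crucially that $1_G\in B$. Recall that $st(A,\mathcal{U})$ is exactly the union of those members of $\mathcal{U}$ that meet $A$.

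For the inclusion $A\cdot B\subset st(A,\mathcal{U})$, I would first observe that for each $a\in A$ the cover element $a\cdot B$ contains $a$ itself, since $1_G\in B$ gives $a=a\cdot 1_G\in a\cdot B$. Hence $a\cdot B$ is one of the members of $\mathcal{U}$ that meet $A$, so $a\cdot B\subset st(A,\mathcal{U})$. As every element of $A\cdot B$ has the form $a\cdot b$ with $a\in A$ and $b\in B$, and therefore lies in $a\cdot B$, this proves the first inclusion.

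For the inclusion $st(A,\mathcal{U})\subset A\cdot B^{-1}\cdot B$, I would take an arbitrary $x\in st(A,\mathcal{U})$ and unwind the definition: there is $g\in G$ with $x\in g\cdot B$ and $(g\cdot B)\cap A\ne\emptyset$. Picking $a\in A\cap(g\cdot B)$ and writing $a=g\cdot b_1$ and $x=g\cdot b_2$ with $b_1,b_2\in B$, I would solve $g=a\cdot b_1^{-1}$ and substitute to obtain $x=a\cdot b_1^{-1}\cdot b_2\in A\cdot B^{-1}\cdot B$, completing the argument.

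The argument is entirely elementary and there is no genuine obstacle beyond bookkeeping. The only point requiring care is keeping the left-translation and inversion conventions consistent: membership of $x$ in the translate $g\cdot B$ means $g^{-1}\cdot x\in B$, so that the slack produced by passing from the point $a\in A$ back to the translating element $g$ and then out to $x$ accumulates as the factor $B^{-1}\cdot B$ rather than $B\cdot B^{-1}$.
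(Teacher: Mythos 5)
Your proposal is correct and follows essentially the same route as the paper's proof: the first inclusion uses $1_G\in B$ to place $a$ and $a\cdot b$ in the common cover element $a\cdot B$, and the second unwinds $a\in A\cap(g\cdot B)$ into $g\in a\cdot B^{-1}$, giving $g\cdot B\subset A\cdot B^{-1}\cdot B$. The paper states this more tersely, but the content is identical.
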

\begin{proof}
If $a\in A$ and $b\in B$, then $a,a\cdot b\in a\cdot B$ so $ A\cdot B\subset st(A,U)$. If $a\in A\cap (g\cdot B)$, then $g\in a\cdot B^{-1}$
and $g\cdot B\subset A\cdot B^{-1}\cdot B$.
\end{proof}

\begin{Lemma}\label{CClopenStructure}
A subset $A$ of a large scale group is coarsely clopen if and only if for each bounded subset $B$ of $G$ the set $(A\cdot B)\cap (A^c\cdot B)$ is bounded.
\end{Lemma}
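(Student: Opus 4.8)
The plan is to prove Lemma \ref{CClopenStructure} by translating the definition of coarsely clopen (in terms of stars over all uniformly bounded covers) into the stated condition (in terms of products with all bounded subsets $B$), using the fact that in a large scale group every uniformly bounded cover refines one of the form $\{g\cdot B\}_{g\in G}$ for some bounded $B$. The key bridge is Lemma \ref{CharOfCClopenInLSGroups}, which sandwiches $st(A,\mathcal{U})$ between $A\cdot B$ and $A\cdot B^{-1}\cdot B$ when $\mathcal{U}=\{g\cdot B\}_{g\in G}$ and $1_G\in B$. So the entire argument reduces to manipulating these sandwich inequalities and comparing the boundedness of $st(A,\mathcal{U})\cap st(A^c,\mathcal{U})$ with that of $(A\cdot B)\cap (A^c\cdot B)$.

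First I would establish the forward direction. Assume $A$ is coarsely clopen, and let $B$ be an arbitrary bounded subset of $G$; without loss of generality enlarge $B$ so that it is symmetric and contains $1_G$ (this only enlarges the sets in question, so boundedness of the larger set implies boundedness of the original). Set $\mathcal{U}=\{g\cdot B\}_{g\in G}$, a uniformly bounded cover. By coarse clopenness, $st(A,\mathcal{U})\cap st(A^c,\mathcal{U})$ is bounded. By Lemma \ref{CharOfCClopenInLSGroups} applied to both $A$ and $A^c$, we have $A\cdot B\subset st(A,\mathcal{U})$ and $A^c\cdot B\subset st(A^c,\mathcal{U})$, hence $(A\cdot B)\cap (A^c\cdot B)\subset st(A,\mathcal{U})\cap st(A^c,\mathcal{U})$ is bounded, as desired.

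For the converse, assume that $(A\cdot B)\cap (A^c\cdot B)$ is bounded for every bounded $B$, and let $\mathcal{U}$ be an arbitrary uniformly bounded cover of $G$. By the definition of a large scale group, $\mathcal{U}$ refines $\{g\cdot B_0\}_{g\in G}$ for some bounded $B_0$; again enlarge $B_0$ to a symmetric bounded set $B$ containing $1_G$ with $B_0\subset B$, and set $\mathcal{V}=\{g\cdot B\}_{g\in G}$, so that $\mathcal{U}$ refines $\mathcal{V}$ and therefore $st(A,\mathcal{U})\subset st(A,\mathcal{V})$. By Lemma \ref{CharOfCClopenInLSGroups}, $st(A,\mathcal{V})\subset A\cdot B^{-1}\cdot B=A\cdot B\cdot B$ (using symmetry), and likewise $st(A^c,\mathcal{V})\subset A^c\cdot B\cdot B$. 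Hence $st(A,\mathcal{U})\cap st(A^c,\mathcal{U})\subset (A\cdot B^2)\cap (A^c\cdot B^2)$, which is bounded by hypothesis applied to the bounded set $B^2=B\cdot B$. Since $\mathcal{U}$ was arbitrary, $A$ is coarsely clopen.

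I expect the main obstacle to be purely bookkeeping rather than conceptual: one must be careful that enlarging $B$ to be symmetric and to contain $1_G$ is harmless in both directions (in the forward direction a larger intersection still being bounded suffices, and in the converse the hypothesis quantifies over \emph{all} bounded sets, so $B^2$ is admissible). The only genuine content is invoking the defining refinement property of large scale groups to replace an arbitrary $\mathcal{U}$ by a canonical left-translation cover, together with the two-sided containment of Lemma \ref{CharOfCClopenInLSGroups}; everything else is the observation that products $A\cdot B$ and stars $st(A,\mathcal{U})$ are cofinally the same families of sets.
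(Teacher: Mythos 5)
Your proof is correct and is exactly the argument the paper intends: the paper's own proof consists of the single line ``Apply \ref{CharOfCClopenInLSGroups},'' and your write-up fills in precisely the expected details (sandwiching stars of left-translation covers between $A\cdot B$ and $A\cdot B^{-1}\cdot B$, and using the refinement property of large scale groups to reduce arbitrary uniformly bounded covers to covers of the form $\{g\cdot B\}_{g\in G}$). The bookkeeping about symmetrizing $B$ and passing to $B^2$ is handled correctly in both directions.
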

\begin{proof}
Apply \ref{CharOfCClopenInLSGroups}.
\end{proof}

\begin{Proposition}\label{ComponentsOfNCCInCoarselyGeodesic}
If $A$ is a coarsely clopen subset of $G$ equipped with a Cayley metric, then every union of $1$-components of $A$ is coarsely clopen.
Moreover, there is a bounded subset $B$ of $G$ such that every $1$-component of $G\setminus B$ intersecting $A\setminus B$ is contained in $A\setminus B$.
\end{Proposition}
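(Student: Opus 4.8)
The statement has two parts, and I expect the first to follow almost directly from machinery already in place while the second is where the real work lies. Let me set $K$ to be the symmetric generating set consisting of all vertices at Cayley-distance $1$ from $1_G$, so that the large scale structure is generated by $\mathcal{U}=\{g\cdot K\}_{g\in G}$ and ``$1$-component'' means ``$K$-component'' in the sense of Definition \ref{KchainDef}.

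For the first assertion I would argue exactly as in Claims 1 and 2 of the proof of Theorem \ref{MainComparisonTheorem}, now interpreted in the group setting. First I would verify the group analog of Claim 1: if $C$ is a union of $K$-components of $G\setminus B$ for some bounded $B$, then $C$ is coarsely clopen. Using Lemma \ref{CClopenStructure}, it suffices to bound $(C\cdot D)\cap (C^c\cdot D)$ for each bounded $D$; taking $D\subset K^m$ and using that a short $K$-chain leaving a $K$-component must pass through $B$, one sees this intersection lies in $B\cdot K^{2m}$, hence is bounded. Then I would repeat the Claim 2 argument: given coarsely clopen $A$, set $B:=(A\cdot K)\cap (A^c\cdot K)$, which is bounded by Lemma \ref{CClopenStructure}; then $A\setminus B$ is a union of $K$-components of $G\setminus B$, so it is coarsely clopen by the first step, and adding back the bounded set $B\cap A$ preserves coarse clopenness. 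Since any union of $1$-components of $A$ differs from a union of $K$-components of $G\setminus B$ by a subset of the bounded set $B$, it too is coarsely clopen.

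For the second assertion I would take the same $B:=(A\cdot K)\cap (A^c\cdot K)$, which I claim already works, possibly after enlarging it to $\mathrm{st}(B,\mathcal{U})$. The key point is that if a $1$-component $C$ of $G\setminus B$ meets $A\setminus B$, I must rule out $C$ also meeting $A^c$. Suppose $x\in C\cap (A\setminus B)$ and $y\in C\cap A^c$; since $C$ is $K$-connected inside $G\setminus B$, there is a $K$-chain from $x$ to $y$ avoiding $B$, and along this chain some consecutive pair $p\in A$, $q\in A^c$ with $q^{-1}p\in K$ occurs. But then $q\in A\cdot K$ and $q\in A^c\cdot K$, so $q\in B$, contradicting that the chain avoids $B$. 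Hence $C\cap A^c=\emptyset$, i.e. $C\subset A\setminus B$ once we also note $C\subset G\setminus B$.

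The main obstacle, and the step I would be most careful with, is the very last containment: establishing that a $K$-component meeting $A\setminus B$ lies entirely in $A^c$'s complement requires the chain to stay in $G\setminus B$, and one must check that the ``boundary crossing'' element genuinely lands in $B=(A\cdot K)\cap(A^c\cdot K)$ rather than merely near it. This is exactly the role of choosing $B$ via Lemma \ref{CClopenStructure} rather than an arbitrary bounded set, and it is where the symmetry $K^{-1}=K$ and the precise definition of $K$-chain are used. Everything else reduces to the coarse-clopenness criterion of Lemma \ref{CClopenStructure} and the componentwise reasoning already rehearsed in Theorem \ref{MainComparisonTheorem}.
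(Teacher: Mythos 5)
Your proposal is correct. For the ``Moreover'' clause it is essentially the paper's own argument: the same set $B=(A\cdot K)\cap(A^c\cdot K)$, bounded by Lemma \ref{CClopenStructure}, and the same one-step crossing computation using $K=K^{-1}$. For the first clause, however, your route is genuinely different. The paper argues directly: it asserts that a union $C$ of $1$-components of $A$ satisfies $C\cdot K=C$, hence $C\cdot K^n=C$, deduces $(C\cdot K^n)\cap((A\setminus C)\cdot K^n)=\emptyset$, and concludes that $(C\cdot K^n)\cap(C^c\cdot K^n)\subset(A\cdot K^n)\cap(A^c\cdot K^n)$ is bounded. You instead prove the ``Moreover'' clause first and derive the first clause from it: $C\setminus B$ becomes a union of $K$-components of $G\setminus B$, your group analog of Claim 1 of Theorem \ref{MainComparisonTheorem} (a $K$-chain of length at most $2m$ from $C$ to $C^c$ must cross $B$, whence $(C\cdot K^m)\cap(C^c\cdot K^m)\subset B\cdot K^{2m}$) shows such unions are coarsely clopen, and stability of coarse clopenness under adding bounded sets finishes. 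Your longer route buys robustness at a point where the paper is in fact inaccurate: the identity $C\cdot K=C$ is false as stated (only $(C\cdot K)\cap A=C$ holds, since a chain step $c\cdot k$ may exit $A$), and the disjointness claim can also fail near the frontier of $A$ --- in $\mathbb{Z}$ with $A=[0,\infty)\cup\{-5\}$ and $C=\{-5\}$ one has $C\cdot K=\{-6,-5,-4\}\ne C$ and $(C\cdot K^3)\cap((A\setminus C)\cdot K^3)\ni -3$. These defects are repaired by exactly the chain-crossing estimate you use, so the two proofs rest on the same key idea, but yours is organized so that boundary effects are handled rather than assumed away. The one step you should spell out is why a $K$-component of $G\setminus B$ contained in $A\setminus B$ lies in a single $1$-component of $A$ (its connecting chains stay inside the component, hence inside $A$); this is what justifies your phrase ``differs from a union of $K$-components of $G\setminus B$ by a subset of $B$'' for an arbitrary union $C$ of $1$-components of $A$, not only for $C=A$.
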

\begin{proof}
Let $K=\{g\in G | d(g,1_G)\leq 1\}$.
$C\subset A$ is a union of $1$-components of $A$ if and only if $C\cdot K=C$.
Therefore $C\cdot K^n=C$ for all $n\ge 1$. Now,
$(C\cdot K^n)\cap (C^c\cdot K^n)\subset ((C\cdot K^n)\cap (A\setminus C)\cdot K^n))\cup (C\cdot K^n)\cap (A^c\cdot K^n)=(C\cdot K^n)\cap (A^c\cdot K^n)$, as $(C\cdot K^n)\cap (A\setminus C)\cdot K^n)=C\cap (A\setminus C)=\emptyset$. Thus, $(C\cdot K^n)\cap (C^c\cdot K^n)$ is bounded for all $n\ge 1$ resulting in $(C\cdot B)\cap (C^c\cdot B)$ being bounded for all bounded $B$.

Let $B:=(A\cdot K)\cap (A^c\cdot K)$. If $g\in A\setminus B$ and $g\cdot k\in A^c$ for some $k\in K$, then
implies $g=(g\cdot k)\cdot k^{-1}\in A^c\cdot K$, hence $g\in B$, a contradiction. That means any $K$-chain in $X\setminus B$ starting from $g\in A\setminus B$ must remain in $A\setminus B$.
\end{proof}

\begin{Corollary}\label{MetrizabilityOfEndsOfLSGroup}
Suppose $G$ is a large scale group metrizable by a Cayley metric $d$.
$Ends(G)$ is metrizable if both of the following conditions are satisfied:\\
1. For each bounded subset $B$ of $G$ the union of bounded $1$-components of $G\setminus B$ is bounded.\\
2. For each bounded subset $B$ of $G$ its complement $G\setminus B$ has finitely many unbounded $1$-components.\\
Moreover, $Ends(G)$ can be described as the family of decreasing sequences $\{A_i\}_{i\ge 1}$
 of unbounded $1$-components of $G\setminus K_i$, where $\{K_i\}_{i\ge 1}$ is an increasing sequence of bounded subsets of $G$ that is a basis of bounded subsets of $G$.
\end{Corollary}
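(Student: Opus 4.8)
The plan is to deduce this corollary from the comparison theorem \ref{MainComparisonTheorem}. First I would unpack the hypothesis: by \ref{CayleyGraphDef} and \ref{CayleyGraph}, a Cayley metric on $G$ comes from a Cayley graph, so $G$ is generated by the symmetric bounded set $K=\{g\in G\mid d(g,1_G)\le 1\}$ and $\{K^i\}_{i\ge 1}$ is a countable basis of the bornology; being increasing (as $1_G\in K$) it is the required increasing basis, with $K_i=K^i$. The key bookkeeping point is that the ``$1$-components'' of the statement are the $K$-components in the sense of \ref{KchainDef}, whereas \ref{MainComparisonTheorem} is phrased through $\mathcal{U}$-components; to make these coincide I would not use $\{g\cdot K\}_{g\in G}$ (whose $\mathcal{U}$-components are $K^2$-components) but the \emph{edge cover} $\mathcal{U}=\{\{g,g\cdot k\}\mid g\in G,\ k\in K\}$. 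This cover is refined by $\{g\cdot K\}_{g\in G}$, hence uniformly bounded, and a $\mathcal{U}$-chain inside any $G\setminus B$ is exactly a $K$-chain inside $G\setminus B$, so the $\mathcal{U}$-components of $G\setminus B$ are precisely its $1$-components. Since each $g\cdot K^m$ is $K$-connected, every uniformly bounded cover refines $\{g\cdot K^m\}_{g\in G}$, a cover by $\mathcal{U}$-connected sets; thus $G$ is large scale chain-connected via $\mathcal{U}$.

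With this identification, Conditions 1 and 2 are literally the hypotheses of \ref{MainComparisonTheorem}: Condition 1 says the union of bounded $\mathcal{U}$-components of $G\setminus B$ is bounded, and Condition 2 says there are only finitely many unbounded ones. (Had I instead used $\{g\cdot K\}_{g\in G}$, I would need to check that Conditions 1 and 2 for $K$-components still force their $K^2$ versions, using that a union of only-bounded $1$-components is bounded by Condition 1, so every unbounded $K^2$-component contains an unbounded $1$-component; the edge cover sidesteps this.) Applying \ref{MainComparisonTheorem} then yields a bijection between the coarse ends of $G$ and the Freundenthal ends relative to the bornology, the latter being exactly the decreasing sequences $\{A_i\}$ of (automatically unbounded) $1$-components of $G\setminus K_i$. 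This is the ``Moreover'' clause. Along the way I record, via \ref{ComponentsOfNCCInCoarselyGeodesic} applied to the coarsely clopen set $G$ (equivalently to $G\setminus B$, which is coarsely identical to $G$), that every union of $1$-components of $G\setminus B$ is coarsely clopen.

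For metrizability I would use that $Ends(G)$ is compact, Hausdorff and totally disconnected (the large scale compactification theorem), so it suffices to produce a countable basis and invoke Urysohn's metrization theorem. The candidate basis is the countable family $\{(\mathrm{int}\,C)_{end}\mid C$ an unbounded $1$-component of $G\setminus K_i,\ i\ge 1\}$; it is countable because Condition 2 makes the collection of such $C$ finite at each level $i$, and each member is open since $\mathrm{int}\,C$ is open coarsely clopen and coarsely identical to $C$, so $(\mathrm{int}\,C)_{end}=C_{end}$ by \ref{CoarselyIdentical}. To see it is a basis, take an open coarsely clopen $U$ with $U\in E$; by \ref{ComponentsOfNCCInCoarselyGeodesic} there is $j$ with $K_j$ large enough that $U\setminus K_j$ is a union of $1$-components of $G\setminus K_j$, and by Condition 1 its bounded part has bounded union, so $U\setminus K_j$ agrees coarsely with a finite union of unbounded $1$-components; maximality of the end $E$ forces one such component $C\subseteq U\setminus K_j$ to lie in $E$, whence $E\in C_{end}=(\mathrm{int}\,C)_{end}\subseteq U_{end}$, the last inclusion because $C\subseteq U$ coarsely. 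Equivalently, this identifies $Ends(G)$ with the inverse limit of the finite discrete sets of unbounded $1$-components of $G\setminus K_i$, which is automatically compact and metrizable.

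The step I expect to be the main obstacle is the neighborhood-basis verification in the last paragraph: converting ``$U\in E$'' into ``some single unbounded $1$-component $C\subseteq U$ lies in $E$'' requires both the local structure result \ref{ComponentsOfNCCInCoarselyGeodesic} (to realize $U\setminus K_j$ as a union of components) and a maximality argument for ends (if a finite union of coarsely clopen sets lies in $E$, one summand must, since otherwise intersecting $E$-witnesses would produce a bounded member of $E$). Everything else is routine once the edge cover is chosen to align $\mathcal{U}$-components with $1$-components.
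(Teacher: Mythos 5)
Your proposal is correct, but it reaches the ``Moreover'' clause by a genuinely different route than the paper. The paper does not invoke \ref{MainComparisonTheorem} at all: it argues directly in the group setting, using \ref{ComponentsOfNCCInCoarselyGeodesic} twice --- once to show that every decreasing sequence of unbounded $1$-components of $G\setminus K_i$ is coarsely clopen level-by-level and hence lies in a unique end (uniqueness by separating two hypothetical ends with disjoint coarsely clopen sets $C$, $D$, choosing $K_j$ containing both $B_C$ and $B_D$, and noting $A_j$ must miss one of $C\setminus B_C$, $D\setminus B_D$), and once to show that every member $A$ of an end traps, beyond some $K_j$, an unbounded $1$-component of $G\setminus K_j$ belonging to that end, which yields the countable basis and metrizability in one stroke. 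Your version instead makes the statement a true corollary of \ref{MainComparisonTheorem}; the price is the bookkeeping you correctly identified --- $\mathcal{U}$-components for $\mathcal{U}=\{g\cdot K\}_{g\in G}$ are $K^2$-components, not $K$-components --- and your edge cover $\{\{g,g\cdot k\}\mid g\in G,\ k\in K\}$ is a clean way to pay it (your parenthetical fallback argument, that Conditions 1 and 2 for $1$-components imply them for $K^2$-components, is also sound). What your route buys is reuse of the general comparison machinery rather than reproving it for groups; what the paper's route buys is a shorter self-contained argument that never needs to verify large scale chain-connectedness or translate the component hypotheses. Your metrizability argument (countable basis from Conditions 1 and 2, plus compact Hausdorff and Urysohn, with the maximality argument that a member of an end which is a finite union of coarsely clopen sets plus a bounded set must have one summand in the end) is essentially the paper's second paragraph, just spelled out in more detail; the paper leaves the maximality step implicit in the phrase ``$A$ must contain exactly one unbounded $1$-component of $G\setminus K_j$.''
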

\begin{proof}
Suppose $\{K_i\}_{i\ge 1}$ is an increasing sequence of bounded subsets of $G$ that is a basis of bounded subsets of $G$.
By \ref{ComponentsOfNCCInCoarselyGeodesic} every unbounded $1$-component $A_i$ of $G\setminus K_i$ is coarsely clopen. Therefore each decreasing sequence $\{A_i\}_{i\ge 1}$
 of unbounded $1$-components of $G\setminus K_i$ is contained in an end $E$ of $G$. It cannot be contained in two different ends $E$ and $E'$. Indeed, in that case we can pick disjoint coarsely clopen subsets $C\in E$ and $D\in E'$. 
By \ref{ComponentsOfNCCInCoarselyGeodesic} there are bounded subsets $B_C$ and $B_D$ of $G$ such that $C\setminus B_C$ is a union of $1$-components of $G\setminus B_C$ and $D\setminus B_D$ is a union of $1$-components of $G\setminus B_D$. Find $j\ge 1$ such that $B_C\cup B_D\subset K_j$. Notice $A_j$ does not intersect at least one of $C\setminus B_C$ or $D\setminus B_D$, a contradiction.

Suppose $E\in Ends(G)$. By \ref{ComponentsOfNCCInCoarselyGeodesic} there is a bounded subset $B$ of $G$ such that $A\setminus B$ is a union of $1$-components of $G\setminus B$. Find $j\ge 1$ such that $B\subset K_j$. 
Now, $A\setminus K_j$ is the union of $1$-components of $G\setminus K_j$,
so $A$ must contain exactly one unbounded $1$-component of $G\setminus K_j$. That shows $Ends(G)$ has a countable basis, hence it is metrizable.
\end{proof}

\begin{Lemma}\label{NonMetrizableEndsOfLSGroups}
Suppose $G$ is a large scale group metrizable by a Cayley metric $d$ and
$\{A_i\}_{i\ge 1}$ is a family of mutually disjoint coarsely clopen subsets of $G$. If for each infinite subset $P$ of naturals $\mathbb{N}$ the union $\bigcup\limits_{i\in P}A_i$ is an unbounded coarsely clopen subset of $G$, then the space of ends $Ends(G)$ of $G$ is not metrizable.
\end{Lemma}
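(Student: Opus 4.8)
We must show that under the stated hypotheses the space $Ends(G)$ fails to be metrizable. The natural route is to exhibit a point of $Ends(G)$ that has no countable neighborhood basis, since a metrizable space is first-countable. The candidate obstruction is an end $E$ that "sees" all of the $A_i$ simultaneously, forcing any single basic neighborhood to meet infinitely many of the disjoint pieces.

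\begin{proof}[Proof proposal]
The plan is to produce an end $E$ of $G$ which fails to be first-countable, contradicting metrizability. First I would use the hypothesis to build a suitable end. Consider the family
$$\mathcal{F}=\Big\{\,\bigcup\limits_{i\in P}A_i \;\Big|\; P\subset \mathbb{N}\text{ cofinite}\,\Big\}.$$
By assumption each such union is unbounded and coarsely clopen, and the family is closed under finite intersection (a finite intersection of cofinite unions is again a cofinite union), so all finite intersections are unbounded. Hence $\mathcal{F}$ extends to an end $E\in Ends(G)$, and in particular $\bigcup_{i\ge n}A_i\in E$ for every $n$. I would also check, via \ref{CClopenStructure}, that $E$ genuinely contains the tails: the point is that removing finitely many $A_i$ from $\bigcup_i A_i$ changes it only by a coarsely clopen set that is still unbounded, so these tails form a decreasing chain inside $E$.

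Next I would argue that $E$ has no countable neighborhood basis in $Ends(G)$. Recall from the Corollary following \ref{OpenEndsProp} that the sets $U_{end}$, with $U$ open coarsely clopen, form a basis for the topology on $Ends(G)$. Suppose toward a contradiction that $\{(U_k)_{end}\}_{k\ge1}$ is a countable neighborhood basis at $E$, so each $U_k\in E$ is open coarsely clopen. The key observation is that each $U_k$, being in $E$ while being coarsely disjoint from $A_i^c$ only for the $A_i$ it "captures," can contain only finitely much of each $A_i$ up to bounded error, yet must meet $\bigcup_{i\ge n}A_i$ unboundedly for every $n$; so for each $k$ there is a smallest index $i_k$ with $U_k\cap A_{i_k}$ unbounded, and since $U_k\in E$ while the $A_i$ are mutually disjoint, I would show the set of indices $i$ with $U_k\cap A_i$ unbounded cannot be all of $\mathbb{N}$. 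A diagonal construction then selects one index $j$ avoided by every $U_k$ in a suitable sense.

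The main obstacle, and the crux of the argument, is the diagonalization: I would choose an infinite set $P\subset\mathbb{N}$ together with its complement so that $W:=\bigcup_{i\in P}A_i$ is an open coarsely clopen set with $E\in W_{end}$, yet $W$ is not contained (modulo the $U_{end}$ ordering) in any single $U_k$ — concretely, arrange that for each $k$ the set $W\setminus U_k$ is unbounded, so that $W_{end}\not\subset (U_k)_{end}$, contradicting the basis property. To pull this off I would, for each $k$, pick an index $p_k\in P$ on which $U_k$ is bounded (using that a fixed coarsely clopen $U_k$ cannot be unbounded on all infinitely many disjoint $A_i$ while respecting coarse disjointness), and let $P=\{p_1,p_2,\dots\}$; by the hypothesis $W=\bigcup_{i\in P}A_i$ is unbounded coarsely clopen, it lies in an end refining $E$'s tails, and each $U_k$ misses the unbounded piece $A_{p_k}$ of $W$. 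Passing to the open interior via \ref{OpenEndsProp} and \ref{CoarselyIdentical} keeps everything open and coarsely clopen, and the resulting contradiction shows no countable basis at $E$ exists, so $Ends(G)$ is not first-countable and hence not metrizable.
\end{proof}
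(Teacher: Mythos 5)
Your strategy --- exhibiting a point of $Ends(G)$ with no countable neighborhood basis --- is genuinely different from the paper's, but as written it has three concrete gaps, the first of which breaks the stated contradiction. (i) Your diagonalization points the wrong way. To contradict that $\{(U_k)_{end}\}_{k\ge 1}$ is a neighborhood basis at $E$, you must produce a neighborhood $W_{end}$ of $E$ such that $(U_k)_{end}\not\subset W_{end}$ for \emph{every} $k$; this requires each $U_k\setminus W$ to be unbounded, since then $U_k\setminus W$ (coarsely clopen by \ref{IntersectionOFCCs}) lies in some end containing $U_k$ but not $W$. What you arrange instead is that $W\setminus U_k$ is unbounded, which only yields $W_{end}\not\subset (U_k)_{end}$ --- and that contradicts nothing: a neighborhood is never required to be contained in the basis elements, only to contain one of them. (ii) You never secure $W\in E$. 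The end $E$ is a maximal (Zorn-type) extension of the family of tails, and its trace $\{Q\subset\mathbb{N} \mid \bigcup_{i\in Q}A_i\in E\}$ is an ultrafilter on $\mathbb{N}$; for your infinite, co-infinite set $P$ that trace may contain $\mathbb{N}\setminus P$ rather than $P$, in which case $\bigcup_{i\in\mathbb{N}\setminus P}A_i\in E$ is disjoint from $W$, so $W\notin E$ and $W_{end}$ is not a neighborhood of $E$ at all. Saying that $W$ ``lies in an end refining $E$'s tails'' does not help: that end need not be $E$. (iii) The claim underlying your choice of $p_k$ --- that a coarsely clopen $U_k\in E$ must meet some $A_i$ in a bounded set --- is false: $U_k=\bigcup_{i\ge 1}A_i$ is coarsely clopen, belongs to $E$, and meets every $A_i$ in all of $A_i$.

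The paper avoids all of this with a cardinality argument. For each free ultrafilter $\mathcal{F}$ on $\mathbb{N}$, the family $\{\bigcup_{i\in P}A_i \mid P\in\mathcal{F}\}$ has all finite intersections unbounded (finite intersections of members of $\mathcal{F}$ are again members, and the hypothesis applies), hence it is contained in an end $x(\mathcal{F})$; two distinct free ultrafilters contain disjoint sets, whose corresponding unions are disjoint, so $x(\mathcal{F}_1)\neq x(\mathcal{F}_2)$. Since there are $2^c$ free ultrafilters on $\mathbb{N}$ ($c$ the cardinality of the continuum), $Ends(G)$ has at least $2^c$ points; but $Ends(G)$ is compact by the earlier theorem on large scale compactifications, and a compact metrizable space has at most $c$ points. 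If you want to salvage the first-countability route, the fix is essentially the standard proof that a free ultrafilter on $\mathbb{N}$ is not countably generated, and it forces a two-sided diagonalization: one must build two \emph{disjoint} infinite index sets $R$ and $R'$ such that every $U_k$ meets both $\bigcup_{i\in R}A_i$ and $\bigcup_{i\in R'}A_i$ unboundedly, so that whichever of $R$, $\mathbb{N}\setminus R$ lies in the trace of $E$ produces a neighborhood of $E$ containing no $(U_k)_{end}$. That two-sidedness is precisely what handles the ultrafilter dichotomy that your single set $P$ cannot, and even then one must address point (iii), since unboundedness of $U_k\cap\bigcup_{i\ge n}A_i$ need not localize on any individual $A_i$.
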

\begin{proof}
Given an ultrafilter $\mathcal{F}$ of $\mathbb{N}$ consisting of infinite sets
choose $x({\mathcal{F}})\in \bigcap\limits_{P\in \mathcal{F}}Ends(\bigcup\limits_{i\in P}A_i)$.
Given two different ultrafilters $\mathcal{F}_1$ and $\mathcal{F}_2$
there are disjoint $P\in \mathcal{F}_1$ and $Q\in \mathcal{F}_2$ resulting in $x(\mathcal{F}_1)\ne x(\mathcal{F}_2)$. That means $Ends(G)$ contains at least $2^{c}$ points, where $c$ is the cardinality of reals, so it cannot be metrizable.
\end{proof}

\begin{Corollary}
Suppose $G$ is a large scale group metrizable by a Cayley metric $d$.
$Ends(G)$ is non-metrizable if and only if one of the following conditions is satisfied:\\
1. There is a bounded subset $B$ of $G$ such that the union of bounded $1$-components of $G\setminus B$ is unbounded.\\
2. There is a bounded subset $B$ of $G$ such that of $G\setminus B$ has infinitely many unbounded $1$-components.
\end{Corollary}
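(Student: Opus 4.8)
The statement is the logical combination of two results already at hand, so the plan is to split it into its two implications and feed each into the appropriate earlier result. Throughout, $d$ is the fixed Cayley metric, and the crucial observation is that conditions (1) and (2) are exactly the negations of the two hypotheses of \ref{MetrizabilityOfEndsOfLSGroup}.

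For the implication ``$Ends(G)$ non-metrizable $\Longrightarrow$ (1) or (2)'' I would argue by contraposition. If neither (1) nor (2) holds, then for every bounded $B\subseteq G$ the union of the bounded $1$-components of $G\setminus B$ is bounded and $G\setminus B$ has only finitely many unbounded $1$-components; these are precisely the two hypotheses of \ref{MetrizabilityOfEndsOfLSGroup}, whose conclusion is that $Ends(G)$ is metrizable. Hence non-metrizability forces at least one of (1), (2). This half is immediate once one records that (1) and (2) literally negate the assumptions of \ref{MetrizabilityOfEndsOfLSGroup}.

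For the converse, ``(1) or (2) $\Longrightarrow$ $Ends(G)$ non-metrizable'', the plan is to produce in either case a countable mutually disjoint family $\{A_i\}_{i\ge 1}$ of coarsely clopen subsets of $G$ such that $\bigcup_{i\in P}A_i$ is unbounded and coarsely clopen for every infinite $P\subseteq\mathbb{N}$, and then quote \ref{NonMetrizableEndsOfLSGroups}. The key technical input for coarse clopenness is \ref{ComponentsOfNCCInCoarselyGeodesic}: since $G$ is coarsely clopen and $B$ is bounded, $G\setminus B$ is coarsely clopen (being coarsely identical to $G$), so every union of $1$-components of $G\setminus B$ is again coarsely clopen. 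Under (2) this is painless: pick countably many distinct unbounded $1$-components $A_1,A_2,\dots$ of $G\setminus B$; they are disjoint, each is unbounded and coarsely clopen, and every union containing one of them is unbounded, so the hypotheses of \ref{NonMetrizableEndsOfLSGroups} hold.

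The case (1) is where the real work lies, since there the individual $1$-components are bounded and I must group infinitely many together to obtain unbounded pieces. Writing $D$ for the unbounded union of the bounded $1$-components $C_j$ of $G\setminus B$, I would first choose points $x_n\in D$ with $d(x_n,1_G)\to\infty$; because each $C_j$ is bounded, only finitely many $x_n$ can lie in a fixed $C_j$, so after passing to a subsequence the $x_n$ lie in pairwise distinct components $D_1,D_2,\dots$. Fixing a partition of $\mathbb{N}$ into infinitely many infinite sets $P_1,P_2,\dots$, I set $A_i:=\bigcup_{n\in P_i}D_n$. Each $A_i$ is a union of $1$-components of $G\setminus B$, hence coarsely clopen, and is unbounded because it contains points $x_n$ with $d(x_n,1_G)\to\infty$ along the infinite index set $P_i$; the $A_i$ are disjoint since the $D_n$ are distinct. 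Then for any infinite $P$ the set $\bigcup_{i\in P}A_i$ contains some unbounded $A_i$ and is a union of $1$-components, so it is unbounded and coarsely clopen, and \ref{NonMetrizableEndsOfLSGroups} applies. The main obstacle is exactly this extraction-and-regrouping step: one must justify the distinct-components subsequence (from boundedness of each component together with unboundedness of their union) and verify that each regrouped $A_i$ is genuinely unbounded, after which coarse clopenness comes for free from \ref{ComponentsOfNCCInCoarselyGeodesic}.
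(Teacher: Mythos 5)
Your proof is correct and follows essentially the same route as the paper: the ``non-metrizable $\Rightarrow$ (1) or (2)'' direction by contraposition from \ref{MetrizabilityOfEndsOfLSGroup}, and the converse by building a disjoint family of unions of $1$-components of $G\setminus B$, invoking \ref{ComponentsOfNCCInCoarselyGeodesic} for coarse clopenness and then \ref{NonMetrizableEndsOfLSGroups}. The only difference is that your regrouping step in case (1) is unnecessary: \ref{NonMetrizableEndsOfLSGroups} does not require the individual sets $A_i$ to be unbounded (only their infinite unions), so the paper simply takes the $A_i$ to be the bounded components $C_i$ themselves, with points $g_i\in C_i$ satisfying $d(g_i,1_G)\to\infty$ guaranteeing that every infinite union is unbounded.
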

\begin{proof}
In case of 1) choose a sequence $C_n$ of bounded $1$-components of $G\setminus B$ such that $g_n\in C_n$ and $d(g_n,1_G)\to\infty$.
In case of 2) choose a sequence $C_n$ of unbounded $1$-components of $G\setminus B$ that are mutually disjoint.
Apply \ref{ComponentsOfNCCInCoarselyGeodesic} and \ref{NonMetrizableEndsOfLSGroups}.
\end{proof}

\section{Large scale groups of bounded geometry}

In this section we introduce the concept of bounded geometry for large scale groups. Typically, bounded geometry is defined for metric spaces by requiring that for each $r > 0$ there is $N_r\in \mathbb{N}$ such that every $r$-ball contains at most $N_r$ elements. We want a coarse invariant, so we extend this definition to arbitrary large scale spaces as follows:

\begin{Definition}
A large scale space $X$ has \textbf{bounded geometry} if it is coarsely equivalent to a large scale space $Y$ with the property that for each uniformly bounded cover $\mathcal{U}$ of $Y$ there is $N(\mathcal{U})\in \mathbb{N}$ such that every element of $\mathcal{U}$ contains at most $N(\mathcal{U})$ elements.
\end{Definition}

\begin{Corollary}
Suppose $X$ and $Y$ are large scale spaces and $Y$ has bounded geometry. If there is a coarse embedding $f:X\to Y$, then $X$ has bounded geometry.
\end{Corollary}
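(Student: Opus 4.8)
The statement to prove: if $Y$ has bounded geometry and $f:X\to Y$ is a coarse embedding, then $X$ has bounded geometry.

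Let me recall the definitions. $Y$ has bounded geometry means $Y$ is coarsely equivalent to some $Y'$ such that each uniformly bounded cover of $Y'$ has uniformly bounded fibers (each element has at most $N(\mathcal{U})$ points). A coarse embedding $f:X\to Y$ should mean a coarse equivalence onto its image—so $X$ is coarsely equivalent to $f(X)\subseteq Y$.

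Let me think about the proof structure.

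The plan is to show that $X$ itself is coarsely equivalent to a space with uniformly bounded fibers, namely $f(X)\subseteq Y'$ (pulled back through the coarse equivalence $Y\sim Y'$). The key point is that bounded geometry is a coarse invariant, so it suffices to transport the fiber-bounding property from $Y'$ to a space coarsely equivalent to $X$.

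Key steps:

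1. Since bounded geometry is defined via coarse equivalence, and $f:X\to Y$ is a coarse embedding, $X$ is coarsely equivalent to $f(X)$ as a subspace of $Y$. So it suffices to show $f(X)$ has bounded geometry, then chain coarse equivalences.

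2. Since $Y$ has bounded geometry, $Y$ is coarsely equivalent to some $Y'$ with the uniformly-bounded-fiber property. Under a coarse equivalence $\varphi: Y \to Y'$, the image $\varphi(f(X)) \subseteq Y'$ is a subspace. Being a subspace of a space with uniformly bounded fibers, it inherits the property (restricting a uniformly bounded cover of $Y'$ to a subset keeps the fiber-count bound).

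3. Chain the coarse equivalences: $X \sim f(X) \xrightarrow{\varphi} \varphi(f(X)) \subseteq Y'$, where $\varphi(f(X))$ has the fiber-bounding property. Hence $X$ is coarsely equivalent to a space with uniformly bounded fibers, so $X$ has bounded geometry.

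**Expected main obstacle.** The subtle point is step 2—verifying that the image $\varphi(f(X))$ in $Y'$ genuinely has the uniformly-bounded-fiber property. A coarse equivalence need not be injective, so "image" requires care: I need to check that the subspace $\varphi(f(X))$ of $Y'$, with its restricted large scale structure, inherits fiber bounds. For any uniformly bounded cover of the subspace, it refines the restriction of a uniformly bounded cover of $Y'$; each element of the latter has $\le N$ points, so its trace on the subspace has $\le N$ points too. This is the heart of the argument. The other delicate issue is confirming that restricting to a subspace and the chain of coarse equivalences preserve bounded geometry—but these reduce to the fact that composition of coarse equivalences is a coarse equivalence and that subspaces of finite-fiber spaces have finite fibers.
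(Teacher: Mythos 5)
Your proposal is correct and follows essentially the same route as the paper: compose $f$ with a coarse equivalence $\varphi:Y\to Y'$ onto a space with uniformly bounded fibers, observe that the image $\varphi(f(X))$ inherits the fiber-bound property as a subspace, and chain the coarse equivalences. In fact you spell out the subspace-inheritance step that the paper leaves implicit (its one-line proof even contains an apparent typo, writing $f(g(Y))$ where $g(f(X))$ is meant), so your write-up is a faithful and slightly more careful version of the same argument.
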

\begin{proof}
Pick a coarse equivalence $g:Y\to Z$, where $Z$ has the property that for each uniformly bounded cover $\mathcal{U}$ of $Z$ there is $N(\mathcal{U})\in \mathbb{N}$ such that every element of $\mathcal{U}$ contains at most $N(\mathcal{U})$ elements. Observe that $X$ is coarsely equivalent to $f(g(Y))$.
\end{proof}

\begin{Proposition}
A large scale group $G$ is of bounded geometry if and only if there is a  bounded set $K$ such that for every bounded subset $B$ of $G$ there are elements $g_i\in G$, $i\leq k$, so that $B\subset \bigcup\limits_{i=1}^k g_i\cdot K$.
\end{Proposition}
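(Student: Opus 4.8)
The plan is to prove the two implications separately, in each case passing between $G$ and a suitable discrete auxiliary space. Throughout I would first replace $K$ by $K\cup K^{-1}\cup\{1_G\}$, which is still bounded (the bornology is stable under inverses, and finite unions of bounded sets are bounded by coarse connectivity), so that I may assume $K$ is symmetric and contains $1_G$. Note that $K^{-1}\cdot K=K\cdot K=K^2$ is then bounded and symmetric, and condition (B) persists for the enlarged $K$.

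For the implication ($\Leftarrow$), suppose such a $K$ exists. I would let $Y\subset G$ be maximal with respect to being \emph{$K^2$-separated}, i.e. $y^{-1}\cdot y'\notin K^2$ for distinct $y,y'\in Y$ (such $Y$ exists by Zorn's Lemma). Maximality and symmetry of $K^2$ force $G=Y\cdot K^2$, so $Y$ is coarsely dense and the inclusion $Y\to G$ is a coarse equivalence (a coarse inverse sends each $g$ to some $y$ with $g\in y\cdot K^2$). The reason for separating by $K^2$ rather than $K$ is the key counting estimate: any translate $f\cdot K$ contains at most one point of $Y$, since $y,y'\in f\cdot K$ yields $y^{-1}\cdot y'\in K^{-1}\cdot K=K^2$, forcing $y=y'$. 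Now any uniformly bounded cover $\mathcal{V}$ of the subspace $Y$ refines $\{(g\cdot B)\cap Y\}_{g\in G}$ for some bounded $B$; covering $B\subset\bigcup_{i=1}^m h_i\cdot K$ by hypothesis, each set $(g\cdot B)\cap Y\subset\bigcup_{i=1}^m (g\cdot h_i\cdot K)\cap Y$ has at most $m$ points. Hence every element of $\mathcal{V}$ has at most $m=:N(\mathcal{V})$ points, so $Y$ satisfies the defining property of bounded geometry and $G$, being coarsely equivalent to $Y$, has bounded geometry.

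For the implication ($\Rightarrow$), suppose $G$ is coarsely equivalent to a space $Y$ with the defining cardinality property, via coarse large scale continuous maps $\varphi:G\to Y$ and $\psi:Y\to G$ that are coarse inverses. The first point I would record is that in $Y$ every bounded set is finite: a bounded set lies in some element $U$ of a uniformly bounded cover $\mathcal{U}$, and $|U|\le N(\mathcal{U})<\infty$. Next, fix a uniformly bounded cover $\mathcal{W}$ of $G$ witnessing that $\psi\circ\varphi$ is close to $id_G$, together with a bounded $K_0\ni 1_G$ such that $\mathcal{W}$ refines $\{g\cdot K_0\}_{g\in G}$; I claim $K:=K_0^{-1}\cdot K_0$ works. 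Given a bounded $B\subset G$, its image $\varphi(B)$ is bounded, hence finite, say $\varphi(B)=\{y_1,\dots,y_r\}$. Since $g\in st(\psi(\varphi(g)),\mathcal{W})$ for every $g\in B$, we get $B\subset st(\psi(\varphi(B)),\mathcal{W})$; using monotonicity of $st$ under refinement and \ref{CharOfCClopenInLSGroups} this gives $B\subset \psi(\varphi(B))\cdot K_0^{-1}\cdot K_0=\bigcup_{i=1}^r \psi(y_i)\cdot K$. Thus $B$ is covered by the $r$ translates $\psi(y_1)\cdot K,\dots,\psi(y_r)\cdot K$ of the fixed bounded set $K$, which is exactly (B).

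The main obstacle is the counting step in ($\Leftarrow$): a maximal $K$-separated net does \emph{not} have bounded geometry, because two $K$-separated points may share a common translate $f\cdot K$ (they need only be $K^2$-apart). Separating by $K^2$ is precisely what guarantees at most one net point per translate of $K$ and makes the cardinality bound uniform in $g$. In ($\Rightarrow$) the crucial and less obvious input is that bounded sets in a space of bounded geometry are automatically finite; this is what converts ``finitely many point-images $\varphi(B)$'' into ``finitely many translates of a single $K$'', and it avoids the circularity of trying to cover each bounded preimage $\varphi^{-1}(y_i)$ separately.
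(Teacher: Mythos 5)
Your proof is correct and takes essentially the same route as the paper: for the converse you build the same maximal $K^2$-separated net $Y$ and use the same one-point-per-translate counting to get the cardinality bound, and then conclude via coarse density of $Y$ in $G$. Your forward direction differs only cosmetically, invoking the closeness of $\psi\circ\varphi$ to $id_G$ and the star estimate $st(A,\{g\cdot K_0\})\subset A\cdot K_0^{-1}\cdot K_0$ directly, where the paper packages the same fact as the statement that $\{f^{-1}(y)\}_{y\in Y}$ is a uniformly bounded cover of $G$; both hinge on the observation that bounded subsets of a space with the cardinality property are finite.
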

 \begin{proof}
Suppose $f:G\to Y$ is a coarse embedding, where $Y$ has the property that for each uniformly bounded cover $\mathcal{U}$ of $Y$ there is $N(\mathcal{U})\in \mathbb{N}$ such that every element of $\mathcal{U}$ contains at most $N(\mathcal{U})$ elements. The family $\{f^{-1}(y)\}_{y\in Y}$ is a uniformly bounded cover of $G$, so there is a bounded subset $K$ of $G$ with the property that for every $y\in Y$ there is $g(y)\in G$ such that
$f^{-1}(y)\subset g(y)\cdot K$. Given a bounded subset $B$ of $G$, $f(B)$ is bounded, hence it is finite and $B\subset \bigcup\limits_{y\in f(B)} f^{-1}(y)\subset \bigcup\limits_{y\in f(B)} g(y)\cdot K$.

Suppose there is a  bounded set $K$ such that for every bounded subset $B$ of $G$ there are elements $g_i\in G$, $i\leq k$, so that $B\subset \bigcup\limits_{i=1}^k g_i\cdot K$. We may assume $K$ is symmetric by switching to $K\cup K^{-1}$. Also, add $1_G$ to $K$. Choose a maximal subset $Y$ of $G$ with the property that $g\ne h\in Y$ implies $g^{-1}\cdot h\notin K\cdot K^{-1}$. 

Notice that $Y$ is of bounded geometry. Indeed, given a uniformly bounded cover $\mathcal{U}$ of $Y$ there is a bounded subset $B$ of $G$ such that $U$ is a refinement of $\{g\cdot B\}_{g\in G}$. Suppose $B\subset \bigcup\limits_{i=1}^k g_i\cdot K$ and $y,z\in Y\cap (g\cdot B)$ are two different elements of $Y$. If $y,z\in g\cdot g_i\cdot K$, then $y^{-1}\cdot z\in K^{-1}\cdot K$, a contradiction.

To conclude that the inclusion $Y\to G$ is a coarse equivalence, it is sufficient to show $G= \bigcup\limits_{y\in Y} y\cdot K$.
Suppose $x\in G\setminus Y$. There is $y\in Y$ such that $k:=x^{-1}\cdot y\in K$. Now, $x=y\cdot k^{-1}$ and we are done.
\end{proof}

John Roe \cite{RoeAsDimHyp} defines a geodesic metric space $X$ to be of \textbf{bounded growth} if for each $s > 0$ there is a number $N_s$ such that each ball of radius $S+s$ in $X$ can be covered by at most $N_s$ balls of radius $S$. We consider this definition excessively restrictive as it is of an all-scale character instead of being of a large scale character. It would seem that changing Roe's definition to require that there is $G > 0$ such that
for each $s > G$ there are numbers $N_s$ and $D_s$ such that each ball of radius $r+s$, $r\ge D_s$, in $X$ can be covered by at most $N_s$ balls of radius $r$, would be an improvement but it is not clear if the new concept is an invariant of quasi-isometries. 

Our next result shows that metrizable large scale groups of bounded geometry have a property resembling bounded growth.

\begin{Proposition}\label{BasicBdPropInGroups}
If a large scale group $G$ is metrizable by the $K$-norm, then the following conditions are equivalent:\\
1. For every bounded subset $B$ of $G$ there are elements $g_i\in G$, $i\leq k$, so that $B\subset \bigcup\limits_{i=1}^k g_i\cdot K$.\\
2. There are elements $g_i\in G$, $i\leq k$, so that $K\ast K\subset \bigcup\limits_{i=1}^k g_i\cdot K$.\\
3. For each $s\ge 1$ there is $N_s\ge 1$ such that for each $n\ge 1$ the set $K^{n+s}$ can be covered by at most $N_s$ sets of the form $g\cdot K^n$.
\end{Proposition}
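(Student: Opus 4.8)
The plan is to prove the three conditions equivalent by establishing the cycle $1\Rightarrow 2\Rightarrow 3\Rightarrow 1$. Throughout I would use the basic fact that, since $G$ is metrizable by the $K$-norm, the powers $\{K^n\}_{n\ge 1}$ form a basis of the bornology of $G$; equivalently, a subset of $G$ is bounded if and only if it lies inside some $K^m$. Two of the implications are then immediate. For $1\Rightarrow 2$, I would simply observe that $K\ast K=K^2$ is bounded and apply Condition 1 to $B=K^2$. For $3\Rightarrow 1$, given a bounded $B$ I would choose $m$ with $B\subset K^m$; if $m=1$ then $B\subset K=1_G\cdot K$, and otherwise Condition 3 with $n=1$ and $s=m-1$ covers $K^{m}=K^{1+(m-1)}$ by at most $N_{m-1}$ translates $g\cdot K$, hence covers $B$.

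The heart of the argument is $2\Rightarrow 3$, and I would begin by proving the covering lemma
$$K^{n+1}\subset\bigcup_{i=1}^{k} g_i\cdot K^{n}\qquad\text{for all }n\ge 1$$
by induction on $n$. The base case $n=1$ is exactly Condition 2. For the inductive step, right-multiplying the inclusion for $n$ by $K$ gives $K^{n+2}=K^{n+1}\cdot K\subset\bigl(\bigcup_i g_i\cdot K^{n}\bigr)\cdot K=\bigcup_i g_i\cdot K^{n+1}$, which is the inclusion for $n+1$. This already yields Condition 3 for $s=1$ with $N_1=k$. For general $s$ I would iterate the lemma $s$ times: applying it first with $n$ replaced by $n+s-1$, then peeling off one factor of $K$ at each subsequent stage by covering the lower power, one obtains after $s$ steps
$$K^{n+s}\subset\bigcup_{i_1,\dots,i_s} g_{i_1}\cdots g_{i_s}\cdot K^{n},$$
a union of at most $k^{s}$ translates of $K^n$. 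Hence $N_s=k^s$ works, and crucially this bound is independent of $n$.

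The step I expect to be the main obstacle, and the one I would verify most carefully, is precisely this uniformity in the iterated step $2\Rightarrow 3$: one must check that right-multiplication by $K$ transports a covering of $K^{n+1}$ by translates of $K^n$ into a covering of $K^{n+2}$ by translates of $K^{n+1}$ \emph{without increasing the number of translates}, so that after $s$ iterations the count is the $n$-independent quantity $k^s$ demanded by Condition 3. Granting this, the cycle closes and all three conditions are equivalent.
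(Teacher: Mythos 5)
Your proposal is correct and is essentially the paper's own proof: both hinge on the same absorption identity $g\cdot K\ast K^{n-1}=g\cdot K^{n}$, and your iterated lemma $K^{n+1}\subset\bigcup_{i=1}^{k}g_i\cdot K^{n}$ yields exactly the same covering of $K^{n+s}$ by the $k^{s}$ translates $g_{i_1}\cdots g_{i_s}\cdot K^{n}$ that the paper gets by first covering $K^{s+1}$ by translates of $K$ and then right-multiplying once by $K^{n-1}$. The only difference is bookkeeping: you close the cycle $1\Rightarrow 2\Rightarrow 3\Rightarrow 1$, whereas the paper treats condition 2 as the hub, proving $2\Rightarrow 1$ and $2\Rightarrow 3$ with $1\Rightarrow 2$ and $3\Rightarrow 2$ trivial.
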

\begin{proof}
1)$\implies$2) and 3)$\implies$2) are trivial.

2)$\implies$1). Notice $K\ast K\ast K\subset \bigcup\limits_{i=1}^k g_i\cdot K\ast K\subset \bigcup\limits_{i,j=1}^k g_i\cdot g_j\cdot K$. Apply induction to get that for every $n\ge 1$ there are elements $h_i\in G$, $i\leq k$, so that $K^n\subset \bigcup\limits_{i=1}^k h_i\cdot K$. Since every bounded subset $B$ of $G$ is contained in some $K^n$, 1) follows.

2)$\implies$3). As in the above there is $N_s\ge 1$ such that $K^{s+1}$ can be covered by at most $N_s$ sets of the form $g\cdot K$.
Therefore $K^{n+s}=K^{s+1}\ast K^{n-1}$ can be covered by at most $N_s$ sets of the form $g\cdot K\ast K^{n-1}$.
\end{proof}

\begin{Corollary}
The following large scale groups are of bounded geometry:\\
1. Groups with bornology consisting of finite sets,\\
2. Locally compact topological groups.
\end{Corollary}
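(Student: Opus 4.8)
The plan is to invoke the characterization of bounded geometry for large scale groups established just above, namely that $G$ has bounded geometry precisely when there is a single bounded set $K$ such that every bounded subset $B$ of $G$ is covered by finitely many left translates $g_i\cdot K$, $i\leq k$. In each of the two cases it then suffices to exhibit such a $K$ directly, so the whole corollary reduces to two short applications of that proposition.

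For case 1, where the bornology is exactly the collection of finite subsets of $G$, I would take $K=\{1_G\}$, which is bounded since singletons are finite. Any bounded $B$ is then finite, say $B=\{b_1,\ldots,b_k\}$, and $B=\bigcup\limits_{i=1}^k b_i\cdot K$, so the covering condition holds with the $g_i$ taken to be the elements of $B$ themselves. Thus bounded geometry is essentially formal for this class; this covers in particular finitely generated groups with word metrics and countable groups with proper left-invariant metrics.

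For case 2 I would use local compactness to choose a compact \emph{neighborhood} $K$ of $1_G$. Such a $K$ is bounded because, as recorded in the verification that locally compact topological groups are large scale groups, the bornology of such a group consists of all subsets of compact sets. The crucial feature is that $K$ has nonempty interior, so the sets $g\cdot int(K)$, $g\in G$, form an open cover of $G$. Given a bounded $B$, its closure is contained in a compact set $C$; extracting a finite subcover of $C$ by the $g_i\cdot int(K)$ and enlarging to the $g_i\cdot K$ yields the required finite left-translate covering of $B$.

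There is no genuine obstacle here; the only point demanding care is the selection in case 2 of $K$ as an honest compact neighborhood rather than an arbitrary compact set, since it is precisely the nonempty interior that lets the translates form an open cover and lets compactness of $C$ be applied. With that choice the compactness argument is routine, and case 1 is purely formal, so the corollary follows immediately from the preceding characterization.
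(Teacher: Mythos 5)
Your proof is correct and follows essentially the same route as the paper: the paper's (very terse) proof also applies the preceding characterization, taking a finite set in case 1 and a pre-compact neighborhood of $1_G$ in case 2, with compactness providing the finite family of translates. You merely spell out the details the paper leaves implicit, including the key point that the neighborhood's nonempty interior makes the translates an open cover.
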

\begin{proof}
In case 2) any pre-compact neighborhood $K$ of $1_G$ works.
\end{proof}

\begin{Observation}
Notice that reals form a large scale group of bounded geometry but the Cayley graph of reals constructed using $K=[-1,1]$ is not of bounded growth in the sense of Roe's definition. Indeed, for $s < 1/2$ the ball $B(r,2s)$ at any vertex $r$ of the graph cannot be covered by finitely many balls of radius $s$.
\end{Observation}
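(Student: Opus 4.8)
The statement has two independent halves, and the plan is to treat them in turn; only the second requires real work.

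For the first half, I would observe that $\mathbb{R}$ is a locally compact topological group, whose bornology consists of the relatively compact (equivalently, metrically bounded) sets, so it is of bounded geometry by the Corollary asserting that every locally compact topological group is of bounded geometry; the pre-compact symmetric neighbourhood $K=[-1,1]$ of $0$ is the witness. Concretely this can be seen through Proposition \ref{BasicBdPropInGroups}, whose condition 2 holds because $K\ast K=[-2,2]$ is covered by the three translates $(-1)\cdot K$, $0\cdot K$, $1\cdot K$. Since the bornology of $\mathbb{R}$ has the countable basis $\{K^{n}\}=\{[-n,n]\}$ and $\mathbb{R}$ is $K$-connected, $1)\Rightarrow 2)$ of Proposition \ref{CayleyGraph} guarantees that the Cayley graph $\Gamma$ described below is genuinely coarsely equivalent to $\mathbb{R}$, so ``the Cayley graph of reals'' is legitimate terminology.

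For the second half I would realize $\Gamma$ as the metric graph with vertex set $\mathbb{R}$, a unit-length edge joining $g,h$ exactly when $h-g\in K$, i.e. $|g-h|\le 1$, and let $d$ be the resulting graph metric; since $\mathbb{R}$ is $K$-connected, $\Gamma$ is connected and hence geodesic, as Roe's definition requires. The decisive structural feature is that $\Gamma$ is far from locally finite: a fixed vertex $r$ is adjacent to every point of $[r-1,r+1]$, so infinitely many distinct unit edges $(r,a_n)$ emanate from $r$ (take, say, $a_n=r+\tfrac1n$). Fix a scale $s<1/2$ and choose $d_0$ with $s<d_0<\min(2s,3/4)$, which is possible since $s<1/2$. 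On each edge $(r,a_n)$ pick the interior point $p_n$ at distance $d_0$ from $r$; then $d(r,p_n)=d_0<2s$, so every $p_n$ lies in $B(r,2s)$.

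The heart of the matter is the estimate $d(p_n,p_m)=2d_0$ for $n\ne m$, and this is the step I expect to be the main obstacle, because the non-local-finiteness of $\Gamma$ opens up ``shortcuts'' through the far endpoints that one must rule out. I would compute the graph distance between two interior edge-points by the standard metric-graph recipe, minimizing over the endpoints $\{r,a_n\}$ and $\{r,a_m\}$ the quantity (distance to a chosen endpoint of the first edge) $+$ (vertex distance between the chosen endpoints) $+$ (distance from a chosen endpoint of the second edge to the target): the route through $r$ costs $2d_0$, the two mixed routes cost $2$, and the route through the far endpoints costs at least $2(1-d_0)+1=3-2d_0$. Since $d_0<3/4$ the minimum is $2d_0$. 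Because $2d_0>2s$ while any ball of radius $s$ has diameter at most $2s$, no such ball can contain two of the $p_n$; the infinitely many points $p_n$ therefore require infinitely many balls, so $B(r,2s)$ cannot be covered by finitely many balls of radius $s$. This exhibits the failure of Roe's bounded-growth condition at scale $s$ (with $S=s$) and completes the Observation.
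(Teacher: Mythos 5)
Your proposal is correct and takes essentially the same approach as the paper: the paper's entire justification is the one-line claim that for $s<1/2$ the ball $B(r,2s)$ at a vertex $r$ cannot be covered by finitely many balls of radius $s$, and your construction of infinitely many points $p_n$ at distance $d_0\in(s,\min(2s,3/4))$ along distinct edges emanating from $r$, pairwise at graph distance $2d_0>2s$, is the natural (and evidently intended) way to substantiate exactly that claim. Your metric-graph distance computation ruling out shortcuts through the far endpoints, and the bounded-geometry verification via covering $K\ast K=[-2,2]$ by three translates of $K$, are both correct.
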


\begin{Definition}\label{BasisForBoundedlygeneratedSubgroupsDef}
A sequence of subgroups $\{G_i\}_{i\ge 1}$ of a large scale group $G$ is a \textbf{basis for boundedly generated subgroups} of $G$ if every boundedly generated subgroup $H$ of $G$ is contained in some $G_i$. Equivalently, for any bounded subset $B$ of $G$ there is $i\ge 1$ such that $B\subset G_i$.
\end{Definition}

\begin{Proposition}\label{BornologyHasACountableBasis}
1. If $G$ is a countable union of its bounded subsets and has a bounded geometry, then its bornology has a countable basis. Consequently, $G$ has a countable basis of boundedly generated subgroups.\\
2. If $G$ is boundedly generated and has a bounded geometry, then it is coarsely geodesic.
\end{Proposition}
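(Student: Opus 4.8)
The plan is to extract from bounded geometry a single bounded set $K$ that controls all bounded sets by finitely many left-translates, and then to feed this finiteness into the countable-union hypothesis (Part 1) and the bounded-generation hypothesis (Part 2).

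For Part 1, I would begin with the characterization of bounded geometry proved above: there is a bounded set $K$, which I may take to be symmetric and to contain $1_G$ (replace $K$ by $K\cup K^{-1}\cup\{1_G\}$, still bounded and still a witness), such that every bounded subset of $G$ is covered by finitely many left-translates of $K$. Writing $G=\bigcup_{n\ge 1}C_n$ with $C_n$ bounded and passing to finite unions (bounded by coarse connectivity), I may assume the $C_n$ increase. Given a bounded $B$, take a finite cover $B\subset\bigcup_{i\le k}g_iK$, discard the translates missing $B$, and pick $b_i\in B\cap g_iK$; then $g_i\in b_iK^{-1}$, so $g_iK\subset b_iK^{2}$ and hence $B\subset\bigcup_i b_iK^{2}$ with every $b_i\in B\subset G$. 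Choosing $N$ with all $b_i\in C_N$ gives $B\subset C_NK^{2}$. Since each $D_n:=C_nK^{2}$ is bounded (the bornology of a large scale group is stable under products) and there are countably many, $\{D_n\}$ is a countable basis of the bornology. The consequence then follows by putting $G_n:=\langle D_n\rangle$: these are subgroups, and any boundedly generated $H=\langle B\rangle$ satisfies $B\subset D_N\subset G_N$, so $\{G_n\}$ is a basis for boundedly generated subgroups in the sense of Definition \ref{BasisForBoundedlygeneratedSubgroupsDef}.

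For Part 2, the decisive move is to make one set play both roles. Let $K_0$ be a symmetric bounded generating set ($G=\langle K_0\rangle$) and $K_1$ a symmetric bounded witness for bounded geometry, and set $K:=K_0\cup K_1\cup\{1_G\}$. Then $K$ is symmetric and bounded, still generates $G$, and still witnesses bounded geometry. I would now invoke the $K$-norm of Definition \ref{KchainDef}: since $G=\langle K\rangle$, every $g$ has finite norm and $K^{n}=\{g:|g|\le n\}$. For a bounded $B$, write $B\subset\bigcup_{i\le k}g_iK$; each $g\in B$ lies in some $g_iK$, so $|g|\le|g_i|+1\le\max_i|g_i|+1=:N$, whence $B\subset K^{N}$. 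Thus $\{K^n\}$ is a countable basis of the bornology and $G=\langle K\rangle$, so by Proposition \ref{CayleyGraph} the group $G$ is coarsely geodesic.

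The main obstacle is precisely this final finiteness argument in Part 2, and the reason the two hypotheses must be used together rather than separately. If one covered $B$ by translates of the witness $K_1$ while measuring length in the generating set $K_0$, the last factor would lie in $K_1$ and could have arbitrarily large $K_0$-norm, so the estimate $\max_i|g_i|+1$ would collapse; indeed an infinite $K_1$ (as for locally compact topological groups) can spread across all the powers $K_0^{n}$. Merging the two sets into a single $K$ forces every last factor into $K=K^{1}$, so that the finiteness of the cover bounds the norm uniformly on $B$. Part 1 uses the same idea more weakly, where it suffices that the finitely many centers $b_i$ live in the countable exhaustion $\{C_n\}$.
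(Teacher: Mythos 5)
Your proof is correct and follows essentially the same route as the paper: in both parts one covers a bounded set $B$ by finitely many left-translates of the bounded-geometry witness $K$, absorbs the finitely many centers into the countable exhaustion (Part 1) or into a power $K^m$ after merging $K$ with a bounded generating set (Part 2), and concludes Part 2 via Proposition \ref{CayleyGraph}. The only cosmetic difference is in Part 1, where you relocate the centers into $B$ at the cost of replacing $K$ by $K^{2}$, while the paper simply notes that each center $g_i$ already lies in some member of the increasing exhaustion and so $B\subset B_m\cdot K$ for $m$ large.
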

\begin{proof}
1. Pick a bounded set $K$ such that for every bounded subset $B$ of $G$ there are elements $g_i\in G$, $i\leq k$, so that $B\subset \bigcup\limits_{i=1}^k g_i\cdot K$. Suppose $G= \bigcup\limits_{i=1}^\infty B_i$, where each $B_i$ is bounded and $B_i\subset B_j$ if $i < j$. Each $g\in G$ has an index $n(g)$ such that $g\in B_{n(g)}$. Now, if $B$ is bounded and $B\subset \bigcup\limits_{i=1}^k g_i\cdot K$, then for $m\ge n(g_i)$ for all $i\leq n$ one has
$B\subset B_m\cdot K$.

2. Pick a bounded set $K$ such that for every bounded subset $B$ of $G$ there are elements $g_i\in G$, $i\leq k$, so that $B\subset \bigcup\limits_{i=1}^k g_i\cdot K$. We may assume $K$ generates $G$. Each $g\in G$ has an index $n(g)$ such that $g\in K^{n(g)}$. Now, if $B$ is bounded and $B\subset \bigcup\limits_{i=1}^k g_i\cdot K$, then for $m\ge n(g_i)$ for all $i\leq n$ one has
$B\subset K^{m+1}$. By \ref{CayleyGraph}, $G$ is coarsely geodesic.
\end{proof}

\begin{Proposition}\label{FinitelyManyComponentsBdGrowth}
Suppose $G$ is a large scale group that has a bounded symmetric subset $K$ such that $\{K^n\}_{n\ge 1}$ is a basis for the bornology of $G$
and $K^2$ can be covered by $m < \infty$ sets of the form $g\cdot K$, $g\in G$. 
If $B$ is a bounded subset of $G$ equipped with the Cayley metric induced by $K^4$, then $G\setminus B$ has only finitely many $1$-components.
\end{Proposition}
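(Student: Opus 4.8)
The plan is to reduce this to the earlier Proposition asserting that if $K$ is a symmetric bounded subset of a $K$-connected large scale group $G$ in which every bounded subset can be covered by finitely many sets of the form $g\cdot K$, then $G\setminus L$ has only finitely many $K^4$-components for each bounded $L$. Once its hypotheses are verified, the conclusion is immediate, since a $1$-component of $G\setminus B$ in the Cayley metric induced by $K^4$ is, by the edge relation $g^{-1}\cdot h\in K^4$, exactly a $K^4$-component of $G\setminus B$.

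First I would record that $G$ is $K$-connected. Because a bornology covers $G$ and $\{K^n\}_{n\ge 1}$ is a basis of it, every $g\in G$ lies in some bounded set and hence in some $K^n$, so $G=\bigcup_{n\ge 1}K^n$. Writing $g=k_1\cdots k_n$ with each $k_i\in K$ exhibits a $K$-chain $1_G,k_1,k_1 k_2,\ldots,g$, and concatenating two such chains joins any pair of elements; thus $G$ is $K$-connected. The same identity shows that $K^4$ generates $G$ and that $\{K^{4n}\}_{n\ge 1}$ is cofinal with $\{K^n\}_{n\ge 1}$, so $K^4$ genuinely induces a Cayley metric inducing the same bornology; in particular ``bounded'' is unambiguous in the statement.

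Next I would promote the covering hypothesis from $K^2$ to all bounded sets. Since $\{K^n\}_{n\ge 1}$ is a basis of the bornology and $K$ generates $G$, the group $G$ is metrizable by the $K$-norm, so \ref{BasicBdPropInGroups} applies. The assumption that $K^2=K\ast K$ is covered by $m$ sets of the form $g\cdot K$ is precisely condition (2) there, which is equivalent to condition (1): every bounded subset of $G$ can be covered by finitely many sets $g\cdot K$.

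With $G$ being $K$-connected and every bounded set covered by finitely many translates of $K$, all hypotheses of the cited Proposition are met, so $G\setminus B$ has only finitely many $K^4$-components, i.e. finitely many $1$-components in the $K^4$-Cayley metric. I do not anticipate a genuine obstacle: the argument is a matter of checking that ``metrizable by the $K$-norm'' and ``$K$-connected'' hold so that \ref{BasicBdPropInGroups} and the $K^4$-component Proposition can be invoked. The only point requiring a moment's care is confirming that enlarging the generating set from $K$ to $K^4$ leaves the bornology unchanged, which is what makes the reduction legitimate.
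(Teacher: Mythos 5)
Your proof is correct, and every verification needed for the reduction goes through: since the bornology covers $G$ and $\{K^n\}_{n\ge 1}$ is a basis for it, $G=\bigcup_{n\ge 1}K^n$ is $K$-connected and metrizable by the $K$-norm, so Proposition \ref{BasicBdPropInGroups} applies and turns the hypothesis on $K^2=K\ast K$ (its condition 2) into a finite cover of every bounded subset by translates of $K$ (its condition 1); moreover, adjacency in the Cayley graph induced by $K^4$ means exactly $g^{-1}\cdot h\in K^4$, so the $1$-components of $G\setminus B$ are precisely its $K^4$-components, and the unlabelled Proposition of Section 5 on $K^4$-components of complements applies verbatim. The paper takes a different, though closely related, route: it argues directly, assuming $1_G\in B$, covering $B\cdot K$ by finitely many translates $g_i\cdot K$ (a step that implicitly uses the same consequence of \ref{BasicBdPropInGroups} that you invoke explicitly), choosing for each $g\in G\setminus B$ a geodesic $K$-chain from $g$ to $1_G$, recording which translate contains the chain's first term in $B$, and showing that two elements with the same recorded index can be joined by a $K^4$-chain avoiding $B$. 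That direct argument is essentially a re-run, inside the proof, of the proof of the very Section 5 Proposition you cite, so the mathematical core is identical. What your reduction buys is economy (no duplication of the chain argument) and an explicit justification of the covering of $B\cdot K$, which the paper leaves tacit; what the paper's version buys is a self-contained proof with an explicit bound on the number of components (at most the number of translates of $K$ needed to cover $B\cdot K$). Both are valid; yours is arguably the cleaner deduction given the results already established earlier in the paper.
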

\begin{proof}
Assume $1_G\in B$ and for every $g\in G$ pick a geodesic $K$ chain $c(g)$ from $g$ to $1_G$. Choose $g_i\in G$, $i\leq p$, such that
$B\cdot K\subset\bigcup\limits_{i=1}^p g_i\cdot K$. Given $g\in G\setminus B$, the chain $c(g)$ has the first term $f(g)$ in $B$ and that term must land
in some $g_j\cdot K$. Assign one such index $j\leq p$ to $g$ thus creating a function $i(g)$ from $G\setminus B$ to natural numbers at most $p$. If $i(g_1)=i(g_2)$, then one can jump from the previous element of $c(g_1)$ to $l(g_1)$ to the previous element of $c(g_2)$ to $l(g_2)$ via an element of $K^4$. That means $g_1$ and $g_2$ can be connected via a $K^4$-chain outside of $B$ and are in the same $1$-component of $G\setminus B$.
\end{proof}

\begin{Corollary}
The space of ends of a coarsely geodesic large scale group of bounded geometry is metrizable.
\end{Corollary}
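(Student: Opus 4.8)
The plan is to reduce everything to the metrizability criterion \ref{MetrizabilityOfEndsOfLSGroup} by manufacturing a single Cayley metric for which both of its hypotheses hold, and to secure those hypotheses in one stroke from the finiteness statement \ref{FinitelyManyComponentsBdGrowth}. First I would fix a convenient generating set. Since $G$ is coarsely geodesic, \ref{CayleyGraph} supplies a symmetric bounded $K_0$ containing $1_G$ that generates $G$ and for which $\{K_0^n\}_{n\ge 1}$ is a basis of the bornology. Since $G$ has bounded geometry, the characterization of bounded geometry for large scale groups supplies a bounded $K'$ such that every bounded subset of $G$ is covered by finitely many translates $g\cdot K'$. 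Setting $K:=K_0\cup K'\cup (K')^{-1}\cup\{1_G\}$, the set $K$ is symmetric, bounded and contains $1_G$; it still generates $G$ with $\{K^n\}_{n\ge 1}$ a basis (as $K_0\subseteq K\subseteq K_0^m$ for some $m$), and $K^2$, being bounded, is covered by finitely many translates $g\cdot K'\subseteq g\cdot K$. Thus $K$ meets every hypothesis of \ref{FinitelyManyComponentsBdGrowth}.

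Next I would pass to the Cayley metric $d$ induced by $K^4$ and verify it is a bona fide Cayley metric. The graph on vertex set $G$ in which $g,h$ are adjacent iff $g^{-1}\cdot h\in K^4$ is connected because $K^4\supseteq K$ generates $G$, its graph metric is left-invariant, and the identities $B(g,n+1)=g\cdot K^{4n}$ together with the fact that $\{K^{4n}\}_{n\ge 1}$ is a basis of the bornology show that the induced large scale structure coincides with that of $G$. Hence the inclusion $G\to\Gamma$ is a coarse equivalence and $d$ is a Cayley metric in the sense of \ref{CayleyGraphDef}; with respect to $d$ the $1$-components of any subset are precisely its $K^4$-components.

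Finally I would apply \ref{FinitelyManyComponentsBdGrowth} to conclude that for every bounded $B\subseteq G$ the complement $G\setminus B$ has only finitely many $1$-components in this $K^4$-Cayley metric. That single finiteness statement delivers both conditions of \ref{MetrizabilityOfEndsOfLSGroup} simultaneously: at most finitely many of these components are unbounded, which is condition~2, while the union of the bounded ones is a finite union of bounded sets and hence bounded by coarse connectivity, which is condition~1. Therefore \ref{MetrizabilityOfEndsOfLSGroup} applies to the metric $d$ and $Ends(G)$ is metrizable.

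The one point requiring genuine care—and the step most likely to trip one up—is the bookkeeping between the two quoted results: \ref{FinitelyManyComponentsBdGrowth} yields finitely many components only relative to the Cayley metric induced by $K^4$, so one must invoke \ref{MetrizabilityOfEndsOfLSGroup} for precisely that metric (after confirming $d$ qualifies as a Cayley metric), not for the metric attached to the original generating set $K$. Once the hypotheses on $K$ are assembled as above, the remainder is immediate.
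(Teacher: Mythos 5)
Your proposal is correct and follows exactly the paper's route: the paper's proof is the one-line instruction to apply \ref{MetrizabilityOfEndsOfLSGroup} and \ref{FinitelyManyComponentsBdGrowth}, and your argument simply fills in the details that instruction leaves implicit (assembling a single generating set $K$ from \ref{CayleyGraph} and the bounded-geometry characterization, and working with the Cayley metric induced by $K^4$). The care you take in matching the $K^4$-metric of \ref{FinitelyManyComponentsBdGrowth} with the Cayley metric required by \ref{MetrizabilityOfEndsOfLSGroup} is exactly the bookkeeping the paper elides.
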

\begin{proof}
Apply \ref{MetrizabilityOfEndsOfLSGroup} and \ref{FinitelyManyComponentsBdGrowth}.
\end{proof}

\subsection{Coarse hyperbolicity}

\begin{Definition}\label{CoarselyHypGroupDef}
A large scale group $G$ is \textbf{coarsely hyperbolic} if it is large scale equivalent to a geodesic space that is hyperbolic in the sense of Gromov.
\end{Definition}

Notice that $G$ has Cayley graphs if it is coarsely hyperbolic and, since being hyperbolic is a coarse invariant of geodesic spaces, $G$ is coarsely hyperbolic if and only if one (hence every) of its Cayley graphs is hyperbolic. Therefore, if $G$ is metrizable via a $K$-norm, it is coarsely hyperbolic if there is $\delta > 0$ such that
for every two $K$-geodesics $c$ and $d$ on $G$ emanating from $1_G$ (those are $K$-chains of length equal to the $K$-norm of the terminal elements) the distance between $i$th elements of the chains is less than $\delta$ if $i\leq (|c|+|d|-d(g_c,g_d)/2$, where $g_c$ is the terminal point of $c$ and $g_d$ is the terminal point of $d$.

The proof of the theorem below is a simplification of the one in \cite{RoeAsDimHyp}. Also, we fix a gap in the original proof by showing that any ball $B(x,ps)$ intersects at most $N_{2\delta}$ elements of the cover of $A$, not only those with $x\in A$.
\begin{Theorem}\label{AsdimOfHypGroups}
Suppose $G$ is a large scale group of bounded geometry. If $G$ is coarsely hyperbolic, then the asymptotic dimension of $G$ is finite.
\end{Theorem}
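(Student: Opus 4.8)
\section*{Proof proposal for Theorem \ref{AsdimOfHypGroups}}

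The plan is to reduce the statement to the tree-like behaviour of geodesics supplied by coarse hyperbolicity and then to exploit bounded geometry to bound the multiplicity of an explicitly constructed family of covers. Since $\asdim$ is a coarse invariant (it is preserved under coarse equivalence by \ref{CoarselyEquivalentSpacesHaveHomeomorphicEndsSpaces}-type reasoning for dimension), and a coarsely hyperbolic $G$ possesses Cayley graphs, I may assume by \ref{CayleyGraph} that $G$ is metrizable by a $K$-norm, that $\{K^n\}_{n\ge 1}$ is a basis for its bornology, and that the hyperbolicity constant $\delta>0$ from the discussion preceding the theorem is in force: any two $K$-geodesics emanating from $1_G$ with terminal points $g_c,g_d$ have their $i$-th vertices within $\delta$ whenever $i\le(|c|+|d|-d(g_c,g_d))/2$, writing $|g|$ for the $K$-norm. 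Bounded geometry enters through \ref{BasicBdPropInGroups}: there is a constant $N_{2\delta}$ such that every set of the form $g\cdot K^{2\delta}$ is covered by at most $N_{2\delta}$ translates of $K$.

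I fix the basepoint $1_G$ and, for each $g\in G$, a geodesic $K$-chain $\gamma_g$ from $1_G$ to $g$, writing $\gamma_g(j)$ for its vertex at $K$-distance $j$ from $1_G$. For each scale $s\ge 1$ I build a cover $\mathcal{U}_s$ of $G$ by grouping $g$ according to the annulus $n=\lfloor|g|/s\rfloor$ together with the ancestor vertex $\gamma_g(ns)$: within each annulus I fix a maximal $2\delta$-separated family of level-$ns$ vertices and assign $g$ to the set labelled by a nearby such vertex, extending each set slightly downward so that consecutive annuli overlap. Three properties then have to be checked. Uniform boundedness is immediate: if $g,h$ share a set, their ancestors at level $ns$ are within $2\delta$ and each point is within $s$ of its ancestor, so $d(g,h)\le 2s+2\delta$. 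The Lebesgue number estimate uses the fellow-traveling inequality: for any $g$, the geodesics of all points within roughly $s$ of $g$ agree to within $\delta$ at levels below their Gromov products, so an entire ball $B(g,\sim s)$ shares an ancestor and lies in a single set, giving a Lebesgue number comparable to $s$.

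The decisive property is the multiplicity bound. I will show that any ball $B(x,ps)$, for a fixed $p$ independent of $s$, meets at most a constant multiple of $N_{2\delta}$ elements of $\mathcal{U}_s$: such a ball spans boundedly many annuli, and within each annulus the ancestors of the points it meets all fall into a $2\delta$-neighbourhood of $\gamma_x(ns)$, which by the covering estimate from \ref{BasicBdPropInGroups} contains at most $N_{2\delta}$ of the chosen $2\delta$-separated labels. The point that must be treated with care is that this count is required for \emph{every} centre $x$, including centres $x$ whose own geodesic does not pass near the ancestors in question; this uniformity is exactly the gap in the original argument. Once it is in place, $\mathcal{U}_s$ is a uniformly bounded cover of multiplicity bounded by a constant $M$ independent of $s$, whence $\asdim G\le M-1<\infty$.

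The hard part will be precisely this uniform multiplicity bound. The difficulty is that a point $x$ may lie in the set labelled by an ancestor $v$ not because its own geodesic passes near $v$ but because the geodesics of neighbouring points do; controlling this forces one to combine the fellow-traveling inequality with the bounded-geometry covering count \emph{simultaneously}, rather than annulus by annulus. Once the ancestors of points near $x$ are shown to cluster in a single $2\delta$-neighbourhood uniformly in $x$, the estimate $N_{2\delta}$ from \ref{BasicBdPropInGroups} closes the argument and finiteness of the asymptotic dimension follows.
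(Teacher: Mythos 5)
Your proposal follows the same skeleton as the paper's proof (annuli determined by the $K$-norm, projection of each point along its geodesic to a sphere, a maximal separated net on that sphere as labels, and the bounded-geometry covering numbers $N_t$ from \ref{BasicBdPropInGroups} to count labels), but it breaks at exactly the step you yourself flag as ``the hard part,'' and the claim you hope to establish there is false. You take the net of labels to be $2\delta$-separated, with $\delta$ the hyperbolicity constant, independent of the scale $s$, and you assert that for every centre $x$ the ancestors $\gamma_y(ns)$ of all points $y\in B(x,ps)$ cluster in a $2\delta$-neighbourhood of $\gamma_x(ns)$. Hyperbolicity does not give this: fellow-traveling of $\gamma_x$ and $\gamma_y$ holds only up to the Gromov product, which for $y$ with $|y|\approx ns$ and $d(x,y)\le ps$ is only about $ns-ps$. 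At level $ns$ itself the two geodesics may already be $\sim ps$ apart (in a tree, two points at distance $ps$ whose geodesics branch at level $ns-ps/2$ have level-$ns$ ancestors at distance about $ps$). So the relevant ancestors spread over a ball of radius comparable to $ps$, and the number of $2\delta$-separated labels in such a ball is of order $N_{ps}$, which grows with $s$; it is not $O(N_{2\delta})$. With a fixed separation $2\delta$ the multiplicity of your cover $\mathcal{U}_s$ therefore blows up as $s\to\infty$, and no uniform bound $M$ exists.

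The paper's proof repairs precisely this by making the separation of the net scale with the radius of the balls being counted: the points $x_i$ on the inner sphere $K^{2n}\setminus K^{2n-1}$ are chosen $2ps$-separated (not $2\delta$-separated), and the comparison point is taken \emph{below} the sphere, at $z:=c_x(2n-ps)$, where fellow-traveling is still valid. Two applications of the hyperbolicity inequality (to $c_x$ versus $c_y$, and to the truncation of $c_y$ at $r(y)$ versus $c_i$) show that every label $x_i$ whose set meets $B(x,ps)$ lies in $B(z,ps+2\delta)$. Now the counting works because the ratio of the cluster radius $ps+2\delta$ to the separation $2ps$ is bounded: by \ref{BasicBdPropInGroups}, $B(z,ps+2\delta)$ is covered by $N_{2\delta}$ balls of radius $ps$, each containing at most one $x_i$, giving the bound $N_{2\delta}$ independent of $s$ and hence asymptotic dimension at most $2N_{2\delta}-1$. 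So your construction is salvageable, but only after replacing the $2\delta$-separated net by a $2ps$-separated one and moving the clustering claim from the sphere level $ns$ down to level $ns-ps$ (equivalently $2n-ps$ in the paper's notation); as written, the deferred step cannot be closed.
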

\begin{proof}
If $G$ is bounded, it is of asymptotic dimension at most $0$, so assume $G$ is unbounded.

Choose $K$ satisfying two conditions:\\
1. $G$ is metrizable via the $K$-norm,\\
2. There is $m\ge 1$ such that $K\ast K$ is covered by $m$ balls of radius $2$ in $G$, i.e. sets of the form $g\cdot K$, $g\in G$.\\
3. There is $\delta > 1$ such that
for every two $K$-geodesics $c$ and $d$ on $G$ emanating from $1_G$  the distance between $i$th elements of the chains is less than $\delta$ if $i\leq (|c|+|d|-d(g_c,g_d)/2$, where $g_c$ is the terminal point of $c$ and $g_d$ is the terminal point of $d$.

By \ref{BasicBdPropInGroups} for each $t\ge 1$ there is $N_t$ such that for each $S \ge 1$ the set $K^{S+t}$ can be covered by at most $N_t$ sets of the form $g\cdot K^S$.

Consider $A:=K^{2n+2p\cdot s}\setminus K^{2n}$ for some $n,p\ge 1$ so that $n > ps$.
Notice $K^{2n}\setminus K^{2n-1}\ne\emptyset$ as otherwise $K^q=K^{2n}$ for all $g\ge 2n$ and $G$ is bounded. Similarly, $A\ne\emptyset$. In $K^{2n}\setminus K^{2n-1}$ choose a maximal set of points $\{x_i\}_{i\in J}$ that are separated by at least $2p\cdot s$ in terms of the $K$-metric. For each $i\in J$ choose a $K$-geodesic $c_i$ from $1_G$ to $x_i$.

For each $g\in G$ choose a $K$-geodesic $c_g$ from $1_G$ to $g$.
Given $g\in A$ let $r(g)$ be the last term of $c_g$ belonging to $K^{2n}\setminus K^{2n-1}$. Let $U_i$, $i\in J$, be the set of all $g\in A$ so that 
$d(r(g),x_i)\leq 2ps$. Obviously, $U_i$, $i\in J$, form a cover of $A$.
If $g\in U_i$, then $d(g,x_i)\leq d(g,r(g))+d(r(g),x_i)< 2ps+2ps=
4ps$.
 That means each $U_i$ is of diameter at most $8ps$.

Suppose $x\in G$ and $B(x,ps)$ intersects $U_i$ for some $i\in J$ and $y\in B(x,ps)\cap U_i$. Since $|x|\ge |y|-ps > 2n-ps$, $z:=c_x(2n-ps)$ exists.
Moreover, $d(z,c_y(2n-ps)) < \delta$ as 
$|x|+|y|-d(x,y)\ge 2n-ps+2n-ps=2(2n-ps)$. Also,
$|r(y)|+|x_i|-d(r(y),x_i)\ge 2n+2n-2ps=2(2n-ps)$, so
$d(c_y(2n-ps),c_i(2n-ps)) < \delta$. Thus $d(z,c_i(2n-ps)) < 2\delta$
and $d(z,x_i) < ps+2\delta$.
However, $B(z,ps+2\delta)$ can be covered by $N_{2\delta}$ balls of radius $ps$ and each of them can contain at most one $x_i$.

Now we are ready to show that the asymptotic dimension of $G$ is at most $2\cdot N_{2\delta}-1$. Indeed, given $r > 0$ we choose $p \ge 1$ such that $ps > r$. Now, each $A_n:=K^{2n+2p\cdot s}\setminus K^{2n}$, $n > ps$ is covered by at most $N_{2\delta}$ sets of diameter at most $8ps$.
Add $A_{ps}:=K^{2ps}$ to obtain a cover of $G$ uniformly bounded by $8ps$
such that any $r$-ball intersects at most $2\cdot N_{2\delta}$ of its elements. It is so because any $r$-ball can intersect at most two annuli $A_n$.
\end{proof}

\section{Ends of large scale groups}

In this section we apply the general theory of ends of coarse spaces to large scale groups.
\begin{Definition}\label{LocallyBoundedGroupDef}
A large scale group $G$ is \textbf{locally bounded} if for every bounded subset $B$ of $G$ the subgroup $<B>$ of $G$ generated by $B$ is bounded.
\end{Definition}

\begin{Proposition}\label{LocallyBoundedGroupsCase}
Suppose the bornology of a large scale group $G$ has a countable basis.
If $G$ is an unbounded and locally bounded group, then its number of ends is infinite.
\end{Proposition}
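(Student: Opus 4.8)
The plan is to reduce the statement to a partition problem: it suffices to show that for every $k\ge 1$ the group $G$ can be partitioned into $k$ unbounded coarsely clopen subsets. Indeed, if $G=A_1\sqcup\cdots\sqcup A_k$ with each $A_j$ unbounded and coarsely clopen, then the one-element family $\{A_j\}$ has all finite intersections unbounded, so by a Zorn's lemma argument it extends to an end $E_j$ with $A_j\in E_j$; and if $j\ne j'$ then $E_j\ne E_{j'}$, since $A_j\cap A_{j'}=\emptyset$ is bounded while any two members of a common end have unbounded intersection. Thus such a partition produces at least $k$ distinct ends, and letting $k\to\infty$ gives infinitely many ends.

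To build the partitions I would first exploit the hypotheses to produce a convenient exhausting filtration by bounded subgroups. Since the bornology of $G$ has a countable basis, choose an increasing sequence $\{B_n\}$ of bounded sets forming a basis of the bornology. Because $G$ is locally bounded, each subgroup $\langle B_n\rangle$ is bounded, and $\bigcup_n\langle B_n\rangle=G$. As $G$ is unbounded, the chain $\langle B_1\rangle\subset\langle B_2\rangle\subset\cdots$ cannot stabilize, so after passing to a subsequence I obtain bounded subgroups $H_1\subsetneq H_2\subsetneq\cdots$ with $\bigcup_i H_i=G$ and $\{H_i\}$ cofinal in the bornology. Define the level function $\nu:G\to\{1,2,\dots\}$ by $\nu(g)=\min\{i:g\in H_i\}$; it is finite everywhere and, since each inclusion is strict, attains every value $i\ge 1$ (on $H_i\setminus H_{i-1}$).

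For fixed $k$ set $A_j:=\{g\in G:\nu(g)\equiv j\ (\mathrm{mod}\ k)\}$ for $j=0,\dots,k-1$; these clearly partition $G$. Each $A_j$ is unbounded: if $A_j\subset H_N$ then every $g\in A_j$ would satisfy $\nu(g)\le N$, yet $A_j$ meets $H_i\setminus H_{i-1}$ for arbitrarily large $i\equiv j$, so $A_j$ lies in no $H_N$ and hence, by cofinality of $\{H_i\}$, in no bounded set. The essential point is that each $A_j$ is coarsely clopen, which I would verify through \ref{CClopenStructure}: given a bounded $B$, pick $m$ with $B\cdot B^{-1}\subset H_m$. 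The key computation is that translation by a bounded element cannot change the level of a high-level element: if $\nu(g)>m$ and $d\in H_m$, then $d\in H_{\nu(g)-1}$, so $gd\in H_{\nu(g)}$ while $gd\notin H_{\nu(g)-1}$ (otherwise $g=(gd)d^{-1}\in H_{\nu(g)-1}$), giving $\nu(gd)=\nu(g)$. Consequently, if $x\in(A_j\cdot B)\cap(A_j^c\cdot B)$, writing $x=ab_1=a'b_2$ with $a\in A_j,\ a'\in A_j^c$ yields $a'=ad$ with $d=a^{-1}a'\in B\cdot B^{-1}\subset H_m$; were $\nu(a)>m$ we would get $\nu(a')=\nu(a)$ and hence $a'\in A_j$, a contradiction, so $\nu(a)\le m$ and $x\in H_m\cdot B$. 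Thus $(A_j\cdot B)\cap(A_j^c\cdot B)\subset H_m\cdot B$ is bounded and $A_j$ is coarsely clopen.

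The main obstacle, and the reason a naive argument fails, is the construction of the coarsely clopen pieces: an arbitrary union of cosets of some $H_i$ is in general \emph{not} coarsely clopen, because the group translates freely among cosets and the ``boundary'' can be all of $G$ (as already in $\bigoplus_{\mathbb N}\mathbb Z/2$, where the parity of a coset is flipped by every generator). The level-function construction is engineered precisely to defeat this: once $\nu(g)$ exceeds the size of the admissible translations, the level of $g$, and hence the class $A_j$ containing it, is frozen, so only the bounded set $\{g:\nu(g)\le m\}$ can lie on the boundary. Everything else is routine bookkeeping with the cofinality of $\{H_i\}$ and the elementary reduction of the first paragraph.
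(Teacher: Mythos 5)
Your proof is correct, and its central construction differs from the paper's. Both arguments begin the same way: a countable basis plus local boundedness yields a strictly increasing, cofinal, exhausting sequence of bounded subgroups, and both rest on the same mechanism, namely that right translation by a bounded set can only disturb a bounded initial segment of that filtration. But the paper's coarsely clopen witnesses are staircases of cosets: it picks $g_i\in G_{i+1}\setminus G_i$ and, for each infinite $J\subset\mathbb{N}$, sets $A_J:=\bigcup_{i\in J}g_i\cdot G_i$; coarse clopenness follows because $g_i\cdot G_i\cdot G_j=g_i\cdot G_i$ for $i\ge j$, so a bounded translate only perturbs the finitely many low floors, and $A_J\cap A_L=\emptyset$ whenever $J\cap L=\emptyset$. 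Partitioning $\mathbb{N}$ into infinitely many infinite subsets then exhibits infinitely many pairwise disjoint NCC sets, hence infinitely many ends in one stroke. Your witnesses are instead the level sets of $\nu$ modulo $k$ (unions of the filtration annuli $H_i\setminus H_{i-1}$), which partition all of $G$ and are canonical once the filtration is fixed, and you conclude by letting $k\to\infty$. Each approach buys something: the paper's produces an infinite pairwise disjoint family of NCC sets at once, which is exactly the kind of input needed for ultrafilter arguments in the style of \ref{NonMetrizableEndsOfLSGroups}, so it in effect shows $Ends(G)$ is very large rather than merely infinite; yours avoids any choice of the elements $g_i$, verifies coarse clopenness cleanly through the criterion \ref{CClopenStructure}, and is easily strengthened, since your frozen-level computation shows that $\{g : \nu(g)\in P\}$ is coarsely clopen for every $P\subset\mathbb{N}$, so taking infinitely many disjoint infinite sets $P$ would recover the paper's stronger output as well.
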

\begin{proof}
Express $G$ is a union of a strictly increasing sequence $\{G_i\}_{i\ge 1}$ of its bounded subgroups so that $B_i\subset G_i$, where $\{B_i\}_{i\ge 1}$ is a basis of the bornology of $G$. Choose $g_i\in G_{i+1}\setminus G_i$ for each $i\ge 1$. Given an infinite subset $J$ of natural numbers define $A_J$
as $\bigcup\limits_{i\in J} g_i\cdot G_i$. Notice each $A_J$ is unbounded.
To show $A_J$ is coarsely clopen assume $B\subset G$ is bounded
and choose $j\in J$ so that $B\subset G_j$.
Notice $A_j\subset A_J\cdot G_j\subset A_j\cup\bigcup\limits_{i\in J, i < j} g_i\cdot G_i$, so $(A_J\cdot B)\Delta A_J$ is bounded.
To complete the proof notice $A_J\cap A_L=\emptyset$ if $J\cap L=\emptyset$. Indeed, if $g\in A_J\cap A_L$, then there exist $i < j$
such that $g_i\cdot h=g_j\cdot h'$, where $h\in G_i$ and $h'\in G_j$.
Therefore $g_j\in G_j$, a contradiction. Finally, since $\mathbb{N}$ can be expressed as an infinite union of mutually disjoint infinite subsets of $\mathbb{N}$, $G$ has infinitely many mutually disjoint unbounded coarsely clopen subsets and hence infinitely many ends.

\end{proof}
\begin{Proposition}
Suppose the bornology of a large scale group $G$ has a countable basis. $G$ is non-locally bounded if and only if $G$ admits a strictly increasing sequence $\{G_i\}_{i\ge 1}$ of unbounded boundedly generated subgroups such that $\{G_i\}_{i\ge 1}$ is a basis for boundedly generated subgroups.
\end{Proposition}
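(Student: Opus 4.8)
The plan is to prove the two implications separately, working throughout with the countable basis $\{B_i\}_{i\ge 1}$ of the bornology, which I may assume to be increasing (replace $B_i$ by $B_1\cup\cdots\cup B_i$).

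For the easy direction, suppose such a sequence $\{G_i\}_{i\ge 1}$ is given. Then $G_1$ is unbounded and boundedly generated, so $G_1=\langle K\rangle$ for some bounded symmetric $K\subseteq G_1$; here $K$ is bounded in $G_1$ and hence in $G$, since (by the Corollary identifying the induced large scale structure on a subgroup with the one generated by the restricted bornology) the bounded subsets of $G_1$ are exactly its subsets that are bounded in $G$. As $\langle K\rangle=G_1$ is unbounded while $K$ is bounded, $G$ fails to be locally bounded, i.e. $G$ is non-locally bounded.

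For the main direction I would build the sequence directly from the bornology. Set $H_i:=\langle B_i\rangle$. First I would check each $H_i$ is boundedly generated: the set $K_i:=B_i\cup B_i^{-1}\cup\{1_G\}$ is symmetric, generates $H_i$, and is bounded in $H_i$ (again using that the induced large scale structure on $H_i$ is generated by the restricted bornology). Next, the $H_i$ form an increasing chain because the $B_i$ increase, and they form a basis for boundedly generated subgroups in the sense of Definition \ref{BasisForBoundedlygeneratedSubgroupsDef}: any bounded $B$ lies in some $B_j\subseteq H_j$, which is precisely the equivalent condition there. Finally, using non-local-boundedness I would pick a bounded $B$ with $\langle B\rangle$ unbounded and an index $i_0$ with $B\subseteq B_{i_0}$; then $H_i\supseteq\langle B\rangle$ is unbounded for every $i\ge i_0$, so after discarding the first $i_0-1$ terms every $H_i$ is unbounded.

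The remaining, and principal, difficulty is upgrading the increasing chain $\{H_i\}$ to a strictly increasing one without destroying the basis property. I would pass to a subsequence, setting $i_1=i_0$ and letting $i_{k+1}$ be the least index with $H_{i_{k+1}}\supsetneq H_{i_k}$; a cofinal subsequence of a basis is again a basis, so nothing is lost there. The only obstruction is stabilization: if beyond some $H_{i_k}$ no strictly larger term exists, then $H_j=H_{i_k}$ for all $j$, and since every singleton is bounded and hence lies in some $B_j\subseteq H_j$ we have $\bigcup_j H_j=G$, forcing $G=H_{i_k}$ to be itself boundedly generated. Thus the strictly increasing subsequence exists exactly when the chain does not stabilize, i.e. when $G$ is not boundedly generated, and this non-stabilization step is where the real content sits and where one must be careful about the degenerate boundedly generated case.
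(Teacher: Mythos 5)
Your construction is essentially the paper's: both build the candidate subgroups directly from an increasing countable basis $\{B_i\}_{i\ge 1}$ of the bornology. The difference is the mechanism for strictness. The paper chooses $g_i\in G\setminus\langle B_i\rangle$ and sets $G_i:=\langle B_i\cup\{g_i\}\rangle$, while you take $H_i:=\langle B_i\rangle$ and extract a strictly increasing cofinal subsequence. Your route is arguably cleaner: as literally written, the paper's sequence need not even be increasing (nothing forces $g_i\in G_{i+1}$; to repair this one should cumulate the generators, e.g.\ $G_i:=\langle B_i\cup\{g_1,\dots,g_i\}\rangle$ with $g_{i+1}$ chosen outside $\langle B_{i+1}\cup\{g_1,\dots,g_i\}\rangle$), whereas your subsequence device sidesteps that issue entirely. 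Your easy direction and your argument that the tail consists of unbounded subgroups match the paper's.

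The ``degenerate case'' you flag at the end is not a weakness of your argument; it is a counterexample to the statement itself. If $G$ is unbounded and boundedly generated (for instance $G=\mathbb{Z}$ with the bornology of finite sets), then $G$ is non-locally bounded, yet no strictly increasing sequence $\{G_i\}_{i\ge 1}$ can be a basis for boundedly generated subgroups: $G$ is itself a boundedly generated subgroup, so by Definition \ref{BasisForBoundedlygeneratedSubgroupsDef} some $G_i$ must contain $G$, i.e.\ $G_i=G$, and then $G_{i+1}\supsetneq G_i$ is impossible. The paper's proof conceals exactly this gap: the choice $g_i\in G\setminus\langle B_i\rangle$ tacitly assumes $\langle B_i\rangle\ne G$ for every $i$, i.e.\ that $G$ is not boundedly generated. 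So your proof establishes everything that is actually true: once the hypothesis is strengthened to ``non-locally bounded and not boundedly generated'' (or ``strictly increasing'' is relaxed to ``increasing,'' allowing eventually constant sequences), your stabilization analysis completes the equivalence; in the form stated, the forward implication genuinely fails, and no completion of your argument --- or of the paper's --- can exist.
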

\begin{proof}
Let $\{B_i\}_{i\ge 1}$ be a basis of the bornology of $G$, where $\{B_i\}_{i\ge 1}$ is increasing. Choose $g_i\in G\setminus <B_i>$, and set $G_i:=<B_i\cup g_i>$. Notice for any bounded subset $B\subset G$, there exists $i\ge 1$ such that $B\subset B_i$ and hence $<B>\subset G_i$. If each $G_i$ is bounded then $G$ is locally bounded, a contradiction. Therefor, there exists $N\geq 1$ such that $G_n$ is unbounded for all $n\geq N$. Without loss of generality, we may assume that $N=1$.
\end{proof}
\begin{Proposition}\label{SequenceOfSubgroupsProp}
Let $G$ be a large scale group that has a basis $\{G_i\}_{i\ge 1}$ for boundedly generated subgroups consisting of unbounded subgroups. If $A$ is a coarsely clopen unbounded subset of $G$, then there is $n\ge 1$ such that $A\cap G_n$ is unbounded.
\end{Proposition}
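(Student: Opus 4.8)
The plan is to argue by contradiction: assume that $A\cap G_n$ is bounded for every $n\ge 1$ and derive a contradiction with $A$ being unbounded and coarsely clopen. First I would make two harmless reductions. Since singletons are bounded and every bounded set lies in some $G_i$, we have $G=\bigcup_{i\ge 1}G_i$; and replacing $G_i$ by $\langle G_1\cup\cdots\cup G_i\rangle$ we may assume the sequence is increasing. Independently, I fix once and for all a bounded symmetric set $B\ni 1_G$ whose generated subgroup $H:=\langle B\rangle$ is \emph{unbounded}; such a $B$ exists because some $G_i$ is an unbounded boundedly generated subgroup and hence has a bounded symmetric generating set. Being boundedly generated, $H$ is contained in some $G_{n_0}$.

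The core is a local-structure observation. By \ref{CClopenStructure} the set $D:=(A\cdot B)\cap(A^c\cdot B)$ is bounded. If $a\in A\setminus D$ and $ab\in A^c$ for some $b\in B$, then $a=(ab)b^{-1}\in A^c\cdot B$ while also $a\in A\cdot B$, forcing $a\in D$, a contradiction; hence $a\in A\setminus D$ implies $aB\subseteq A$. Consequently $A\setminus D$ is a union of $B$-components of $G\setminus D$, since a $B$-neighbour in $G\setminus D$ of a point of $A\setminus D$ again lies in $A\setminus D$. Each such component $P$ is $B$-connected, so it lies in a single left coset $xH$; as $H\subseteq G_{n_0}$ and the representative $x$ lies in some $G_m$, we get $P\subseteq xH\subseteq G_N$ with $N=\max(m,n_0)$. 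Therefore, if any component $P\subseteq A\setminus D$ were unbounded, then $A\cap G_N\supseteq P$ would be unbounded, the desired conclusion. Thus I am reduced to the case where \emph{every} $B$-component of $A\setminus D$ is bounded.

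The main obstacle — and the step that genuinely uses unboundedness of $H$ — is to rule this out by a boundary/shell count. For a bounded $B$-component $P$ of $G\setminus D$ one has $PB\setminus P\subseteq D$, because a $B$-step out of $P$ either lands in $D$ or would connect $P$ to another component. Moreover $PB\setminus P\ne\emptyset$: otherwise $PB=P$, so $P$ is right-$\langle B\rangle$-invariant and hence a union of left $H$-cosets, which is impossible for a nonempty bounded $P$ since $H$ is unbounded. Now, as $A\setminus D$ is unbounded and is the disjoint union of these bounded components, I would extract greedily — using that each component and each $B$-thickening of a bounded set is bounded — an infinite subfamily $P_{k_1},P_{k_2},\dots$ so separated that the thickenings $P_{k_j}B$ are pairwise disjoint and escape every bounded set. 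Their shells $P_{k_j}B\setminus P_{k_j}$ are then nonempty, pairwise disjoint, and unbounded as a union, yet all contained in $D$, contradicting boundedness of $D$.

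This contradiction shows some $B$-component of $A\setminus D$ is unbounded, and the coset computation then yields $N$ with $A\cap G_N$ unbounded. The only delicate point is the shell extraction in the third paragraph: everything else is the straightforward local structure of coarsely clopen sets together with the fact that a $B$-connected set lands in one coset of $\langle B\rangle$, hence inside one $G_N$. I expect the extraction to go through because bounded components covering an unbounded set must spread out, so their disjoint nonempty shells accumulate at infinity inside the bounded set $D$, which is impossible; the one place where care is needed is guaranteeing the shells are nonempty, which is exactly where the unboundedness of $H=\langle B\rangle$ is indispensable.
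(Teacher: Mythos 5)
Your first two paragraphs are sound and actually close in spirit to the paper's argument: the right-$B$-invariance of $A$ away from the bounded set $D=(A\cdot B)\cap(A^{c}\cdot B)$ (bounded by \ref{CClopenStructure}), the fact that $A\setminus D$ is a union of $B$-components of $G\setminus D$, each contained in a single coset of $H=\langle B\rangle$ and hence in some $G_N$ — all of this is correct, and it correctly reduces the problem to the case where every $B$-component of $A\setminus D$ is bounded. The gap is in the third paragraph, and it is fatal as written. The contradiction you aim for cannot exist: the shells $P_{k_j}B\setminus P_{k_j}$ are, by your own observation, contained in $D$, so their union is automatically bounded; asserting that this union is ``unbounded'' contradicts nothing except your own construction. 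What your extraction actually produces is infinitely many nonempty, pairwise disjoint subsets of the bounded set $D$ — and that is not a contradiction in a general large scale group, because bounded does not mean finite. (It would be a contradiction if the bornology consisted of finite sets, but the proposition is stated for arbitrary large scale groups; for a locally compact group, say, the bornology consists of pre-compact sets, which are typically infinite.) So the step ``the shells escape every bounded set, contradicting boundedness of $D$'' genuinely fails, and no amount of care in the extraction will repair it.

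The good news is that the remaining case does not need to be ruled out: it yields the conclusion directly. If every component $P$ of $A\setminus D$ is bounded, your nonempty-shell argument gives $p\in P$ and $b\in B$ with $pb\notin P$; since $pb\in A$ (right-$B$-invariance) and $p,pb$ would otherwise lie in the same component of $G\setminus D$, we get $pb\in D$, so $P$ meets $D\cdot B$, and being $B$-connected, $P\subset xH$ for some $x\in D\cdot B$. Hence $A\subset D\cup\bigl((D\cdot B)\cdot H\bigr)$. Now $D\cup (D\cdot B)$ is bounded, so it lies in some $G_k$, and $H\subset G_{n_0}$; with your increasing reduction, $A\subset G_N$ for $N=\max(k,n_0)$, so $A\cap G_N=A$ is unbounded. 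For comparison, the paper's proof compresses both of your cases into one stroke: it encloses the boundary $(A\cdot B)\,\Delta\, A$ together with $B$ in a single subgroup $G_k$; then a $B$-step from a point of $A\setminus G_k$ can neither leave $A$ nor enter $G_k$ (because $G_k$ is a subgroup containing $B$), so either $A\subset G_k$, or $A\setminus G_k$ contains a whole unbounded coset $g\cdot G_1$, which lies in some $G_n$. Using a subgroup, rather than the bare boundary set $D$, as the excluded region is exactly what spares the paper your case analysis.
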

\begin{proof}
Let $G_1$ be generated by a symmetric bounded subset $B$ of $G$. Since $(A\cdot B)\Delta A$ is bounded, there is $k > 1$ such that $(A\cdot B)\Delta A\subset G_k$. Therefore $(A\cdot B)\setminus G_k=A\setminus G_k$. If $ A\setminus G_k=\emptyset$, we are done, so assume $g\in A\setminus G_k$. Now, for each $b\in B$, $g\cdot b\in (A\cdot B)\setminus G_k=A\setminus G_k$. That implies $g\cdot G_1\subset A\setminus G_k$. Choose $n>1$ such that $<B\cup g>\subset G_n$, then $g\cdot G_1\subset A\cap G_n$ and hence $A\cap G_n$ is unbounded.
\end{proof}

\begin{Definition}
\textbf{NCC} is a \textbf{shortcut for non-trivial coarsely clopen subsets} $Y$ of a large scale space $X$, i.e. those coarsely clopen subsets that are unbounded and $X\setminus Y$ is unbounded.
\end{Definition}

\begin{Theorem}\label{NumberOfEndsViaFiltration}
Let $G$ be a large scale group that has a basis $\{G_i\}_{i\ge 1}$ for boundedly generated subgroups consisting of unbounded subgroups. If $m\leq \infty$ and each $G_i$ has at most $m$ ends, then the number of ends of $G$ is at most $m$.
\end{Theorem}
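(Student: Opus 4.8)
Assume $m$ is finite, since otherwise there is nothing to prove. The plan is to argue by contraposition: if $G$ had at least $m+1$ ends, I would produce a single basis subgroup $G_n$ with at least $m+1$ ends, contradicting the hypothesis. First I would record a combinatorial reformulation. Recall from Section 2 that $Ends(G)$ is compact, Hausdorff and totally disconnected, hence zero-dimensional with a basis of clopen sets $U_{end}$. If $|Ends(G)|\ge m+1$, I pick distinct ends $E_0,\dots,E_m$ and separate them by pairwise disjoint basic clopen sets $(U_j)_{end}$ with $E_j\in(U_j)_{end}$. Since $U_j\cap U_k$ is coarsely clopen by \ref{IntersectionOFCCs} and $(U_j)_{end}\cap(U_k)_{end}=(U_j\cap U_k)_{end}=\emptyset$ for $j\ne k$, the set $U_j\cap U_k$ must be bounded (any unbounded coarsely clopen set lies in some end, which would then contain both $U_j$ and $U_k$). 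Thus $U_0,\dots,U_m$ are $m+1$ unbounded coarsely clopen subsets with pairwise bounded intersections; conversely, any such family determines $m+1$ distinct ends, one containing each $U_j$. So the heart of the proof is to pull such a family back into one $G_n$.

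Next I would strengthen \ref{SequenceOfSubgroupsProp} to a simultaneous statement: given finitely many unbounded coarsely clopen sets $U_0,\dots,U_m$, there is one basis subgroup $G_n$ with every $U_j\cap G_n$ unbounded. Fix a basis element $G_1=\langle B\rangle$ with $B$ symmetric, bounded, containing $1_G$; by coarse clopenness each $F_j:=(U_j\cdot B)\setminus U_j$ is bounded, as it lies in $(U_j\cdot B)\cap(U_j^c\cdot B)$ by \ref{CClopenStructure}. Choose a basis element $G_k$ containing the bounded set $B\cup F_0\cup\dots\cup F_m$; being a subgroup it then contains $G_1$. For each index with $U_j\not\subseteq G_k$ pick $g_j\in U_j\setminus G_k$. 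Mimicking \ref{SequenceOfSubgroupsProp}, for $b\in B$ the element $g_j\cdot b$ lies in $U_j\cdot B$, and if $g_j\cdot b\notin U_j$ then $g_j\cdot b\in F_j\subseteq G_k$, forcing $g_j\in G_k$, a contradiction; hence $g_j\cdot b\in U_j\setminus G_k$, and iterating gives $g_j\cdot G_1\subseteq U_j$.

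Writing $G_k=\langle C_k\rangle$ with $C_k$ bounded, I would then take $G_n$ to be a basis element containing the bounded set $C_k\cup B\cup\{g_j\}$, so that $G_k\subseteq G_n$ and $G_1\subseteq G_n$. Consequently, for each $j$ with $U_j\subseteq G_k$ we get $U_j\cap G_n=U_j$ unbounded, while for the remaining $j$ we get $g_j\cdot G_1\subseteq U_j\cap G_n$, which is unbounded because $G_1$ is unbounded. This simultaneous escape of all the $U_j$ into one subgroup is the step I expect to be the main obstacle: it is exactly where bounded generation of the basis elements is essential, since it lets me nest the a~priori unbounded $G_k$ inside $G_n$ by absorbing its generating set $C_k$ together with the finitely many witnesses $g_j$.

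Finally I would transport the family into $G_n$. For a coarsely clopen $U$ in $G$ and a subgroup $G_n$, the set $U\cap G_n$ is coarsely clopen in $G_n$: for bounded $B\subseteq G_n$ the products stay inside $G_n$ and $((U\cap G_n)\cdot B)\cap((G_n\setminus U)\cdot B)\subseteq(U\cdot B)\cap(U^c\cdot B)$, which is bounded by \ref{CClopenStructure}. Thus $U_0\cap G_n,\dots,U_m\cap G_n$ are unbounded and coarsely clopen in $G_n$, with pairwise intersections contained in the bounded sets $U_j\cap U_k$. By the reformulation of the first paragraph applied to $G_n$, these yield $m+1$ distinct ends of $G_n$, contradicting the assumption that $G_n$ has at most $m$ ends. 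Hence $G$ has at most $m$ ends.
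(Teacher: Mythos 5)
Your proof is correct and follows essentially the same route as the paper: reduce to finite $m$, convert an excess of ends into $m+1$ almost-disjoint unbounded coarsely clopen sets, and push them all into a single basis subgroup $G_n$ via the mechanism of \ref{SequenceOfSubgroupsProp}, contradicting the bound on $Ends(G_n)$. The only difference is one of detail: the paper's proof simply cites \ref{SequenceOfSubgroupsProp} and asserts that one index $n$ works for all the sets simultaneously, whereas you explicitly prove that simultaneous strengthening (absorbing the generating set $C_k$, the set $B$, and the finitely many witnesses $g_j$ into one basis element) together with the fact that restrictions of coarsely clopen sets to subgroups remain coarsely clopen---precisely the steps the paper leaves implicit.
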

\begin{proof}
The case $m=\infty$ is clear, so assume $m < \infty$.

If $G$ has $(m+1)$ mutually disjoint NCC sets $A_i$, $i\leq m+1$, then
by
\ref{SequenceOfSubgroupsProp} we can find an index $n$ such that
each $A_i\cap G_n$ is an NCC set in $G_n$. Hence $|Ends(G_n)|\geq m+1$, a contradiction.
\end{proof}

\begin{Corollary}
\label{NumberOfEndsViaFiltration2}
Let $G$ be a large scale group whose bornology consists of all finite subsets.
If $m\leq \infty$ and $G$ is the union of an increasing sequence $\{G_i\}_{i\ge 1}$ 
of infinite subgroups having at most $m$ ends, then the number of ends of $G$ is at most $m$.
\end{Corollary}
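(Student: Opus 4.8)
The plan is to deduce this directly from Theorem \ref{NumberOfEndsViaFiltration} by checking that its two hypotheses hold in the present setting. That Theorem requires a basis $\{G_i\}_{i\ge 1}$ for boundedly generated subgroups consisting of unbounded subgroups, each having at most $m$ ends, and then concludes that $G$ itself has at most $m$ ends. So the entire task reduces to matching the data of the Corollary to the hypotheses of the Theorem.

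First I would record the defining feature of the bornology: since the bounded subsets of $G$ are exactly the finite ones, a subset of $G$ is unbounded precisely when it is infinite. Hence each $G_i$, being an infinite subgroup by assumption, is automatically unbounded, which supplies the first half of the hypothesis.

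Next I would verify that $\{G_i\}_{i\ge 1}$ is a basis for boundedly generated subgroups in the sense of Definition \ref{BasisForBoundedlygeneratedSubgroupsDef}. By the equivalent formulation given there, it suffices to show that every bounded, hence finite, subset $B$ of $G$ is contained in some $G_i$. Writing $B=\{b_1,\ldots,b_k\}$ and using $G=\bigcup_{i\ge 1} G_i$, each $b_j$ lies in some $G_{i_j}$; because the sequence $\{G_i\}$ is increasing, taking $i=\max_{j\le k} i_j$ yields $B\subset G_i$. This is the only place where the increasing and exhaustive nature of the sequence is used, and it constitutes the mild technical core of the argument.

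With both hypotheses in hand, and each $G_i$ assumed to have at most $m$ ends, Theorem \ref{NumberOfEndsViaFiltration} applies verbatim and gives that $G$ has at most $m$ ends. Since the deduction is essentially a matter of unwinding definitions, I do not anticipate a genuine obstacle; the one point to handle with care is that the finiteness of $B$ together with the increasing property is exactly what lets us absorb the finitely many indices $i_1,\ldots,i_k$ into a single $G_i$.
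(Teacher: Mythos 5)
Your proposal is correct and follows exactly the paper's own argument: the paper's proof likewise observes that $\{G_i\}_{i\ge 1}$ is a basis for boundedly generated subgroups consisting of unbounded subgroups and then invokes Theorem \ref{NumberOfEndsViaFiltration}. Your write-up merely spells out the details (infinite $=$ unbounded under this bornology, and absorbing a finite set into a single $G_i$ via the increasing union) that the paper leaves implicit.
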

\begin{proof}
Notice $\{G_i\}_{i\ge 1}$ is a basis for boundedly generated subgroups consisting of unbounded subgroups. Apply \ref{NumberOfEndsViaFiltration}.
\end{proof}

\begin{Lemma}\label{ActingOnThreeEnds}
Let $G$ be a large scale group that has a basis $\{G_i\}_{i\ge 1}$ for boundedly generated subgroups consisting of unbounded subgroups that are coarsely geodesic. 
If $G$ contains three NCC sets that are mutually disjoint, then it acts trivially on at most one of the three NCC sets.
\end{Lemma}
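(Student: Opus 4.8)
The plan is to argue by contradiction: assume $G$ acts trivially on two of the three mutually disjoint NCC sets, say on $A_1$ and $A_2$, and use the third set $A_3$ as the source of translations to contradict the coarse disjointness of $A_1$ and $A_2$. First I would pass to a coarsely geodesic subgroup. By \ref{SequenceOfSubgroupsProp}, applied as in the proof of \ref{NumberOfEndsViaFiltration}, there is an index $n$ for which $A_1\cap G_n$, $A_2\cap G_n$ and $A_3\cap G_n$ are all unbounded; being pairwise disjoint and coarsely clopen (the restriction of a coarsely clopen set to $G_n$ is coarsely clopen), each is an NCC subset of $G_n$. Writing $H:=G_n$ and abbreviating $A_i\cap H$ by $A_i$, the group $H$ is coarsely geodesic, so by \ref{CayleyGraph} I fix a left-invariant Cayley metric $d$ on $H$ with symmetric unit generating set $K$, noting that every left translation $L_h$ is a $d$-isometry. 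The group $H$ still acts trivially on $A_1$ and on $A_2$: for $h\in H$ one has $h(A_i\cap H)=hA_i\cap H$, whose symmetric difference with $A_i\cap H$ is contained in the bounded set $(hA_i)\Delta A_i$. Finally, using \ref{ComponentsOfNCCInCoarselyGeodesic} I choose one bounded $B_0\ni 1_H$ that simultaneously separates the three sets, i.e. every $1$-component of $H\setminus B_0$ meeting some $A_i\setminus B_0$ lies inside $A_i\setminus B_0$; set $R_0:=\diam(B_0\cup\{1_H\})$ and $R:=\diam(B_0)$.

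The heart of the argument is a translation estimate. Fix $g\in A_3$ with $D:=|g|$ large, and for $i=1,2$ pick $c_i\in A_i$ with $|c_i|$ so large that $gc_i$ avoids the bounded set $(gA_i)\Delta A_i$; since $H$ acts trivially on $A_i$, this forces $gc_i\in A_i$, deep. Let $\gamma_i$ be a $d$-geodesic from $1_H$ to $c_i$ and consider the $1$-chain $g\gamma_i$, which runs from $g\in A_3$ to $gc_i\in A_i$. As its endpoints lie in different unions of $1$-components of $H\setminus B_0$, the chain must meet $B_0$, say $g\gamma_i(t_i)\in B_0$. Then $p_i:=\gamma_i(t_i)\in g^{-1}B_0$, and since $L_g$ is an isometry one gets $|p_i|\ge D-R_0$; hence $p_i$ lies beyond the last visit of $\gamma_i$ to $B_0$ and therefore in the component of $c_i$, so $p_i\in A_i$. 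Thus $p_1\in A_1$ and $p_2\in A_2$ both lie in the single bounded set $g^{-1}B_0$, whence $d(p_1,p_2)\le R$ while $|p_1|\ge D-R_0$.

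Letting $D\to\infty$ produces points $p_1\in A_1$ of unbounded norm, each within $R$ of $A_2$, so $\st(A_1,\mathcal{U}_R)\cap\st(A_2,\mathcal{U}_R)=B(A_1,R)\cap B(A_2,R)$ is unbounded. But two disjoint coarsely clopen sets are coarsely disjoint (each is coarsely disjoint from its complement, hence from any subset of it), so this intersection must be bounded, a contradiction; consequently $G$ acts trivially on at most one of the three sets. I expect the main obstacle to be precisely the translation estimate: one must verify that a deep $g\in A_3$ sends the chosen geodesics back into $A_1$ and $A_2$ (this is exactly where triviality of the action on \emph{both} sets is used) and that the resulting $B_0$-crossing points sit at distance $\approx D$ and genuinely inside $A_1$ and $A_2$ rather than in $B_0$ itself; once these two points are located in the common bounded set $g^{-1}B_0$, the coarse-disjointness contradiction is immediate.
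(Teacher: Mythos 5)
Your proof is correct and follows essentially the same route as the paper's: reduce via \ref{SequenceOfSubgroupsProp} to a coarsely geodesic subgroup with a Cayley metric, fix a bounded separating set, and translate geodesics by a deep element $g\in A_3$ so that the trivial action on $A_1$ and $A_2$ forces both translated chains to return to $A_1$, resp.\ $A_2$, and hence cross the separator, placing points of $A_1$ and of $A_2$ inside the single bounded set $g^{-1}\cdot B_0$. The only notable (and cosmetic) difference is the endgame: you let $D\to\infty$ and contradict coarse disjointness of $A_1$ and $A_2$, whereas the paper gets its contradiction from a single deep element $g_3$ by observing that $g_3^{-1}\cdot K$ must lie in only one of the sets, which forces it to also handle the residual set $G\setminus(A_1\cup A_2\cup A_3)$ -- a step your variant neatly avoids.
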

\begin{proof}

Suppose $G$ acts trivially on disjoint NCC sets $A_1$, $A_2$ and $A_3$
is an NCC sets disjoint from $A_1\cup A_2$.
Using \ref{SequenceOfSubgroupsProp} we may reduce the proof to $G$ being boundedly generated and of bounded geometry. Equip $G$ with a left-invariant metric $d$ inherited from a Cayley graph.
Find a bounded subset $K$ of $G$ containing $1_G$ such that if $i\leq 3$ and $g\in A_i\setminus K$, $h\in A_i^c\setminus K$, then $d(g,h) > 2$. 

Let $A_4:=G\setminus (A_1\cup A_2\cup A_3)$. Either $A_4$ is an NCC or it is bounded.
Find $m\ge 1$ such that for any $x\in A_i$, $i\leq 3$, of norm at least $m$, $B(x,2\cdot diam(K)+2)$
is contained in $A_i$. If $A_4$ is unbounded, require the same property for $A_4$, otherwise require that $B(x,diam(K)+1)$ is disjoint with $A_4$.

In $A_3$ find an element $g_3$ of the norm bigger than $m$. Hence
$g_3\cdot K\subset A_3$.

Since $A_1\Delta (g_3\cdot A_1)$ is bounded,
choose $g_1\in A_1$ of the norm larger than $m$ such that $g_3\cdot g_1\in A_1$. Given a $K$-chain $c$ joining $g_1$ to $g_0\in K$, it stays in $A_1$ until it hits $K$ for the first time. Truncate $c$ to include initially (until the last element) only elements of $A_1$ and ending at $K$. Now, $g_3\cdot c$ is a $K$-chain starting in $A_1$ and ending in $A_3$. Therefore it hits $K$ at certain moment. That means existence of $x_1\in A_1$ such that $g_3\cdot x_1\in K$.
Similarly, we can find $x_2\in A_2$ such that $g_3\cdot x_2\in K$. That means $g_3^{-1}\cdot K$ intersects both $A_1$ and $A_2$, a contradiction as that set is contained exclusively in only one of $A_i$, $i\leq 4$, due to the norm of
$g_3^{-1}$ being larger than $m$.
\end{proof}

\begin{Theorem}\label{NumberOfEnds}
Let $G$ be a large scale group that has a basis $\{G_i\}_{i\ge 1}$ for boundedly generated subgroups consisting of subgroups that are coarsely geodesic. The number of ends of $G$ is either infinite or at most $2$.
\end{Theorem}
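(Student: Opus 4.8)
The plan is to prove the contrapositive: assuming $G$ is unbounded with only finitely many ends, I will show it has at most two. So suppose $Ends(G)=\{E_1,\dots,E_n\}$ with $n\ge 3$ and seek a contradiction. Since $Ends(G)$ is compact and totally disconnected, a finite end space is discrete; using the basis $\{U_{end}\}$ for its topology, each singleton $\{E_i\}$ equals $(A_i)_{end}$ for some open coarsely clopen $A_i\subseteq G$. Each $A_i$ is an NCC set isolating exactly $E_i$: it is unbounded because $E_i\in(A_i)_{end}$, and its complement is unbounded because it carries the remaining ends. Since $(A_i\cap A_j)_{end}=(A_i)_{end}\cap(A_j)_{end}=\emptyset$ for $i\ne j$, while every unbounded coarsely clopen set isolates at least one end (extend it by Zorn to a maximal family), the $A_i$ are pairwise coarsely disjoint; subtracting the bounded overlaps (which preserves coarse clopenness) I may take $A_1,\dots,A_n$ to be genuinely disjoint NCC sets.

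Next I would record the key observation: \emph{if $g\in G$ fixes the end $E_i$, then $g$ acts trivially on $A_i$}, i.e. $A_i\Delta(g\cdot A_i)$ is bounded. Indeed, left translation by $g$ is a coarse equivalence, so it induces a homeomorphism of $Ends(G)$ sending $E_i$ to the end isolated by $g\cdot A_i$; as $gE_i=E_i$, the set $g\cdot A_i$ again isolates only $E_i$. If $A_i\Delta(g\cdot A_i)$ were unbounded, then one of $A_i\setminus(g\cdot A_i)$ or $(g\cdot A_i)\setminus A_i$ would be an unbounded coarsely clopen set contained in $A_i$ yet coarsely disjoint from $g\cdot A_i$; it would then have to isolate an end lying in $(A_i)_{end}=\{E_i\}$ but outside $(g\cdot A_i)_{end}=\{E_i\}$, which is impossible.

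Now consider the action of $G$ on the finite set $Ends(G)$ by the homeomorphisms induced by left translations. This gives a homomorphism $G\to\mathrm{Sym}(Ends(G))$ with finite image, so its kernel $N$ — the elements fixing every $E_i$ — has finite, hence bounded, index in $G$. By \ref{BoundedIndexSubgroups} the inclusion $N\hookrightarrow G$ is a coarse equivalence, and by \ref{CoarselyEquivalentSpacesHaveHomeomorphicEndsSpaces} it carries $Ends(N)$ homeomorphically onto $Ends(G)$; in particular the traces $A_i\cap N$ are $n\ge 3$ pairwise disjoint NCC sets of $N$. By the observation, every element of $N$ acts trivially on each $A_i$, and hence on each $A_i\cap N$. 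Finally I would check that $N$ inherits the hypotheses of \ref{ActingOnThreeEnds}: taking the basis $\{G_i\}$ of $G$, the subgroups $G_i\cap N$ have bounded index in $G_i$ (so are coarsely geodesic, being a coarse invariant by \ref{CayleyGraph}) and still form a basis for the boundedly generated subgroups of $N$; replacing each by $\langle (G_i\cap N)\cup\{x_i\}\rangle$ for a suitable escaping $x_i$ makes them unbounded without destroying the basis property. Applying \ref{ActingOnThreeEnds} to $N$ with $A_1\cap N,A_2\cap N,A_3\cap N$, the group $N$ may act trivially on at most one of them — contradicting that it acts trivially on all three. Hence $n\le 2$.

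I expect the main obstacle to be bookkeeping rather than conceptual: the heart of the argument is to pass to the bounded-index kernel $N$ of the action on the finite end space and apply \ref{ActingOnThreeEnds} to $N$ itself. The delicate points are the end-fixing observation of the second paragraph and the verification that $N$ again satisfies the coarsely-geodesic-basis hypothesis of \ref{ActingOnThreeEnds}; both become routine once one notes that bounded index makes $N$ coarsely equivalent to $G$ and that all the relevant notions — NCC sets, trivial action, coarse geodesicity, and the end space — are coarse invariants.
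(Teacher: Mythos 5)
Your overall strategy coincides with the paper's: let $G$ act on its finite end space, pass to the finite-index (hence bounded-index) kernel $N$, intersect three disjoint NCC sets with $N$, and contradict Lemma \ref{ActingOnThreeEnds}; your isolation of each end by an NCC set $A_i$ and your observation that fixing the end $E_i$ forces $A_i\Delta(g\cdot A_i)$ to be bounded are correct and make explicit what the paper leaves implicit. However, there is a genuine gap: you never handle the case where $G$ is \emph{locally bounded} in the sense of Definition \ref{LocallyBoundedGroupDef}, i.e. every boundedly generated subgroup of $G$ is bounded. This case is permitted by the hypotheses of the theorem, because bounded subgroups are (trivially) coarsely geodesic: for example $\bigoplus_{i=1}^{\infty}\mathbb{Z}/2$ with the bornology of finite sets has the basis $G_i=\bigoplus_{j\leq i}\mathbb{Z}/2$ of finite, hence coarsely geodesic, subgroups. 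For such a group Lemma \ref{ActingOnThreeEnds} can never be invoked: its hypothesis demands a basis of \emph{unbounded} boundedly generated subgroups, and no unbounded boundedly generated subgroup exists at all. Your patch --- ``replacing each by $\langle (G_i\cap N)\cup\{x_i\}\rangle$ for a suitable escaping $x_i$ makes them unbounded'' --- is vacuous here, since any such subgroup is boundedly generated and therefore bounded. The paper disposes of this case by a separate and genuinely different argument, Proposition \ref{LocallyBoundedGroupsCase}: an unbounded, locally bounded group whose bornology has a countable basis has infinitely many ends, exhibited explicitly as the disjoint unbounded coarsely clopen sets $A_J=\bigcup_{i\in J}g_i\cdot G_i$ over infinite sets $J$ of indices. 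Without this case your contradiction argument, and hence the theorem, is incomplete.

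There is also a secondary defect in the non-locally-bounded case: after adjoining $x_i$ you do not verify that the enlarged subgroup is still coarsely geodesic, which Lemma \ref{ActingOnThreeEnds} requires; bounded generation alone does not give coarse geodesicity (that is exactly the extra bornology condition in part 1 of \ref{CayleyGraph}). The cleaner repair, once $G$ is known not to be locally bounded, is to fix a bounded set $B$ with $\langle B\rangle$ unbounded and discard the bounded members of $\{G_i\}$: the unbounded ones still form a basis (any boundedly generated $H$ lies in some $G_j$ containing $\langle H\cup B\rangle$, and that $G_j$ is unbounded), and their intersections $G_i\cap N$ are of bounded index in $G_i$, hence unbounded, boundedly generated and coarsely geodesic by \ref{BoundedIndexSubgroups} and \ref{CayleyGraph}. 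With that adjustment, plus the locally bounded case settled by \ref{LocallyBoundedGroupsCase}, your argument becomes the paper's proof.
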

\begin{proof}
Notice for each $G_i$ there exists a symmetric bounded subset $K_i\subset G$ containing $1_G$ such that $G_i=<K_i>$ and that $(K_i^n)_{n\geq 1}$ is a basis for $G_i$. In particular, $\{K_i^n:i,n\in \mathbb{N}\}$ is a countable basis for $G$.
If $G$ is bounded, then $Ends(G)$ is empty.
If $G$ is locally bounded and unbounded, then $Ends(G)$ is infinite by \ref{LocallyBoundedGroupsCase}.

Assume $G$ is unbounded, not locally bounded, its number of ends $m\ge 3$ is finite, and it contains three NCC sets
that are disjoint. Notice that, by \ref{CoarselyEquivalentSpacesHaveHomeomorphicEndsSpaces}, for any $x\in G$, the map $\sigma^x:G\to G$ that maps each $g\in G$ to $x\cdot g$ induces a bijection $\sigma^x_{end}:Ends(G)\to Ends(G)$. Let $Bij(Ends(G))$ be the finite group of all bijections from  $Ends(G)$ to itself. The map $\rho:G\to Bij(Ends(G))$ given by: $\rho(x)= \sigma^x_{end}$ is a group homomorphism with $H:=Ker(\rho)$ is a subgroup of finite index in $G$. Moreover, $H$ acts trivially on $Ends(G)$. Now, there are $m\ge 3$ disjoint NCC subsets of $H$ obtained by intersecting $H$. This contradicts Lemma \ref{ActingOnThreeEnds}.
\end{proof}
\begin{Corollary}
Let $G$ be a locally compact $\sigma$-compact topological group with bornology consisting of pre-compact subsets. The number of ends of $G$ is either infinite or at most $2$.
\end{Corollary}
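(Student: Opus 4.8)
The plan is to realize $G$ as a large scale group satisfying the hypotheses of Theorem \ref{NumberOfEnds} and then quote that theorem directly. First I would recall that, by the discussion of case 3 in the Corollary listing the classes of large scale groups, the bornology of all pre-compact subsets does induce a large scale structure on $G$, so $G$ is genuinely a large scale group. Since $G$ is $\sigma$-compact and locally compact, it admits an exhaustion by compact sets $C_1\subset C_2\subset\cdots$ with $C_i\subset int(C_{i+1})$ and $\bigcup_i C_i=G$; in particular the bornology of $G$ has a countable basis, which is what the quoted results on the number of ends presuppose.

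Next I would construct the required basis of coarsely geodesic subgroups. Fix a symmetric compact neighborhood $K$ of $1_G$ and set $K_i:=C_i\cup C_i^{-1}\cup K$ and $G_i:=<K_i>$. Each $K_i$ is a symmetric compact neighborhood of $1_G$, so $G_i$ is an open (hence closed) $\sigma$-compact subgroup generated by $K_i$. The crucial point is that $G_i$ is coarsely geodesic: because $K_i$ is a neighborhood of $1_G$ one has $K_i^m\subset int(K_i^{m+1})$ for every $m$, so $\{K_i^m\}_{m\ge 1}$ is an exhaustion of $G_i$ and every pre-compact subset of $G_i$ is swallowed by some $K_i^m$. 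Thus $\{K_i^m\}_m$ is a countable basis for the bornology of $G_i$, which, $G_i$ being a closed subgroup, coincides with the restriction to $G_i$ of the bornology of $G$, and that restricted bornology generates the induced large scale structure on $G_i$ by the Corollary following Lemma \ref{CoversOfSubgroupsLemma}. Condition 1 of Proposition \ref{CayleyGraph} is therefore met, so each $G_i$ is coarsely geodesic.

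Finally I would check that $\{G_i\}_{i\ge 1}$ is a basis for boundedly generated subgroups in the sense of Definition \ref{BasisForBoundedlygeneratedSubgroupsDef}: given a bounded, i.e. pre-compact, subset $B$ of $G$, its closure is compact and hence covered by finitely many of the open sets $int(C_{j+1})$, so $B\subset C_N\subset G_N$ for some $N$. With all hypotheses verified, Theorem \ref{NumberOfEnds} applies and yields that the number of ends of $G$ is either infinite or at most $2$.

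I expect the main obstacle to be the coarse geodesicity of the subgroups $G_i$, and specifically the topological input $K_i^m\subset int(K_i^{m+1})$ together with the claim that every pre-compact subset of the open subgroup $G_i$ lies in some power $K_i^m$; this is exactly where local compactness and the neighborhood property of $K$ are essential. The remaining verifications, namely that $G$ is a large scale group, that the $G_i$ exhaust all bounded sets, and the identification of the induced and restricted large scale structures, are routine consequences of results already established in the paper.
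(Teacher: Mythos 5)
Your proposal is correct and takes essentially the same route as the paper: the paper's proof likewise takes an exhausting sequence of compact sets $(K_i)_{i\ge 1}$, sets $G_i:=<K_i>$, and applies Theorem \ref{NumberOfEnds} after asserting that $\{G_i\}_{i\ge 1}$ is a basis for boundedly generated subgroups consisting of coarsely geodesic subgroups. Your extra steps (adjoining a compact symmetric neighborhood of $1_G$ so each generating set is a neighborhood of the identity, deducing $K_i^m\subset int(K_i^{m+1})$, and verifying condition 1 of Proposition \ref{CayleyGraph} via the restricted bornology) simply supply the details the paper leaves implicit.
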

\begin{proof}
Let $(K_i)_{i\ge 1}$ be an \textbf{exhausting sequence}, i.e. $K_i$ is compact, $K_i\subset int(K_{i+1})$ for each $i\ge 1$, and $\bigcup\limits_{i=1}^\infty K_i=G$. The sequence $\{G_i\}_{i\ge 1}$, where $G_i:=<K_i>$ is a basis for boundedly generated subgroups consisting of subgroups that are coarsely geodesic.
\end{proof}
\begin{Theorem}
Suppose $G$ is a large scale group whose bornology consists of all finite subsets of $G$. If $G$ has finitely many ends, then it has at most $2$ ends.
\end{Theorem}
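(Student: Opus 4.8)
The plan is to deduce the statement from Theorem~\ref{NumberOfEnds}. Since the bornology of $G$ consists of all finite subsets, a subset is bounded exactly when it is finite, so the boundedly generated subgroups of $G$ are precisely its finitely generated subgroups. I first observe that every finitely generated subgroup $H=\langle K\rangle$ of $G$ (with $K$ finite and symmetric) is coarsely geodesic. By the Corollary following \ref{CoversOfSubgroupsLemma}, the large scale structure induced on $H$ from $G$ is the one generated by the finite subsets of $H$, and $\{K^{n}\}_{n\ge 1}$ is a basis for this bornology because every finite subset of $H=\langle K\rangle$ lies in some $K^{n}$. Thus $H$ satisfies condition (1) of \ref{CayleyGraph}, hence condition (2), so $H$ is coarsely equivalent to a Cayley graph and is coarsely geodesic.

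When $G$ is countable, the theorem is now immediate. Fixing an enumeration $G=\{x_{1},x_{2},\dots\}$ and setting $G_{i}:=\langle x_{1},\dots,x_{i}\rangle$, the sequence $\{G_{i}\}_{i\ge 1}$ is increasing, each $G_{i}$ is finitely generated (hence coarsely geodesic by the previous paragraph), and every finite subset of $G$ is eventually contained in some $G_{i}$; so $\{G_{i}\}_{i\ge 1}$ is a basis for boundedly generated subgroups in the sense of \ref{BasisForBoundedlygeneratedSubgroupsDef}. Theorem~\ref{NumberOfEnds} then gives that the number of ends of $G$ is infinite or at most $2$, and the hypothesis that it is finite forces it to be at most $2$.

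The main obstacle is the case where $G$ is uncountable, for then no $\mathbb{N}$-indexed family of finitely generated subgroups can be cofinal, and Theorem~\ref{NumberOfEnds} does not apply verbatim. My plan here is to localize the argument of \ref{NumberOfEnds} into a countable subgroup. Assume for contradiction that $G$ has $m$ ends with $3\le m<\infty$. Since $Ends(G)$ is compact, totally disconnected and Hausdorff with $m$ points, it is discrete, and splitting it into singletons produces three mutually disjoint NCC sets $A_{1},A_{2},A_{3}$. As in \ref{NumberOfEnds}, the left translations induce a homomorphism $\rho$ from $G$ to the finite group $Bij(Ends(G))$ whose kernel $H$ has finite, hence bounded, index; by \ref{BoundedIndexSubgroups} and \ref{CoarselyEquivalentSpacesHaveHomeomorphicEndsSpaces} the inclusion $H\hookrightarrow G$ is a coarse equivalence and $H$ acts trivially on its ends, so $H$ acts trivially on each $A_{i}\cap H$.

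It remains to pass to a countable subgroup inside $H$. Using the characterization \ref{CClopenStructure} of coarse clopenness by finite witnesses, I would run a L\"owenheim--Skolem style closure: starting from countably many chosen points of each $A_{i}$ and each $A_{i}^{c}$, repeatedly adjoin the finite sets witnessing that $(A_{i}\cdot B)\cap(A_{i}^{c}\cdot B)$ is finite for the finitely many $B$ accumulated so far, and close under the group operations. This should yield a countable subgroup $G_{0}\le H$ in which $A_{1}\cap G_{0}, A_{2}\cap G_{0}, A_{3}\cap G_{0}$ are still mutually disjoint, unbounded, and coarsely clopen, and on which $G_{0}$ acts trivially. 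By the countable case $G_{0}$ carries a basis of coarsely geodesic boundedly generated subgroups, so Lemma~\ref{ActingOnThreeEnds} applies and tells us $G_{0}$ acts trivially on at most one of the three sets, a contradiction. The delicate point, and the step I expect to require the most care, is the closure construction: one must ensure that restriction to $G_{0}$ preserves both coarse clopenness and unboundedness of the three sets simultaneously.
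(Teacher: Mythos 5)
Your proposal is correct and is essentially the paper's proof: pass to a finite-index subgroup $H$ acting trivially on its ends, take three mutually disjoint non-trivial coarsely clopen sets $A_1,A_2,A_3$, drop to a countable subgroup containing countable unbounded subsets of each $A_i$ and each $A_i^c$, and contradict Lemma \ref{ActingOnThreeEnds}; your separate countable case via Theorem \ref{NumberOfEnds} is subsumed by this reduction. The step you flag as delicate is in fact automatic: since bounded means finite, $\bigl((A_i\cap G_0)\cdot B\bigr)\cap\bigl((G_0\setminus A_i)\cdot B\bigr)\subset (A_i\cdot B)\cap(A_i^c\cdot B)$ is finite for every finite $B\subset G_0$ by \ref{CClopenStructure}, so coarse clopenness, unboundedness (given the seed points), and the trivial action all restrict to any subgroup for free --- no L\"owenheim--Skolem closure is needed, and the paper accordingly just takes the subgroup generated by the chosen countable subsets.
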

\begin{proof}
$G$ has a subgroup $H$ of finite index that has the same number of ends as $G$ and $H$ acts on its ends trivially. Suppose $H$ has at least $3$ ends. In that case we can find three mutually disjoint subsets $A_i$, $i\leq 3$, of $H$ which are coarsely clopen and non-trivial. Let $B_i:=A^c_i$, and choose countable unbounded subsets $C_i$ and $D_i$ of each $A_i$ and $B_i$, respectively. Let $H'$ be the subgroup of $H$ generated by $\bigcup\limits_{i=1}^{3}(C_i\cup D_i)$. Each $A_i\cap H'$ is a non-trivial coarsely clopen subset of $H'$ on which $H'$ acts trivially contradicting \ref{ActingOnThreeEnds}.
\end{proof}

Now we generalize the Stallings' theorem by showing that any group of bounded geometry of two ends contains an infinite cyclic subgroup of bounded index.

\begin{Lemma}\label{CyclicSubgroupsLemma}
Let $G$ be a large scale group containing two unbounded cyclic subgroups $H$ and $K$. If $H$ is of bounded index in $G$, then so is $K$. 
\end{Lemma}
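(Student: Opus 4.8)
The plan is to translate ``bounded index'' into the coarse language already available and then reduce everything to a single dichotomy about the two tails of the cyclic line $(k^n)_{n\in\ZZ}$. By \ref{BoundedIndexSubgroups}, $K$ is of bounded index in $G$ if and only if the inclusion $K\to G$ is a coarse equivalence, i.e. if and only if $K$ is \emph{coarsely dense}: $K\cdot B=G$ for some bounded $B$. Likewise, the hypothesis on $H$ says exactly that the inclusion $H\to G$ is a coarse equivalence. Since $H=\langle h\rangle$ is unbounded cyclic, $h$ has infinite order and $H\cong\ZZ$; its induced bornology is a proper, product- and inverse-stable bornology on $\ZZ$, which forces $H$ to be coarsely equivalent to $\ZZ$ with the standard metric. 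Hence $G$ is coarsely equivalent to $\ZZ$. In particular $G$ is coarsely geodesic and metrizable by a left-invariant metric $d$ (via \ref{MetrizabilityOfLSG} and \ref{CayleyGraph}), it has exactly two ends $e_+,e_-$, and there is a quasi-isometry $\pi\colon G\to\ZZ$ whose two directions realize $e_+$ and $e_-$. The same reasoning applied to $K=\langle k\rangle$ shows $K$ is coarsely equivalent to $\ZZ$, so $\nu(n):=d(1_G,k^n)\to\infty$ as $|n|\to\infty$ and $n\mapsto k^n$ is proper.

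First I would set up the reduction. Because $d$ is left-invariant, consecutive powers satisfy $d(k^n,k^{n+1})=d(1_G,k)=:c$, so the integers $F(n):=\pi(k^n)$ have uniformly bounded consecutive jumps $|F(n+1)-F(n)|\le \lambda c+\mu=:C$, while properness of $n\mapsto k^n$ together with the quasi-isometry inequalities for $\pi$ gives $|F(n)|\to\infty$ as $|n|\to\infty$. A sequence of integers with steps bounded by $C$ and $|F(n)|\to\infty$ must, on each tail, eventually stay on one side of $0$; thus $k^n$ converges to an end as $n\to+\infty$ and another as $n\to-\infty$. This leaves exactly two cases. In the good case the two tails reach \emph{different} ends, say $F(n)\to+\infty$ and $F(-n)\to+\infty$ fails while $F(-n)\to-\infty$; then by a discrete intermediate value argument every integer lies within $C$ of some $F(n)$, so $\pi(K)$ is $C$-dense in $\ZZ$, and pulling back through the quasi-isometry $\pi$ shows $K$ is coarsely dense in $G$, whence $K$ has bounded index by \ref{BoundedIndexSubgroups}. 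So the entire theorem rests on excluding the bad case (``Case B'') where both tails $k^{n}$ and $k^{-n}$ converge to the \emph{same} end.

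The exclusion of Case B is the main obstacle, and it is where the group structure must enter. The first observation is that left translation $L_k\colon g\mapsto k\cdot g$ is a $d$-isometry and hence a coarse equivalence, so by \ref{CoarselyEquivalentSpacesHaveHomeomorphicEndsSpaces} it induces a homeomorphism $\sigma^k_{end}$ of the two-point space $Ends(G)=\{e_+,e_-\}$. Since $L_k$ carries the sequence $(k^n)$ to $(k^{n+1})$, continuity of $\sigma^k_{end}$ forces it to fix the common limiting end $\epsilon$ of Case B; a bijection of a two-point set fixing one point fixes both, so $\sigma^k_{end}=\mathrm{id}$, i.e. $L_k$ is coarsely orientation-preserving on the line $\ZZ$. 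I would then argue that an orientation-preserving isometry of a coarse line cannot push a bi-infinite group orbit off to $+\infty$ in \emph{both} directions: concretely, for every $n$ the point $1_G$ is a coarse midpoint of $k^{-n}$ and $k^{n}$ because $d(k^{-n},1_G)=d(1_G,k^{n})=\nu(n)$, and an orientation-preserving translation of the line must move this midpoint monotonically, forcing $\pi(k^{n})$ and $\pi(k^{-n})$ to opposite sides of $\pi(1_G)$ — contradicting Case B. (The orientation-reversing alternative $\sigma^k_{end}=\mathrm{swap}$ is already ruled out above, since in Case B $\epsilon$ is a fixed end.)

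The delicate technical heart of this last step is controlling distortion: making ``$1_G$ is a coarse midpoint on a coarse geodesic from $k^{-n}$ to $k^{n}$'' rigorous requires that $K$ be undistorted in $G$, i.e. that $\nu(2n)\ge 2\nu(n)-O(1)$, equivalently that $n\mapsto k^n$ is a quasigeodesic line. I expect this undistortedness to be the real obstacle and would prove it directly from two-endedness: using $\pi$, estimate $\nu(n)=d(1_G,k^n)$ from below by $\tfrac1\lambda|F(n)-F(0)|-\mu$ and show, via the bounded-jump and properness properties of $F$ together with the fact that in a coarse line the two complementary unbounded half-spaces are coarsely clopen (so a path with bounded jumps that returns toward the same end must backtrack a comparable amount), that $\nu$ grows coarsely linearly. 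Once undistortedness is in hand, the midpoint argument closes Case B and the theorem follows. An attractive alternative, which I would pursue in parallel, is to bypass the metric estimates entirely and phrase Case B as ``$K$ is coarsely contained in one coarsely clopen half $A$ whose complement is unbounded while $L_k$ acts trivially on $Ends(G)$,'' and then derive a contradiction from the trivial action hypothesis in the style of \ref{CyclicSubgroupProp}, which is tailored to exactly this configuration of an unbounded cyclic subgroup inside a two-ended group.
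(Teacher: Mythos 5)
Your proposal has two genuine gaps, and the first is fatal at the level of generality of the statement. The lemma concerns \emph{arbitrary} large scale groups, but your opening move asserts that an unbounded cyclic large scale group must be coarsely equivalent to $\ZZ$ with its standard structure ("its induced bornology is a proper, product- and inverse-stable bornology on $\ZZ$, which forces $H$ to be coarsely equivalent to $\ZZ$ with the standard metric"). That is false. Let $D_k$ denote the set of integers expressible as a signed sum of at most $k$ powers of $2$, enlarged by $\{-k,\dots,k\}$, and let $\mathcal{B}$ be the bornology of all subsets of the sets $D_k$. Then $D_k+D_l\subset D_{k+l}$ and $-D_k=D_k$, so $\mathcal{B}$ is stable under products and inverses and $(\ZZ,\mathcal{B})$ is a large scale group; it is unbounded since $|D_k\cap[-N,N]|=O((\log N)^k)$, yet it is \emph{not} coarsely equivalent to standard $\ZZ$: it fails bounded geometry (each translate $g+D_1$ meets $[-N,N]$ in $O(\log N)$ points uniformly in $g$, while $|D_2\cap[-N,N]|$ grows like $(\log N)^2$, so no finite family of translates of $D_1$ covers $D_2$), and bounded geometry is a coarse invariant which standard $\ZZ$ enjoys. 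In such a group even your properness claim fails: $d(1_G,k^{2^j})$ stays bounded although $\langle k\rangle$ is unbounded. Hence the quasi-isometry $\pi:G\to\ZZ$, the two ends $e_\pm$, the bounded-jump function $F$, and the properness of $n\mapsto k^n$ --- the entire stage on which your argument plays out --- need not exist. The paper's proof never makes this reduction: it stays abstract, restricting the coarse inverse $f:G\to H$ of the inclusion (supplied by \ref{BoundedIndexSubgroups}) to $K$ to obtain a coarse embedding $K\to H$, and concluding that this is a coarse equivalence; no metric, geodesic, or end appears. Your route, even if completed, proves only the special case where $G$ is coarsely a standard line (which, to be fair, does cover the place where the paper uses the lemma, inside \ref{SpecialStallingsTwoEnds}, since there the ambient groups are coarsely geodesic of bounded geometry).

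Second, even granting the coarse-line picture, the step you yourself identify as the heart of the matter --- excluding Case B --- is not proved. The midpoint argument is circular: $d(k^{-n},1_G)=d(1_G,k^{n})$ only makes $1_G$ \emph{equidistant} from $k^{\pm n}$; it is a coarse midpoint only if $d(k^{-n},k^{n})=\nu(2n)$ is comparable to $2\nu(n)$, i.e.\ only if $K$ is undistorted, which is exactly what is in doubt in Case B (there $\nu(2n)$ could a priori be much smaller than $\nu(n)$). You acknowledge this and defer to an undistortedness claim that is left as an expectation with a sketch, so the central case of the proposal is not actually established; the alternative via \ref{CyclicSubgroupProp} is likewise only gestured at. For what it is worth, Case B admits a short, distortion-free exclusion with your own tools: left-invariance gives $d(k^{a},k^{a+m})=\nu(m)$ for every $a$, hence $\nu(m)\le\lambda\,|F(a+m)-F(a)|+\mu$ for \emph{every} $a$. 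If $F$ were bounded on one side (which is what Case B entails), then for each $m$ the function $a\mapsto F(a+m)-F(a)$, whose jumps are at most $2C$, cannot keep a constant sign --- iterating a constant sign drives $F$ past any bound on that side --- so for some $a$ its absolute value is at most $2C$, giving $\nu(m)\le 2\lambda C+\mu$ for all $m$ and contradicting unboundedness of $K$. Thus $F$ is unbounded above and below, and your discrete intermediate-value argument already yields coarse density, with no mention of ends, orientation, or undistortedness. That repair closes your Case B, but it still lives strictly inside the coarsely geodesic world and therefore still falls short of the lemma as stated and of the paper's three-line formal argument.
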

\begin{proof}
Since there is a coarse equivalence $f:G\to H$ inverse to the inclusion $i:H\to G$, $f|K:K\to H$ is a coarse embedding, hence a coarse equivalence. Consequently, the inclusion $K\to G$ is a coarse equivalence and $K$ is of bounded index in $G$. 
\end{proof}

\begin{Theorem}\label{SpecialStallingsTwoEnds}
If $G$ is a large scale group of bounded geometry, non-locally bounded, and $\sigma$-bounded that has two ends, then it contains an infinite cyclic subgroup of bounded index.
\end{Theorem}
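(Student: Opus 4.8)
The plan is to reduce everything to Proposition \ref{CyclicSubgroupProp}, whose hypotheses (a symmetric bounded $K\ni 1_G$ and a bounded $B$ such that $G\setminus B$ has exactly two $K$-components $L,R$, both unbounded, on which $G$ acts trivially) already produce an infinite cyclic subgroup of bounded index. So the real work is to manufacture such $K,B,L,R$ out of the assumption that $G$ is two-ended.

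\emph{Reduction to a trivial action.} First I would replace $G$ by the kernel $H$ of the homomorphism $\rho\colon G\to \mathrm{Bij}(Ends(G))$ used in the proof of Theorem \ref{NumberOfEnds}; since $Ends(G)$ is a two-point set, $H$ has finite, hence bounded, index in $G$, so by \ref{BoundedIndexSubgroups} the inclusion $H\to G$ is a coarse equivalence and by \ref{CoarselyEquivalentSpacesHaveHomeomorphicEndsSpaces} $H$ again has two ends while remaining of bounded geometry, $\sigma$-bounded and non-locally bounded. By construction $H$ fixes both ends. If I produce an infinite cyclic $C$ of bounded index in $H$, then $C\to H\to G$ is a composite of coarse equivalences, so $C$ is of bounded index in $G$ (the transfer recorded in Lemma \ref{CyclicSubgroupsLemma}). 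I therefore rename $H$ as $G$ and assume $G$ fixes each of its two ends. Using that two ends force at most two disjoint NCC sets (disjoint NCC sets must lie in distinct ends) together with \ref{LargeScleCompact}, I obtain a coarsely clopen \textbf{bipartition} $A$ with $A\in E_1$, $A^c\in E_2$, unique up to coarse identity; since $G$ fixes the ends, $gA$ and $A$ lie in the same end, whence $A\,\Delta\,gA$ is bounded for every $g$, i.e. $G$ acts trivially on $A$ and on $A^c$.

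\emph{The main step: $G$ is boundedly generated.} This is where the hypotheses interact most delicately and is, I expect, the principal obstacle. Non-local boundedness supplies a symmetric bounded $K_0\ni 1_G$ with $\langle K_0\rangle$ unbounded, and applying \ref{SequenceOfSubgroupsProp} to $A$ and to $A^c$ I enlarge $K_0$ so that the boundedly generated subgroup $G_0:=\langle K_0\rangle$ meets both ends (both $A\cap G_0$ and $A^c\cap G_0$ are unbounded). I claim $G_0$ is of bounded index in $G$; granting this there is a bounded $K'$ with $K'\cdot G_0=G$, and then $G=\langle K_0\cup K'\rangle$ is boundedly generated. I would prove the claim by contradiction from a failure of coarse density of $G_0$: a subset of $G$ lying coarsely far from $G_0$ is unbounded and hence accumulates at one of the only two ends, and, combined with the part of $G_0$ running to that same end, it would split that end into a third disjoint NCC set, contradicting that $G$ has exactly two ends. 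Making this splitting rigorous — in particular ruling out spurious ends coming from filtration pieces that individually may have infinitely many ends, and upgrading a coarse gap to a genuine coarsely clopen separation — is the technical heart; it is the $\mathbb{Z}$ versus $\mathbb{Z}[1/p]$ dichotomy, the latter failing to be two-ended precisely because its coarse boundary refines without bound.

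\emph{Assembling the conclusion.} Once $G$ is boundedly generated, \ref{BornologyHasACountableBasis} makes it coarsely geodesic with $\{K^n\}_{n\ge 1}$ a basis of its bornology for a suitable symmetric bounded $K\ni 1_G$, and bounded geometry gives that $K^2$ is covered by finitely many translates $g\cdot K$. Equipping $G$ with the Cayley metric induced by $K^4$, Proposition \ref{FinitelyManyComponentsBdGrowth} shows $G\setminus B$ has only finitely many $1$-components for each bounded $B$; absorbing the finitely many bounded components into $B$ and invoking \ref{MetrizabilityOfEndsOfLSGroup}, for $B$ large enough $G\setminus B$ has exactly two unbounded $1$-components $L,R$, which are coarsely clopen by \ref{ComponentsOfNCCInCoarselyGeodesic}. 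Since there are only two ends, $L$ and $R$ are coarsely identical to $A$ and $A^c$, so $G$ acts trivially on each; and every bounded $B$ lies in a power of the generating set, verifying condition (2) of Proposition \ref{CyclicSubgroupProp}. That proposition then yields an infinite cyclic subgroup of bounded index in $G$, which completes the proof.
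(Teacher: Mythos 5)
Your reduction to a group acting trivially on its two ends, and your endgame in the boundedly generated case (finitely many $1$-components via \ref{FinitelyManyComponentsBdGrowth}, coarse clopenness of components via \ref{ComponentsOfNCCInCoarselyGeodesic}, then \ref{CyclicSubgroupProp}), both match the paper. The genuine gap is exactly where you flag it: the claim that a boundedly generated subgroup $G_0=\langle K_0\rangle$ meeting both ends has bounded index in $G$ is never proved, and the mechanism you sketch for it would fail. The set of points lying coarsely far from a subgroup is essentially never coarsely clopen (in $\ZZ^2$ the set of points at distance $>r$ from the $x$-axis already fails), and two-endedness only forbids three \emph{mutually disjoint NCC} sets; an unbounded set disjoint from $G_0$ carries no contradiction by itself, so everything hinges on the ``upgrade to a coarsely clopen separation'' that you admit is missing. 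There is also a uniformity problem your sketch never confronts: combining \ref{SequenceOfSubgroupsProp}, the boundedly generated case, and \ref{CyclicSubgroupsLemma}, one can show that each \emph{individual} $g\in G$ lies in $B_g\cdot G_0$ for some bounded $B_g$, but bounded index demands a single bounded set working for all $g$ at once. This is precisely where $\sigma$-boundedness must be used, and it plays no role in your main step.

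The paper closes this gap with an algebraic, not geometric, argument. Supposing $G$ is \emph{not} boundedly generated, it uses $\sigma$-boundedness, \ref{SequenceOfSubgroupsProp}, the already-settled boundedly generated case, and \ref{CyclicSubgroupsLemma} to build a strictly increasing exhaustion $G=\bigcup_{n\ge 1} H_n$ by boundedly generated subgroups, each of bounded index in the next, with $H_1$ infinite cyclic and meeting both ends. Two short computations then give a contradiction. First, $A$ lies in no $H_m$: since $A$ is coarsely clopen, $A\Delta(A\cdot z)$ is bounded for any fixed $z$ (\ref{CClopenStructure}); so if $A\subset H_m$ and $z\in H_{m+1}\setminus H_m$, either $A\cap (A\cdot z)\ne\emptyset$, forcing $z\in A^{-1}\cdot A\subset H_m$, or $A\cap(A\cdot z)=\emptyset$, making $A\Delta(A\cdot z)\supset A$ unbounded. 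Second, with $t$ a generator of $H_1$, choose $k$ so large that $A\Delta(A\cdot t)$ and $A^c\Delta(A^c\cdot t)$ lie in $H_k$, and pick $x\in A\setminus H_k$; then $x\cdot t^n\in A$ for all $n$, i.e. $H_1\subset x^{-1}\cdot A$, and the trivial action makes $H_1\cap A^c\subset x^{-1}\cdot A\setminus A$ bounded --- contradicting that $H_1$ meets both ends. To complete your write-up you should replace the ``third end'' heuristic by this exhaustion argument (or an equivalent one); as it stands, the central step of your proof is an unproven claim supported by an approach that does not work.
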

\begin{proof}
Notice $G$ contains a subgroup $H$ of finite index that acts trivially on the ends of $G$, so $H$ has two ends and acts trivially on them. Thus, we reduce the general case to that of $G$ acting trivially on its ends. Choose a coarsely clopen subset $A$ belonging to one of ends of $G$.

First, consider the case of $G$ being boundedly generated. 
Using \ref{FinitelyManyComponentsBdGrowth} we may find a bounded subset $B_1$ of $G$ containing $1_G$ so that
each $A\setminus B_1$ and $A^c\setminus B_1$ are unions of unbounded $1$-components of $G\setminus B_1$ and $A\setminus B_1$ is $2$-separated from $A^c\setminus B_1$ for some Cayley metric $d$ on $G$. Therefore $A\setminus B_1$ and $A^c\setminus B_1$ are $1$-components of $G\setminus B_1$ and
$G$ acts trivially on each of those components. Now, \ref{CyclicSubgroupProp} says $G$ has a cyclic subgroup of bounded index.

Suppose $G$ is not boundedly generated and is the union of its bounded subsets $B_i$, $i\ge 1$. We will show that there exists a strictly increasing sequence $H_n$ of subgroups of $G$ satisfying the following conditions:\\
1. $H_1$ is infinite cyclic,\\
2. $H_n$ is of bounded index in $H_{n+1}$ for each $n\ge 1$,\\
3. $G$ is the union of all $H_n$, $n\ge 1$.

Given a bounded subset $B$ of $G$ we can find using \ref{SequenceOfSubgroupsProp} a boundedly generated subgroup $H_B$ of $G$
containing $B$ such that both $A\cap H_B$ and $A^c\cap H_B$ are NCC sets in $H_B$.
By the first case, $H_B$ has an infinite cyclic subgroup of bounded index. Call this group $H_1$.

In particular, if we construct two subgroups
$H_B\subset H_{B'}$ that way, then $H_B$ is of bounded index in $H_{B'}$. To this end, notice that $H_{B'}$ has an infinite cyclic subgroup $H'_{1}$ of bounded index. Since $H_1$ and $H'_{1}$ are infinite cyclic subgroups of $H_{B'}$ and $H'_{1}$ of bounded index in $H_{B'}$, by \ref{CyclicSubgroupsLemma}, $H_1$ must be of bounded index in  $H_{B'}$ and hence $H_B$ is of bounded index in $H_{B'}$. Using these facts it is easy to construct the required sequence $H_n$ of subgroups of $G$.

 $A$ cannot be contained in any $H_m$. Indeed, suppose there is $m\ge 1$ such that
$A\subset H_m$ and choose $z\in H_{m+1}\setminus  H_{m}$. On one hand, if $A\cap A\cdot z\neq\emptyset$, then $z\in A^{-1}\cdot A\subset H_m$, a contradiction. On the other hand, if $A\cap A\cdot z=\emptyset$, then $A\Delta (A\cdot z)=A\cup A\cdot z$ which is unbounded, a contradiction.

$A\cap H_1$ and $A^c\cap H_1$ are both unbounded. Let $t$ be a generator of $H_1$. Since both $A\Delta(A\cdot t)$ and $A^c\Delta(A^c\cdot t)$ are bounded,
there is $k > m$ such that both these sets are contained in $H_k$.
Given $x\in A\cap H_{k+1}\setminus H_k$, then $x\cdot t\notin H_k$. One has $x\cdot t\in A$ as otherwise $x\cdot t\in A\Delta(A\cdot t)\subset H_k$. Consequently, $x\cdot t^n\in A$ for all integers $n$.
As $G$ acts trivially on $A$, $x^{-1}\cdot A\setminus A$ is bounded.
That implies $H_1\cap A^c$ is bounded, a contradiction.
\end{proof}
\begin{Corollary}
Let $G$ be a locally compact $\sigma$-compact topological group with bornology consisting of pre-compact subsets. If $G$ is not compactly generated, then either $G$ is 1-ended or it has infinitely many ends.
\end{Corollary}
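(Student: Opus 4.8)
The plan is to argue by contradiction: I will assume $G$ has exactly two ends and show that this forces $G$ to be compactly generated, contradicting the hypothesis. The preceding corollary already tells us that the number of ends of $G$ is either infinite or at most $2$, and since $G$ is not compactly generated it is in particular not compact, hence unbounded, so it has at least one end. Therefore the entire content of the statement is to rule out the value $2$.

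The main tool is Theorem \ref{SpecialStallingsTwoEnds}, so first I would verify its three hypotheses for $G$. Bounded geometry is immediate from the corollary asserting that every locally compact topological group is of bounded geometry. The property of being $\sigma$-bounded follows directly from $\sigma$-compactness, since each compact member of an exhausting sequence is bounded and these exhaust $G$. The step I expect to require the most care is establishing that $G$ is non-locally bounded; here I would argue by contraposition via \ref{LocallyBoundedGroupsCase}, whose hypotheses apply because the bornology of $G$ has a countable basis (an exhausting sequence of compacta). If $G$ were locally bounded then, being unbounded, it would have infinitely many ends, contradicting our assumption of exactly two. Hence $G$ is non-locally bounded.

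With all hypotheses verified, Theorem \ref{SpecialStallingsTwoEnds} furnishes an infinite cyclic subgroup $H=\langle t\rangle$ of bounded index in $G$. By the definition of bounded index there is a bounded, hence pre-compact, subset $B$ of $G$ with $G=B\cdot H$. Then every $g\in G$ has the form $b\cdot t^n$ with $b\in B$ and $n\in\mathbb{Z}$, so $G$ is generated by the pre-compact set $B\cup\{t\}$; that is, $G$ is compactly generated. This contradicts the hypothesis, so $G$ cannot have exactly two ends. Combining this with the preceding corollary and the fact that $G$ has at least one end, the number of ends of $G$ is $1$ or infinite, as claimed.
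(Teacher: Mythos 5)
Your proof is correct and follows exactly the route the paper intends: the corollary is stated there without an explicit proof, as an immediate consequence of Theorem \ref{SpecialStallingsTwoEnds} applied in contrapositive form (two ends would yield an infinite cyclic subgroup of bounded index, hence compact generation, since a bounded set together with the generator is pre-compact). Your verification of the hypotheses --- bounded geometry of locally compact groups, $\sigma$-boundedness from $\sigma$-compactness, and non-local boundedness via \ref{LocallyBoundedGroupsCase} --- simply fills in the details the paper leaves implicit.
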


\begin{Theorem}\label{GeneralStallingsTwoEnds}
If $G$ is a large scale group of bounded geometry with $2$ ends, then the following conditions are equivalent:\\
1. $G$ is boundedly generated of bounded geometry and has $2$ ends.\\
2. $G$ is boundedly generated of bounded geometry, $\sigma$-bounded that has $2$ ends.\\ 
3. $G$ contains an infinite cyclic subgroup of bounded index.
\end{Theorem}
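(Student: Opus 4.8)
The plan is to prove the cyclic chain of implications $(3)\Rightarrow(2)\Rightarrow(1)\Rightarrow(3)$. First note that $(2)\Rightarrow(1)$ is immediate, since condition $(2)$ merely adds $\sigma$-boundedness to $(1)$. The reverse, $(1)\Rightarrow(2)$, is equally cheap: if $G$ is $K$-generated for a bounded symmetric $K$, then $G=\bigcup_{n\ge 1}K^n$ is a countable union of bounded sets (the bornology being stable under products), so $G$ is $\sigma$-bounded. Thus $(1)$ and $(2)$ are interchangeable, and the genuine content lies in $(1)\Rightarrow(3)$ together with $(3)\Rightarrow(1)$.

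For $(1)\Rightarrow(3)$ the idea is to verify the hypotheses of Theorem \ref{SpecialStallingsTwoEnds} and invoke it. Bounded geometry and two ends hold by the standing assumption, and $\sigma$-boundedness follows from bounded generation as above. It remains to check that $G$ is non-locally bounded. Since $G$ is $\sigma$-bounded and of bounded geometry, Proposition \ref{BornologyHasACountableBasis} shows that the bornology of $G$ has a countable basis. Moreover $G$ is unbounded, because a bounded group has empty end space, contradicting the assumption of two ends. If $G$ were locally bounded, then Proposition \ref{LocallyBoundedGroupsCase} would force $G$ to have infinitely many ends, again contradicting two ends; hence $G$ is non-locally bounded. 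Theorem \ref{SpecialStallingsTwoEnds} now applies and produces an infinite cyclic subgroup of bounded index, which is precisely $(3)$.

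For $(3)\Rightarrow(1)$, suppose $H=\langle t\rangle$ is an infinite cyclic subgroup of bounded index, so that by Proposition \ref{BoundedIndexSubgroups} there is a bounded $B$ with $B\cdot H=G$. Setting $K:=B\cup B^{-1}\cup\{1_G,t,t^{-1}\}$, each $g\in G$ can be written $g=b\cdot t^{n}$ with $b\in B$ and $n\in\mathbb{Z}$, whence $g\in K^{|n|+1}$; therefore $G=\langle K\rangle$ is boundedly generated, giving $(1)$. (Bounded geometry and two ends are inherited from the standing hypothesis, and alternatively would follow from the coarse equivalence of $G$ with $H\cong\mathbb{Z}$ via Proposition \ref{BoundedIndexSubgroups} and Corollary \ref{CoarselyEquivalentSpacesHaveHomeomorphicEndsSpaces}.) By the first paragraph this upgrades to $(2)$ as well.

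The main obstacle is the implication $(1)\Rightarrow(3)$, but essentially all of its difficulty has been pre-packaged into Theorem \ref{SpecialStallingsTwoEnds}, so the remaining task is the careful bookkeeping of its hypotheses. The one step demanding genuine attention is the derivation of non-local boundedness: it does not follow merely from $G$ having finitely many ends in isolation. One must route through the fact that the bornology admits a countable basis, so that Proposition \ref{LocallyBoundedGroupsCase} becomes applicable, and then apply the contrapositive of the locally-bounded-implies-infinitely-many-ends dichotomy. Once non-local boundedness and $\sigma$-boundedness are established, the conclusion is a direct citation of Theorem \ref{SpecialStallingsTwoEnds}.
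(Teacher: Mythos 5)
Your proposal is correct and matches the paper's approach: the paper's entire proof is the terse cycle 1)$\Rightarrow$2) obvious, 2)$\Rightarrow$3) by Theorem \ref{SpecialStallingsTwoEnds}, 3)$\Rightarrow$1) obvious, and you supply exactly the bookkeeping it leaves implicit. The only inefficiency is your route to non-local boundedness through Propositions \ref{BornologyHasACountableBasis} and \ref{LocallyBoundedGroupsCase}: since $G$ is boundedly generated by some bounded symmetric $K$ and is unbounded (it has two ends), $\langle K\rangle=G$ is unbounded, so $G$ fails Definition \ref{LocallyBoundedGroupDef} directly, with no need for countable bases or the infinitely-many-ends dichotomy.
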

\begin{proof}
1)$\implies$2) is obvious.\\
2)$\implies$3) follows from
\ref{SpecialStallingsTwoEnds}.\\
3)$\implies$1) is obvious.\\
\end{proof}

\begin{Theorem}
Let $G$ be a coarse group whose bornology consists of all finite subsets.  If $G$ has $2$ ends, then $G$ is finitely generated. Therefore it contains an infinite cyclic subgroup of finite index.
\end{Theorem}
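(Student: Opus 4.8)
The plan is to reduce the statement to \ref{SpecialStallingsTwoEnds}, whose hypotheses are bounded geometry, non-local-boundedness, $\sigma$-boundedness, and having two ends. Since the bornology of $G$ consists of all finite subsets, ``bounded'' means finite, ``boundedly generated'' means finitely generated, ``bounded index'' means finite index, and $G$ automatically has bounded geometry. Thus proving that $G$ is finitely generated, together with the ``therefore'' clause, amounts to verifying the two remaining hypotheses and then reading off the conclusion of \ref{SpecialStallingsTwoEnds}.

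First I would record the Boolean structure forced by having exactly two ends. Fix a non-trivial coarsely clopen set $A$ (so $A$ and $A^c$ are both infinite). For every coarsely clopen $U$ exactly one of $U,U^c$ lies in each end, so modulo finite sets the coarsely clopen subsets are precisely $\emptyset, A, A^c, G$. Left translation by $g\in G$ is a coarse equivalence, so $gA$ is coarsely clopen, hence coarsely identical to $A$ or to $A^c$; setting $\epsilon(g)=+1$ or $-1$ accordingly defines a homomorphism $\epsilon\colon G\to\mathbb{Z}/2$. Its kernel $H$ has index at most $2$, acts trivially on the two ends, and by \ref{CoarselyEquivalentSpacesHaveHomeomorphicEndsSpaces} still has two ends. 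Since a group with a finitely generated finite-index subgroup is finitely generated, it suffices to prove $H$ is finitely generated; and once that is known, \ref{GeneralStallingsTwoEnds} (with $\sigma$-boundedness free for a finitely generated group) delivers the infinite cyclic subgroup of finite index, supplying the ``therefore''. Moreover $H$ is not locally bounded, since an infinite, locally bounded group with a countable bornology basis has infinitely many ends by \ref{LocallyBoundedGroupsCase}. Granting $\sigma$-boundedness, \ref{SpecialStallingsTwoEnds} then yields an infinite cyclic subgroup of finite index in $H$, whence $H$, and therefore $G$, is finitely generated.

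The one hypothesis that is not immediate is $\sigma$-boundedness, i.e. that $H$ is countable, and this is the main obstacle: every structural tool available here (\ref{SequenceOfSubgroupsProp}, \ref{NumberOfEnds}, \ref{SpecialStallingsTwoEnds}) is phrased for groups possessing a \emph{countable} basis of boundedly generated subgroups, and \ref{LocallyBoundedGroupsCase} itself presupposes a countable bornology basis, so one must be careful not to invoke these before countability is in hand. To break this circularity I would extract a countable core by hand: using the extraction argument inside the \emph{proof} of \ref{SequenceOfSubgroupsProp} (which, given an infinite finitely generated subgroup $\langle B\rangle$ and the NCC set $A$, produces $g$ with $g\langle B\rangle\subset A$ up to a finite set), build a finitely generated subgroup $H_0\le H$ for which both $A\cap H_0$ and $A^c\cap H_0$ are unbounded and coarsely clopen in $H_0$. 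Then $H_0$ is countable with two ends, hence finitely generated and virtually $\mathbb{Z}$ by the countable case already treated, so it contains an infinite cyclic $\langle t\rangle$ of finite index. The crux is to show $H_0$ has finite index in $H$: for any $g\in H$ the group $\langle H_0,g\rangle$ is again a finitely generated two-ended subgroup in which, by \ref{CyclicSubgroupsLemma}, $\langle t\rangle$ is of finite index, so every such enlargement is commensurable with $\langle t\rangle$; organizing these enlargements into an ascending commensurable family and using that a fixed virtually-$\mathbb{Z}$ group cannot contain an infinite strictly ascending chain of finite-index subgroups forces the family to stabilize, exhausting $H$. This commensurability-and-exhaustion step, which simultaneously establishes countability and rules out the uncountable case, is where the real work lies; the remainder is bookkeeping against the quoted results.
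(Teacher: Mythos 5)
Your overall route is the paper's: pass to a finite-index subgroup $H$ acting trivially on the two ends, settle the countable case via \ref{SpecialStallingsTwoEnds} (with \ref{LocallyBoundedGroupsCase} supplying non-local-boundedness), and defeat potential uncountability by an ascending-chain argument whose engine is that the union of a strictly increasing chain of finitely generated two-ended subgroups would be a countable, two-ended, non-finitely-generated group, which is impossible. Your commensurability/stabilization step (finite index of $\langle t\rangle$ in every enlargement via \ref{CyclicSubgroupsLemma}, then the fact that a virtually cyclic group has no infinite strictly ascending chain of subgroups) is a correct variant of the paper's chain contradiction. You also share with the paper its one implicit step, namely that a subgroup $W\le H$ with $A\cap W$ and $A^c\cap W$ both unbounded ``has two ends'' because $W$ acts trivially on these two pieces; since the paper itself asserts this in a single clause, you are at its level of rigor there.

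There is, however, one genuine gap that is yours alone: the construction of the core $H_0$. You build $H_0$ by running the extraction argument inside the proof of \ref{SequenceOfSubgroupsProp}, and that argument requires as input an \emph{infinite finitely generated} subgroup $\langle B\rangle$ of $H$: it needs $B$ bounded (i.e.\ finite) so that $(A\cdot B)\Delta A$ is finite, and it needs $\langle B\rangle$ unbounded for the translate $g\cdot\langle B\rangle\subset A$ to produce an unbounded intersection. But the existence of an infinite finitely generated subgroup is precisely non-local-boundedness of $H$, which -- as you yourself note -- can only be extracted from \ref{LocallyBoundedGroupsCase} once $H$ is known to be countable. For uncountable $H$, nothing at this stage rules out that $H$ is locally finite, in which case your extraction cannot start; so the circularity you flagged is not actually broken. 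The paper sidesteps this entirely: instead of a finitely generated core it generates with arbitrary countably infinite \emph{subsets} $C_1\subset A$ and $C_2\subset A^c$, obtaining a countable subgroup $\langle G_2\cup C_1\cup C_2\rangle$ to which the countable case (where \ref{LocallyBoundedGroupsCase} legitimately applies, and where \ref{GeneralStallingsTwoEnds} yields finite generation) can be applied with no prior knowledge of any infinite finitely generated subgroup. Your argument is repaired by the same substitution: set $H_0=\langle C_1\cup C_2\rangle$ with $C_1,C_2$ countably infinite subsets of $A\cap H$ and $A^c\cap H$; then $H_0$ is countable, two-ended in the shared sense above, hence virtually $\mathbb{Z}$ and finitely generated, and the rest of your stabilization argument goes through as you describe.
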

\begin{proof}
$G$ has a subgroup $G_1$ of finite index that has $2$ ends and $G_1$ acts on its ends trivially. If $G_1$ is countable, then we are done, so assume it is uncountable. 
Given a countable subgroup $G_2$ of $G_1$ choose $A_1\subset G_1$, a non-trivial coarsely clopen subset on which $G_1$ acts trivially. Let $A_2:=G_1\setminus A_1$ and choose infinite countable subsets $C_i$ of $A_i$, $i\leq 2$. The group generated by $G_2\cup C_1\cup C_2$ has two ends since the countable group $<G_2\cup C_1\cup C_2>$ acts trivially on $A_1\cap <G_2\cup C_1\cup C_2>$ and $A_2\cap <G_2\cup C_1\cup C_2>$, by \ref{GeneralStallingsTwoEnds}, it is finitely generated. That means every countable subgroup $G_2$ of $G_1$ is contained in a finitely generated subgroup $G_3$ of two ends. By induction we can construct a strictly increasing sequence $H_i$ of finitely generated subgroups of $G_1$ each having $2$ ends inherited from $G_1$.
The union of all $H_i$ is not finitely generated but has $2$ ends, a contradiction to \ref{GeneralStallingsTwoEnds}.
\end{proof}

\end{document}